% Time-stamp: <2010-04-15 17:31:15 nakajima>
%\documentclass{amsart}
\documentclass[12pt]{amsart}
\usepackage{amssymb}
\usepackage{amsbsy}
\usepackage{amscd}
\usepackage[mathscr]{eucal}
\usepackage{verbatim}
\usepackage{version}

\usepackage{nicefrac}

\usepackage{pstricks}
\usepackage{pst-all}
\usepackage{color}

\setlength{\unitlength}{2em}

%%\usepackage{a4wide}
%%%%%%%%%%%%%%%%%%%%%%%%%%%%%%%%%%%%%%%%%%%%%%%%%%%%%%%%%%%%%%%%%%%%%%%%%%%
% A4 style
%%%%%%%%%%%%%%%%%%%%%%%%%%%%%%%%%%%%%%%%%%%%%%%%%%%%%%%%%%%%%%%%%%%%%%%%%%%
%\oddsidemargin -0.54cm \evensidemargin -0.54cm
\oddsidemargin -0.44cm \evensidemargin -0.44cm
%\topmargin -2cm \headheight 1pc \headsep 2pc
%\footheight 1pc \footskip 2pc
%\textheight 60pc \textwidth 40pc \columnsep 2pc \columnseprule 0pt
\textheight 52pc \textwidth 38pc \columnsep 2pc \columnseprule 0pt
\usepackage[all]{xy}
\usepackage{mathdots}
%
%\usepackage{epsfig}
%\usepackage[dvips]{graphics}
%\DeclareGraphicsExtensions{.eps}
\makeatletter
%
% For backward-compatibility (AMSLaTeX v1.1)
%

\def\cal{\mathcal}
\def\Bbb{\mathbb}
\def\frak{\mathfrak}

\newenvironment{pf*}[1]{\proof[#1]}{\endproof}
\newcommand{\rom}{\textup}
%
% For forward-compatibility (AMSLaTeX v1.2)
%
%\newcommand{\mathcal}{\cal}
%\newcommand{\mathbb}{\Bbb}
%\newcommand{\mathfrak}{\frak}
%
%\newenvironment{proof}{pf}{\endpf}

\hyphenation{Gro-then-dieck}

\hfuzz1pc % Don't bother to report overfull boxes if overage is < 1pc
%
%\def\endpf{\endtrivlist}
% section numbering ---- such as 2.4
%\renewcommand{\thesection}{\thechapter.\arabic{section}}
% subsection numbering --- such as 2(i), etc.
%\renewcommand{\thesubsection}{\thesection(\@roman\c@subsection)}
% subsection numbering --- such as (a)
%\renewcommand{\thesubsection}{(\alph{subsection})}
\makeatother

% counters for ``enumerate'' are (a), (b),...
\newenvironment{aenume}{%
  \begin{enumerate}%
  }{\end{enumerate}}
%
%
% H.N: Avoid the error caused by \subjclass
\makeatletter
\makeatletter
\@ifclasslater{amsart}{1999/11/24}{}{
% Taken from AMSLaTeX ver. 2.0
\renewcommand*\subjclass[2][1991]{%
  \def\@subjclass{#2}%
  \@ifundefined{subjclassname@#1}{%
    \ClassWarning{\@classname}{Unknown edition (#1) of Mathematics
      Subject Classification; using '1991'.}%
  }{%
    \@xp\let\@xp\subjclassname\csname subjclassname@#1\endcsname
  }%
}
\renewcommand{\subjclassname}{%
  \textup{1991} Mathematics Subject Classification}
\@xp\let\csname subjclassname@1991\endcsname \subjclassname
\@namedef{subjclassname@2000}{%
  \textup{2000} Mathematics Subject Classification}
}

\renewcommand{\MR}[1]{}

\makeatother
\newenvironment{NB}{
\color{red}{\bf NB}. \footnotesize 
}{}
% The following is used inside NB
\newenvironment{NB2}{
\color{blue}{\bf NB}. \footnotesize
}{}
% For a final version, NB disappears.
\excludeversion{NB}
\excludeversion{NB2}
% Older version
%\renewenvironment{NB}{%
%  \comment
%  }{\endcomment}

%       Theorem environments
%% \theoremstyle{plain} %% This is the default
\newtheorem{Theorem}[equation]{Theorem}
\newtheorem{Corollary}[equation]{Corollary}
\newtheorem{Lemma}[equation]{Lemma}
\newtheorem{Proposition}[equation]{Proposition}

\theoremstyle{definition}
\newtheorem{Definition}[equation]{Definition}

\theoremstyle{remark}
\newtheorem{Remark}[equation]{Remark}

%\renewcommand{\theClaim}{} % to make the claim environment unnumbered

%\renewcommand{\theQuestion}{}  % to make the notation environment
                               % unnumbered

%\renewcommand{\theExercise}{}

\numberwithin{equation}{section}
%\numberwithin{figure}{section}

\newcommand{\thmref}[1]{Theorem~\ref{#1}}
\newcommand{\secref}[1]{\S\ref{#1}}
\newcommand{\lemref}[1]{Lem\-ma~\ref{#1}}
\newcommand{\propref}[1]{Proposition~\ref{#1}}

\newcommand{\subsecref}[1]{\S\ref{#1}}

\newcommand{\defref}[1]{Definition~\ref{#1}}
\newcommand{\remref}[1]{Remark~\ref{#1}}
%
%       Math definitions

%\newcommand{\defeq}{\overset{\operatorname{\scriptstyle def.}}{=}}
\newcommand{\defeq}{:=}
 % interior
                                                         % product
\newcommand{\C}{{\Bbb C}}
\newcommand{\Z}{{\Bbb Z}}
\newcommand{\Q}{{\Bbb Q}}
\newcommand{\R}{{\Bbb R}}
% Projective spaces
\newcommand{\proj}{{\Bbb P}}
%\newcommand{\CP}{\operatorname{\C P}}

% Lie Groups
\newcommand{\SL}{\operatorname{\rm SL}}

\newcommand{\GL}{\operatorname{GL}}

% Lie algebras
 % because \sl="slant"

% Useful symbols

\newcommand{\Hom}{\operatorname{Hom}}
\newcommand{\Ext}{\operatorname{Ext}}
\newcommand{\Ker}{\operatorname{Ker}}
\newcommand{\Coker}{\operatorname{Coker}}
\newcommand{\Ima}{\operatorname{Im}}

\newcommand{\rank}{\operatorname{rank}}

\newcommand{\pd}[2]{\frac{\partial#1}{\partial#2}}

\newcommand{\id}{\operatorname{id}}
\newcommand{\ve}{\varepsilon}

\newcommand{\linf}{{\ell_\infty}}
\newcommand{\shfO}{\mathcal O}
\newcommand{\dslash}{/\!\!/} % for algebro-geometric quotient (double
                             % slash)
\newcommand{\bp}{{{\widehat\proj}^2}}
\newcommand{\bC}{{\widehat\C}^2}
\newcommand{\bM}{{\widehat M}}
\newcommand{\tM}{{\widetilde M}}
\newcommand{\baM}{{\bar M}}

\newcommand{\ch}{\operatorname{ch}}
\newcommand{\Wedge}{{\textstyle \bigwedge}}

\newcommand{\Zin}{Z^{\text{\rm inst}}}

\newcommand{\bZin}{\widehat{Z}^{\text{\rm inst}}}
\newcommand{\bZ}{\widehat{Z}}

\newcommand{\Fin}{F^{\text{\rm inst}}}

\newcommand{\q}{\mathfrak q}
\newcommand{\bbeta}{\boldsymbol\beta}
\newcommand{\hT}{\widetilde T}

\newcommand{\rk}{\mathop{{\rm rk}}}

\newcommand{\td}{\mathop{\text{\rm td}}}

\newcommand{\Res}{\operatornamewithlimits{Res}}

\newcommand{\codim}{\mathop{\text{\rm codim}}\nolimits}

\newcommand{\Qcal}{\mathcal Q}

\makeatletter
\newcommand{\vechatom}{
    {\Vec{\omega}}
    \,\smash[b]{\hbox{\lower2\ex@\hbox{$\m@th\hat{\null}$}}}
}
\makeatother

\newcommand{\pt}{\operatorname{pt}}
\newcommand{\bX}{{\widehat X}}
\newcommand{\bMm}[1]{\widehat{M}^{#1}}

\DeclareMathOperator{\vdim}{\mathbf{dim}}
\newcommand{\bn}{\mathbf{n}}
\newcommand{\Flag}{\operatorname{Flag}}
\newcommand{\Gr}{\operatorname{Gr}}
\newcommand{\cM}{\mathcal M}
\newcommand{\cV}{\mathcal V}
\newcommand{\Vcal}{\cV}
\newcommand{\cE}{\mathcal E}
\newcommand{\cF}{\mathcal F}
\newcommand{\cL}{\mathcal L}
\newcommand{\cQ}{\mathcal Q}
\newcommand{\Xf}{X_\flat}
\newcommand{\Xs}{X_\sharp}
\newcommand{\Ff}{F_\flat}
\newcommand{\Fs}{F_\sharp}
\newcommand{\Fcal}{\cF}
\newcommand{\If}{I_\flat}
\newcommand{\Is}{I_\sharp}
\newcommand{\Vf}{V^\flat}
\newcommand{\Vs}{V^\sharp}

\newcommand{\cEf}{\cE_\flat}
\newcommand{\cEs}{\cE_\sharp}
\newcommand{\Trel}{\varTheta_{\operatorname{rel}}}
\newcommand{\fN}{{\mathfrak N}}
\newcommand{\fI}{{\mathfrak I}}
\newcommand{\topdeg}{\operatorname{top}}
\newcommand{\Dec}{\operatorname{Dec}}

\setcounter{tocdepth}{1}
%setcounter{tocdepth}{3}

%\usepackage[bookmarks=false]{hyperref}
\usepackage[dvips,bookmarks=false]{hyperref}

\begin{document}
\title[Perverse coherent sheaves on blow-up. III]
{Perverse coherent sheaves on blow-up. III.
\\ Blow-up formula from wall-crossing
\\
%{\rm Preliminary Version (\today)}
}
\author{Hiraku Nakajima}
\address{Research Institute for Mathematical Sciences,
Kyoto University, Kyoto 606-8502,
Japan}
\email{nakajima@kurims.kyoto-u.ac.jp}

\author{K\={o}ta Yoshioka}
\dedicatory{To the memory of the late Professor Masaki Maruyama}
\address{Department of Mathematics, Faculty of Science, Kobe University,
Kobe 657-8501, Japan}
\address{Max-Planck-Institut f\"{u}r Mathematik,
Vivatsgasse 7
53111 Bonn, Germany}
\email{yoshioka@math.kobe-u.ac.jp}

\subjclass[2000]{Primary 14D21; Secondary 16G20}

\begin{abstract}
  In earlier papers \cite{perv,perv2} of this series we constructed a
  sequence of intermediate moduli spaces $\{
  \bM^m(c)\}_{m=0,1,2,\dots}$ connecting a moduli space $M(c)$ of
  stable torsion free sheaves on a nonsingular complex projective
  surface $X$ and $\bM(c)$ on its one point blow-up $\bX$. They are
  moduli spaces of perverse coherent sheaves on $\bX$.
  In this paper we study how Donaldson-type invariants (integrals of
  cohomology classes given by universal sheaves) change from
  $\bM^m(c)$ to $\bM^{m+1}(c)$, and then from $M(c)$ to $\bM(c)$.
  As an application we prove that Nekrasov-type partition functions
  satisfy certain equations which determine invariants recursively in
  second Chern classes. They are generalization of the blow-up
  equation for the original Nekrasov deformed partition function for
  the pure $N=2$ SUSY gauge theory, found and used to derive the
  Seiberg-Witten curves in \cite{NY1}.
\end{abstract}

\maketitle
\section*{Introduction}

Let $X$ be a nonsingular complex projective surface and $p\colon
\widehat X\to X$ the blow-up at a point $0$. Let $C = p^{-1}(0)$ be
the exceptional divisor.
Let $c = (r,c_1,\ch_2)\in H^{\mathrm{ev}}(\bX)$ be a cohomological
data.
Let $\bM(c)$ be the moduli space of stable torsion free sheaves $E$ on
$\bX$ with $\ch(E) = c$ and $M(p_*(c))$ the corresponding moduli space
on $X$.
In \cite{perv,perv2} we constructed a sequence of intermediate moduli
spaces $\bM^m(c)$ connected by birational morphisms as
\begin{equation}
  \label{eq:flip}
\cdots \setbox5=\hbox{$\bMm{m,m+1}(c)$}{\rule{0mm}{\ht5}\hspace*{-\wd5}}
\begin{aligned}[m]
  \xymatrix@R=.5pc{ &
\bMm{m}(c) \ar[rd]^{\xi_{m}} \ar[ld] & & 
\bMm{m+1}(c) \ar[rd]^{\xi_{m+1}} \ar[ld]_{\xi^+_{m}} & 
& %\setbox5=\hbox{$\bMm{m}$}{\rule{0mm}{\ht5}\hspace*{\wd5}}
\ar[ld]
\\
\setbox5=\hbox{$\bMm{m,m+1}(c)$}{\rule{0mm}{\ht5}\hspace*{\wd5}}
&& \bMm{m,m+1}(c) & & \bMm{m+1,m+2}(c) &
}
\end{aligned}
\cdots
\tag{$*$}
\end{equation}
such that
\begin{enumerate}
\item $\bM^m(c) \cong \bM(c)$ if $m$ is sufficiently large, and
\item $\bM^0(c) \cong M(p_*(c))$ if $(c_1,[C]) = 0$ under the natural
  homomorphism given by $E\mapsto p_*(E)$. (See \propref{prop:Grassmann}
  for the statement when $0 < (c_1,[C]) < r$.)
\end{enumerate}
The diagram \eqref{eq:flip} is an example of those often appearing in
variations of GIT quotients \cite{Thaddeus}, and similar to ones for
moduli spaces of sheaves (by Thaddeus, Ellingsrud-G\"ottsche,
Friedman-Qin and others) when we move polarizations. See
\cite{perv,perv2} for more references on earlier works.

In this paper, we study how Donaldson type invariants (certain
integrals of cohomology classes given by universal sheaves) change
from $\bM^m(c)$ to $\bM^{m+1}(c)$. By a technical reason we restrict
ourselves to the case when $X$ is $\proj^2$ and $\bM^m(c)$ is replaced
by the moduli space of framed sheaves for which quiver description was
given in \cite{perv}. We conjecture that the results are universal,
i.e., independent of the choice of a surface.
Moreover we have a natural $(r+1)$-dimensional torus $\hT =
(\C^*)^2\times T^{r-1}$ action on $\bM^m(c)$ from the
$(\C^*)^2$-action on $\bp$ and the change of framing. Thus we can
consider equivariant Donaldson type invariants to which we can apply
Atiyah-Bott-Lefschetz fixed point formula to perform a further
computation. In this sense, we think our situation is most basic.

Our first main result says that the difference of invariants is given
by a variant of Mochizuki's weak wall-crossing formula \cite{Moc},
i.e., it is expressed as a sum of an integral over $\bM^m(c')$ with
smaller $c'$ (\thmref{thm:m=0}). Our argument closely follows
Mochizuki's, once \eqref{eq:flip} is understood as a variation of GIT
quotients.

Summing up the weak wall-crossing formula from $0$ to $m$, we get the
formula for the difference of $\bM(c)$ and $M(p_*(c))$ by integrals
over various $\bM^{m'}(c')$, as a result. We normalize the first Chern
class of $c'$ in the interval $[0,r-1]$ twisting by a line bundle in
order to apply $M(p_*(c'))\cong \bM^0(c')$ for $(c_1(c'),[C]) = 0$ and
its modification \propref{prop:Grassmann}. Then those integrals
themselves can be expressed by integrals over $M(p_*(c'))$ and ones
over even smaller $\bM^{m''}(c'')$. We apply the same argument for
$\bM^{m''}(c'')$.
We thus do this argument recursively to give an algorithm to express
$\bM(c)$ by a linear combination of integrals over $M(c_\flat)$ for
various $c_\flat$. Since this algorithm is complicated (see
Figure~\ref{fig:flow} for the flowchart), we do not try to write down
an explicit formula in general.
We instead focus on vanishing theorems for special cases when
integrands are not twisted too much along $C$. This is our second main
result.
See \secref{sec:vanish}.

Our motivation of study in this series is an application to the
Nekrasov partition function \cite{Nek}. Let us explain it briefly.
The Nekrasov partition function is the generating function of an
equivariant integral over $M(c)$.
One of the main conjecture on it states that the leading part
$\mathcal F_0$ of its logarithm is given by the Seiberg-Witten
prepotential, a certain period integral on the Seiberg-Witten curves.
The three consecutive coefficients (denoted by $H$, $A$, $B$) are also
important for the application to the wall-crossing formula for usual
or $K$-theoretic Donaldson invariants for projective surfaces with
$p_g = 0$ \cite{GNY,GNY2}.

When the integrand is (1) $1$, (2) slant products of Chern classes of
universal sheaves with the fundamental classes of $\C^2$, or (3) the
Todd class of $M(c)$, the authors proved that the partition function
satisfies functional equations, called the {\it blow-up equations},
which determine coefficients recursively in second Chern numbers of
$c$ \cite{NY1,NY2,NY3}. The functional equations induce a nonlinear
partial differential equation for $\mathcal F_0$, which has been known
as the {\it contact term equation\/} in the physics literature
\cite{LNS1,GM3}. In particular, the Seiberg-Witten prepotential
satisfies the same equation and hence is equal to $\mathcal F_0$. This
was our proof of the above mentioned conjecture.  There are other
completely independent proofs by \cite{NO,BE}.
But so far $H$, $A$, $B$ can be determined only from the blow-up
equation.

Nekrasov's partition functions have more variants by replacing the
integrand. Let us give three examples:

(a) We integrate Euler classes of vector bundles given by pushforward
of universal sheaves. They are called the {\it theories with
  fundamental matters\/} in the physics literature.

(b) When we integrate the Todd classes, we can cap with powers of the
first Chern classes of the same bundles. They are called {\it
  $5$-dimensional Chern-Simons terms}.

(c) We can also incorporate universal sheaves to which Adams operators
are applied. They are called {\it (higher) Casimir operators}. They
give coefficients appearing in the defining equation of the
Seiberg-Witten curve.

The blow-up equation was derived by analyzing relation between
integrals over $M(c)$ and $\bM(c)$.
Our vanishing results in \secref{sec:vanish} enable us to generalize
our proof for those variants\footnote{The proof in \cite{NO} can be
  generalized to those variants. See \cite{Ta} for the theory with
  $5$-dimensional Chern-Simons terms, and \cite{MN} for higher Casimir
  operators, but not with Todd genus.}. In this paper, we explain it
for theories with $5$-dimensional Chern-Simons terms and Casimir
operators. The case for the theories with matters will be given
elsewhere \cite{NY4}.

The paper is organized as follows. In \secref{sec:1st} we state our
results after preparing the necessary notations.
In \secref{sec:vanish} we prove several versions of vanishing theorems
as applications of the results in \secref{sec:1st}.
In \secref{sec:SW} we study the Nekrasov partition function for
theories with $5$-dimensional Chern-Simons terms. The blow-up equation
is derived. This section is expository since the derivation of
Nekrasov's conjecture was already given in \cite{GNY2} assuming the
vanishing theorems.

The actual proof starts from \secref{sec:quiver}. We review the quiver
description of the framed moduli spaces obtained in \cite{perv} and
the analysis of the wall-crossing in \cite{perv2}, and add a few
things. The quiver description is necessary to define master spaces.
In \secref{sec:mater spaces} we define enhanced master spaces.  We
follow Mochizuki's method \cite{Moc}, but give the construction in
detail for the sake of a reader.
In \secref{sec:wallcrossing} we prove \thmref{thm:m=0}, the variant of
Mochizuki's weak wall-crossing formula. Again the proof is the same as
Mochizuki's.

\subsection*{Acknowledgments}

The first named author is supported by the Grant-in-Aid for Scientific
Research (B) (No.~19340006), Japan Society for the Promotion of
Science.
The second named author is supported by the Grant-in-aid for
Scientific Research (B) (No.~18340010), (S) (No.~19104002), JSPS and
Max Planck Institute for Mathematics.
Both authors thank Takuro Mochizuki for explanations of his results,
and the referee for many valuable comments.
\begin{NB2}
  Added. 2010/04/03
\end{NB2}%

\section{Main result}\label{sec:1st}

\subsection*{Notations}

Let $[z_0:z_1:z_2]$ be the homogeneous coordinates on $\proj^2$ and
$\linf = \{ z_0 = 0\}$ the line at infinity.
Let $p\colon \bp\to \proj^2$ be the blow-up of $\proj^2$ at $[1:0:0]$.
Then $\bp$ is the closed
subvariety of $\proj^2\times\proj^1$ defined by
\begin{equation*}
  \left\{ ([z_0:z_1:z_2],[z:w])\in\proj^2\times\proj^1 \mid
    z_1 w = z_2 z \right\},
\end{equation*}
where the map $p$ is the projection to the first factor.  We denote
$p^{-1}(\linf)$ also by $\linf$ for brevity. Let $C$ denote the
exceptional divisor given by $z_1 = z_2 = 0$. Let $\shfO$ denote the
structure sheaf of $\bp$, $\shfO(C)$ the line bundle associated with
the divisor $C$, and $\shfO(mC)$ its $m^{\mathrm{th}}$ tensor product
$\shfO(C)^{\otimes m}$ when $m > 0$, $\left(\shfO(C)^{\otimes
    -m}\right)^\vee$ if $m < 0$, and $\shfO$ if $m=0$. And we use the
similar notion $\shfO(mC+n\linf)$ for tensor products of $\shfO(mC)$
and tensor powers of the line bundle corresponding to $\linf$ or its
dual.

The structure sheaf of the exceptional divisor $C$ is denoted by
$\shfO_C$. If we twist it by the line bundle $\shfO_{\proj^1}(n)$ over
$C \cong\proj^1$, we denote the resulted sheaf by $\shfO_C(n)$. Since
$C$ has the self-intersection number $-1$, we have
$\shfO_C\otimes \shfO(C) = \shfO_C(-1)$.

For $c\in H^*(\bp)$, its degree $0$, $2$, $4$-parts are denoted by
$r$, $c_1$, $\ch_2$ respectively. If we want to specify $c$, we denote
by $r(c)$, $c_1(c)$, $\ch_2(c)$. 

For brevity, we twist the push-forward homomorphism $p_*$ by Todd
genera of $\proj^2$ and $\bp$ as in \cite[\S3.1]{perv2} so that it is
compatible with the Riemann-Roch formula.

We also use the following notations frequently:
\begin{itemize}
\item $\vee$ is the involution on the $K$-group given by taking the
  dual of a vector bundle.
\item $\Delta(E) \defeq c_2(E) - \frac{r-1}{2r} c_1(E)^2$,
$\Delta(c) \defeq -\ch_2(c) + \frac1{2r(c)} c_1(c)^2$.
%\item $\rk E$ is the rank of a coherent sheaf $E$.
\item $C_m$ denotes $\shfO_C(-m-1)$.
\item $e_m\defeq \ch(\shfO_C(-m-1))$.
  \begin{NB}
    $e_m = e^{(m+1)[C]} - e^{m[C]}$ from the exact sequence
    $0 \to \shfO(mC) \to \shfO((m+1)C) \to \shfO_C(-(m+1)C) \to 0$.
  \end{NB}
\item $\pt$ is a single point in $X$, $\bX$ or sometimes an abstract
  point. Its Poincar\'e dual in $H^4(X)$ or $H^4(\bX)$ is also denoted
  by the same notation.
\begin{NB}
\item $\chi(E,F) := \sum_{i=-\infty}^{\infty} (-1)^i \dim \Ext^i(E,F)
= \sum_{i=-\infty}^{\infty} (-1)^i \dim \Hom(E,F[i])$.
\item $h^0(E, F) := \dim \Hom(E,F)$.
\item $\chi(E) := \chi(\shfO_{\bX}, E)$.
\item $h^0(E) := h^0(\shfO_{\bX}, E)$.
\item $V_0(E) := H^1(E(-\linf))$, $V_1(E) := H^1(E(C-\linf))$.
\item For a sheaf $\cE$ on $\bp\times \bM^m(c)$, say the universal
  sheaf, we set
  $\Vcal_0(\cE) := R^1 q_{2*}(\cE\otimes q_1^*\shfO(-\linf))$,
  $\Vcal_1(\cE) := R^1 q_{2*}(\cE\otimes q_1^*\shfO(C-\linf))$,
  where $q_1$, $q_2$ are projections to the first and second factors
  of $\bp\times \bM^m(c)$.
\end{NB}
\item For an integer $N$ let $\underline{N} = \{ 1,2,\dots, N\}$.
\end{itemize}

\begin{NB}
Serre duality:
\begin{equation*}
\begin{split}
  & \Ext^i(E,\shfO_C(-1)) \cong \Ext^{2-i}(\shfO_C(-1),E\otimes K)^\vee
  \cong \Ext^{2-i}(K^\vee\otimes\shfO_C(-1),E)^\vee
  \cong \Ext^{2-i}(\shfO_C,E)^\vee,
\\
  & \Ext^i(\shfO_C(-1),E) \cong \Ext^{2-i}(E, \shfO_C(-1)\otimes K)^\vee
  \cong \Ext^{2-i}(E, \shfO_C(-2))^\vee,
\end{split}
\end{equation*}
as $K|_C \cong \shfO_C(-1)$.

Riemann-Roch:
\begin{gather*}
\ch(\shfO_C(-1)) = [C] - \frac12 \pt,
\\
\td{\bX} = 1 + \frac12 c_1(\bX) + \frac1{12}(c_1(\bX)^2 + c_2(\bX)).
\end{gather*}
Therefore
\begin{equation*}
\begin{split}
  & \chi(E,\shfO_C(-1)) = \int_{\bX} \ch(E)^\vee \ch(\shfO_C(-1))\td \bX
  = - (c_1(E), [C]),
\\
  & \chi(\shfO_C(-1),E) = \int_{\bX} \ch(E)\ch(\shfO_C(-1))^\vee \td \bX
  = - (c_1(E), [C]) - \rk E,
\end{split}
\end{equation*}
as $c_1(\bX)\cdot [C] = \pt$.
\end{NB}

For a sheaf $E$ on $\bp$, we denote $H^1(E(-\linf))$,
$H^1(E(C-\linf))$ by $V_0(E)$, $V_1(E)$ respectively (and
simply by $V_0$, $V_1$ if there is no fear of confusion).
In this paper we mainly treat sheaves $E$ with $H^i(E(-\linf)) = 0 =
H^i(E(C-\linf))$ for $i\neq 1$. This is clear after we will recall the
quiver description of framed moduli spaces in \secref{sec:quiver}:
$V_\alpha(E)$ appears as a vector space on the vertex $\alpha$, and
any sheaf in this paper corresponds to a representation of the quiver.
Under this assumption we have
\begin{equation*}
  \begin{split}
    & \dim V_0 = \dim H^1(E(-\linf)) = -(\ch_2(E),[\bp]) + \frac12
    (c_1(E),[C]),
    \\
    & \dim V_1 = \dim H^1(E(C-\linf)) = -(\ch_2(E),[\bp]) - \frac12
    (c_1(E),[C])
  \end{split}
\end{equation*}
by Riemann-Roch.

Let $\bM$ be a moduli scheme (or stack) and $q_1$, $q_2$ be
projections to the first and second factors of $\bp\times\bM$. For a
sheaf $\cE$ (e.g., the universal sheaf) on $\bp\times\bM$, let
\begin{itemize}
\item $\Vcal_0(\cE) \defeq R^1 q_{2*}(\cE\otimes q_1^*\shfO(-\linf))$,
\item $\Vcal_1(\cE) \defeq R^1 q_{2*}(\cE\otimes q_1^*\shfO(C-\linf))$.
\end{itemize}
Let $\Ext^\bullet_{q_2}$ denotes the derived functor of the composite
functor $q_{2*}\circ{\mathcal H}om$. We often consider
$\Ext^\bullet_{q_2}(\cE,C_m)$, where $C_m$ is considered as a sheaf on
$\bp\times\bM$ via the pull-back by $q_1$.
\begin{NB2}
  Corrected according to the referee's remark 2010/04/01
\end{NB2}%

\subsection{Framed moduli spaces}

A {\it framed sheaf\/} $(E,\Phi)$ on $\proj^2$ is a pair of
\begin{itemize}
\item a coherent sheaf $E$, which is locally free in a neighborhood
  of $\linf$, and
\item an isomorphism $\Phi\colon E|_{\linf}\to \shfO_{\linf}^{\oplus
    r}$, where $r$ is the rank of $E$.
\end{itemize}
An isomorphism of framed sheaves $(E,\Phi)$, $(E',\Phi')$ is an
isomorphism $\xi\colon E\to E'$ such that $\Phi'\circ \xi|_{\linf} =
\Phi$. When $r=0$, we understand that a framed sheaf is an ordinary
sheaf of rank $0$ whose support does not intersect with $\linf$.
\begin{NB2}
  Editted according to the referee's remark 2010/04/01
\end{NB2}%
We have the corresponding definition of a framed sheaf on the blow-up
$\bp$.

\begin{Definition}\label{def:m-stable}
  Let $m\in \Z_{\ge 0}$.
  A framed sheaf $(E,\Phi)$ on $\bp$ is called {\it $m$-stable\/} if 
  \begin{enumerate}
  \item $\Hom(E,\shfO_C(-m-1)) = 0$,
    \begin{NB}
      i.e., $\Hom(E(-mC),\shfO_C(-1)) = 0$,
    \end{NB}
  \item $\Hom(\shfO_C(-m),E) = 0$, and
    \begin{NB}
      i.e., $\Hom(\shfO_C,E(-mC)) = 0$,
    \end{NB}
  \item $E$ is torsion free outside $C$.
  \end{enumerate}
\end{Definition}

Though it is not obvious from the definition, an $m$-stable sheaf must
have $r > 0$. (See \cite[\S2.2]{perv}.)
\begin{NB2}
  Added according to the referee's suggestion. 2010/04/01
\end{NB2}%

We have a smooth fine moduli scheme $\bM^m(c)$ of $m$-stable framed
sheaves $(E,\Phi)$ with $\ch(E) = c\in H^*(\bp)$ such that
$(c,[\linf]) = 0$. It is of dimension $2r(c)\Delta(c)$.
\begin{NB2}
  Added according to the referee's remark. 2010/04/01
\end{NB2}%
(See \thmref{thm:quiver}.) Let $\cE$ be the universal sheaf on $\bp\times
\bM^m(c)$, which is unique thanks to the framing unlike the case of
ordinary moduli spaces.
\begin{NB}
Let $\Vcal_0(\cE) \defeq R^1 q_{2*}(\cE\otimes q_1^*\shfO(-\linf))$,
$\Vcal_1(\cE) \defeq R^1 q_{2*}(\cE\otimes q_1^*\shfO(C-\linf))$,
where $q_1$, $q_2$ are projections to the first and second factors of
$\bp\times \bM^m(c)$. We use the same notation for more general
sheaves $\cE$ on $\bp\times \bM^m(c)$.
\end{NB}%

As special cases with $m = 0$ and $m$ sufficiently large, we have fine
moduli schemes $M(c)$ and $\bM(c)$ of framed torsion free sheaves
$(E,\Phi)$ on $\proj^2$ and $\bp$ respectively. For $M(c)$, we take
$c\in H^*(\bp)$ with $(c_1,[C]) = 0$. (See \cite[\S7]{perv} or
\cite[\S3.1, \S3.9]{perv2}.)
They are connected by a sequence of birational morphisms as explained
in the introduction. See \subsecref{subsec:wall-crossing}.

In fact, $M(c)$ was studied earlier in \cite[Chap.~2,3]{Lecture}
(denoted there by ${\mathcal M}(r,n)$). We need to recall one
important property: We have a projective morphism $\pi\colon M(c)\to
M_0(c)$, where $M_0(c)$ is the Uhlenbeck (partial) compactification of
the moduli space $M_0^{\mathrm{reg}}(c)$ of framed locally free
sheaves $(E,\Phi)$. In [loc.\ cit.] $M_0(c)$ was constructed via the
quiver description, and bijective to
\begin{equation*}
  \bigsqcup_{c'} M_0^{\mathrm{reg}}(c')\otimes S^{\Delta(c) - \Delta(c')}(\C^2)
\end{equation*}
set-theoretically. Here $S^{n}(\C^2)$ denotes the $n^{\mathrm{th}}$
symmetric product of $\C^2$.

For any $m$, we still have a projective morphism $\widehat\pi\colon
\bM^m(c)\to M_0(p_*(c))$. This follows from the quiver description
(\thmref{thm:quiver}) or \cite[\S3.2]{perv2}. It is compatible with
the diagram \eqref{eq:flip} and induced from a projective morphism
$\bM^{m,m+1}(c)\to M_0(p_*(c))$.

\subsection{Grassmann bundle structure}

As we mentioned above, we have $M^0(c) \cong M(p_*(c))$ when
$(c_1,[C]) = 0$.
For $0 < (c_1,[C]) < r$, we have a similar relation as follows.
We need to consider $\bM^1(c)$ with $0 > (c_1,[C]) > -r$ instead after
twisting by the line bundle $\shfO(C)$.

\begin{Proposition}[\protect{\cite[\S3.10]{perv2}}]\label{prop:Grassmann}
  Suppose $0 < n \defeq -(c_1,[C]) < r$. There is a variety $\widehat
  N(c,n)$ relating $\bM^1(c)$ and $\bM^1(c-ne_0)$ through a diagram
\begin{equation*}
\xymatrix@R=.5pc{
& \widehat N(c,n) \ar[ld]_{f_1} \ar[rd]^{f_2} &
\\
\bM^{1}(c) & & \bM^{1}(c-ne_0)
}
\end{equation*}
satisfying the followings\textup:

\textup{(1)} $f_1$ is surjective and birational.

\textup{(2)} $f_2$ is the Grassmann bundle 
\(
   \Gr(n,\Ext^1_{q_2}(\shfO_C(-1),\cE'))
\)
of $n$-planes in the vector bundle $\Ext^1_{q_2}(\shfO_C(-1),\cE')$ of
rank $r$ over $\bM^1(c-ne_0)$.

\textup{(3)} We have a short exact sequence
\begin{equation*}
   0 \to (\id_{\bp}\times f_2)^*\cE' \to (\id_{\bp}\times f_1)^*\cE 
   \to \shfO_C(-1)\boxtimes\mathcal S\to 0.
\end{equation*}
Here $\cE$, $\cE'$ are the universal sheaves for $\bM^1(c)$ and
$\bM^1(c-ne_0)$ respectively, and $\mathcal S$ is the universal rank $n$
subbundle of $\Ext^1_{q_2}(\shfO_C(-1),\cE')$ over
$\Gr(n,\Ext^1_{q_2}(\shfO_C(-1),\cE'))$.
\end{Proposition}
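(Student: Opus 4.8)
The statement is a relative (family) form of the \emph{universal extension} construction, and this is how I would organize the proof.

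\textbf{Construction of $\widehat N(c,n)$ and the extension; proof of (2) and (3).} Over $\bM^1(c-ne_0)$ set $\mathcal V\defeq\Ext^1_{q_2}(\shfO_C(-1),\cE')$. Since $c_1(c-ne_0)=c_1(c)-n[C]$ pairs to zero with $[C]$, each fibre $E'$ has $(c_1(E'),[C])=0$, so by the Serre duality recalled above ($K_{\bp}|_C\cong\shfO_C(-1)$, whence $\shfO_C(-1)\otimes K_{\bp}\cong\shfO_C(-2)$) together with the $1$-stability conditions one gets $\Hom(\shfO_C(-1),E')=0$ and $\Ext^2(\shfO_C(-1),E')\cong\Hom(E',\shfO_C(-2))^\vee=0$; as $\chi(\shfO_C(-1),E')=-(c_1(E'),[C])-r=-r$, the middle $\Ext$ has rank $r$, so by base change $\mathcal V$ is locally free of rank $r$. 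Put $\widehat N(c,n)\defeq\Gr(n,\mathcal V)$ with tautological rank-$n$ subbundle $\mathcal S$, and let $f_2$ be the projection to $\bM^1(c-ne_0)$. The relative $\Ext$ spectral sequence on $\bp\times\widehat N(c,n)$, using the vanishing of $\Ext^0_{q_2}$ and $\Ext^2_{q_2}$, identifies $\Ext^1_{\bp\times\widehat N(c,n)}(\shfO_C(-1)\boxtimes\mathcal S,(\id_\bp\times f_2)^*\cE')$ with $\Hom(\mathcal S,\mathcal V)$ and shows the corresponding $\Hom$ vanishes; hence the tautological inclusion $\mathcal S\hookrightarrow\mathcal V$ is the class of a unique extension
\[
0\to(\id_\bp\times f_2)^*\cE'\to\cE''\to\shfO_C(-1)\boxtimes\mathcal S\to 0 .
\]
One checks that $\cE''$ is flat over $\widehat N(c,n)$ (an extension of flat by flat), has fibrewise Chern character $\ch(E')+ne_0=c$, and inherits the framing (the term $\shfO_C(-1)\boxtimes\mathcal S$ and its $\mathcal{T}or_1$ against $\shfO_{\linf\times\widehat N(c,n)}$ both vanish, as $C\cap\linf=\emptyset$). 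Finally each fibre $E''$ is $1$-stable: it is torsion free off $C$ because $E'$ is; $\Hom(E'',\shfO_C(-2))=0$ by applying $\Hom(-,\shfO_C(-2))$ to the fibre sequence, using $\Hom(E',\shfO_C(-2))=0$ and $\Hom(\shfO_C(-1),\shfO_C(-2))=H^0(\proj^1,\shfO(-1))=0$; and $\Hom(\shfO_C(-1),E'')=0$ by applying $\Hom(\shfO_C(-1),-)$, using $\Hom(\shfO_C(-1),E')=0$ and that the connecting map $\Hom(\shfO_C(-1),\shfO_C(-1)\otimes\mathcal S)=\mathcal S\to\mathcal V$ is the (injective) tautological inclusion. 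So $\cE''$ is classified by a morphism $f_1\colon\widehat N(c,n)\to\bM^1(c)$ with $(\id_\bp\times f_1)^*\cE\cong\cE''$, the framing rigidifying the identification. This gives the diagram, and (2) and (3) hold by construction.

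\textbf{Proof of (1).} A Chern-character computation gives $\Delta(c-ne_0)=\Delta(c)-\frac n2+\frac{n^2}{2r}$, whence $\dim\widehat N(c,n)=2r\,\Delta(c-ne_0)+n(r-n)=2r\Delta(c)=\dim\bM^1(c)$, and $\widehat N(c,n)$ is smooth and irreducible, being a Grassmann bundle over the smooth irreducible $\bM^1(c-ne_0)$. Next, $f_1$ is proper: both $\widehat N(c,n)$ and $\bM^1(c)$ are projective over $M_0(p_*(c))=M_0(p_*(c-ne_0))$ (here $Rp_*\shfO_C(-1)=0$) and $f_1$ commutes with the projections to $M_0$. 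It then suffices to show $f_1$ is generically injective: the image, being closed, irreducible, and of dimension $\dim\bM^1(c)$, is all of $\bM^1(c)$, so $f_1$ is surjective and birational. Now for generic $E'\in\bM^1(c-ne_0)$ one has $\Hom(E',\shfO_C(-1))=0$ — e.g. whenever $E'$ is locally free near $C$ with $E'|_C$ trivial, which is a dense open condition (over the regular part of $M_0$ the $1$-stable sheaves are elementary modifications along $C$ of pullbacks from $\proj^2$, and generically $E'|_C$ is as balanced as the stability conditions permit). For such $E'$ and any $\mathcal S$, applying $\Hom(-,\shfO_C(-1))$ to the fibre sequence and using $\Ext^1(\shfO_C(-1),\shfO_C(-1))=0$ (the sheafy $\mathcal{E}xt^1=N_{C/\bp}=\shfO_C(-1)$ has no global sections, while $\mathcal{H}om=\shfO_C$ has no $H^1$) yields $\Hom(E'',\shfO_C(-1))\cong\mathcal S^\vee$; a chase of the universal extension then shows that the canonical evaluation $E''\to\Hom(E'',\shfO_C(-1))^\vee\otimes\shfO_C(-1)$ \emph{is} the quotient map of the fibre sequence. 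Hence $E'$ (as the kernel of this evaluation) and $\mathcal S$ (as a subspace of $\mathcal V_{E'}$, via the connecting homomorphism) are recovered from $E''$, so $f_1$ is injective over a dense open set, as needed.

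\textbf{Main obstacle.} The two points needing real care are the fibrewise $1$-stability of the extended sheaf $E''$ and, conversely, the reconstruction of $(E',\mathcal S)$ from $E''$ — that the canonical evaluation onto $\shfO_C(-1)$ is surjective with kernel exactly $E'$ for generic input. Both reduce to cohomology vanishing on $C\cong\proj^1$ with $N_{C/\bp}=\shfO_C(-1)$ combined with careful use of the $1$-stability hypotheses, of the kind developed systematically in \cite{perv,perv2}; one also uses from there that the relevant $\bM^1$'s are nonempty, smooth, irreducible fine moduli schemes equipped with a projective morphism to $M_0$.
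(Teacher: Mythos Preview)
The paper does not give its own proof of this proposition: it is quoted from \cite[\S3.10]{perv2}, and the only argument supplied here is the sentence following the statement, explaining why $\Ext^i_{q_2}(\shfO_C(-1),\cE')$ vanishes for $i=0,2$ (via the remark after \lemref{lem:Ext+}) so that $\mathcal V$ is a rank-$r$ vector bundle. Your proposal reproduces exactly this computation and then carries out what is the expected universal-extension construction and reconstruction; this is essentially the approach of \cite[\S3.10]{perv2}, so there is no real divergence to discuss.

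A couple of points where your write-up could be tightened. For the vanishing $\Hom(\shfO_C,E')=0$ (needed for $\Ext^2(\shfO_C(-1),E')=0$ via Serre duality), note that $1$-stability of $E'$ is $\zeta^+$-stability for the wall $m=0$, hence implies $(0,1)$-semistability, which is precisely the hypothesis of \lemref{lem:Ext+}(2); your Serre-duality line is correct but this is the cleanest route in the paper's language. For generic injectivity in (1), a cleaner substitute for your ``$E'|_C$ trivial'' openness is: the locus of $(0,1)$-\emph{stable} sheaves is open and dense in $\bM^1(c-ne_0)$ (this is the content of the birational diagram \eqref{eq:flip} for $m=0$), and on that locus $\Hom(E',\shfO_C(-1))=0$ is part of $0$-stability. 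You are implicitly using that $\bM^1(c-ne_0)$ is irreducible; this is a known property from \cite{perv,perv2} but should be cited rather than asserted.
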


Remark that $\Ext^i_{q_2}(\shfO_C(-1),\cE') = 0$ for $i=0,2$ by the
remark after \lemref{lem:Ext+} below. Hence
$\Ext^1_{q_2}(\shfO_C(-1),\cE')$ is a vector bundle, and its rank is
$r$ by Riemann-Roch.

We have $(c_1(c-ne_0),[C]) = (c_1,[C]) + n = 0$. Therefore
$\bMm{1}(c-ne_0)$ becomes $M(p_*(c))$ after crossing the wall between
$0$-stability and $1$-stability.
\begin{NB2}
  Editted according to the referee's remark 2010/04/01
\end{NB2}%

\begin{NB}
  I am not sure that we really need this description. It is probably
  enough to explain $\bM^0(c) = \emptyset$ if $(c_1,[C]) < 0$. Then
  the wall-crossing formula will take care.
\end{NB}

\subsection{Torus action and equivariant homology
  groups}\label{subsec:equivhom}

Let $T$ be the maximal torus of $\SL_r(\C)$ consisting of diagonal
matrices and let $\hT = \C^*\times\C^*\times T$. We have a
$\hT$-action on $\bM^m(c)$ induced from the $\C^*\times\C^*$-action on
$\bp$ given by
\begin{equation*}
  ([z_0:z_1:z_2],[z:w])\mapsto
  ([z_0:t_1 z_1: t_2 z_2],[t_1z:t_2 w])
\end{equation*}
and the change of the framing $\Phi$.
See \cite[\S5]{perv2}.
It was defined exactly as in the case of framed moduli spaces of
torsion free sheaves, given in \cite[\S3]{NY1}. The action is
compatible with one on $M_0(c)$, i.e., $\widehat\pi$ is
$\hT$-equivariant.
All the constructions, which we have explained so far, are canonically
$\hT$-equivariant. For example, we have the canonical $\hT$-action on
the universal sheaf $\cE$.

Let $H^{\hT}_*(X)$ be the $\hT$-equivariant Borel-Moore homology group
of a $\hT$-space $X$ with rational coefficients. Let $H_{\hT}^*(X)$ be
the $\hT$-equivariant cohomology group with rational
coefficients. They are defined for $X$ satisfying a reasonable
condition, say an algebraic variety with an algebraic
$\hT$-action. See, for example, \cite[App.~C]{NY2}. They are modules
over the equivariant cohomology group $H_{\hT}^*(\pt)$ of a point,
isomorphic to the symmetric product of the dual of the Lie algebra,
which we denote by $S(\hT)$.

The projective morphism $\widehat\pi\colon \bM^m(c)\to M_0(p_*(c))$
induce a homomorphism 
\[
   \widehat\pi_*\colon H^{\hT}_*(\bM^m(c))
   \to H^{\hT}_*(M_0(p_*(c))).
\]
We denote this homomorphism $\widehat\pi_*$ by $\int_{\bM^m(c)}$,
since we also use similar push-forward homomorphisms from homology
groups of various moduli schemes or stacks and want to emphasize the
domain.

On the other hand, the target space $M_0(p_*(c))$ is not at all
important. We can compose the push-forward homomorphism for the
inclusion $M_0(p_*(c))\subset M_0(c')$ for $\Delta(c')\ge\Delta(p_*(c))$.
Then $\int_{\bM^m(c)}$ takes values in $H^{\hT}_*(M_0(c'))$.
We can also make $\int_{\bM^m(c)}$ with values in $\mathfrak S(\hT)$,
the quotient field of $S(\hT)$ as follows:
Recall that $\hT$ has the unique fixed point $0$ in $M_0(p_*(c))$
\cite[Prop.2.9(3)]{NY1}. We compose $\int_{\bM^m(c)}$ with the inverse
$\iota_{0*}^{-1}$ of the push-forward homomorphism $\iota_{0*}$ for
the inclusion $\{0\}\to M_0(p_*(c))$ by using the localization theorem
for the equivariant homology group, which says $\iota_{0*}$ becomes an
isomorphism after taking tensor products with $\mathfrak S(\hT)$ over
$H^*_{\hT}(\pt) = S(\hT)$.
This is compatible with the above inclusion.
See \cite[\S4]{NY1} for more detail.

\subsection{Weak wall-crossing formula}\label{subsec:statement}
We now state our first main result in this subsection.

Let $\Phi(\cE)\in H_{\hT}^*(\bM^m(c))$ be an equivariant
\begin{NB}
multiplicative   
(no longer assumed)
\end{NB}%
cohomology class on $\bM^m(c)$ defined from a sheaf $\cE$ on
$\bp\times\bM^m(c)$ by taking a slant product by a cohomology class on
$\bp$, or taking a cohomology group, for example,
{\allowdisplaybreaks
\begin{multline}\label{eq:Phi}
  \Phi(\cE) \defeq
      \exp\left[
      \sum_{p=1}^\infty \left\{
        t_p \ch_{p+1}(\cE)/[C] + \tau_p \ch_{p+1}(\cE)/[\bC]
      \right\}
    \right],
\\
  \text{or }
  \Phi(\cE) \defeq 
  \prod_{f=1}^{N_f} e(\Vcal_a(\cE)\otimes e^{m_f})
  \quad \text{$a=0$ or $1$},
\end{multline}
where} $t_p$, $\tau_p$ are variables and the exponential defines formal
power series in $t_p$, $\tau_p$ in the first case, and
$m_1$, \dots, $m_{N_f}$ are variables for the equivariant cohomology
$H^*_{(\C^*)^{N_f}}(\pt)$ of the $N_f$-dimensional torus of a point, and
$e^{m_f}$ is the corresponding equivariant line bundle.
For $\cE$ we typically take the universal sheaf, or its variant.
For the latter $\Phi(\cE) = \prod_{f=1}^{N_f} e(\Vcal_a(\cE)\otimes
e^{m_f})$, we need to enlarge $\hT$ to $\hT\times (\C^*)^{N_f}$ but
keep the notation $\hT$ for brevity. And $e(\ )$ denotes the
equivariant Euler class.

\begin{Remark}
  The notation $N_f$ is taken from physics literature. It is the
  number of {\it flavors}. But we denote the rank by $r$, though it is
  denoted by $N_c$ (number of {\it colors\/}) in physics literature.
\begin{NB2}
  Added according to the referee's remark 2010/04/01
\end{NB2}%
\end{Remark}

The above examples of $\Phi$ are {\it multiplicative}, i.e.,
$\Phi(\cE\oplus \cE') = \Phi(\cE)\Phi(\cE')$. This condition is useful
when we will study the vanishing theorem in \secref{sec:vanish}. But
we do not assume it in general.

For $j\in \Z_{>0}$ we consider the $j$-dimensional torus $(\C^*)^j$
acting trivially on moduli schemes. We denote the $1$-dimensional
weight $n$ representation of the $i^{\mathrm{th}}$ factor by $e^{n
  \hbar_i}$.
\begin{NB}
(Denoted by $I_n$ previously.)  
\end{NB}%
The equivariant cohomology $H_{(\C^*)^j}^*(\pt)$ of the point is
identified with $\C[\hbar_1,\dots,\hbar_j]$.
\begin{NB2}
  We keep as it is, since $H^*$ is a direct {\it sum\/} of $H^k$ for
  all $k$, not direct {\it product}. We do not use $e^{n \hbar_i}$
  directly, as it appears either in $\Phi$ or $e(\ )$. 2010/04/03
\end{NB2}%
In the following formula we invert variables $\hbar_1$,\dots,
$\hbar_j$. See \subsecref{subsec:Euler} for the precise definition.
% We define $\mathfrak R(\hbar_1,\dots,\hbar_j)$ recursively by
% $\mathfrak R(\hbar_1,\dots,\hbar_j) = \mathfrak
% R(\hbar_2,\dots,\hbar_j)[[\hbar_1^{-1}, \hbar_1]$.
Also we identify $\Phi(\cE)$ with the homology class
$\Phi(\cE)\cap[\bM^{m+1}(c)]$ and apply the push-forward homomorphism
$\int_{\bM^{m+1}(c)}$.
\begin{NB2}
  Editted according to the referee's remark 2010/04/01
\end{NB2}%

\begin{Theorem}\label{thm:m=0}
\begin{multline*}
%\begin{equation*}
  \int_{\bM^{m+1}(c)} \Phi(\cE) - \int_{\bM^{m}(c)} \Phi(\cE)
\\
  = 
  \sum_{j=1}^\infty
   \int_{\bM^{m}(c - je_m)}
   \Res_{\hbar_{j}=0} \cdots \Res_{\hbar_{1}=0}
   \left[
   \Phi(\cEf\oplus\bigoplus_{i=1}^j  C_m\boxtimes e^{-\hbar_i})
   \Psi^{j}(\cEf)\right],
%\end{equation*}
\end{multline*}
where
$\cEf$ is the universal sheaf for $\bM^m(c-je_m)$ and
{\allowdisplaybreaks
\begin{gather*}%\label{eq:Psi}
  \Psi^{j}(\cEf) \defeq
    \frac1{j!}
%    \left[
      \frac{
%      \Phi(C_m\boxtimes e^{-\hbar_i})
    \prod_{1\le i_1\neq i_2 \le j} 
   ({-\hbar_{i_1}+\hbar_{i_2}})
    }{
      \prod_{i=1}^j
      e(\fN(\cEf,C_m)
      \otimes e^{-\hbar_i})
      \, e(\fN(C_m,\cEf)
      \otimes e^{\hbar_i})
    }
%   \right]
    ,
\\
  \fN(\cEf,C_m) \defeq - \sum_{a=0}^2 (-1)^a \Ext^a_{q_2}(\cEf,C_m),
\quad
  \fN(C_m,\cEf) \defeq - \sum_{a=0}^2 (-1)^a \Ext^a_{q_2}(C_m,\cEf).
\end{gather*}
\textup(}Note that $\Psi^j(\cEf)$ depends on $j$, but not on $c-je_m$
if we consider $\cEf$ as a variable.\textup)
\end{Theorem}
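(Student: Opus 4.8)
The plan is to follow Mochizuki's strategy for weak wall-crossing formulas, adapted to the variation-of-GIT description \eqref{eq:flip} of the intermediate moduli spaces. First I would introduce the enhanced master space $\widehat{\mathcal M}$ interpolating between $\bM^m(c)$ and $\bM^{m+1}(c)$: this is a smooth Deligne--Mumford stack carrying a $\C^*$-action (coming from the extra $\GL(1)$ in the quiver GIT problem) whose fixed-point locus has two "extremal" components isomorphic to $\bM^m(c)$ and $\bM^{m+1}(c)$, together with finitely many "intermediate" fixed components indexed by $j\ge 1$. The universal sheaf extends to $\widehat{\mathcal M}$, so the class $\Phi(\cE)$ extends, and one has a well-defined $\C^*$-equivariant integral over $\widehat{\mathcal M}$ after localizing the target to the $\hT$-fixed point of $M_0(p_*(c))$. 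I would assume the construction of this master space from \secref{sec:mater spaces} (which the excerpt defers) and its properties: smoothness, the identification of the extremal fixed loci, and the description of the intermediate fixed loci.

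The core of the argument is then the standard "two ways of computing the same integral" trick. On the one hand, the pushforward of $\Phi(\cE)$ from $\widehat{\mathcal M}$ to a point can be computed by the Atiyah--Bott--Lefschetz localization formula with respect to the auxiliary $\C^*$: this expresses $\int_{\widehat{\mathcal M}} \Phi(\cE)$ as a sum over all $\C^*$-fixed components of $\int_{(\text{component})} \Phi(\cE)\big/ e(\text{normal bundle})$, and the result is a rational function in the $\C^*$-weight variable $t$ whose sum of residues over all poles (including $t=\infty$) vanishes. On the other hand, one isolates: the two extremal contributions give exactly $\int_{\bM^{m+1}(c)}\Phi(\cE) - \int_{\bM^m(c)}\Phi(\cE)$ (the sign coming from orientation/normal-bundle direction at the two ends), and each intermediate fixed component of type $j$ contributes a term that must be massaged into $\int_{\bM^m(c-je_m)} \Res_{\hbar_j=0}\cdots\Res_{\hbar_1=0}[\cdots]$. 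The residue operations encode the further localization of the auxiliary $\C^*$-weights on the intermediate components, where the $j$ destabilizing quotients $C_m\boxtimes e^{-\hbar_i}$ split off; the variables $\hbar_i$ are precisely the $\C^*$-weights on these $j$ one-dimensional pieces.

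The key computational steps, in order: (1) identify the $j$-th intermediate fixed locus set-theoretically and scheme-theoretically with (a quotient related to) $\bM^m(c-je_m)$, using the filtration $0\to \cEf \to \cE \to \bigoplus_{i=1}^j C_m\boxtimes e^{-\hbar_i} \to 0$ — this is the analogue of Proposition~\ref{prop:Grassmann}'s exact sequence but now with $j$ copies of $C_m$; (2) compute the virtual normal bundle of this fixed locus inside $\widehat{\mathcal M}$ in terms of the $\Ext$-complexes $\Ext^\bullet_{q_2}(\cEf, C_m)$, $\Ext^\bullet_{q_2}(C_m, \cEf)$, and $\Ext^\bullet_{q_2}(C_m, C_m)$; the first two give the factors $e(\fN(\cEf,C_m)\otimes e^{-\hbar_i})$, $e(\fN(C_m,\cEf)\otimes e^{\hbar_i})$ in the denominator of $\Psi^j$, while the "$C_m$ with $C_m$" self-Ext pieces together with the deformations of the flag produce the numerator $\prod_{i_1\ne i_2}(-\hbar_{i_1}+\hbar_{i_2})$ and the $1/j!$ from the residual symmetric-group action permuting the $j$ quotients; (3) check that $\Phi(\cE)$ restricted to this fixed locus becomes $\Phi(\cEf \oplus \bigoplus_i C_m\boxtimes e^{-\hbar_i})$ — immediate if $\Phi$ is built from Chern characters via slant products and from the $\Vcal_a$, since these are additive in short exact sequences at the level of $K$-theory classes; (4) combine and recognize the $\C^*$-localization contribution as an iterated residue in the $\hbar_i$ at $0$.

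The main obstacle I expect is step (2): getting the virtual normal bundle — and hence $\Psi^j$ — exactly right, including all the weight shifts by $e^{\pm\hbar_i}$ and the precise form of the numerator/denominator. One must carefully separate the tangent directions of $\widehat{\mathcal M}$ along the fixed locus from the genuinely normal ones, track which $\Ext$ groups vanish (e.g.\ $\Ext^0$ and $\Ext^2$ terms, using the remarks after \lemref{lem:Ext+} and the $m$-stability conditions of \defref{def:m-stable}, which guarantee $\Hom(E,\shfO_C(-m-1))=0=\Hom(\shfO_C(-m),E)$), and handle the fact that $\widehat{\mathcal M}$ is only a DM stack so that the $1/j!$ and the automorphisms of the intermediate objects enter correctly. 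A secondary subtlety is justifying the localization-to-a-point on the non-compact target $M_0(p_*(c))$ and the convergence/well-definedness of the infinite sum over $j$ (only finitely many $j$ contribute for fixed $\ch_2$, since $\Delta(c-je_m)$ must stay nonnegative, so this is really a finiteness check). Once the master space and its fixed-locus analysis are in place, the rest is bookkeeping that runs exactly parallel to \cite{Moc}.
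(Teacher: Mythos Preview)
Your overall strategy is right in spirit---this is Mochizuki's method (master space plus $\C^*$-localization)---but there is a genuine structural gap in how you set up the master space and its fixed loci.

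You propose a single master space interpolating directly between $\bM^m(c)$ and $\bM^{m+1}(c)$, with intermediate fixed components ``indexed by $j\ge 1$'' identified with $\bM^m(c-je_m)$.  This does not work: the naive master space between $\bM^m$ and $\bM^{m+1}$ is \emph{not} a Deligne--Mumford stack.  A strictly $\zeta^0$-semistable point is $S$-equivalent to $E'\oplus C_m^{\oplus p}$, and such an object has stabilizer containing $\GL_p(\C)$, not a finite group.  The paper makes exactly this remark (immediately after \thmref{thm:master}).  Without DM-ness, the fixed-point formula in the form you need is unavailable.

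The paper's fix, following Mochizuki's (Ma), is to pass to pairs $((E,\Phi),F^\bullet)$ of a framed sheaf together with a \emph{full flag} $F^\bullet$ in $V_1(E)=H^1(E(C-\linf))$.  This produces moduli schemes $\tM^{m,\ell}(c)$, $0\le\ell\le N=\dim V_1$, with $\tM^{m,0}$ and $\tM^{m,N}$ flag bundles over $\bM^m(c)$ and $\bM^{m+1}(c)$.  The enhanced master space $\cM^{m,\ell}$ interpolates between $\tM^{m,0}$ and $\tM^{m,\ell}$, and the flag data cut the stabilizers down to finite groups (\lemref{lem:stabilizer}).  The price is that the intermediate fixed loci are not $\bM^m(c-je_m)$ but rather products $\tM^{m,\min(\Is)-1}(c_\flat)\times\tM^{m,+}(c_\sharp)$ indexed by decomposition types $\fI=(\If,\Is)$ of $\underline{N}$, where the second factor is related to a Grassmannian $\Gr(m+1,p)$ (\propref{prop:m,+}).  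One first proves a formula comparing $\int_{\tM^{m,\ell}}$ and $\int_{\bM^m}$ (\thmref{thm:1st}), then applies it \emph{recursively} to the $\tM^{m,\min(\Is)-1}(c_\flat)$ factors.  The iterated decompositions are tracked by $\Dec^{(j)}(c)$, and a combinatorial lemma counting $\#\rho_j^{-1}(\vec c)$ collapses the sum over flag-decomposition types to the clean sum over $j$ with the factor $1/j!$.  So the $1/j!$ does not arise from a residual $\Sigma_j$-action on a single fixed component, as you suggest, but from this combinatorial bookkeeping.  (The reduction to $m=0$, where $\Gr(1,p)=\emptyset$ unless $p=1$, is what makes the Grassmannian disappear from the final answer.)
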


\begin{NB}
This looks simpler than \thmref{thm:2nd}.
\end{NB}

The proof will be given in \subsecref{subsec:2nd}.

\subsection{Blow-up formula}\label{subsec:blowup}

Recall that $\bM^m(c)$ is isomorphic to the framed moduli space
$\bM(c)$ of torsion free sheaves on $\bp$ if $m$ is sufficiently
large. 
Using \propref{prop:Grassmann}, \thmref{thm:m=0} and twist by the line
bundle $\shfO(C)$, we can express $\int_{\bM(c)}\Phi(\cE)$ as a sum of
various $\int_{M(c')}\Phi'(\cE)$'s for some $c'$, $\Phi'$.
Unfortunately the procedure, which we will explain below in detail, is
recursive in nature and rather cumbersome. See Figure~\ref{fig:flow}
for the flowchart. In particular, we do not solve the recursion and
do not give the explicit formula.

\subsubsection{}\label{subsub:1st}

For a sequence $\vec{j} = (j_0,j_1,\dots, j_{m-1})\in \Z_{\ge 0}^{m}$, we
define $\Psi^{\vec{j}}_n$ recursively starting from $\Psi^{\vec{j}}_m
= 1$ by
%\begin{multline*}
\begin{equation*}
   \Psi^{\vec{j}}_n(\bullet)
\defeq
   \Psi^{\vec{j}}_{n+1}(\bullet\oplus C_n\boxtimes e^{-\hbar_i^n})
%\\
   \times
    \frac1{j_n!}
 %   \left[
      \frac{
    \prod_{1\le i_1\neq i_2 \le j_n} 
   ({-\hbar_{i_1}^n+\hbar_{i_2}^n})
%      \Phi(C_n\boxtimes e^{-\hbar_i^n})
    }{
      \prod_{i=1}^{j_n}
      e(\fN(\bullet,C_n)
      \otimes e^{-\hbar_i^n})
      \, e(\fN(C_n,\bullet)
      \otimes e^{\hbar_i^n})
    }
%   \right]
   ,
%\end{multline*}
\end{equation*}
where $\hbar^n_1$, \dots, $\hbar^n_{j_n}$ are variables. Then we set
$\Psi^{\vec{j}} = \Psi^{\vec{j}}_0$. By \thmref{thm:m=0} we get
\begin{equation}\label{eq:1st}
  \int_{\bM^m(c)} \Phi(\cE)
  = 
  \sum_{\vec{j}}
   \int_{\bM^0(c - \sum_n j_n e_n)}
   \overrightarrow{\Res_{\vec{\hbar}=0}}
   \,
   \Phi(\cEf\oplus\bigoplus_{n=0}^{m-1}
   \bigoplus_{i=1}^{j_n} C_n\otimes e^{\hbar^n_{i}})
   \Psi^{\vec{j}}(\cEf),
\end{equation}
where $\overrightarrow{\Res}_{\vec{\hbar}=0}$ is the iterated residues
\begin{equation*}
 \overrightarrow{\Res_{\vec{\hbar}=0}} \defeq
 \Res_{\hbar^0_{j_0} = 0} \cdots \Res_{\hbar^0_{1} = 0}
 \Res_{\hbar^1_{j_1} = 0} \cdots \Res_{\hbar^1_{1} = 0}
 \;
 \cdots
 \Res_{\hbar^{m-1}_{j_{m-1}} = 0} \cdots \Res_{\hbar^{m-1}_{1} = 0}.
\end{equation*}

Since $\bM(c)$ is isomorphic to $\bM^m(c)$ for a sufficiently large
$m$, an integral over $\bM(c)$ can be written in terms of integrals
over $\bM^0(c')$ with various $c'$ thanks to this formula.

Note that if $(c_1,[C]) \ge 0$, we have
\begin{equation}\label{eq:dim_est}
  \begin{split}
  & \dim \bM^0(c-\sum_n j_n e_n) 
\\
  = \; &  \dim \bM^m(c) - \sum r(2n+1)j_n 
  -\sum j_n \left(\sum j_n + 2(c_1,[C])\right)
  < \dim \bM^m(c)
  \end{split}
\end{equation}
if $(j_1,j_2,\dots) \neq 0$. 
\begin{NB}
  Consider the ADHM data for $\bM^m(c)$ and we have vector spaces $W$,
  $V_0$, $V_1$. Then
  \begin{equation*}
   \dim \bM^m(c) 
    = \dim W(\dim V_0 + \dim V_1) - (\dim V_0 - \dim V_1)^2.
  \end{equation*}
  The sheaf $C_n = \shfO_C(-n-1)$ corresponds to the data $V_0 =
  \C^n$, $V_1 = \C^{n+1}$. Hence the data for $\bM^0(c-\sum j_n e_n)$
  has $\dim V'_0 = \dim V_0 - \sum nj_n$, $\dim V'_1 = \dim V_1 - \sum
  (n+1)j_n$. Therefore
\begin{equation*}
  \begin{split}
   & \dim \bM^0(c-\sum j_n e_n) =
   \dim W(\dim V_0 + \dim V_1 - \sum (2n+1)j_n )
   - (\dim V_0 - \dim V_1 + \sum j_n)^2
\\
   =\; &
   \dim W(\dim V_0 + \dim V_1) - \sum (2n+1)j_n \dim W
   - (\dim V_0 - \dim V_1)^2 - \left(\sum j_n\right)^2
   - 2(\dim V_0 - \dim V_1)\sum j_n
\\
  =\; &
  \dim \bM^0(c) - \sum r(2n+1)j_n 
  -\sum j_n \left(\sum j_n + 2(c_1,[C])\right).
  \end{split}
\end{equation*}

\end{NB}

\subsubsection{}\label{subsub:recursive}

We usually consider the moduli space $\bM(c)$ with $0 \le (c_1,[C]) <
r$. This is always achieved by tensoring a power of the line bundle
$\shfO(C)$. And then we can hope to relate $\int_{\bM(c)} \Phi(\cE)$
to $\int_{M(p_*(c))}\Phi(\cE)$ thanks to \propref{prop:Grassmann}.
But look at \eqref{eq:1st}. The right hand side of
\eqref{eq:1st} contains integrals over $\bM^0(c')$ ($c' = c-\sum_n j_n
e_n$) for which we have $0\le (c_1(c'),[C])$, but not necessarily $<
r$. Thus we need to tensor a line bundle again.

Since keeping track the precise form of the formula is a rather
tiresome work, we {\it redefine\/} a term in the right hand side of
\eqref{eq:1st} as $\int_{\bM^0(c)} \Phi(\cE)$, and start from it. We
can assume $(c_1,[C])\ge 0$, as we explained.

\subsubsection{}\label{subsub:c_1=0}

First consider the case $(c_1,[C]) = 0$. We have an isomorphism
$\Pi\colon \bM^0(c) \cong M(p_*(c))$ by $(E,\Phi)\mapsto
(p_*(E),\Phi)$, where the higher direct image sheaves $R^{>0}p_*(E)$
vanishes. Moreover its inverse is given by $(F,\Phi)\mapsto
(p^*F,\Phi)$, and $L^{<0}p^* F = 0$.
(See \cite[Prop.~3.3 and \S1]{perv2}.)
If we denote the universal sheaf for $M(p_*(c))$ by $\mathcal F$, the
universal sheaf $\cE$ for $\bM^0(c)$ is equal to
$(p\times\Pi)^*(\mathcal F)$.
Therefore we have
\begin{equation*}
  \int_{\bM^0(c)} \Phi(\cE)
  = \int_{\bM^0(c)} \Phi((p\times\Pi)^*(\mathcal F)).
\end{equation*}
Since $L^{<0}(p\times\Pi)^*(\mathcal F)$ vanishes, this holds in the
level of $K$-group.

We may have expressions $\shfO(C)$ or $[C]$, which do not come from
$M(p_*(c))$ in the expression $\Phi(\cE)$, but we can use the
projection formula to rewrite the right hand side as
\begin{equation*}
  \int_{M(p_*(c))} \Phi'(\mathcal F).
\end{equation*}
for a possibly different cohomology class $\Phi'(\bullet)$.

\subsubsection{}\label{subsub:O(C)}

Now we may assume $(c_1,[C]) > 0$.
We have
\begin{equation}\label{eq:tensor}
  \int_{\bM^0(c)} \Phi(\cE)
  = \int_{\bM^1(c e^{[C]})} \Phi(\cE(-C))
\end{equation}
by the isomorphism $\bM^0(c)\ni (E,\Phi)\mapsto (E(C),\Phi)\in
\bM^1(ce^{[C]})$. Two universal sheaves for $\bM^0(c)$,
$\bM^1(ce^{[C]})$ are denoted by the same notation, but the latter is
twisted by $\shfO(C)$ from the former under this isomorphism, and it
is the reason why we have $\Phi(\cE(-C))$. 

We have $-r < (c_1(ce^{[C]}),[C]) = (c_1,[C]) - r$. If this is
negative, in other words, if we have $(c_1,[C]) < r$, we go to
the step which will be explained in \subsecref{subsub:Grassmann}. So
we assume $(c_1(ce^{[C]}),[C]) \ge 0$.
We now {\it redefine\/} the right hand side of \eqref{eq:tensor} as
$\int_{\bM^1(c)} \Phi(\cE)$ and return back to \subsecref{subsub:1st}
and apply \eqref{eq:1st} with $m=1$.

We repeat this procedure until all terms are integrals over
$\bM^1(c')$ with $-r < (c_1(c'),[C]) < 0$, or $\bM^0(c'')$ with
$c_1(c'') = 0$. From the dimension estimate \eqref{eq:dim_est}, the
procedure ends after finite steps.

For $\Phi(\cE) = \prod_{f=1}^{N_f} e(\Vcal_a(\cE)\otimes e^{m_f})$,
this process requires a care, since $\Vcal_a(\cE(-C)) = R^1
q_{2*}(\cE(-C)\otimes q_1^*(\shfO(aC-\linf)))$ may not be a vector
bundle on $M^0(c)$, so $e(\Vcal_a(\cE(-C))\otimes e^{m_f})$ does not
make sense, and we cannot apply \eqref{eq:1st} with $m=1$.
We overcome this difficulty by replacing $e(\Vcal_a(\cE(-C))\otimes
e^{m_f})$ by a product of $e(\Vcal_a(\cE)\otimes e^{m_f})$ and a
certain class, which is well-defined on $M^0(c)$. See the proof of
\thmref{thm:matter_gap} for detail.
\begin{NB2}
Changed. 2010/04/02

Earlier version:

Therefore we replace $R^1 q_{2*}$ by the alternating sum
$-R^0q_{2*} + R^1 q_{2*} - R^2 q_{2*}$ and consider equivariant Euler
class as in \subsecref{subsec:Euler} by inverting $m_f$.
\end{NB2}%

\subsubsection{}\label{subsub:Grassmann}

Now we {\it redefine\/} the right hand side of \eqref{eq:tensor}
as $\int_{\bM^1(c)} \Phi(\cE)$ and consider it under the assumption
$-r < (c_1,[C]) < 0$.

Let $n \defeq -(c_1,[C])$. By \propref{prop:Grassmann} we have
\begin{equation*}
  \begin{split}
  & \int_{\bM^1(c)} \Phi(\cE)
 = \int_{\widehat N(c,n)} \Phi(
  (\id_{\bp}\times f_1)^*\cE)
\\
  =\; &
  \int_{\widehat N(c,n)} 
  \Phi\left(
      (\id_{\bp}\times f_2)^*(\cE) \oplus C_0\boxtimes\mathcal S
      \right),
\end{split}
\end{equation*}
where we denote the universal bundle over
$\bM^1(c-ne_0)$ by $\cE$ for brevity. Since
$f_2\colon \widehat N(c,n)\to \bM^1(c-ne_0)$ is the Grassmann bundle
of $n$-planes in $\Ext^1_{q_2}(C_0,\cE)$, we can pushforward
to $\bM^1(c-ne_0)$ to get
\begin{equation*}
  \int_{\widehat N(c,n)} 
  \Phi\left(
      (\id_{\bp}\times f_2)^*(\cE) \oplus C_0\boxtimes\mathcal S
      \right)
  = \int_{\bM^1(c-ne_0)} {}'\Phi(\cE),
\end{equation*}
where
\begin{equation*}
  {}'\Phi(\bullet) \defeq
  \left.
    \int_{\Gr(n,r)} 
    \Phi\left(\bullet\oplus 
    (C_0\boxtimes\mathcal S)
    \right)
   \right|_{c(\underline{\C^r})= c(\Ext^1_{q_2}(C_0,\cE))}.
\end{equation*}
We need to explain the notation. We consider the Grassmannian
$\Gr(n,r)$ of $n$-planes in $\C^r$, and $\int_{\Gr(r-n,r)}$ is the
pushforward
\(
   H^*_{\GL(r)}(\Gr(r-n,r)) \to H^*_{\GL(r)}(\pt).
\)
The $\bullet$ is a variable living in the $K$-group
$K(\bp\times\pt)$.
The universal subbundle of the trivial bundle
$\underline{\C^r}$ is denoted by $\mathcal S$. And
$C_0\boxtimes\mathcal S$ is a sheaf
on $\bp\times\Gr(r,n)$. We consider $\Gr(r,n)$ as a moduli space
and $\bullet\oplus C_0\boxtimes\mathcal S$ is a universal sheaf,
and apply the function $\Phi$.
Finally $\left.(\ )\right|_{c(\underline{\C^r})=
  c(\Ext^1_{q_2}(C_0,\cE))}$ means that we substitute the Chern
classes of $\Ext^1_{q_2}(C_0,\cE))$ to the equivariant Chern classes
of $\underline{\C^r}$ in $H^*_{\GL(r)}(\pt)$.

We now redefine $\int_{\bM^1(c)}\Phi(\cE)$ as $\int_{\bM^1(c-ne_0)}
{}'\Phi(\cE)$, and return to \subsecref{subsub:1st}. Since $\dim
\bM^1(c-ne_0) < \dim \bM^1(c)$, this procedure eventually stop.

\begin{figure}[htbp]
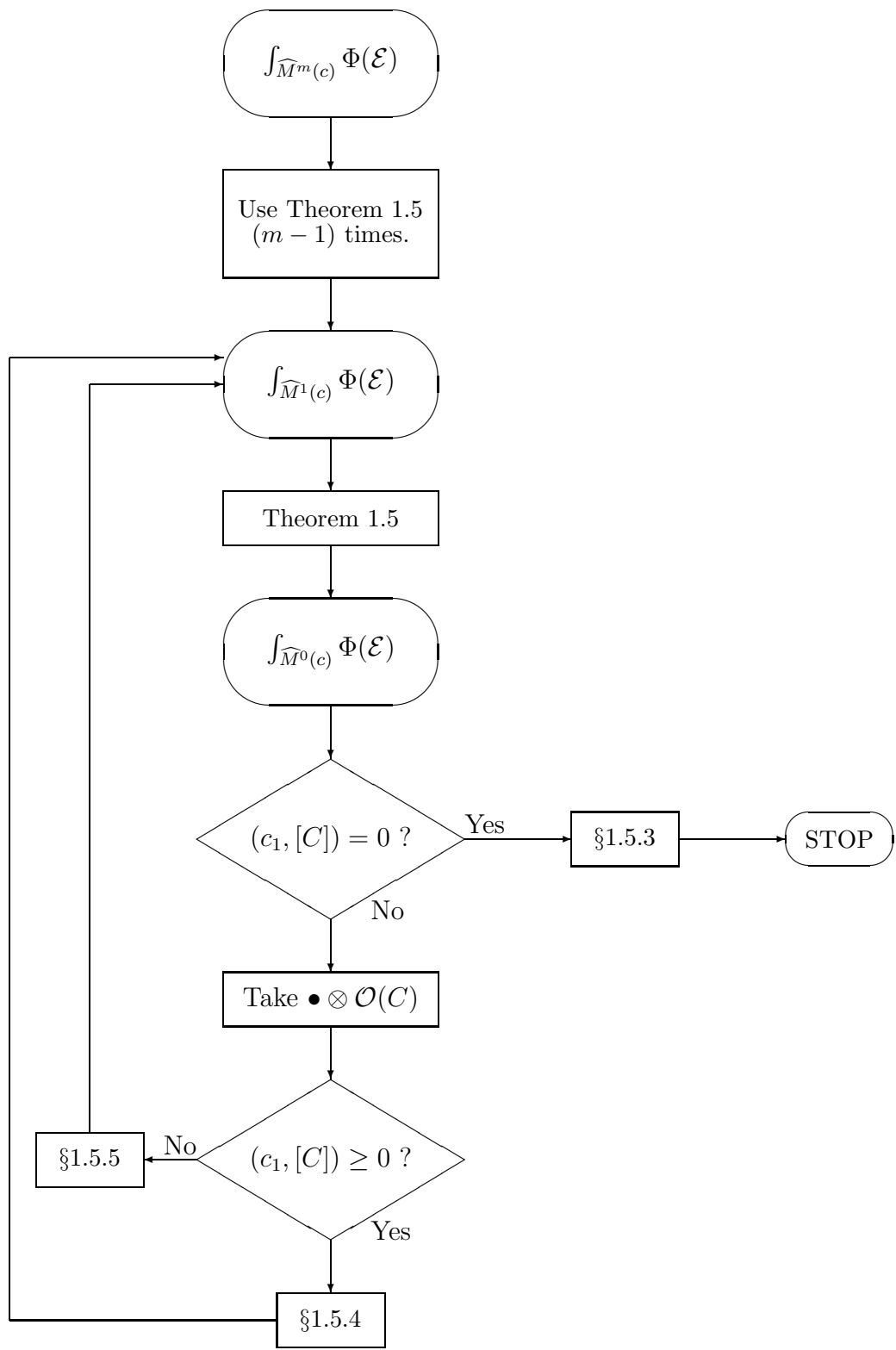

  \centering
\input test.pic
\caption{flowchart}
\label{fig:flow}
\end{figure}

\begin{NB}
  The following earlier version has a gap, for the discussion on the
  twist by $\shfO(mC)$ with $m > 1$.

We consider a sequence $\mathfrak{J} = (\vec{j}^1,\vec{j}^2,\cdots)$
of $\vec{j}^k = (j^k_0,j^k_1,\dots)\in\Z_{\ge 0}^\infty$ with the
following recursive properties and auxiliary sequence $(c^1, c^2,
\cdots) \in H^*(\bp)^{\infty}$, $(m^1,m^2,\cdots)\in \Z_{\ge
  0}^\infty$:
\begin{enumerate}
\item For $k= 1$, no restriction on $\vec{j}^1$. Set $c^1 = \sum_m
  j^1_m e_m$, $m^1 = 0$.
\item Suppose that the conditions
  on $\vec{j}^k$ is imposed, and $c^k$, $m^k$ are defined.
\item Consider $(c_1 - c^k,[C])$. If it is $0$, then
  $\vec{j}^{k+1}\equiv \vec{j}^{k+2} \equiv \cdots \defeq 0$. We set
  $c^{k+1} \equiv c^{k+2} \equiv \cdots \defeq c^k$, $m^{k+1} \equiv
  m^{k+2} \equiv \cdots \defeq 0$ and stop the recursion.
\item If $0 < (c_1 - c^k,[C]) < r$, then we impose the condition
  $j^{k+1}_m = 0$ for $m \ge 1$. We set $m^{k+1} = 1$, 
\(
   c^{k+1} = 
   - (c_1,[C])[C] - \frac12( r + (c_1 - c^k,[C]))\pt
   + j^{k+1}_0 e_0.
\)
Go to (2) after increasing $k$ by $1$.

\item If $(c_1 - c^k,[C]) \ge r$, then let $m^{k+1} \defeq \lfloor
  (c_1 - c^k,[C]) / r \rfloor$. We impose the condition $j^{k+1}_m = 0$
  for $m \ge m^{k+1}$. We set 
\( 
   c^{k+1} \defeq c^k e^{m^{k+1}[C]} - (r + c_1) (e^{m^{k+1}[C]} - 1)
     + \sum j^{k+1}_m e_m.
\)
 Go to (2) after increasing $k$ by $1$.
\item The recursion stops at the case (3) for some $k$.
\end{enumerate}
We introduce variables $\hbar^{k,n}_i$ ($k=1,2,\dots$,
$n=0,1,\dots,m^{k+1}-1$, $i=1,2,\dots,j_n$) and
define $\Phi^{k}$ recursively as
\begin{enumerate}
\item For $k=1$ we set
\(
  \Phi^{1}(\bullet) 
  = \Phi(\bullet\oplus\bigoplus_{n=0}^{m-1}
   \bigoplus_{i=1}^{j_n} C_n\otimes e^{\hbar^{n,1}_{i}})
   \Psi^{\vec{j}^1}(\bullet)
\)
defined as in \S\ref{subsub:1st}.
\item Suppose that $\Phi^{k}(\bullet)$ is defined.
\item If $(c_1^k,[C]) = 0$, then we put
   $\Phi^{k+1}(\bullet) \defeq \Phi^{k}(\bullet)$ and stop.
\item If $0 < n\defeq (c_1^k,[C]) < r$, then
\begin{equation*}
  \begin{split}
  & {}'\Phi^{k}(\bullet) \defeq
  \left.
    \int_{\Gr(r-n,r)} 
    \Phi^k\left(\left(\bullet\oplus 
%     \bigoplus_{i=1}^{j_0^{k+1}} (C_0\otimes e^{-h^{k+1,0}_i}) 
%     \oplus 
    (C_0\boxtimes\mathcal S)
    \right)
    \otimes\shfO(-C)
    \right)
   \right|_{c(\underline{\C^r})= c(\fN(C_0,\bullet
%      \oplus 
%      \bigoplus_{i=1}^{j_0^{k+1}} (C_0\otimes e^{-h^{k+1,0}_i}) 
     ))},
\\
  & \Phi^{k+1}(\bullet) \defeq
  {}'\Phi^{k}\left(\bullet\,
      \oplus \bigoplus_{i=1}^{j_0^{k+1}}
      C_0\otimes e^{\hbar^{k+1,0}_{i}}
    \right) 
  \Psi^{{j}^{k+1}_0}(\bullet),
  \end{split}
\end{equation*}
where $\mathcal S$ is the universal subbundle of the trivial bundle
$\underline{\C^r}$ over the Grassmannian $\Gr(r-n,r)$ of
$(r-n)$-planes in $\C^r$, $\int_{\Gr(r-n,r)}$ is the pushforward
\(
   H^*_{\GL(r)}(\Gr(r-n,r)) \to H^*_{\GL(r)}(\pt)
\)
and $\left.(\ )\right|_{c(\underline{\C^r})= c(\fN(C_0,\bullet))}$ means
that we substitute the Chern classes of $\fN(C_0,\bullet)$ to
the equivariant Chern classes of $\underline{\C^r}$ in $H^*_{\GL(r)}(\pt)$.

\item If $(c_1^k,[C]) \ge r$, then 
\begin{equation*}
  \begin{split}
  & {}'\Phi^{k}(\bullet)
  \defeq
  \Phi^{k}(\bullet\otimes\shfO(-m^{k+1}C)),
\\
  & \Phi^{k+1}(\bullet) \defeq
  {}'\Phi^{k}\left(\bullet\,
      \oplus \bigoplus_{n=0}^{m^{k+1}-1} \bigoplus_{i=1}^{j_n^{k+1}}
      C_n\otimes e^{\hbar^{k+1,n}_{i}}
    \right) 
  \Psi^{\vec{j}^{k+1}}(\bullet),
  \end{split}
\end{equation*}
where $m^{k+1} = \lfloor (c_1 - c^k,[C]) / r \rfloor$ as above.
\end{enumerate}

We then set
$c^{\mathfrak{J}} = c^k$,
$\Phi^{\mathfrak{J}} = \Phi^k$ for $k$ with the case (3).

\begin{Theorem}\label{thm:blowup}
We have
  \begin{equation*}
     \int_{\bM(c)} \Phi(\cE)
     = \sum_{\mathfrak J} \int_{M^0(c-c^{\mathfrak{J}})}
     \Phi^{\mathfrak J}(\cEf).
  \end{equation*}
\end{Theorem}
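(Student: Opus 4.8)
The plan is to carry out the recursive procedure spelled out in \S\ref{subsub:1st}--\S\ref{subsub:Grassmann} and to verify that it terminates. First I would use the isomorphism $\bM(c)\cong\bM^m(c)$ for $m\gg 0$, under which the universal sheaves are identified, to rewrite $\int_{\bM(c)}\Phi(\cE) = \int_{\bM^m(c)}\Phi(\cE)$, and then apply \eqref{eq:1st} --- that is, \thmref{thm:m=0} iterated from $0$ to $m$. This expresses the integral as a finite sum over $\vec j = (j_0,\dots,j_{m-1})\in\Z_{\ge 0}^m$ of terms $\int_{\bM^0(c - \sum_n j_n e_n)}\overrightarrow{\Res_{\vec{\hbar}=0}}\,\Phi(\cEf\oplus\bigoplus_{n,i} C_n\otimes e^{\hbar^n_i})\,\Psi^{\vec j}(\cEf)$, each of which is of the shape that feeds back into the recursion, with accumulated class $\Phi^1$ and cumulative shift $c^1 = \sum_n j_n e_n$.

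Next I would process a general term $\int_{\bM^0(c')}\Phi^k(\cEf)$, $c' = c - c^k$, by cases on $(c_1(c'),[C])$, which is automatically $\ge 0$ since $\bM^0(c') = \emptyset$ otherwise. \textbf{(i)} If $(c_1(c'),[C]) = 0$, apply the isomorphism $\Pi\colon \bM^0(c')\cong M(p_*(c'))$; since $L^{<0}(p\times\Pi)^*\cF = 0$ one has $\cEf = (p\times\Pi)^*\cF$ in the $K$-group, and the projection formula turns $\Phi^k(\cEf)$ into a class $\Phi^{\mathfrak J}(\cF)$ on $M(p_*(c')) = M^0(c')$; this branch terminates. \textbf{(ii)} If $(c_1(c'),[C])\ge r$, twist by $\shfO(-C)$ via \eqref{eq:tensor} to land in $\bM^1(c'e^{[C]})$, redefine the term as an integral over $\bM^1$, and reapply \eqref{eq:1st} with $m = 1$; repeating one twist at a time (each twist lowering $(c_1,[C])$ by $r$), the branch eventually falls under case (i) or (iii). \textbf{(iii)} If $0 < n' := (c_1(c'),[C]) < r$, twist by $\shfO(C)$ so that $-r < (c_1,[C]) = n' - r < 0$, then invoke \propref{prop:Grassmann}: pull back along $f_1$, substitute the exact sequence \propref{prop:Grassmann}(3) into $\Phi$, and push forward along the Grassmann bundle $f_2$, using the projection formula and the substitution $c(\underline{\C^r}) = c(\Ext^1_{q_2}(C_0,\cE))$ of \S\ref{subsub:Grassmann}; the result is an integral over $\bM^1(c'e^{[C]} - n'e_0)$ with $(c_1,[C]) = 0$, which reduces to case (i) after one more use of \eqref{eq:1st} with $m = 1$.

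To close the argument I would invoke the dimension estimate \eqref{eq:dim_est}: every application of \eqref{eq:1st} with a nonzero $\vec j$, and the Grassmann step of case (iii), strictly decreases the dimension, while $\bM^0(c') = \emptyset$ once $\Delta(c') < 0$; combined with the facts that each twist by $\shfO(C)$ drops $(c_1,[C])$ by $r$ and that this quantity is bounded, the recursion forms a finite tree. Reading off along each root-to-leaf branch the accumulated class $\Phi^{\mathfrak J}$ and the total shift $c^{\mathfrak J}$, with the leaves indexed by the composite data $\mathfrak J = (\vec j^1,\vec j^2,\dots)$, then yields the asserted identity $\int_{\bM(c)}\Phi(\cE) = \sum_{\mathfrak J}\int_{M^0(c - c^{\mathfrak J})}\Phi^{\mathfrak J}(\cEf)$.

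The step I expect to be the main obstacle is the matter-type integrand $\Phi(\cE) = \prod_f e(\Vcal_a(\cE)\otimes e^{m_f})$ in case (ii): after twisting by $\shfO(-C)$ the sheaf $\Vcal_a(\cE(-C)) = R^1 q_{2*}(\cE(-C)\otimes q_1^*\shfO(aC-\linf))$ need not be locally free on $\bM^0(c')$, so $e(\Vcal_a(\cE(-C))\otimes e^{m_f})$ is a priori undefined and \eqref{eq:1st} cannot be applied verbatim. Following the proof of \thmref{thm:matter_gap}, I would replace it by the product of $e(\Vcal_a(\cE)\otimes e^{m_f})$ with an explicit correction class well-defined on $\bM^0(c')$; this is exactly why the twist must be performed one step at a time rather than by a single $\shfO(-m'C)$. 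The remaining points --- commutation of the iterated residues with pushforward, well-definedness of the intermediate classes, and compatibility of the Grassmann pushforward with the Chern-class substitution --- are routine once \propref{prop:Grassmann}(3) and the projection formula are in hand.
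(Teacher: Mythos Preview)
Your recursive skeleton is exactly the paper's: prove by induction on the depth $k$ that $\int_{\bM(c)}\Phi(\cE)=\sum_{\vec j^1}\cdots\sum_{\vec j^k}\int_{\bM^0(c-c^k)}\Phi^k(\cEf)$, splitting each term according to whether $(c_1-c^k,[C])$ is zero, lies in $(0,r)$, or is $\ge r$, and invoking respectively the isomorphism $\bM^0\cong M(p_*)$, a twist plus \propref{prop:Grassmann}, or a twist plus \eqref{eq:1st}.

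The one genuine difference is in that last case. The paper sets $m^{k+1}=\lfloor(c_1-c^k,[C])/r\rfloor$ and passes in a single step to $\bM^{m^{k+1}}\bigl((c-c^k)e^{m^{k+1}[C]}\bigr)$, then applies \eqref{eq:1st} with that $m^{k+1}$; you twist by $\shfO(C)$ one step at a time, reapplying \eqref{eq:1st} with $m=1$ after each. The paper itself flags its own argument here as having a gap when $m^{k+1}>1$ --- this theorem and its proof sit inside a suppressed comment block, superseded in the published text by the informal algorithm of \subsecref{subsec:blowup} --- and it is precisely your single-twist approach that the corrected \subsecref{subsub:O(C)} adopts. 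So your route is the sound one; the only price is that the data $\mathfrak J$, $c^{\mathfrak J}$, $\Phi^{\mathfrak J}$ your procedure generates differ from those in the theorem's setup (which encode the multi-twist recursion) and are instead the ones coming from \subsecref{subsec:blowup}.

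Two small slips. In case (ii) the isomorphism \eqref{eq:tensor} sends $E\mapsto E(C)$, so the twist is by $\shfO(C)$; the $-C$ appears only in the integrand $\Phi(\cE(-C))$ on the $\bM^1$ side. In case (iii), with $n'=(c_1(c'),[C])\in(0,r)$, after the twist one has $n=-(c_1(c'e^{[C]}),[C])=r-n'$ in \propref{prop:Grassmann}, so the target is $\bM^1\bigl(c'e^{[C]}-(r-n')e_0\bigr)$ rather than $\bM^1(c'e^{[C]}-n'e_0)$.
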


\begin{proof}
We want to prove
\begin{equation}\label{eq:k}
  \int_{\bM(c)} \Phi(\cE)
  = \sum_{\vec{j}^1} \sum_{\vec{j}^2} \cdots
  \sum_{\vec{j}^k} \int_{\bM^0(c - c^k)} 
  \Phi^k(\cEf)
\end{equation}
by a recursion on $k$.

\begin{enumerate}
\item For $k=1$, the assertion is nothing but \eqref{eq:1st} with
  sufficiently large $m$.
  \begin{NB2}
\begin{equation*}
  \int_{\bM(c)} \Phi(\cE)
  = 
  \sum_{\vec{j}^1}
   \int_{\bM^0(c - c^1)} \Phi^1(\cEf)
  =
  \sum_{\vec{j}^1}
   \int_{\bM^0(c - \sum_m j^1_m e_m)} \Phi(\cEf)
   \overrightarrow{\Res_{\vec{\hbar}=0}} \Psi^{\vec{j}^1}(\cEf),
\end{equation*}
  \end{NB2}

\item Suppose that \eqref{eq:k} holds for $k$. We want to prove
  \begin{equation*}
    \int_{\bM^0(c - c^k)} \Phi^k(\cEf)
%    \Phi(\cEf(-\sum_{l=1}^k m^l C)) \Psi^k(\cEf)
    = \sum_{\vec{j}^{k+1}} \int_{\bM^0(c - c^{k+1})} 
    \Phi^{k+1}(\cEf)
%    \Phi(\cEf(-\sum_{l=1}^{k+1} m^l C))\Psi^{k+1}(\cEf)
    .
  \end{equation*}
\item If $(c_1-c^k,[C]) = 0$, this is a trivial identity from the
  definitions of $\vec{j}^{k+1}$, $c^{k+1}$, $m^{k+1}$,
  $\Phi^{k+1}(\cEf)$.

\item If $0 < n\defeq (c_1 - c^k,[C]) < r$, we use \propref{prop:Grassmann}
  after taking the tensor product $\shfO(C)$:
\begin{equation*}
  \begin{split}
  & \int_{\bM^0(c-c^k)} \Phi^k(\cEf)
  = \int_{\bM^1((c-c^k)e^{[C]})} \Phi^k(\cEf(-C))
\\
  =\; &\int_{\widehat N((c-c^k)e^{[C]},r-n)} \Phi^k(
  (\id_{\bp}\times f_1)^*\cEf\otimes \shfO(-C))
\\
  =\; &
  \int_{\widehat N((c-c^k)e^{[C]},r-n)} 
  \Phi^k\left(\left(
      (\id_{\bp}\times f_2)^*(\cEf) \oplus C_0\boxtimes\mathcal S
      \right)
  \otimes \shfO(-C)\right)
\\
  =\; &
  \int_{M^1((c-c^k)e^{[C]}-(r-n)e_0)} 
  {}'\Phi^k(\cEf).
  \end{split}
\end{equation*}
\begin{NB2}
\begin{equation*}
  \begin{split}
    & (c-c^k)e^{[C]}-(r-n)e_0 = c - c(1 - e^{[C]}) - c^k e^{[C]} - (r-n)e_0
\\
  =\; & c - (r + c_1)(1 - e^{[C]}) - c^k e^{[C]} - (r-n)e_0
\\
  =\; & c + (n + c_1)e_0 - c^k e^{[C]} 
  = c + n [C] - c_1(c^k) 
  + \left(- \frac{n}2 + (c_1,[C]) + \frac{r}2 - (c_1(c^k),[C])\right)\pt
\\
  =\; & c + (c_1,[C]) [C] + \frac{r+n}2 \pt
  = c + (c_1,[C])[C] + \frac12( r + (c_1 - c^k,[C]))\pt.
  \end{split}
\end{equation*}
\end{NB2}%
  We apply \eqref{eq:1st} for the right hand side to get the assertion
  for $k+1$.

\item If $(c_1 - c^k,[C]) \ge r$, then we have
  \begin{equation*}
    \int_{\bM^0(c-c^k)} \Phi^k(\cEf)
    = \int_{\bM^{m^{k+1}}((c-c^k)e^{m^{k+1}[C]})} \Phi^k(\cEf(-m^{k+1}C))
  \end{equation*}
  We apply \eqref{eq:1st} for the right hand side to get the assertion
  for $k+1$.
  \begin{NB2}
  We have
  \begin{equation*}
    (c-c^k)e^{m^{k+1}[C]} - \sum j^{k+1}_m e_m
    =  c - \left(c^k e^{m^{k+1}[C]} - (r + c_1) (e^{m^{k+1}[C]} - 1)
     + \sum j^{k+1}_m e_m\right),
  \end{equation*}
  as $c (e^{m^{k+1}[C]} - 1) = (r + c_1) (e^{m^{k+1}[C]} - 1)$.
  \end{NB2}%
\end{enumerate}
\end{proof}
\end{NB}

\begin{NB}
{\bf The following subsection is included in the previous one......}  
\subsection{Twisting by a line bundle}

Thus we get
\begin{equation*}
  \int_{\bM^0(c)} \Phi 
  - \int_{\bM(c)} \Phi
  = \sum \text{ of integrals over $\bM^0(c')$ with various 
    $c' = c - \sum_{m=1}^\infty p_m e_m$}.
\end{equation*}

We need to have $0 \ge (c_1,[C])$ to relate $\bM^0(c)$ to
framed moduli spaces on $\proj^2$. 
\begin{NB2}
$\bM^0(c) = \emptyset$ if $(c_1,[C]) < 0$.
\end{NB2}%
Since
\(
   (c_1(c'),[C]) = (c_1,[C]) + \sum_{m=0}^\infty p_m
\)
need not to satisfy this condition, we use an isomorphism
\(
  \bM^0(c') \cong \bM^{1}(c'e^{[C]});
  (E,\Phi)\mapsto (E(C),\Phi)
\)
to go back the next chamber.
\begin{NB2}
I am not sure that we can go to
$\bM^{n}(c'e^{n[C]})$ with
 $n \defeq \lfloor\nicefrac{(c_1(c'),[C])}r - 1\rfloor + 1$.
\end{NB2}

Since the pullback of the universal sheaf on $\bM^1(c'e^{[C]})$ is
$\cE(C)$ on $\bM^0(c')$, we need to understand the relation between
$\Phi(\cE(C))$ and $\Phi(\cE)$.
\end{NB}

\subsection{Example}\label{subsec:Example}

Consider the case $c \in H^*(\bp)$ with $r(c) = r$, $c_1(c) = 0$,
$(\Delta(c),[\bp]) = 1$.
\begin{NB}
  We have $\dim V_0 = \dim V_1 = 1$.
\end{NB}
In \thmref{thm:m=0} the wall-crossing term appears only in the case
$m=0$, $j=0$. Therefore
\begin{equation}\label{eq:temp}
   \int_{\bM(c)} \Phi(\cE) - \int_{\bM^0(c)} \Phi(\cE)
   = \int_{\bM^0(c-e_0)} 
   \Res_{\hbar_1=0}
   \frac{
   \Phi(\cEf\oplus C_0\boxtimes e^{-\hbar_1})%\Psi^1(\cEf)
   }
   {
      e(\fN(\cEf,C_0)
      \otimes e^{-\hbar_1})
      \, e(\fN(C_0,\cEf)
      \otimes e^{\hbar_1})
    }
\end{equation}
In the quiver description \thmref{thm:quiver} for $\bM^0(c-e_0)$ we
have $V_0 = \C$, $V_1 = 0$ and hence
\begin{equation*}
  \bM^0(c-e_0) \cong \proj^{r-1}.
\end{equation*}
This also follows from \propref{prop:Grassmann}. In fact, $f_1$ is an
isomorphism in this case.
We also see that
\( 
  \cEf\cong 
  \Ker\left[
  \shfO_{\proj}^{\oplus r}\to \shfO_{\proj}(1)\boxtimes
  \shfO_C
  \right].
\)
Then we have
\begin{equation*}
      \fN(\cEf,C_0) \cong \shfO_\proj(-1), \quad
      \fN(C_0,\cEf) \cong \shfO_\proj(1)^{\oplus 2} \oplus \mathcal S,
\end{equation*}
where $\shfO_\proj(1)$ is the hyperplane bundle of $\proj=\proj^{r-1}$
and $\mathcal S$ is the universal subbundle, i.e., kernel of
$\shfO^{\oplus r}_\proj\to \shfO_\proj(1)$. This also follows from
Lemmas~\ref{lem:Ext-},\ref{lem:Ext+}.
Therefore
\begin{equation*}
  \begin{split}
  & e(\fN(\cEf,C_0)\otimes e^{-\hbar_1})
  = - c_1(\shfO_{\proj}(1)) - \hbar_1,
\\
  & e(\fN(C_0,\cEf) \otimes e^{\hbar_1})
  = (c_1(\shfO_{\proj}(1)) + \hbar_1)^2 e(\mathcal S\otimes e^{\hbar_1})
  = \hbar_1^{r}(c_1(\shfO_{\proj}(1)) + \hbar_1).
  \end{split}
\end{equation*}
\begin{NB}
We observe that the second term is equal to
\(
  \frac{\hbar_1^r(c_1(\shfO_{\proj}(1)) + \hbar_1)^2}
  { \hbar_1 + c_1(\shfO_{\proj}(1))}.
\)
\end{NB}%

For $\Phi$, we consider a simplest nontrivial case.
Let $\mu(C)$ be the cohomology class on $\bM^m(c)$ given by
\begin{equation}\label{eq:mu(C)}
  \mu(C) \defeq \Delta(\cE)/[C],
\end{equation}
where $/$ denotes the slant product
\(
  /\colon H^d_{\hT}(\bp\times\bM^m(c))\otimes H_i^{\hT}(\bp))
  \to H^{d-i}_{\hT}(\bM^m(c)).
\)
This is the $\mu$-map appearing in the usual Donaldson invariants.
We have
\begin{equation*}
  \Delta(\cEf\oplus C_0\boxtimes e^{-\hbar_1})/[C]
  = - c_1(\shfO_{\proj}(1)) - \hbar_1 + \ve_1+\ve_2,
\end{equation*}
\begin{NB2}
  Editted according to the referee's remark 2010/04/01
\end{NB2}%
where $\ve_1$, $\ve_2$ are generators of
$\operatorname{Lie}(\C^*\times\C^*)$ corresponding to $t_1$, $t_2$.
\begin{NB}
Note
\begin{equation*}
  \ch(\cEf\oplus C_0\boxtimes e^{-\hbar_1})
  = \ch(\cEf) + \ch(C_0\boxtimes e^{-\hbar_1})
  = \sum_{\alpha=1}^r e^{a_\alpha} - \ch(\shfO_{\proj}(1))\ch(C_{-1})
  + \ch(C_0) e^{-\hbar_1}.
\end{equation*}
Therefore
\begin{equation*}
  \begin{split}
  & \Phi(\cEf\oplus C_0\boxtimes e^{-\hbar_1})
  = -\ch_2(\cEf\oplus C_0\boxtimes e^{-\hbar_1})/[C]
\\
  =\;& c_1(\shfO_{\proj}(1))\ch_1(C_{-1})/[C] + \hbar_1\ch_1(C_0)/[C]
  + \ch_2(C_{-1})/[C] - \ch_2(C_0)/[C]
\\
  =\;& - c_1(\shfO_{\proj}(1)) - \hbar_1
  + \ve_1 + \ve_2
  \end{split}
\end{equation*}
as
\begin{equation*}
  \ch_1(C_m)/[C] = -1, \qquad
  \ch_2(C_m)/[C] = -(m+\frac12)(\ve_1+\ve_2).
\end{equation*}
\end{NB}

We also have
\begin{equation*}
  \Vcal_1(\cEf\oplus C_0\boxtimes e^{-\hbar_1}) \cong e^{-\hbar_1}.
\end{equation*}
Hence
\begin{equation*}
  \prod_{f=1}^{N_f} e(\Vcal_1(\cEf\oplus C_0\boxtimes e^{-\hbar_1})\otimes e^{m_f})
  = \prod_{f=1}^{N_f} (m_f - \hbar_1).
\end{equation*}

We assume $2r-N_f \ge 1$ and take
\(
  \Phi(\cE) = \mu(C)^{2r-N_f} \prod_{f=1}^{N_f} e(\Vcal_1(\cE)\otimes e^{m_f}).
\)
Then the right hand side of \eqref{eq:temp} becomes
\begin{equation*}
  \int_{\proj^{r-1}} \Res_{\hbar_1=0}
  \frac{
    (- c_1(\shfO_{\proj}(1)) - \hbar_1 + \ve_1 + \ve_2)^{2r-N_f}
  }{-\hbar_1^r (c_1(\shfO_\proj(1)) + \hbar_1)^2}
  \prod_{f=1}^{N_f} (m_f - \hbar_1).
\end{equation*}
By the degree reason, this must be a constant in $\ve_1$, $\ve_2$,
$m_f$. Therefore we may set all $0$. Then this is equal to
\begin{equation*}
  -\int_{\proj^{r-1}} \Res_{\hbar_1=0}
  \hbar_1^{N_f - r}
    (c_1(\shfO_{\proj}(1)) + \hbar_1)^{2r-N_f-2}
    = -\binom{2r-N_f-2}{r-1}.
\end{equation*}
If $r=2$, $N_f=0$, the answer is $-2$. This is a simplest case of the
blow-up formula, which was used to define Donaldson invariants for
$c_2$ in the unstable range.

\section{Applications -- Vanishing theorems}\label{sec:vanish}

As we mentioned above, the wall-crossing formula only gives us a
recursive procedure to give the blow-up formula.
In this section, we concentrate on a rather special $\Phi(\cE)$ and
derive certain vanishing theorems. They turn out to be enough for
applications to the instanton counting.

\subsection{Theory with matters}
Let $\mu(C)$ be as in \eqref{eq:mu(C)}. We consider
\begin{equation*}
  \Phi(\cE) =
  \prod_{f=1}^{N_f} e(\Vcal_0(\cE)\otimes e^{m_f})
  \times
  \exp \left( 
    t \mu(C) % \ch_2(\cE)/[C]
  \right),
\end{equation*}
and study the coefficient $\Phi_d(\cE)$ of $t^d$ with small $d$. We
assume $N_f\le 2r$ hereafter.

\begin{Theorem}\label{thm:matter_gap}
Suppose $(c_1,[C]) = 0$. Then
\begin{equation*}
    \int_{\bM^m(c)} \Phi(\cE) 
    =   \int_{M(p_*(c))} \prod_{f=1}^{N_f} e(\Vcal(\cE)\otimes e^{m_f})
    + O(t^{k}),
\end{equation*}
\begin{NB2}
  Editted according to the referee's remark 2010/04/01
\end{NB2}%
where $k = \max(r+1, 2r - N_f)$. Here $\Vcal(\cE) = R^1
q_{2*}(\cE\otimes q_1^*(\shfO(-\linf)))$ is defined from the universal
sheaf $\cE$ on $\bp\times M(p_*(c))$ as in the case of $\Vcal_0(\cE)$,
$\Vcal_1(\cE)$.
\end{Theorem}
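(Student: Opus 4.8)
The plan is to apply the blow-up formula \eqref{eq:1st} with $m$ large (so that $\bM^m(c)=\bM(c)$) and then run the recursive reduction of \S\ref{subsub:1st}--\S\ref{subsub:Grassmann} down to $M(p_*(c))$, keeping track of orders in $t$. Since $(c_1,[C])=0$, the leading term $\vec j=0$ of \eqref{eq:1st} gives exactly $\int_{\bM^0(c)}\Phi(\cE)=\int_{M(p_*(c))}\prod_f e(\Vcal(\cE)\otimes e^{m_f})$ via the isomorphism $\Pi\colon\bM^0(c)\cong M(p_*(c))$ of \S\ref{subsub:c_1=0} (note $\mu(C)$ restricted to $\bM^0(c)$ contributes nothing to the $t^0$ coefficient, and $\Vcal_0$ becomes $\Vcal$ under $p_*$). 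So the whole content is to show that \emph{every} wall-crossing term with $\vec j\neq 0$ is $O(t^k)$ with $k=\max(r+1,2r-N_f)$.

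First I would fix a single wall-crossing contribution, i.e. a term indexed by $\vec j=(j_0,\dots,j_{m-1})\neq 0$ (and, after the further reductions, possibly a Grassmann-bundle pushforward from \S\ref{subsub:Grassmann}), and analyze its dependence on $t$ and on the residue variables $\hbar^n_i$. The factor $\Psi^{\vec j}(\cEf)$ is a product of Euler classes of the virtual bundles $\fN(\cEf,C_n)$, $\fN(C_n,\cEf)$ twisted by $e^{\pm\hbar^n_i}$; by Riemann--Roch (as in the computation preceding \thmref{thm:quiver}) these have ranks governed by $r$ and $(c_1,[C])$, so the total $\hbar$-degree of the denominator relative to the numerator is controlled. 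The key point is a \emph{degree count}: the residue $\overrightarrow{\Res_{\vec\hbar=0}}$ is nonzero only when the integrand, as a Laurent series in the $\hbar$'s, has the right pole order, and this forces the cohomological degree of $\Phi(\cEf\oplus\bigoplus C_n\boxtimes e^{-\hbar^n_i})$ that can survive to be large. Writing $\Phi=\prod_f e(\Vcal_0(\cE)\otimes e^{m_f})\exp(t\mu(C))$, the only way to feed in enough cohomological degree from the $\exp(t\mu(C))$ factor is to take a high power of $t$: each $\mu(C)$ is degree $2$, the Euler-class factors $\prod_f e(\Vcal_0\otimes e^{m_f})$ supply at most degree $2N_f$ (in the relevant slice), and the fixed "deficit" created by inserting $j_n$ copies of $C_n$ and by $\Psi^{\vec j}$ is at least $r(2n+1)j_n+\cdots$ — precisely the quantity in \eqref{eq:dim_est}. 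Matching this deficit against what $\Phi$ can provide yields the bound $d\ge\max(r+1,2r-N_f)$ for a nonzero $t^d$-coefficient. I would make the two ends of the max explicit: the $2r-N_f$ comes from the $\Vcal_0$-part together with the $\mu(C)$-part needing to cover the rank-$2$ drop $(c_1(\shfO_\proj(1))+\hbar_1)^2$ type factors seen in the Example \S\ref{subsec:Example}; the $r+1$ comes from the pure pole-order constraint in the $\hbar$-residues (the $\hbar_1^r$ in the denominator of the Example, generalized), which is independent of $N_f$.

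Concretely the steps are: (1) reduce to bounding a single term of \eqref{eq:1st} with $\vec j\neq 0$, via \S\ref{subsub:1st}; (2) handle the auxiliary reductions of \S\ref{subsub:O(C)}--\S\ref{subsub:Grassmann}, checking that tensoring by $\shfO(C)$ and the Grassmann-bundle pushforward ${}'\Phi$ only \emph{increase} the $t$-order of wall-crossing corrections, in particular dealing with the subtlety (flagged in \S\ref{subsub:O(C)}) that $\Vcal_0(\cE(-C))$ need not be a vector bundle by replacing $e(\Vcal_0(\cE(-C))\otimes e^{m_f})$ by $e(\Vcal_0(\cE)\otimes e^{m_f})$ times a correction class well-defined on $\bM^0(c)$; (3) do the degree/pole-order bookkeeping for $\Psi^{\vec j}$ and $\Phi$ using Riemann--Roch for the ranks of $\fN(\cEf,C_n)$, $\fN(C_n,\cEf)$ and $\Vcal_0$, together with the dimension estimate \eqref{eq:dim_est}, to conclude that a nonzero residue forces $d\ge k$. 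Then summing over all $\vec j\neq 0$ and all branches of the recursion gives $\int_{\bM^m(c)}\Phi(\cE)-\int_{M(p_*(c))}\prod_f e(\Vcal(\cE)\otimes e^{m_f})=O(t^k)$.

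The main obstacle I anticipate is step (2)–(3) combined: controlling the $t$-order \emph{uniformly across the whole recursion}, since the recursive redefinitions in \S\ref{subsub:O(C)} and \S\ref{subsub:Grassmann} change $\Phi$ into ${}'\Phi$ or $\Phi^{k+1}$ in a way that could in principle lower the apparent $t$-order; one must verify that each such step only produces corrections of order $\ge k$, i.e. that the bound is stable under tensoring by $\shfO(C)$ and under the Grassmannian pushforward. The cleanest route is probably to prove a single clean lemma: for \emph{any} $c'$ with $(c_1(c'),[C])\ge 0$ and any $\vec j\neq 0$, the wall-crossing term attached to $\bM^0(c'-\sum j_n e_n)$ inside $\int_{\bM^0(c')}\Phi(\cE)$ is $O(t^k)$, with $k$ depending only on $r$ and $N_f$ — and then observe the recursion never lowers this. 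The degree-counting inequality underlying that lemma — that the cohomological degree $\Phi$ can supply at $t^d$ is at most $2d+2N_f$ (in the relevant graded piece), while the surviving residue demands at least $2k+2N_f$ — is the crux.
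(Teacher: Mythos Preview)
Your proposal is correct and follows the paper's approach: isolate the $\vec j=0$ term and show all wall-crossing corrections are $O(t^k)$ by degree counting, handling the $\shfO(C)$-twist issue exactly as you describe. The paper organizes the two halves of $k=\max(r+1,2r-N_f)$ a bit differently than you do, and this is worth knowing. The $r+1$ bound comes from a \emph{single} application of \thmref{thm:m=0}, by comparing homological degrees in $H^{\hT}_*(M_0(p_*(c)))$: the wall-crossing term over $\bM^m(c-je_m)$ has degree at most $-(2r-N_f)(\ch_2(c),[\bp])-j(m(2r-N_f)+r+j)$, and $j(m(2r-N_f)+r+j)\ge r+1$ always, so no recursion or residue-pole analysis is needed for that half. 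The $2r-N_f$ bound, by contrast, does require the full recursion as you anticipate: the paper first runs the entire flowchart to write $\int_{\bM^m(c)}\Phi_d(\cE)$ as a sum $\sum_{\mathfrak J}\int_{\bM^0(c-c^{\mathfrak J})}\prod_f e(\Vcal_0(\cEf)\otimes e^{m_f})\,\Omega^{\mathfrak J}_d(\cEf)$ with $c_1(c-c^{\mathfrak J})=0$, having replaced $e(\Vcal_0(\cE(-C))\otimes e^{m_f})$ by $e(\Vcal_0(\cE)\otimes e^{m_f})\cdot c_a(q_{2*}([\cE]\otimes[\shfO_C])\otimes e^{m_f})$ along the way, and only then compares degrees globally. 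This end-of-recursion comparison is slightly cleaner than your proposed step-by-step lemma, since it avoids tracking how each redefinition ${}'\Phi$ affects the bound at intermediate stages; but your route would also work.
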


This, in particular, means that $\Phi_d(\cE) = 0$ for
$d=1,\dots,k-1$. When $N_f = 0$, this vanishing was shown in
\cite[\S6]{NY1} by the dimension counting argument. The key point was
that $\dim M(c) = 2r\Delta$, and hence the smaller moduli spaces have
codimension greater than or equal to $2r$. Once the wall-crossing
formula is established as in the previous section, the remaining
argument below is similar,
\begin{NB2}
  Expanded. 2010/04/01
\end{NB2}%
and the bound $2r - N_f$ comes from the
fact that the `virtual fundamental class' $\prod_{f=1}^{N_f}
e(\Vcal(\cE)\otimes e^{m_f})\cap [M(p_*(c))]$ has dimension
$(2r-N_f)\Delta$.

\begin{proof}
Let us compute the cohomological degrees of the both sides of the
equality in \thmref{thm:m=0}, where we say
\(
   \int_{\bM^{m}(c)} \heartsuit
\)
{\it has degree $k$\/} if it is contained in
\(
   H_{2k}^{\hT}(M_0(p_*(c))).
\)
We have
\begin{multline}\label{eq:deg}
  \deg \int_{\bM^{m+1}(c)} \Phi_d(\cE)
  = \deg \int_{\bM^m(c)} \Phi_d(\cE)
\\
  = \dim \bM^m(c) - N_f \dim V_0(c) - d
  = - (2r - N_f) (\ch_2(c),[\bp]) - d.
\end{multline}
On the other hand, we can write 
\begin{equation*}
   \int_{\bM^m(c-je_m)} 
   \Phi(\cEf\oplus\bigoplus_{i=1}^j  C_m\boxtimes e^{-\hbar_i})
   \Psi^j(\cEf)
   = \int_{\bM^m(c-je_m)} \prod_{f=1}^{N_f} e(\Vcal_0(\cEf)\otimes e^{m_f})
   \cup\heartsuit
\end{equation*}
for some cohomology class $\heartsuit$. Therefore its degree is at most
\begin{equation*}
  \begin{split}
   & \dim \bM^m(c-je_m) - N_f \dim V_0(c-je_m)
\\
   =\; & - (2r - N_f) (\ch_2(c),[\bp]) - j(m(2r-N_f) + r + j).
  \end{split}
\end{equation*}
\begin{NB}
  \begin{equation*}
    \begin{split}
   \mathrm{LHS} &= r \left(v_0(c-je_m)+v_1(c-je_m)\right) 
   - \left(v_0(c-je_m)-v_1(c-je_m)\right)^2
   - N_f v_0(c-je_m)
\\
   & = r (v_0(c)+v_1(c)) - N_f v_0(c) - rj(2m+1) - j^2 + j mN_f 
   = \mathrm{RHS}
    \end{split}
  \end{equation*}
\end{NB}
Since
\(
   j(m(2r-N_f) + r + j) \ge r+1,
\)
\begin{NB}
(the equality holds if and only if $j=1$, $m(2r-N_f) = 0$)
\end{NB}
it is zero if $d\le r$.

\begin{NB}
Here is Kota's argument for
\begin{description}
\item[(a)] $d \le r$. 
\end{description}

We assign $\deg \hbar_i = 1$ and $0$ for all other variables. Let us
compute the degree of $\Psi^j_{d'}(\cEf)$, the coefficient of $t_1^{d'}$ in
$\Phi^j(\cEf)$ as in \thmref{thm:m=0}. Namely
\begin{gather*}%\label{eq:Psi}
  \Psi^j_{d'}(\cEf) =
    \frac1{j! d'!} %\Res_{\hbar_{j}=0} \cdots \Res_{\hbar_{1}=0}
%    \left[
      \prod_{i=1}^j
      \frac{
%      \Phi(C_m\boxtimes e^{-\hbar_i})
        \prod_{f=1}^{N_f} e(\Vcal_0(C_m)\otimes e^{m_f - \hbar_i})
        \left(
          - \hbar_i
          + (m + \frac12)(\ve_1+\ve_2)
        \right)^{d'}
    }{
      e(\fN(\cEf,C_m)
      \otimes e^{-\hbar_i})
      \, e(\fN(C_m,\cEf)
      \otimes e^{\hbar_i})
    }
    \prod_{1\le i_1\neq i_2 \le j} 
   ({-\hbar_{i_1}+\hbar_{i_2}})
%   \right]
   .
\end{gather*}
Note that $\cEf$ is the universal sheaf for $\bM^m(c - je_m)$. Then
\begin{gather*}
  \deg e(\fN(\cEf,C_m)\otimes e^{-\hbar_i})
  = mr + (c_1,[C]) + j,
\quad
  \deg e(\fN(C_m,\cEf) \otimes e^{\hbar_i})
  = (m+1)r + (c_1,[C]) + j,
\\
  \deg e(\Vcal_0(C_m)\otimes e^{m_f - \hbar_i}) = m,
\quad
  \deg \left(
          - \hbar_i
          + (m + \frac12)(\ve_1+\ve_2)
        \right)^{d'} = d'.
\end{gather*}
Therefore in total,
\begin{equation*}
   \deg \Psi^j_{d'}(\cEf)
   \begin{NB2}
   = j (m N_f + d' + j - 1 - 
   ((2m+1) r + 2(c_1,[C]) + 2j))
   \end{NB2}%
   = j (d' + m (N_f - 2r) - r - 2(c_1,[C]) - j - 1).
\end{equation*}

Under the assumption $c_1 = 0$ and {\bf (a)} $d\le r$, we have
$d'\le d$ and
\begin{equation*}
   \deg \Psi^j_{d'} \le - j(j+1) \le - j - 1.
\end{equation*}
Therefore $\Phi^j_{d'}$ becomes $0$ if we take the iterated
residues. Hence we do not have contributions to the wall-crossing term.
\end{NB}

In order to prove the vanishing for $d\le 2r - N_f - 1$, we need a
refinement of the general machinery in \subsecref{subsec:blowup}.
We need to look at each step in the flowchart (Figure~\ref{fig:flow})
more closely.

The first step \subsecref{subsub:1st} has no problem. In
\eqref{eq:1st} we have
\begin{equation*}
    \Phi(\cEf\oplus\bigoplus_{n=0}^{m-1}
      \bigoplus_{i=1}^{j_n} C_n\otimes e^{\hbar^{n,1}_{i}})
    = \Phi(\cEf)\prod_{n=0}^{m-1}\prod_{i=1}^{j_n} 
     \Phi(C_n\otimes e^{\hbar^{n,1}_{i}}),
\end{equation*}
as we have a decomposition of a vector bundle 
\(
  \Vcal_0(\cE \oplus\bigoplus_{n=0}^{m-1}
      \bigoplus_{i=1}^{j_n} C_n\otimes e^{\hbar^{n,1}_{i}})
= \Vcal_0(\cE)\oplus
\bigoplus_{n=0}^{m-1}
      \bigoplus_{i=1}^{j_n} \Vcal_0(C_n\otimes e^{\hbar^{n,1}_{i}}),
\)
and the Euler class has a multiplicative with respect to the Whitney
sum.

In \S\ref{subsub:O(C)}, \ref{subsub:Grassmann} we consider the tensor
product $\cE(-C)$, where $\cE$ is the universal family on the
moduli space of $1$-stable sheaves.
This causes a trouble because $R^1 q_{2*}(\cE(-C-\linf))$ is not a
vector bundle, as mentioned ealier.
\begin{NB2}
  Editted. 2010/04/02
\end{NB2}%
We need a closer look.

By \cite[Lemma~7.3]{perv} the natural homomorphism $H^1(E(-\linf))\to
H^1(E(C-\linf))$ is surjective for a $0$-stable framed sheaf
$(E,\Phi)$. Therefore $H^1(E(-C-\linf))\to H^1(E(-\linf))$ is
surjective for a $1$-stable framed sheaf $(E,\Phi)$.
Let us give a direct proof since we need to understand the kernel.
Suppose $E$ is $1$-stable. Since $\Hom(E,\shfO_C(-2)) = 0$, we have
$E\otimes\shfO_C/\mathrm{torsion} = \bigoplus \shfO_C(a_i)$ with $a_i
\ge -1$. Therefore $H^1(E\otimes\shfO_C) = 0$. Since we have an exact
sequence
\(
  0 = H^2(\operatorname{Tor}_1(E,\shfO_C)) \to
  H^1(E\otimes^L\shfO_C) \to H^1(E\otimes\shfO_C), 
\) 
it implies $H^1(E\otimes^L\shfO_C) = 0$, and hence
$H^1(E(-C-\linf))\to H^1(E(-\linf))$ is surjective.

Since the kernel of this surjective homomorphism is
$H^0(E\otimes^L\shfO_C)$, we have
\begin{equation*}
  \begin{split}
    e(\Vcal_0(\cE(-C))\otimes e^{m_f})
   &=e(\Vcal_0(\cE)\otimes e^{m_f})\,
   e(q_{2*}(\cE\otimes^L\shfO_C)\otimes e^{m_f})
\\
   &= e(\Vcal_0(\cE)\otimes e^{m_f})\,
   c_a(q_{2*}([\cE]\otimes[\shfO_C])
   \otimes e^{m_f})
  \end{split}
\end{equation*}
on $\bM^1(c)$, where
\(
  a = \dim H^1(E(-C-\linf)) - \dim H^1(E(-\linf))
  = (c_1(E),[C]) + r
\)
and we replace $\cE$, $\shfO_C$, $q_{2*}$ by their $K$-theory classes
and the $K$-theory pushforward in the last expression.

Now $\Vcal_0(\cE)$ is a vector bundle over $\bM^0(c)$, $\bM^1(c)$
\begin{NB2}
  $\bM^0(c)$, $\bM^1(c)$ are correct. But we add an explanation that
  $\Vcal_0(\cE)$ is a rector bundle. 2010/04/03
\end{NB2}%
and master spaces from the quiver description in \secref{sec:quiver}.
\begin{NB2}
  Changed from `or' to `and'.
  2010/04/02
\end{NB2}%
Therefore $e(\Vcal_0(\cE)\otimes e^{m_f})$ (and also
$c_a(q_{2*}([\cE]\otimes[\shfO_C]) \otimes e^{m_f})$) are well-defined
so we replace $e(\Vcal_0(\cE(-C))\otimes e^{m_f})$ by the right hand
side and continue the flow in Figure~\ref{fig:flow}. We may still need
to treat $\otimes\shfO(C)$ in a subsequent process in the
flowchart. Then we again get $e(\Vcal_0(\cE(-C))\otimes e^{m_f})$, so
use the same procedure to replace by the right hand side.

\begin{NB}
Next assume 
\begin{description}
\item[(a')] $N_f \le r - 2$ and $d \le 2r - N_f - 1$,
\end{description}
instead of {\bf (a)} above.
If $m\ge 1$, we have
\begin{equation*}
   j(m(2r-N_f)+r+j)
   \ge 2r - N_f \ge d + 1.
\end{equation*}
\begin{NB2}
In fact, the first inequality is strict.
\end{NB2}
Therefore the right hand side vanishes in this case.
If $m=0$ and $j\ge r$, we have
\begin{equation*}
  j (r + j) \ge 2 r^2 \ge d + 1,
\end{equation*}
\begin{NB2}
  The second inequality is strict.
\end{NB2}%
and hence the right hand side vanishes also in this case.

\begin{NB2}
If $m \ge 1$, we have
\(
   \deg \Psi^j_d \le - j - 1
\)
and we have no contribution to the wall-crossing term.
If $m = 0$ and $j\ge r$, we have
\begin{equation*}
   \deg \Psi^j_d(\cEf)
   = j (d - r - j - 1) 
   \le j(d - 2r - 1) 
   \le j(-N_f - 2) \le - j - 1.
\end{equation*}
Hence we have no contribution.
\end{NB2}

Therefore we may assume $m=0$, $j\le r - 1$. Since $0 < (c_1(c -
je_0),[C]) = j < r$, we can apply \propref{prop:Grassmann} after using the
isomorphism $\bM^0(c-je_0)\cong \bM^1(c'_j);
(E_\flat,\Phi)\mapsto (E_\flat(C),\Phi)$
($c'_j = (c - je_0) e^{[C]}$). From (1) we have
\begin{equation*}
   \int_{\bM^0(c-je_0)} \Phi(\cEf)
   \Res_{\hbar_{j}=0} \cdots \Res_{\hbar_{1}=0}\Psi^j(\cEf)
   = \int_{\widehat N(c'_j,r-j)} f_1^* (\Phi(\cEf)
   \Res_{\hbar_{j}=0} \cdots \Res_{\hbar_{1}=0}
   \Psi^j(\cEf)).
\end{equation*}
Note that $\cEf$ is the universal sheaf on $\bM^0(c-je_0)$, and hence
$\cEf(C) = \cEf\otimes q_1^*\shfO(C)$ is one on $\bM^1(c'_j)$. Using
the short exact sequence
\begin{NB2}
\begin{equation*}
   0 \to (\id_{\bp}\times f_2)^*\cE' 
   \to (\id_{\bp}\times f_1)^*(\cEf(C))
   \to C_0\boxtimes\mathcal S\to 0
\end{equation*}
\end{NB2}%
in (3),
\begin{NB2}
we have
\begin{equation*}
  \begin{CD}
\\
     0 @>>> \mathcal S @>>> R^1q_{2*}(\cE'(-C)) 
       @>>> R^1q_{2*}(\cEf) @>>> 0
\\
       @.    @VVV @| @VVV @.
\\
     0 @>>> \Ext^1_{q_2}(C_0,\cE') @>>> R^1q_{2*}(\cE'(-C))
       @>>> R^1q_{2*}(\cE') @>>> 0
  \end{CD}
\end{equation*}
and hence,
\end{NB2}%
we get
\begin{equation*}
   f_1^* e(\Vcal_0(\cEf) \otimes e^{m_f}) =
   f_2^* e(\Vcal_0(\cE') \otimes e^{m_f})
   \, e((f_2^* \Ext^1_{q_2}(C_0,\cE')/\mathcal S) \otimes e^{m_f}),
\end{equation*}
And
\begin{gather*}
  f_1^* \fN(\cEf,C_0)
  \begin{NB2}
  = 
  f_2^* \fN(\cE'(-C),C_0) + \fN(C_{-1},C_0)\otimes \mathcal S^*
  \end{NB2}%
  =
  f_2^* \fN(\cE',C_1) + \fN(C_0,C_1)\otimes \mathcal S^*,
\\
  f_1^* \fN(C_0,\cEf)
  \begin{NB2}
  = 
  f_2^* \fN(C_0, \cE'(-C)) + \fN(C_0,C_{-1})\otimes \mathcal S
  \end{NB2}%
  =
  f_2^* \fN(C_1, \cE') + \fN(C_1,C_0)\otimes \mathcal S,
\end{gather*}
where $\cE'$ is the universal sheaf on $\bM^1(c'_j - (r-j)e_0)
= \bM^1(c + j\pt)$. Therefore we have
\begin{equation*}
   \int_{\bM^0(c-je_0)} \Phi(\cEf)
   \Res_{\hbar_{j}=0} \cdots \Res_{\hbar_{1}=0}
   \Psi^j(\cEf)
   = \int_{\bM^1(c + j\pt)} \prod_{f=1}^{N_f} e(\Vcal_0(\cEf')\otimes e^{m_f})
   \cup\heartsuit
\end{equation*}
for a cohomology class $\heartsuit$. Its degree is at most
\begin{equation*}
  \dim \bM^1(c + j\pt) - N_f v_0(c + j\pt)
  = (2 r - N_f) \left( - (\ch_2(c),[\bp]) - j\right).
\end{equation*}
From the assumption {\bf (a')}, this is smaller than $- (2 r - N_f)
(\ch_2(c),[\bp]) - d$, hence has no contribution to the wall-crossing
formula.
\end{NB}

As a result we can write
\begin{equation}\label{eq:error}
   \int_{\bM^m(c)} \Phi_d(\cE) 
   = \sum_{\mathfrak J}\int_{\bM^0(c-c^{\mathfrak{J}})}
    \prod_{f=1}^{N_f} 
    e(\Vcal_0(\cEf)\otimes e^{m_f})
    \Omega^{\mathfrak J}_d(\cEf)
\end{equation}
\begin{NB2}
  $\bM(c)\to \bM^m(c)$.
\end{NB2}%
for various $c^{\mathfrak J}$ with $c_1(c-c^\mathfrak J) = 0$ and
cohomology classes $\Omega^{\mathfrak J}_d(\cEf)$.
\begin{NB}
Let us compute the cohomological degrees of the both sides of the
equality in \thmref{thm:m=0}, where we say
\(
   \int_{\bM^{m}(c)} \heartsuit
\)
{\it has degree $k$\/} if it is contained in
\(
   H_{2k}^{\hT}(M_0(p_*(c))).
\)
\end{NB}%
The left hand side has degree as in \eqref{eq:deg}.
\begin{NB}
We have
\begin{equation*}
  \deg \int_{\bM(c)} \Phi_d(\cEf)
  = \dim \bM(c) - N_f v_0(c) - d
  = - (2r - N_f) (\ch_2(c),[\bp]) - d.
\end{equation*}
\end{NB}%
On the other hand, the degree of the right hand side is at most
\begin{equation*}
   \dim M(p_*(c-c^{\mathfrak J})) 
   - N_f \rank \Vcal_0(\cEf)
   = (2r - N_f) (\Delta(c-c^{\mathfrak J}),[\bp]).
\end{equation*}
If $c^{\mathfrak J}$ is nonzero, then it is at most $(2r -
N_f)\{(\Delta(c),[\bp]) - 1\}$, since $(\Delta(c-c^{\mathfrak
  J}),[\bp])$ is an integer and we have \eqref{eq:dim_est}.
\begin{NB2}
  Editted according to the referee's remark 2010/04/01
\end{NB2}%
Therefore there is no contribution to the wall-crossing formula if $d
< 2r-N_f$.
For $c^{\mathfrak J} = 0$, we get $\int_{\bM^0(c)} \Phi_d(\cE)$, but
it is equal to $\delta_{d0}\int_{M(p_*(c))} \prod_{f=1}^{N_f}
e(\Vcal(\cE)\otimes e^{m_f})$ as $\bM^0(c)\to M(p_*(c))$ is an
isomorphism and $\mu(C) = 0$ on $\bM^0(c)$.
\begin{NB2}
  Expanded according to Kota's comment. 2010/04/01
\end{NB2}%
\end{proof}

For a slightly modified version
\begin{equation*}
  \Phi'(\cE) =
  \prod_{f=1}^{N_f} e(\Vcal_1(\cE)\otimes e^{m_f})
  \times
  \exp \left( 
    t\mu(C)
    % t_1 \ch_2(\cE)/[C]
  \right),
\end{equation*}
the second part of the argument works, we get
\begin{Theorem}\label{thm:matter_gap2}
Suppose $(c_1,[C]) = 0$. Then
\begin{equation*}
    \int_{\bM^m(c)} \Phi'(\cE) 
    =   \int_{M(p_*(c))} \prod_{f=1}^{N_f} e(\Vcal(\cE)\otimes e^{m_f})
     + O(t^{2r-N_f}).
\end{equation*}
Moreover the coefficient of $t^{2r-N_f}$ is
\[
   -\binom{2r-N_f-2}{r-1}\int_{M(p_*(c)+\pt)}\prod_{f=1}^{N_f}
   e(\Vcal(\cE)\otimes e^{m_f}),
\]
if $N_f < 2r$.
\end{Theorem}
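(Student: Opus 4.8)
The plan is to re-run the proof of \thmref{thm:matter_gap} with $\Vcal_0$ replaced everywhere by $\Vcal_1$, and then to isolate the single wall-crossing that produces the leading coefficient --- it is precisely the integral evaluated in the Example of \subsecref{subsec:Example}. For the first assertion I would start by checking that all the degree computations in the proof of \thmref{thm:matter_gap} survive the substitution $V_0\to V_1$, $\Vcal_0\to\Vcal_1$: the hypothesis $(c_1,[C])=0$ gives $\dim V_0(c)=\dim V_1(c)$, and for every $c'$ occurring in the flowchart with $(c_1(c'),[C])=0$ one has $\rank\Vcal_0=\rank\Vcal_1$ on $\bM^{m'}(c')$, so \eqref{eq:deg} and the estimates after it are literally unchanged. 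The one step needing a separate look is the twist by $\shfO(-C)$ in \subsecref{subsub:O(C)} and \subsecref{subsub:Grassmann}. Whereas $\Vcal_0(\cE(-C))=R^1q_{2*}(\cE(-C-\linf))$ is not a vector bundle --- which forced the Chern-class correction in the proof of \thmref{thm:matter_gap} --- for $\Vcal_1$ one has the tautology $\Vcal_1(\cE(-C))=R^1q_{2*}(\cE(-\linf))=\Vcal_0(\cE)$, and $\Vcal_0(\cE)$ is a genuine vector bundle on $\bM^0(c)$, $\bM^1(c)$ and on the master spaces by the quiver description of \secref{sec:quiver}. So no correction is needed, the flowchart of Figure~\ref{fig:flow} runs unchanged, and it produces the analogue of \eqref{eq:error} with $\Vcal_0$ replaced by $\Vcal_1$. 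Bounding the degree of each term by $\dim M(p_*(c-c^{\mathfrak J}))-N_f\rank\Vcal_1(\cEf)=(2r-N_f)(\Delta(c-c^{\mathfrak J}),[\bp])$ and using \eqref{eq:dim_est}, every term with $c^{\mathfrak J}\neq 0$ contributes only to $t^d$ with $d\ge 2r-N_f$, while $c^{\mathfrak J}=0$ gives $\delta_{d0}\int_{M(p_*(c))}\prod_{f=1}^{N_f}e(\Vcal(\cE)\otimes e^{m_f})$, because $\bM^0(c)\cong M(p_*(c))$, $\mu(C)$ restricts to $0$ there, and $\Vcal_1(\cE)|_{\bM^0(c)}\cong\Vcal(\cE)$ by the projection formula for $p$ (using $Rp_*\shfO(C)=\shfO_{\proj^2}$). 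This gives the first assertion.

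For the coefficient of $t^{2r-N_f}$, the degree estimate just made shows that only the terms with $(\Delta(c-c^{\mathfrak J}),[\bp])=(\Delta(c),[\bp])-1$ survive, and I would identify these by walking through the flowchart. By \eqref{eq:dim_est} the first step \eqref{eq:1st} is forced to take $\vec j=(1,0,\dots,0)$, i.e.\ one unit of $e_0$, giving a wall-crossing over $\bM^0(c-e_0)$, on which $(c_1,[C])=1$. The twist of \subsecref{subsub:O(C)} then produces $\bM^1((c-e_0)e^{[C]})$ with $(c_1,[C])=1-r$, and the Grassmann step of \subsecref{subsub:Grassmann} (with $n=r-1$) rewrites the integral as one over the Grassmann bundle $\Gr(r-1,\Ext^1_{q_2}(C_0,\cE'))$ --- a $\proj^{r-1}$-bundle, since $\Ext^1_{q_2}(C_0,\cE')$ has rank $r$ --- over $\bM^1((c-e_0)e^{[C]}-(r-1)e_0)$, a moduli space with $(c_1,[C])=0$ and $\Delta$ one less than that of $c$. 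Crossing the last $0$-to-$1$ wall replaces this base by $M(p_*((c-e_0)e^{[C]}-(r-1)e_0))=M(p_*(c)+\pt)$, the remaining corrections dropping $\Delta$ further and so not contributing at $t^{2r-N_f}$. Hence the coefficient of $t^{2r-N_f}$ equals the fibre integral over $\proj^{r-1}$ of the relevant integrand times $\int_{M(p_*(c)+\pt)}\prod_{f=1}^{N_f}e(\Vcal(\cE)\otimes e^{m_f})$; by matching cohomological degrees the fibre integral is a constant, and it is computed exactly as in \subsecref{subsec:Example} --- using that fibrewise $\fN(\cEf,C_0)\cong\shfO_\proj(-1)$, $\fN(C_0,\cEf)\cong\shfO_\proj(1)^{\oplus 2}\oplus\mathcal S$ and $\Vcal_1(\cEf\oplus C_0\boxtimes e^{-\hbar_1})\cong e^{-\hbar_1}$ --- so that, when $N_f<2r$, the residue computation there gives
\[
   -\int_{\proj^{r-1}}\Res_{\hbar_1=0}\hbar_1^{N_f-r}\bigl(c_1(\shfO_\proj(1))+\hbar_1\bigr)^{2r-N_f-2}=-\binom{2r-N_f-2}{r-1}.
\]
This yields the stated formula.

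The main obstacle, as always in this circle of ideas, is the bookkeeping in the previous paragraph: one has to verify that at $d=2r-N_f$ the recursion of the flowchart really collapses to this single chain, and that in the resulting $\proj^{r-1}$-bundle over $M(p_*(c)+\pt)$ no cohomology class pulled back from the base --- nor from the bundles $\fN$ and $\Vcal_1$ living over it --- survives into the fibre integral. This is forced by matching degrees through \eqref{eq:deg} and \eqref{eq:dim_est}, but it has to be written out with care. The first assertion, by contrast, is essentially immediate once the identity $\Vcal_1(\cE(-C))=\Vcal_0(\cE)$ is noticed.
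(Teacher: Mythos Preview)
Your proof of the first assertion is correct and follows the same line as the paper's, which simply says ``the second part of the argument works''. Your observation that $\Vcal_1(\cE(-C))=\Vcal_0(\cE)$ makes the twist step genuinely easier than in the $\Vcal_0$ case --- no Chern-class correction of the type in the proof of \thmref{thm:matter_gap} is needed at the first twist (though subsequent twists bring back $\Vcal_0$ and the original machinery). One small imprecision: the estimates ``after \eqref{eq:deg}'' in the proof of \thmref{thm:matter_gap} (the ones giving the $r+1$ bound) are \emph{not} literally unchanged, because on $\bM^m(c-je_m)$ one has $\dim V_1=\dim V_0-j$; but you only need the second (flowchart) argument, which does go through and yields the $2r-N_f$ bound.

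For the coefficient of $t^{2r-N_f}$, your route differs from the paper's. The paper does not walk the flowchart for general $c$; instead it observes (via the proof of \thmref{thm:structure}) that the class $\Omega_1(\cE,t)$ is \emph{independent of $\Delta(c)$}, so the coefficient can be read off from the single case $(\Delta(c),[\bp])=1$, where $\bM^0(c-e_0)\cong\proj^{r-1}$ and the Example of \subsecref{subsec:Example} applies verbatim. Your direct approach --- identifying the unique chain $\vec j=(1,0,\dots)$, twist, Grassmann, trivial last wall --- is correct (your use of \eqref{eq:dim_est} does force $\vec j=(1,0,\dots)$ at the first step, since any other choice drops the dimension by more than $2r$), and the degree count does force the remaining factor to lie in $H^0$, hence to be a scalar. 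But the step ``it is computed exactly as in \subsecref{subsec:Example}'' is exactly where you need that this scalar does not depend on the base point of the $\proj^{r-1}$-bundle: the integrand involves $\fN(\cEf,C_0)$ and $\fN(C_0,\cEf)$, which over a general base carry nontrivial classes pulled back from $M(p_*(c)+\pt)$, not just the fibrewise $\shfO_\proj(-1)$ and $\shfO_\proj(1)^{\oplus 2}\oplus\mathcal S$. That these base contributions drop out is precisely the independence-of-$\Delta$ statement, and the cleanest way to justify it is the uniqueness argument in the proof of \thmref{thm:structure}. So your ``main obstacle'' is real, and the paper's reduction to $\Delta=1$ is the efficient way to discharge it rather than a parallel bookkeeping exercise.
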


For the last assertion, it is enough to calculate the case
$(\Delta(c),[\bp]) = 1$ by the same argument. 
(See the proof of \thmref{thm:structure} for more detail.)
\begin{NB2}
  Added according to the referee's suggestion. 2010/04/01
\end{NB2}%
Hence \subsecref{subsec:Example} gives us the answer.

\begin{NB}
It seems that the bound $d \le r - 1$ cannot be proved.
Feb. 26
\end{NB}

\begin{NB}
We have
\begin{equation*}
   \deg \int_{\bM^m(c)} \Phi'_d(\cE)
   = - (2r - N_f) (\ch_2(c),[\bp]) - d.
\end{equation*}
We also have
\begin{equation*}
  \begin{split}
   & \deg \int_{\bM^m(c-je_m)} \Phi'_d(\cEf) 
   \Res_{\hbar_{j}=0} \cdots \Res_{\hbar_{1}=0}
   \Psi^{\prime j}(\cEf)
   =
    \deg \int_{\bM^m(c-je_m)} \prod_{f=1}^{N_f} e(\Vcal_1(\cEf')\otimes 
    e^{m_f})
    \cup\heartsuit
\\
   \ge \; &
      \dim \bM^m(c-je_m) - N_f v_1(c-je_m)
   = - (2r - N_f) (\ch_2(c),[\bp]) - j(m(2r-N_f) + r + j - N_f).
  \end{split}
\end{equation*}
Therefore we have no contribution if $d\le r-N_f$ as
\(
   j (m (2r - N_f) + r + j - N_f) \ge r.
\)
But this is a stronger assumption.

Next consider the case
\begin{description}
\item[(a')] $d \le 2r - N_f - 1$.
\end{description}
If $m\ge 1$, we still have
\begin{equation*}
  j (m (2r - N_f) + r + j - N_f) \ge 3r - 2N_f + 1 \ge d + 1.
\end{equation*}
\begin{NB2}
  Unfortunately this estimate is not enough, and the argument is incomplete.
  
  Feb. 24.
\end{NB2}

If $m = 0$ and $j\ge r$, we still have
\begin{equation*}
  j ( r + j - N_f) \ge r(2r - N_f) \ge 2r - N_f \ge d + 1,
\end{equation*}
and hence it is enough to consider the case $m=0$, $j\le r-1$. In this
case, we have
\begin{equation*}
   \Vcal_1(\cEf) \cong \Vcal_0(\cE'),
\end{equation*}
and the rest of the argument is the same.
\end{NB}

\begin{NB}
We use the complex \eqref{eq:Ext(C_m,X)} for $m=0$, $X = C_1$:
  \begin{equation*}
     \Hom(C_0,C_1) = 0,
     \Ext^1(C_0,C_1) = \Coker\left[ V_1 \xrightarrow
     { \left(
       \begin{smallmatrix}
         B_1 \\ B_2
       \end{smallmatrix}
       \right) }
     \C^2\otimes V_0\right],
    \Ext^2(C_0,C_1) = V_0,
  \end{equation*}
where $V_0 = H^1(\shfO_C(-2))$, $V_1$, $B_1$, $B_2$ are the ADHM data for $X = C_1$:

\end{NB}

Next consider the $c_1\neq 0$ case.
\begin{Theorem}
  Suppose $0 < n\defeq (c_1,[C]) < r$. Then
  \begin{equation*}
    \int_{\bM^m(c)} \Phi'(\cE) 
    = %\int_{M(p_*(c))} \prod_{f=1}^{N_f} e(\Vcal(\cE)\otimes e^{m_f})
%    + 
    O(t^{n(r-n)}).
  \end{equation*}
\end{Theorem}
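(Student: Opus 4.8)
The plan is to combine the expansion \eqref{eq:1st} with the flowchart of \subsecref{subsec:blowup}, to isolate the unique ``leading'' contribution and dispose of it by a Grassmann-bundle pushforward, and to kill every other contribution by a cohomological degree count, in the spirit of the proof of \thmref{thm:matter_gap2}. First I would record the numerology: since $(c,[\linf])=0$ forces $c_1=-n[C]$, one has $\dim\bM^m(c)=-2r(\ch_2(c),[\bp])-n^2$ and $\dim V_1(c)=-(\ch_2(c),[\bp])-\tfrac n2$, so the coefficient $\Phi'_d(\cE)$ of $t^d$ in $\Phi'(\cE)$ is homogeneous and $\int_{\bM^m(c)}\Phi'_d(\cE)$ lies in $H^{\hT}_{2D(d)}(M_0(p_*(c)))$ with $D(d):=\dim\bM^m(c)-N_f\dim V_1(c)-d$. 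Applying \eqref{eq:1st} and running the flowchart of \subsecref{subsec:blowup} on each term --- using the refinement from the proof of \thmref{thm:matter_gap2} to keep the factors $\prod_f e(\Vcal_1(\cE)\otimes e^{m_f})$ well defined under the twists by $\shfO(mC)$ (recall $\Vcal_1(\cE(-C))=\Vcal_0(\cE)$ is again a vector bundle) --- writes $\int_{\bM^m(c)}\Phi'(\cE)$ as a finite sum of integrals over $M(p_*(c-c^{\mathfrak J}))$ with $c_1(c-c^{\mathfrak J})=0$, each obtained through a chain of Grassmann-bundle pushforwards.

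The hard part is the unique \emph{leading} term, coming from all $\vec j$ equal to $0$. Here one twists by $\shfO(C)$ to pass to $\bM^1(ce^{[C]})$, for which $-r<(c_1,[C])=n-r<0$, and applies \propref{prop:Grassmann} with $r-n$ in place of $n$, getting $f_1\colon\widehat N(ce^{[C]},r-n)\to\bM^1(ce^{[C]})$ birational surjective and $f_2\colon\widehat N(ce^{[C]},r-n)\to\bM^1(ce^{[C]}-(r-n)e_0)$ a Grassmann bundle with fibre $\Gr(r-n,r)$ of dimension $n(r-n)$ over a base with $(c_1,[C])=0$. By \propref{prop:Grassmann}(3), the $\mathcal S$-part of $f_1^*(\cE\otimes q_1^*\shfO(C-\linf))$ equals $\shfO_C(-1)\boxtimes\mathcal S$ (using $\shfO_C\otimes\shfO(C)\cong\shfO_C(-1)$ together with the $\shfO(-C)$-twist built into this term), and $H^\bullet(\proj^1,\shfO(-1))=0$, so $f_1^*\Vcal_1(\cE)$ is pulled back from the base via $f_2$ and $f_1^*\prod_f e(\Vcal_1(\cE)\otimes e^{m_f})$ carries no $f_2$-fibre degree; on the other hand $f_1^*\cE$ differs from an $f_2$-pullback by the class $\shfO_C\boxtimes\mathcal S$, which is supported on $C$, so $f_1^*\mu(C)=f_1^*(\Delta(\cE)/[C])$ differs from an $f_2$-pullback by $\eta$ with $f_2$-fibre degree $\le1$ (the slant product with the $2$-cycle $[C]$ cuts the available degree of the Chern classes of $\mathcal S$ down to $c_1(\mathcal S)$). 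Hence the $t^d$-component of $f_1^*\Phi'(\cE)$ is $f_2^*(\ \cdot\ )$ times a class of $f_2$-fibre degree $\le d$, whose relative pushforward along $f_2$ vanishes as soon as $d<n(r-n)=\dim\Gr(r-n,r)$; so the leading term is $O(t^{n(r-n)})$.

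For every other term I would bound the total dimension drop $\dim\bM^m(c)-\dim M(p_*(c-c^{\mathfrak J}))$ below by $n(r-n)+2r$: the drop $\sum_k r(2k+1)j_k+s(s+2n)\ge s(r+s+2n)$ from \eqref{eq:dim_est} (with $s:=\sum_k j_k\ge1$), combined with the $\Gr$-fibre dimension $(r-n-s)(n+s)$ met at the first Grassmann step when $n+s<r$, gives $n(r-n)+2rs$, while for $n+s\ge r$ one already has $s(r+s+2n)\ge(r-n)(2r+n)\ge n(r-n)+2r$, and later steps only add more. Since such a term lies in $H^{\hT}_{2k}(M_0(p_*(c)))$ with $k\le(2r-N_f)\Delta(p_*(c-c^{\mathfrak J}))=\tfrac{2r-N_f}{2r}(\dim\bM^m(c)-\text{drop})\le-(2r-N_f)((\ch_2(c),[\bp])+\tfrac n2+1)$, whereas $D(d)=-(2r-N_f)(\ch_2(c),[\bp])-n^2+\tfrac{N_f n}{2}-d$, a one-line computation gives $D(d)>k$ precisely when $d<n(r-n)+(2r-N_f)$, which (as $N_f\le2r$) covers all $d<n(r-n)$. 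So no such term contributes to $\int_{\bM^m(c)}\Phi'_d(\cE)$ for $d<n(r-n)$, and together with the leading term this proves $\int_{\bM^m(c)}\Phi'(\cE)=O(t^{n(r-n)})$.

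The main obstacle, exactly as in \thmref{thm:matter_gap2}, is the careful treatment of the twists by $\shfO(mC)$ with $m\ge2$ inside the flowchart --- keeping the Euler-class factors well defined by passing to the appropriate $K$-theoretic substitutes --- and making the ``drop $\ge n(r-n)+2r$'' bookkeeping uniform along the recursion (including the $n+s\ge r$ branches, where the first Grassmann step appears only after further twists). The vanishing $H^\bullet(\proj^1,\shfO(-1))=0$ exploited for the leading term is particular to $\Vcal_1$ and to the $\shfO(C)$-twist and fails for $\Vcal_0$.
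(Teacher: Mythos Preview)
Your argument is correct, but it is considerably more elaborate than the paper's. The paper handles \emph{all} terms --- including the leading one --- by a single uniform degree count: one rewrites
\[
  D(d)=\dim\bM^m(c)-N_f\dim V_1(c)-d=(2r-N_f)\dim V_1(c)+n(r-n)-d,
\]
and observes that every term in the analogue of \eqref{eq:error} has degree at most $(2r-N_f)\dim V_1(c-c^{\mathfrak J})\le(2r-N_f)\dim V_1(c)$, because $\dim V_1$ is non-increasing through the flowchart (the twist-then-Grassmann combination is exactly $\dim V_1$-neutral, and every other step decreases it). For $d<n(r-n)$ this is already $<D(d)$, so the integral vanishes --- no separate treatment of the leading term is needed. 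Your Grassmann fibre-degree argument for the leading term is a pleasant geometric observation (resting on $H^\bullet(\shfO_C(-1))=0$, which is indeed special to $\Vcal_1$), and your dimension-drop bookkeeping for the non-leading terms recovers the same bound $d<n(r-n)+(2r-N_f)$ that the paper's approach gives once one uses $\dim V_1(c-c^{\mathfrak J})\le\dim V_1(c)-1$ there; but the split into leading and non-leading cases, and the attendant case analysis on $n+s$, is avoidable. The ``main obstacle'' you flag --- tracking the Euler-class substitutes through repeated twists --- is exactly the content of the refinement in the proof of \thmref{thm:matter_gap}, and is shared by both approaches.
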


In fact, we have
\begin{equation*}
   \deg \int_{\bM^m(c)} \Phi_d'(\cE)
  = (2r - N_f) \dim V_1(c) + n(r - n) - d,
\end{equation*}
and all terms in the right hand side of \eqref{eq:error} has degrees
at most $(2r - N_f) \dim V_1(c)$ as $\dim V_1(c-c^{\mathfrak J}) \le
\dim V_1(c)$.
\begin{NB2}
  Editted according to the referee's remark 2010/04/01
\end{NB2}%
So the same argument works.

Let us state what we observed in the above proof as a general
structure theorem. Let $\Phi(\cE)$ be a multiplicative class in the
universal family $\cE$. Then
\begin{Theorem}\label{thm:structure}
  Let us fix $c_1$ with $0\le - (c_1,[C]) < r$.
  There exists a class $\Omega_j(\cE,t)$, which is a polynomial in
  $c_i(\cE)/[0]$ \rom($i=2,\dots,r$\rom) with coefficients in
  $H^*_{\C^*\times\C^*}(\pt)[[t]] = \C[\ve_1,\ve_2][[t]]$, and independent of
  $\Delta(c)$ such that
\begin{equation}\label{eq:Omega}
  \int_{\bM(c)} \Phi(\cE)\exp(t\mu(C))
  = \sum_{j\ge 0} \int_{M(p_*(c)+j\pt)} \Phi(\cE) \Omega_j(\cE,t).
\end{equation}
Moreover $\Omega_j(\cE,t)$ is unique if $\Phi(\cE)\neq 0$ for
$H^*_{\hT}(M(r,0,0)) = H^*_{\hT}(\pt) = S(\hT)$.
\end{Theorem}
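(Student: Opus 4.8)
The plan is to obtain existence of the $\Omega_j$ by repackaging the recursion of \S\ref{subsec:blowup} --- this is the argument already implicit in the proof of \thmref{thm:matter_gap}, now run for a general multiplicative $\Phi$ and without the vanishing input --- and then to establish uniqueness by a triangular induction on the discriminant. For existence: since $\bM(c)\cong\bM^m(c)$ for $m\gg 0$, I would apply \eqref{eq:1st} and then the reduction steps of \S\ref{subsub:recursive}--\S\ref{subsub:Grassmann} (the flowchart of Figure~\ref{fig:flow}) to the class $\Phi(\cE)\exp(t\mu(C))$, obtaining a finite expression for $\int_{\bM(c)}\Phi(\cE)\exp(t\mu(C))$ as a sum of terms $\int_{M(p_*(c^\flat))}\Phi^\flat(\cEf)$. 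Because $c_1$ has been normalized so that $0\le -(c_1,[C])<r$, the first application of the $\shfO(C)$--twist in \S\ref{subsub:O(C)} already lands in the range $-r<(c_1,[C])\le 0$ treated by \S\ref{subsub:Grassmann}, so no twist $\shfO(mC)$ with $m>1$ is ever needed; tracking the first Chern class through $e_0=[C]-\frac12\pt$ (for which $p_*e_0=0$), through $e_n$ with $n\ge 1$ (for which $p_*e_n$ is a nonzero multiple of $\pt$), and through the Grassmann step (which alters $c_1$ by $(c_1,[C])[C]$ and leaves $p_*$ unchanged), one sees that every $c^\flat$ occurring satisfies $p_*(c^\flat)=p_*(c)+j\,\pt$ for some integer $j\ge 0$. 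Grouping the resulting terms by the value of $j$ and pushing the classes over the Grassmannians $\Gr(n,r)$ down by the projection formula produces an identity of the shape \eqref{eq:Omega}, with $\Omega_j(\cE,t)$ the class collected at $j$; the sum over $j$ is automatically finite for each $c$, since $M(p_*(c)+j\,\pt)=\emptyset$ once $j>\Delta(p_*(c))$.

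The second point is that $\Omega_j(\cE,t)$ has the stated shape. Each constituent of it --- the classes $\Psi^{\vec{j}}$ and $\Psi^{\vec{j}}_n$ of \S\ref{subsub:1st}, the Euler classes of $\fN(\cEf,C_n)$ and $\fN(C_n,\cEf)$ twisted by the $e^{\hbar}$'s, the Grassmann contribution ${}'\Phi$ of \S\ref{subsub:Grassmann}, and $\exp(t\mu(C))$ --- depends on the auxiliary universal sheaf $\cEf$ only through its Chern character, and not otherwise on $c^\flat$; this is exactly the remark recorded after \thmref{thm:m=0} (``$\Psi^j(\cEf)$ depends on $j$, but not on $c-je_m$ if we consider $\cEf$ as a variable''), applied at every node of the flowchart. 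Hence $\Omega_j(\cE,t)$ is genuinely independent of $\Delta(c)$. That it is a polynomial in the $c_i(\cE)/[0]$ ($i=2,\dots,r$) with coefficients in $\C[\ve_1,\ve_2][[t]]$ is clear for $\Phi$ by hypothesis, and for the other factors it follows from Atiyah--Bott localization on $\bp$: the $\mu$--map $\mu(C)=\Delta(\cE)/[C]$ and the sheaves $\fN(\cEf,C_n)$, $\fN(C_n,\cEf)$ are built from $C_n=\shfO_C(-n-1)$, which is supported on $C=p^{-1}(0)$, so localization rewrites them in terms of the restriction of $\cEf$ to the $\hT$--fixed locus lying over $0$, hence in terms of $c_i(\cEf)/[0]$ and $\ve_1,\ve_2$ alone.

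For uniqueness, suppose $\Omega_j$ and $\widetilde\Omega_j$ both satisfy \eqref{eq:Omega} for all $c$ with the given $c_1$, and set $\delta_j:=\Omega_j-\widetilde\Omega_j$. Subtracting the two identities and writing $N:=\Delta(p_*(c))\in\Z_{\ge 0}$, so that $M(p_*(c)+j\,\pt)=\emptyset$ for $j>N$ and $M(p_*(c)+N\,\pt)=M(r,0,0)=\pt$, we get
\begin{equation*}
  0=\sum_{j=0}^{N}\int_{M(p_*(c)+j\,\pt)}\Phi(\cE)\,\delta_j(\cE,t)
\end{equation*}
for every $N\ge 0$, a system triangular in $N$. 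Taking $N=0$, the sole term is $\int_{\pt}\Phi(\cE)\,\delta_0(\cE,t)=\Phi(\cE)|_{\pt}\cdot\delta_0\bigl((c_i(\cE)/[0]|_{\pt})_{i=2}^{r},t\bigr)$, where the $c_i(\cE)/[0]|_{\pt}$ are the elementary symmetric functions of the framing weights, which together with $\ve_1,\ve_2$ are algebraically independent, so $x_i\mapsto c_i(\cE)/[0]|_{\pt}$ is injective on $\C[\ve_1,\ve_2][[t]][x_2,\dots,x_r]$. Since $\Phi(\cE)|_{\pt}\in S(\hT)$ is nonzero by hypothesis and $S(\hT)$ is an integral domain, it follows that $\delta_0=0$; inductively, once $\delta_0=\dots=\delta_{N-1}=0$, the $N$-th identity reduces to $\Phi(\cE)|_{\pt}\cdot\delta_N\bigl((c_i(\cE)/[0]|_{\pt})_{i=2}^{r},t\bigr)=0$, whence $\delta_N=0$. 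Therefore $\Omega_j=\widetilde\Omega_j$ for all $j$.

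The main obstacle is the bookkeeping in the existence step: one must verify that, under the normalization $0\le -(c_1,[C])<r$, the flowchart never leaves the single family of moduli spaces $M(p_*(c)+j\,\pt)$, $j\ge 0$, and that the classes $\Omega_j$ collected at the end are manifestly independent of $\Delta(c)$ and polynomial in the $c_i(\cE)/[0]$ --- in other words, that the whole recursion is ``universal''. Once this is checked, the triangular induction needed for uniqueness is routine.
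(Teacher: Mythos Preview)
Your existence and uniqueness arguments follow the paper's approach, and the uniqueness induction is correct. However, there is a genuine gap in your justification that $\Omega_j(\cE,t)$ is a polynomial in $c_i(\cE)/[0]$ for $i=2,\dots,r$ \emph{only}. Your localization argument shows that $\fN(\cEf,C_n)$, $\fN(C_n,\cEf)$, and $\mu(C)$ can be expressed via the restriction of $\cEf$ over $0$, hence in terms of $c_i(\cEf)/[0]$ for \emph{all} $i\ge 2$ --- but it does not explain why the expression can be truncated to $i\le r$. For a torsion-free sheaf that fails to be locally free at $0$, the classes $c_i(\cE)/[0]$ are generally nonzero for $i>r$ (already for an ideal sheaf of points one has $c_2\neq 0$ in rank $1$), so the bound $i\le r$ is not automatic.

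The paper closes this gap with an extra recursion that you omit. On $\bM^1(c')$ (as opposed to $\bM^0(c')\cong M(p_*(c'))$), the $1$-stability condition forces $\Ext^0_{q_2}(\shfO_C(-1),\cE)=0=\Ext^2_{q_2}(\shfO_C(-1),\cE)$ (see the remark after \lemref{lem:Ext+}), so $q_{2*}([\cE]\otimes[\shfO_C])=-\Ext^1_{q_2}(\shfO_C(-1),\cE)$ is an honest rank-$r$ vector bundle and $c_i(\cE)/[0]=0$ for $i>r$ there. Since the difference between integrals over $\bM^1$ and $\bM^0$ is itself, by \thmref{thm:m=0}, a sum of integrals over strictly smaller moduli spaces, one can recursively eliminate every occurrence of $c_i(\cE)/[0]$ with $i>r$, at the cost of redistributing terms among the various $\Omega_j$'s. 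This step is essential to the statement as written and is not supplied by localization alone.
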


For the theory with matters, the coefficients of $\Omega_j(\cE,t)$ are
in $H^*_{\C^*\times\C^*\times(\C^*)^{N_f}}(\pt)[[t]] =
\C[\ve_1,\ve_2,m_1,\dots,m_{N_f}][[t]]$.

\begin{proof}
As in the derivation of \eqref{eq:error}, we obtain a formula as
above, where $M(p_*(c)+j\pt)$ is replaced by $\bM^0(p^*(p_*(c) +
j\pt))$ and $\Omega_j(\cE,t)$ is a polynomial in Chern classes of
$q_{2*}([\cE]\otimes[\shfO_C(m)])$, $q_{2*}([\cE]^\vee
\otimes[\shfO_C(m)])$ of various $m$.
This $\Omega_j(\cE,t)$ is independent of $\Delta(c)$, as
\begin{itemize}
\item $\Psi^j(\cEf)$ in \thmref{thm:m=0} depends only on $j$,
\item the choice, whether we perform twist by $\shfO(C)$ or not, is
  determined by $c_1$, and
\item the Grassmannian in \propref{prop:Grassmann} is determined by
$r$ and $c_1$.
\end{itemize}
Thus it only remains to show that we can further replace $\Omega_j$ so
that it is a polynomial in $c_i(\cE)/[0]$ ($i=2,\dots, r$).

By the Grothendieck-Riemann-Roch theorem, these classes can be
expressed by $c_i(\cE)/[0] = c_i(q_{2*}([\cE] \otimes[\shfO_C]))$.
Note that
\(
  q_{2*}([\cE] \otimes[\shfO_C])
  = -\sum_{a=0}^2 (-1)^a \Ext^a(\shfO_C(-1),\cE).
\)
If we cross back the wall from $\bM^0$ to $\bM^1$,
\begin{NB2}
  Editted according to the referee's suggestion. 2010/04/01
\end{NB2}%
we have $\Ext^0_{q_2}(\shfO_C(-1),\cE) = 0 =
\Ext^2_{q_2}(\shfO_C(-1),\cE)$ on $\bM^1$ by the remark after
\lemref{lem:Ext+} below. Therefore $\Ext^1_{q_2}(\shfO_C(-1),\cE)$ is a
vector bundle of rank $r$, and $c_i(\cE)/[0]$ vanishes for $i > r$.
Since the difference of the integrals over $\bM^1$ and $\bM^0$ are
expressed by integrals over small moduli spaces, we can eventually
express $\Omega_j$ as a polynomial in $c_i(\cE)/[0]$ ($i=2,\dots, r$)
by a recursion.

Let us show the uniqueness of $\Omega_j(\cE,t)$ by a recursion on $j$.
Let us take the smallest possible $\Delta(c)$ with
$\bM(c)\neq\emptyset$, i.e., the case when $c+(c_1,[C])e_0 = (r,0,0)$
(cf.\ \propref{prop:Grassmann}). Then we only have the term with $j=0$
in the right hand side of \eqref{eq:Omega}. In this case, $p_*(c) =
(r,0,0)$ and the moduli space $M(r,0,0)$ is a single point. Therefore
$\Omega_0(\cE,t)$ is determined by \eqref{eq:Omega}.

Now suppose that $\Omega_j(\cE,t)$ with $j<n$ are determined. Then we
take $c$ whose $\Delta(c)$ is $n$ larger than the previous smallest
$\Delta(c)$. Then $j$ in the summation in \eqref{eq:Omega} runs from
$0$ to $n$. Moreover $M(p_*(c)+n\pt) = M(r,0,0)$ is a single point
again. Therefore $\Omega_n(\cE,t)$ is determined from \eqref{eq:Omega}
and $\Omega_j(\cE,t)$ with $j<n$.
\end{proof}

Suppose that the degree of $\int_{\bM(c)} \Phi(\cE)$ is of a form
$\gamma\Delta(c) + a(c_1,[C]) + b$ for some constants $\gamma$, $a$
and $b$, depending only on $r$. We further assume that $\gamma > 0$,
as in the case $\gamma = 2r - N_f > 0$ for the theory with matters.
Then
\begin{equation*}
  \begin{split}
  & \deg \int_{\bM(c)}\Phi(\cE)\mu(C)^d = \gamma\Delta(c) + a(c_1,[C]) + b - d,
\\
  & \deg \int_{M(p_*(c)+j\pt)}\Phi(\cE)\Omega_j(\cE,t) 
  \le \gamma(\Delta(p_*(c)) - j) + b,
  \end{split}
\end{equation*}
Since we are fixing $(c_1,[C])$ as in \thmref{thm:structure}, the two
degree cannot match if $j$ is too large. This means that we only need
to calculate finitely many $\Omega_j(\cE,t)$ to determine
$\int_{\bM(c)}\Phi(\cE) \mu(C)^d$ for a fixed $d$. (In practice, the
maximal $j$ can be calculated explicitly.)

\begin{NB}
\subsubsection{$c_1\neq 0$ case}
We have
\begin{multline*}
  \deg \int_{\bM^{m+1}(c)} \Phi_d(\cE)
  = \deg \int_{\bM^m(c)} \Phi_d(\cE)
\\
  = \dim \bM^m(c) - N_f \dim V_0(c) - d
\\
  \begin{NB2}
    = r(v_0(c) + v_1(c)) - (v_0(c) - v_1(c))^2 - N_f v_0(c) - d
    = (2r - N_f)v_1(c) + (r - N_f)(v_0(c) - v_1(c)) - (v_0(c) - v_1(c))^2 - d
  \end{NB2}
  \\
  = - (2r - N_f) \dim V_1 + (r - N_f - (v_0(c) - v_1(c)))(v_0(c) -
  v_1(v)) - d
\end{multline*}

\begin{multline*}
  \deg \int_{\bM^{m+1}(c)} \Phi_d'(\cE)
  = \deg \int_{\bM^m(c)} \Phi_d'(\cE)
\\
  = \dim \bM^m(c) - N_f \dim V_1(c) - d
\\
  \begin{NB2}
    = r(v_0(c) + v_1(c)) - (v_0(c) - v_1(c))^2 - N_f v_1(c) - d
    = (2r - N_f)v_1(c) + r(v_0(c) - v_1(c)) - (v_0(c) - v_1(c))^2 - d
  \end{NB2}
  \\
  = - (2r - N_f) \dim V_1 + (r - (c_1,[C]))(c_1,[C]) - d.
\end{multline*}
\end{NB}

\subsection{$K$-theory version}\label{subsec:K}
We derive the wall-crossing formula for a $K$-theoretic integration
via the Grothendieck-Riemann-Roch formula.

Let $\td(\alpha)$ be the Todd class of a $K$-theory class $\alpha$ on
various moduli spaces. The Todd class of the tangent bundle $T_M$ of a
variety $M$ is denoted by $\td M$. We have
\begin{equation*}
   \td \bM^m(c) = \td (\fN(\cE,\cE)).
\end{equation*}
For integers $d$, $l$ and $a=0$ or $1$, we consider
\begin{equation}\label{eq:Kth}
   \Phi(\cE) = \td (\fN(\cE,\cE)) \exp(l c_1 (\Vcal_a(\cE)))
   \exp(-d \ch_2(\cE)/[C]).
   % \exp(d \mu(C)).
\end{equation}
\begin{NB}
  There is a correction term if $(c_1,[C])\neq 0$. But it is
  immaterial, as it is `constant' over $\bM^m(c)$.
\end{NB}%
By a discussion in \cite[several paragraphs preceding Def.~2.1]{NY3},
$-\ch_2(\cE)/[C]$ is the first Chern class of an equivariant line
bundle up to a (rational) cohomology class in $H^2_{\hT}(\pt)$, which
is zero if $(c_1,[C]) = 0$. Since this difference is immaterial in the
following discussion (in particular in \thmref{thm:Kvanish}), we
identify $-\ch_2(\cE)/[C]$ with the equivariant line bundle, which we
denote by $\mu(C)$. (This is the same as $\mu(C)$ in the previous
subsection up to a class in $H^2_{\hT}(\pt)$.)
Then the equivariant Riemann-Roch theorem (see e.g., \cite{Jos}) we have
\begin{equation*}
  \int_{\bM^m(c)} \Phi(\cE) 
  = \tau\left( \widehat\pi_*(\mu(C)^{\otimes d}
    \otimes \det\Vcal_a(\cE)^{\otimes l})\right),
\end{equation*}
where $\tau$ is the equivariant Todd homomorphism $\tau\colon
K^{\hT}(M_0(p_*(c)))\to H^{\hT}_*(M_0(p_*(c)))$ for the Uhlenbeck
partial compactification $M_0(p_*(c))$, and $\widehat\pi_*$ is the
push-forward homomorphism in the equivariant $K$-theory.

We have
\begin{equation*}
   \td (\fN(\cE,\cE)) 
   = \td(\fN(\cE_1,\cE_1))
   \td(\fN(\cE_1,\cE_2))\td(\fN(\cE_2,\cE_1))
   \td(\fN(\cE_2,\cE_2))
\end{equation*}
if $\cE = \cE_1\oplus \cE_2$. Then from \thmref{thm:m=0} we get
\begin{equation*}
  \int_{\bM^{m+1}(c)} \Phi(\cE)
  - \int_{\bM^{m}(c)} \Phi(\cE)
  = 
  \sum_{j=1}^\infty
   \int_{\bM^{m}(c - je_m)} \Phi(\cEf)\cup
   \Res_{\hbar_{j}=0} \cdots \Res_{\hbar_{1}=0}\Psi^{j}(\cEf),
\end{equation*}
where
\begin{gather*}%\label{eq:Psi}
  \Psi^{j}(\bullet) \defeq
    \frac1{j!}
    \left[
      \prod_{i=1}^j
      \frac{
      \exp\left(l c_1(\Vcal_a(C_m \otimes e^{- \hbar_i}))
      - d \ch_2(C_m\otimes e^{-\hbar_i})/[C]\right)
    }{
      e^K(\fN(\bullet,C_m)
      \otimes e^{-\hbar_i})
      \, e^K(\fN(C_m,\bullet)
      \otimes e^{\hbar_i})
    }
    \prod_{1\le i_1\neq i_2 \le j} 
    ( 1 - e^{\hbar_{i_1}-\hbar_{i_2}})
   \right].
\end{gather*}

Here $e^K$ is the (Chern character of) $K$-theoretic Euler class:
\begin{equation*}
   e^K(\alpha) = e(\alpha) \td(\alpha)^{-1}
   = \sum_{p=0}^\infty (-1)^p \ch (\Wedge^p \alpha^\vee), 
\end{equation*}
where $e(\alpha)$ is the usual Euler class as before.
\begin{NB2}
  Editted according to the referee's suggestion. 2010/04/01
\end{NB2}%

\begin{NB}
  \begin{equation*}
    \td(\fN(C_m,C_m)) = \td( - \shfO) = 1
  \end{equation*}
\end{NB}

Strictly speaking, we need to consider the completion
$\C[\hbar_i^{-1},\hbar_i]]$ for the coefficient rings of the localized
equivariant homology groups of moduli spaces, as for example,
$e^{-\hbar_i}$ is not allowed. Here $\C[\hbar_i^{-1},\hbar_i]]$ is the
algebra of formal power series $\sum a_j \hbar^j$ such that $\{ j <
0\mid a_j\neq 0\}$ is finite.
The modification appears only at \subsecref{subsec:Euler} and the
beginning of \subsecref{subsec:fixedptformula}, and the rest of the
proof remains unchanged.

Observe that $\hbar_i$ appears always as a function in
$x_i\defeq e^{-\hbar_i} - 1$ in the above formula. We change
the coefficient ring from $\C[\hbar_i^{-1},\hbar_i]]$ to
$\C[x_i^{-1},x_i]]$.

We have
\begin{equation*}
   \Res_{\hbar_i=0} f(e^{-\hbar_i}-1) = - \Res_{x_i=0} \frac{f(x_i)}{x_i+1}
\end{equation*}
as $d x_i = - e^{-\hbar_i} d\hbar_i = -(x_i + 1)d\hbar_i$. Therefore

\begin{Theorem}\label{thm:K}
\begin{equation*}
  \int_{\bM^{m+1}(c)} \Phi(\cE)
  - \int_{\bM^{m}(c)} \Phi(\cE)
  = 
  \sum_{j=1}^\infty
   \int_{\bM^{m}(c - je_m)} \Phi(\cEf)\cup
   \Res_{x_{j}=0} \cdots \Res_{x_{1}=0}\Psi^{j}(\cEf),
\end{equation*}
where
\begin{gather*}%\label{eq:Psi}
  \Psi^{j}(\bullet) \defeq
    \frac1{j!}
%    \left[
      \prod_{i=1}^j
      \frac{
      \exp\left(l c_1(\Vcal_a(C_m \otimes (1 + x_i)))
      - d \ch_2(C_m\otimes (1+x_i))/[C]\right)
    }{
      e^K(\fN(\bullet,C_m)
      \otimes (1+x_i))
      \, e^K(\fN(C_m,\bullet)
      \otimes \frac1{1 + x_i})
      (-(1+x_i))
    }
\\
    \times \prod_{1\le i_1\neq i_2 \le j} 
    \frac{x_{i_1} - x_{i_2}}{1 + x_{i_1}}
%   \right]
   .
\end{gather*}
\end{Theorem}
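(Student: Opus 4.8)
The plan is to derive \thmref{thm:K} from \thmref{thm:m=0} applied to the specific multiplicative class $\Phi(\cE)$ of \eqref{eq:Kth}, together with the equivariant Grothendieck-Riemann-Roch identity recalled above, and then to carry out the substitution $x_i = e^{-\hbar_i}-1$ in the iterated residues. So the only genuinely new ingredient compared with \thmref{thm:m=0} is a change of variables; everything else is bookkeeping of Todd classes.

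First I would record that $\Phi(\cE)$ in \eqref{eq:Kth} is multiplicative up to cross terms: since $\fN(\cdot,\cdot)$ is biadditive and $\td$ converts $K$-theory sums into products, while $c_1(\Vcal_a(\cdot))$ and $\ch_2(\cdot)/[C]$ are additive, one has $\Phi(\cE_1\oplus\cE_2) = \Phi(\cE_1)\,\Phi(\cE_2)\,\td(\fN(\cE_1,\cE_2))\,\td(\fN(\cE_2,\cE_1))$. Plugging $\cE = \cEf\oplus\bigoplus_{i=1}^j C_m\boxtimes e^{-\hbar_i}$ into \thmref{thm:m=0}, the $\td(\fN(\cEf,\cEf))$-part reassembles $\Phi(\cEf)$; the diagonal pieces $\td(\fN(C_m,C_m))$ equal $\td(-\shfO)=1$ because $\sum_a(-1)^a\Ext^a_{q_2}(C_m,C_m)$ is the trivial line bundle; and the remaining Todd cross factors multiply into the denominator of $\Psi^j(\cEf)$ and, via $e^K(\alpha)=e(\alpha)\td(\alpha)^{-1}$, turn the cohomological Euler classes $e(\fN(\cEf,C_m)\otimes e^{-\hbar_i})$, $e(\fN(C_m,\cEf)\otimes e^{\hbar_i})$ into their $K$-theoretic versions $e^K$, while the $i_1\neq i_2$ cross terms among the exceptional summands turn $\prod_{i_1\neq i_2}(-\hbar_{i_1}+\hbar_{i_2})$ into $\prod_{i_1\neq i_2}(1-e^{\hbar_{i_1}-\hbar_{i_2}})$. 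Using the equivariant Riemann-Roch identity $\int_{\bM^m(c)}\Phi(\cE)=\tau(\widehat\pi_*(\mu(C)^{\otimes d}\otimes\det\Vcal_a(\cE)^{\otimes l}))$ together with the analogous additivity of $\td(\fN(\cE,\cE))$, this yields the $\hbar$-version of the wall-crossing formula stated just before \thmref{thm:K}.

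Next I would substitute $x_i := e^{-\hbar_i}-1$ in each of the $j$ residues. Every $\hbar_i$ enters only through $e^{-\hbar_i}=1+x_i$ (hence $e^{\hbar_i}=(1+x_i)^{-1}$ and $e^{\hbar_{i_1}-\hbar_{i_2}}=(1+x_{i_2})/(1+x_{i_1})$), so $\Psi^j$ rewrites factor by factor; in particular $1-e^{\hbar_{i_1}-\hbar_{i_2}}=(x_{i_1}-x_{i_2})/(1+x_{i_1})$, and $\ch_2(C_m\otimes e^{-\hbar_i})/[C]$, $c_1(\Vcal_a(C_m\otimes e^{-\hbar_i}))$ become $\ch_2(C_m\otimes(1+x_i))/[C]$, $c_1(\Vcal_a(C_m\otimes(1+x_i)))$. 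Since $dx_i=-(1+x_i)\,d\hbar_i$ and the substitution is an analytic isomorphism at the origin, $\Res_{\hbar_i=0}f(e^{-\hbar_i}-1)=-\Res_{x_i=0}f(x_i)/(1+x_i)$, so the $j$ iterated residues produce exactly the Jacobian factors $-(1+x_i)^{-1}$, which I absorb into the denominator as the factor $-(1+x_i)$ next to $e^K(\fN(C_m,\bullet)\otimes(1+x_i)^{-1})$. This reproduces $\Psi^j(\bullet)$ precisely as written in \thmref{thm:K}. One point requiring care throughout is that $e^{-\hbar_i}$ is not polynomial, so the localized equivariant homology must be taken over the completion $\C[\hbar_i^{-1},\hbar_i]]$, and over $\C[x_i^{-1},x_i]]$ afterwards; the residue operations remain well defined there, and this is the only place where the argument departs from the proof of \thmref{thm:m=0}.

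The hard part will be the first step: correctly distributing all the Todd factors coming from $\td(\fN(\cE,\cE))$ under $\cE\mapsto\cEf\oplus\bigoplus_i C_m\boxtimes e^{-\hbar_i}$ among $\Phi(\cEf)$, the $e^K$-denominators, and the $\prod_{i_1\neq i_2}$-numerator, keeping every sign and every $e^{\pm\hbar_i}$-twist straight (in particular verifying $\td(\fN(C_m,C_m))=1$ and the symmetry of the $i_1\neq i_2$ product); and, more conceptually, making the equivariant Riemann-Roch step rigorous on the possibly singular Uhlenbeck compactification $M_0(p_*(c))$, where one invokes the equivariant Todd homomorphism $\tau$ of the cited references. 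After that, the change of variables in the previous paragraph is a routine computation.
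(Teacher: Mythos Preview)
Your proposal is correct and follows essentially the same route as the paper: apply \thmref{thm:m=0} to the Todd-class integrand \eqref{eq:Kth}, use the biadditivity of $\fN(\cdot,\cdot)$ to split $\td(\fN(\cE,\cE))$ into $\Phi(\cEf)$, cross Todd factors that convert $e(\cdot)$ into $e^K(\cdot)$, and the $i_1\neq i_2$ pieces giving $\prod(1-e^{\hbar_{i_1}-\hbar_{i_2}})$; then perform the substitution $x_i=e^{-\hbar_i}-1$ with the residue identity $\Res_{\hbar_i=0}f(e^{-\hbar_i}-1)=-\Res_{x_i=0}f(x_i)/(1+x_i)$, absorbing the Jacobian as the $-(1+x_i)$ in the denominator. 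Your account is in fact more explicit than the paper's (e.g.\ you spell out $\td(\fN(C_m,C_m))=\td(-\shfO)=1$ and the rewriting $1-e^{\hbar_{i_1}-\hbar_{i_2}}=(x_{i_1}-x_{i_2})/(1+x_{i_1})$), and you correctly flag the need to work in the completed coefficient rings $\C[\hbar_i^{-1},\hbar_i]]$ and $\C[x_i^{-1},x_i]]$, which is exactly where the paper notes the only departure from the proof of \thmref{thm:m=0}.
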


In view of \propref{prop:Grassmann} the following is useful to replace
the $K$-theoretic integration on $\bM^1(c)$ by one on $\widehat
N(c,n)$.
\begin{Lemma}\label{lem:higher}
  Consider the diagram in \propref{prop:Grassmann}. We have
  \begin{equation*}
    {\mathbf R}f_{1*}(\shfO_{\widehat N(c,n)}) = \shfO_{\bM^1(c)}.
  \end{equation*}
\end{Lemma}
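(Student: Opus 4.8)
The plan is to recognise $f_1\colon\widehat N(c,n)\to\bM^1(c)$ as a resolution of singularities of the \emph{smooth} variety $\bM^1(c)$, and then invoke the standard fact that a smooth variety has rational singularities: for \emph{any} proper birational morphism $g\colon Y\to X$ from a smooth $Y$ to a smooth $X$ one has ${\mathbf R}g_*\shfO_Y=\shfO_X$.

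First I would check the hypotheses. The target $\bM^1(c)$ is a smooth variety by \thmref{thm:quiver}. The source $\widehat N(c,n)$ is, by \propref{prop:Grassmann}(2), the Grassmann bundle $\Gr(n,\Ext^1_{q_2}(\shfO_C(-1),\cE'))$ over $\bM^1(c-ne_0)$; since $\bM^1(c-ne_0)$ is smooth and the fibres are Grassmannians, $\widehat N(c,n)$ is smooth. The morphism $f_1$ is birational by \propref{prop:Grassmann}(1). It remains to see that $f_1$ is proper. Both $\bM^1(c)$ and $\bM^1(c-ne_0)$ carry the projective morphism $\widehat\pi$ to the Uhlenbeck space $M_0(p_*(c))$ (note $p_*(e_0)=0$, so $p_*(c-ne_0)=p_*(c)$), and $f_2$ is projective, so $\widehat N(c,n)$ is projective over $M_0(p_*(c))$; moreover the short exact sequence in \propref{prop:Grassmann}(3) exhibits $f_1$ as an elementary modification supported on $C$, which does not change the associated point of the Uhlenbeck space, so $\widehat\pi\circ f_1=\widehat\pi\circ f_2$. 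A morphism from a scheme proper over $M_0(p_*(c))$ to a scheme separated over $M_0(p_*(c))$ is proper, hence $f_1$ is proper. Thus $f_1$ is a proper birational morphism from the smooth $\widehat N(c,n)$ onto the smooth $\bM^1(c)$, i.e.\ a resolution of singularities of $\bM^1(c)$.

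The conclusion is then immediate. For the zeroth direct image, the Stein factorisation of the proper birational $f_1$ is a finite birational morphism over the normal scheme $\bM^1(c)$, hence an isomorphism, so $f_{1*}\shfO_{\widehat N(c,n)}=\shfO_{\bM^1(c)}$. For the higher direct images, since $\bM^1(c)$ is smooth it has rational singularities, so $R^if_{1*}\shfO_{\widehat N(c,n)}=0$ for $i>0$ — a standard consequence of resolution of singularities in characteristic zero (for instance, dominate $f_1$ by a composite of blow-ups of $\bM^1(c)$ along smooth centres, for each of which the higher direct images of the structure sheaf vanish by a local computation, and compose). Combining the two statements gives ${\mathbf R}f_{1*}(\shfO_{\widehat N(c,n)})=\shfO_{\bM^1(c)}$.

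I expect the only non-formal point to be the properness of $f_1$, which relies on unwinding the construction of $\widehat N(c,n)$ in \cite[\S3.10]{perv2} enough to see its compatibility with $\widehat\pi$; the higher-direct-image vanishing is then off the shelf. If one preferred to avoid resolution theory altogether, the alternative is to compute the fibres of $f_1$ directly from \propref{prop:Grassmann}(3) — the fibre over a $1$-stable $(E,\Phi)$ is an open subset of a Schubert-type subvariety of a Grassmannian, hence has $H^0(\shfO)=\C$ and $H^{>0}(\shfO)=0$ — and then combine this with the theorem on formal functions along the stratification of $\bM^1(c)$ by the jump loci of $\dim\Hom(E,\shfO_C(-1))$; this route is more laborious, which is why I would take the one above.
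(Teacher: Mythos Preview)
Your approach is correct and is essentially the same as the paper's: the paper's proof is a one-liner, observing that $f_1$ is a proper birational morphism between smooth varieties and citing the standard result (e.g.\ \cite[\S5.1]{KollarMori}). You have simply filled in the verification of the hypotheses (smoothness of both sides, properness of $f_1$ via compatibility with $\widehat\pi$) that the paper takes for granted from the construction in \cite[\S3.10]{perv2}.
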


\begin{proof}
  Since $f_1$ is a proper birational morphism between smooth
  varieties, this is a well-known result (see e.g.,
  \cite[\S5.1]{KollarMori}).
\end{proof}

In the remaining of this section we study the vanishing theorem for
small $d$.

\begin{Theorem}\label{thm:Kvanish}
Assume $0\le l\le r$.

\textup{(1)}
Suppose $(c_1,[C]) = 0$. If $0\le al + d\le r$,
\begin{equation*}
  \int_{\bM(c)} \Phi(\cE)
  = \int_{M(p_*(c))} \td(\fN(\cE,\cE)) \exp(l c_1 (\Vcal(\cE))),
\end{equation*}
where $\Vcal(\cE)$ is the vector bundle defined as in
\thmref{thm:matter_gap}.

\textup{(2)} Suppose $a=1$ and $0 < (c_1,[C]) < r$.
If $0 < d \le \min(r+(c_1,[C]) - l,r-1)$,
\begin{equation*}
  \int_{\bM(c)} \Phi(\cE) = 0.
\end{equation*}
\end{Theorem}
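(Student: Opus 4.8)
The plan is to run the blow-up machinery of \subsecref{subsec:blowup} in its $K$-theoretic form --- \thmref{thm:K} together with \lemref{lem:higher} --- along the lines of the proofs of \thmref{thm:matter_gap} and \thmref{thm:structure}, but replacing the cohomological degree counting used there by a Borel--Weil--Bott vanishing for line bundles on the Grassmannian (and projective-space) fibres that appear. Since, by the equivariant Riemann--Roch formula recalled above, $\int_{\bM^m(c)}\Phi(\cE) = \tau\bigl(\widehat\pi_*(\mu(C)^{\otimes d}\otimes\det\Vcal_a(\cE)^{\otimes l})\bigr)$ with $\mu(C)$ viewed as an equivariant line bundle, it is enough to prove the corresponding identity of classes in equivariant $K$-theory before applying $\tau$; and $K$-theoretic Euler characteristics are precisely what Bott vanishing controls.

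First I would isolate the diagonal term. Running the flowchart, \thmref{thm:K} expresses $\int_{\bM(c)}\Phi(\cE)$ as a ``no wall-crossing, no twist'' term over $\bM^0(c_\flat)$, plus correction terms, each of which is a $K$-theoretic integral over a strictly smaller moduli space obtained by pushing forward along a Grassmann bundle $f_2\colon\widehat N(c,n)\to\bM^1(\cdot)$ of \propref{prop:Grassmann} and along the projective/Grassmann fibres produced by the iterated residues in $\Psi^j$. In case (1), where $(c_1,[C])=0$, we have $\bM^0(c)\cong M(p_*(c))$ with $\mu(C)=0$ there, $\td(\fN(\cE,\cE))=\td M(p_*(c))$, and $\Vcal_a(\cE)$ restricting to $\Vcal(\cE)$ --- using \cite[Lemma~7.3]{perv} to see $\Vcal_0\to\Vcal_1$ is an isomorphism of vector bundles on $\bM^0(c)$ --- so the diagonal term is exactly the asserted right-hand side $\int_{M(p_*(c))}\td(\fN(\cE,\cE))\exp(l c_1(\Vcal(\cE)))$. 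In case (2) the $c_1$-normalisation twists $c$ by $\shfO(C)$ into the range $-r<(c_1,[C])<0$ and invokes \propref{prop:Grassmann}, so every surviving term --- including the one ultimately living over an $M(p_*(\cdot))$ --- carries a non-trivial Grassmann-bundle pushforward, and all of them are to be shown to vanish.

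The technical heart, and the main obstacle, is the fibre computation. Using $\mathbf{R}f_{1*}\shfO_{\widehat N(c,n)}=\shfO_{\bM^1(c)}$ (\lemref{lem:higher}) together with the short exact sequence of \propref{prop:Grassmann}(3), the integrand of each correction term descends along the Grassmann bundle $\Gr(n,\Ext^1_{q_2}(\shfO_C(-1),\cE'))$ and along the $\Gr(k,r)$-fibres produced by the residues; on each such fibre the integrand becomes $\td$ of the relative tangent bundle times a single line bundle, a power $\det\mathcal S^{\otimes\nu}$ of the Pl\"ucker bundle whose exponent $\nu$ is an explicit affine function of $d$, $l$, the level $m$ at which the wall is crossed, and $(c_1,[C])$ --- the shape of this dependence is already visible in \subsecref{subsec:Example}, where the fibre is $\proj^{r-1}$ and the residue produces the binomial coefficient $-\binom{2r-N_f-2}{r-1}$. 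One then checks that under the hypotheses $0\le al+d\le r$ in case (1) and $0<d\le\min(r+(c_1,[C])-l,r-1)$ in case (2), together with $0\le l\le r$, the exponent $\nu$ of every fibre line bundle lands in the singular window $-r<\nu<0$, where $H^\bullet(\Gr(k,r),\det\mathcal S^{\otimes\nu})=0$ by Borel--Weil--Bott, so the fibrewise $K$-theoretic pushforward, and hence the whole correction term, vanishes in equivariant $K$-theory. Keeping accurate track of how $\nu$ is assembled through the recursive flowchart --- following $\det\Vcal_a(\cE)^{\otimes l}$ and $\mu(C)^{\otimes d}$ through each $\otimes\shfO(C)$ (where $R^1q_{2*}$ of the twisted sheaf fails to be a bundle and must be treated as in the proof of \thmref{thm:matter_gap}) and through each residue, and verifying that the net exponent stays in $(-r,0)$ --- is the delicate bookkeeping.

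Finally, since \eqref{eq:dim_est} forces $\Delta(c_\flat)<\Delta(c)$ for every correction term while the twists applied --- hence the inequalities imposed on $d$ and $l$ --- depend only on $r$ and $c_1$, not on $\Delta$, a downward induction on $\Delta(c)$ with base case the one-point moduli space $M(r,0,0)$ propagates the vanishing of all corrections and yields both assertions.
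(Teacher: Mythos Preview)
Your proposal has a genuine gap: you are trying to kill the wall-crossing corrections by Bott vanishing on Grassmann fibres, but the corrections in \thmref{thm:K} are \emph{residues} in the formal variables $x_i$, not $K$-theoretic pushforwards along Grassmann bundles. There is no mechanism by which Borel--Weil--Bott applies to $\Res_{x_1=0}\Psi^j(\cEf)$ as such, and your assertion that on each fibre the integrand reduces to $\td(\text{rel.\ tangent})\otimes\det\mathcal S^{\otimes\nu}$ is not justified --- after the residue is taken, the class sitting on $\bM^m(c-je_m)$ involves Chern classes of $\fN(\cEf,C_m)$ and $\fN(C_m,\cEf)$, not merely a power of a determinant line.

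What the paper actually does is much more direct. One expands $\Psi^j(\cEf)$ from \thmref{thm:K} as a formal Laurent series in $x_1$; writing out the factors $e^K(\fN(\cEf,C_m)\otimes(1+x_1))^{-1}$, $e^K(\fN(C_m,\cEf)\otimes(1+x_1)^{-1})^{-1}$, $(1+x_1)^{l(m+a)+d}$ and the Vandermonde-type product explicitly, one finds $\Psi^j(\cEf)=x_1^{-N}f(x_1^{-1})$ with
\[
  N=(m+1)r+(c_1,[C])+j+1-l(m+a)-d = m(r-l)+(r-d)+(c_1,[C])-la+j+1.
\]
Under $0\le l\le r$ and $la+d\le r+(c_1,[C])$ one has $N\ge 2$, so $\Res_{x_1=0}\Psi^j(\cEf)=0$ for every $m\ge 0$ and every $j\ge 1$. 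Thus there are \emph{no} wall-crossing terms at all, and $\int_{\bM(c)}\Phi(\cE)=\int_{\bM^0(c)}\Phi(\cE)$ without any recursion or flowchart. This is the step you are missing.

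Only then does Bott vanishing enter, and only in case~(2): once the integral is on $\bM^0(c)$, the twist $\bM^0(c)\cong\bM^1(ce^{[C]})$ and \propref{prop:Grassmann} together with \lemref{lem:higher} reduce the problem to computing $f_{2*}(\det\mathcal S^{\otimes d})$ on a single Grassmann bundle $\Gr(n,r)$, since $f_1^*\mu(C)=\det\mathcal S$ up to a pullback from the base and $f_1^*\Vcal_0(\cE)=f_2^*\Vcal_0(\cE')$ is likewise basic. This pushforward vanishes for $0<d<r$ by Bott (or Kodaira), giving~(2). So you have the right tool for the endgame of~(2), but the reduction that makes it applicable is the residue-counting argument above, not a recursive Bott argument.
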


This result was conjectured in \cite[(1.37),(1.43)]{GNY2}.
\begin{NB}
  In \cite{GNY2}, we consider the case $0 > (c_1,[C]) > -r$.
\end{NB}

\begin{proof}
We study factors of $\Psi^j(\cEf)$ more closely.
Note that $\cEf$ is the universal sheaf for $\bM^m(c - je_m)$, hence
\begin{equation*}
    \rank \fN(C_m,\cEf) = (m+1)r + (c_1,[C]) + j,
\quad
    \rank \fN(\cEf,C_m) = mr + (c_1,[C]) + j.
\end{equation*}
If $\{ \alpha_a\}$, $\{ \beta_b \}$ are Chern roots of
$\fN(C_m,\cEf)$, $\fN(\cEf,C_m)$ respectively,
we have
\begin{equation}\label{eq:eK}
  \begin{split}
  & \frac{1}{e^K(\fN(C_m,\cEf)\otimes \frac1{1 + x_i})}
  = \frac{\exp({\sum_a \alpha_a})}{(-x_i)^{(m+1)r + (c_1,[C]) + j}}
    \prod_a \frac1{1 + \frac{1 - e^{\alpha_a}}{x_i}},
\\
  & \frac{1}{e^K(\fN(\cEf,C_m)\otimes (1 + x_i))}
  =
  \left(1 + \frac{1}{x_i}\right)^{mr + (c_1,[C]) + j}
  \prod_b \frac1{1 + \frac{1 - e^{-\beta_b}}{x_i}}.
  \end{split}
\end{equation}
On the other hand, we have
\begin{equation*}
  \exp(l c_1(\Vcal_a(C_m \otimes (1 + x_i))))
  = \exp(l c_1(\Vcal_a(C_m))) (1 + x_i)^{l(m+a)}
\end{equation*}
as $\rank \Vcal_a(C_m) = m+a$. Also
\begin{equation*}
  \exp(-d \ch_2(C_m\otimes (1+x_i))/[C])
  =   \exp(-d \ch_2(C_m)/[C])(1+x_i)^d,
\end{equation*}
as $\ch_1(C_m)/[C] = -1$.

Let us expand $\Psi^j(\cEf)$ into formal Laurent power series in
$x_1$. Note that we have the remaining factor
\begin{equation*}
  \frac1{(1 + x_1)}\prod_{i_2\neq 1} \frac{x_1 - x_{i_2}}{1 + x_1}
  =   \frac{\prod_{i_2\neq 1} ({x_1 - x_{i_2}})}{(1 + x_1)^j}
\end{equation*}
from $\Psi^j(\bullet)$ in \thmref{thm:K}.
This term can be absorbed into the second equality of
\eqref{eq:eK} as
\begin{equation*}
  \frac{\prod_{i_2\neq 1} ({x_1 - x_{i_2}})}{(1 + x_1)^j}
  \left(1 + \frac{1}{x_1}\right)^{mr + (c_1,[C]) + j}
  = \frac1{x_1}
  \left(1 + \frac{1}{x_1}\right)^{mr + (c_1,[C])}
  \prod_{i_2\neq 1} ( 1 - \frac{x_{i_2}}{x_1}).
\end{equation*}
Note that $mr + (c_1,[C])\ge 0$, and hence $(1 + \frac1{x_1})^{mr +
  (c_1,[C])}$ is a polynomial in $x_1^{-1}$.
We also write
\begin{equation*}
  (1+x_1)^{l(m+a)+d} = x_1^{l(m+a)+d} \left(1 + \frac1{x_1}\right)^{l(m+a)+d}
\end{equation*}
as a Laurent polynomial in $x_1^{-1}$. Note that we have
$l(m+a)+d \ge la + d\ge 0$ by our assumption.
Therefore we have
\begin{equation*}
  \Psi^j(\cEf) 
  = \frac1{x_1^N} f(x_1^{-1})
\end{equation*}
for some formal power series $f(x_1^{-1})$ in $x_1^{-1}$ with
\begin{equation*}
  \begin{split}
  N &= (m+1)r + (c_1,[C]) + j + 1 - l(m+a) - d
\\
    &= m(r - l) + (r - d) + (c_1,[C]) - la + j + 1.
  \end{split}
\end{equation*}
Since $0\le r-l$ and $0\le m$,
\begin{NB2}
  Editted according to the referee's suggestion. 2010/04/01
\end{NB2}%
the first term $m(r-l)$ is nonnegative. We also have $j\ge
1$. Therefore we have $N\ge 2$ if $d + la \le r + (c_1,[C])$.
This shows that there are no wall-crossing term, i.e.,
\begin{equation*}
   \int_{\bM(c)} \Phi(\cE) = \int_{\bM^0(c)}\Phi(\cE).
\end{equation*}

If $(c_1,[C]) = 0$, we have an isomorphism $\Pi\colon \bM^0(c)\to
M(p_*(c))$ given by $\Pi(E,\Phi) = (p_*(E),\Phi)$, $\Pi^{-1}(F,\Phi) =
(p^*(F),\Phi)$ (see \subsecref{subsub:c_1=0} for a precise
statement). Therefore the tangent bundles $\fN(\cE,\cE)$ for $\bM(c)$
and $M(p_*(c))$ are isomorphic to each other. Vector bundles
$\Vcal_0(\cE)$ for $\bM(c)$ and $M(p_*(c))$ are isomorphic to each
other from the description of $\Pi$ in \subsecref{subsub:c_1=0}. We
also have $\ch_2(\cE)/[C] = 0$. These show (1).

To show (2), we consider $\bM^0(c)\cong \bM^1(ce^{[C]})$ given by
$(E,\Phi)\mapsto (E(C),\Phi)$. Then \propref{prop:Grassmann} is
applicable. We then have
\begin{equation*}
%  \begin{split}
   \int_{\bM^0(c)} \Phi(\cE)
   = \int_{\bM^1(ce^{[C]})} \td(\fN(\cE,\cE))
   \exp(l c_1(\Vcal_0(\cE))) 
   \exp(-d \ch_2(\cE(-C))/[C])
% \\
%    &= 
%    \int_{\widehat N(ce^{[C]}, r- (c_1,[C]))} \td(\fN(\cE,\cE))
%    \exp(l c_1(\Vcal_0(\cE))) 
%    \exp(-d \ch_2(\cE(-C))/[C])
%   \end{split}
\end{equation*}
where $\cE$ in the right hand side is the universal sheaf
for $\bM^1(ce^{[C]})$.
As we explained in the beginning of this subsection, we may replace
this integral by
\begin{equation*}
  \tau(\widehat\pi_*(\mu(C)^{\otimes d}\otimes
  \det\Vcal_0(\cE)^{\otimes l})).
\end{equation*}
The difference between $\ch_2(\cE(-C))/[C]$ and $\ch_2(\cE)/[C]$ is
immaterial, as it is a class pulled back from $M_0(p_*(c))$.
\begin{NB2}
  Editted according to the referee's comment. 2010/04/02
\end{NB2}% 
By \lemref{lem:higher} and the projection formula, we can replace the
above by the push-forward from $N(ce^{[C]},n)$ with $n = r -
(c_1,[C])$, where $\mu(C)$, $\Vcal_0(\cE)$ are replaced by their
pull-backs by $f_1$.
From the exact sequence in \propref{prop:Grassmann}(3) we have
\begin{equation*}
  f_1^* \Vcal_0(\cE) = f_2^* \Vcal_0(\cE')
\end{equation*}
where $\cE'$ is the universal sheaf for $\bM^0(ce^{[C]} - ne_0)$.
We also have
\begin{equation*}
  f_1^*\mu(C) = \det \mathcal S
\end{equation*}
up to the pull-back of a line bundle from $\bM^0(ce^{[C]} - ne_0)$ by $f_2$.
Therefore it is enough to show that
\begin{equation*}
  f_{2*}(\det\mathcal S^{\otimes d}) = 0 \quad\text{for $0 < d < r$}.
\end{equation*}
Since $f_2$ is a Grassmann bundle of $n$-planes in a rank $r$ vector
bundle, this vanishing is a special case of Bott vanishing theorem
\cite{Bott} or a direct consequence of Kodaira vanishing theorem.
\begin{NB}
  Note first that $\det \mathcal S^\vee$ is $f_2$-ample. This shows
  $f_{2*}(\det \mathcal S^{\otimes d}) = 0$ for $d > 0$. The relative
  tangent bundle is $\mathcal S^{\vee} \otimes
  \Ext^1_{q_2}(\shfO_C(-1),\cE')/\mathcal S$. Therefore the relative
  canonical bundle is
  \begin{equation*}
    \det \mathcal S^{\otimes r-n} \otimes
    (\det \Ext^1_{q_2}(\shfO_C(-1),\cE')^\vee\otimes\det\mathcal S)^{\otimes n}
    = \det\mathcal S^{\otimes r} \otimes
    \det \Ext^1_{q_2}(\shfO_C(-1),\cE')^{\vee \otimes n}.
  \end{equation*}
  Now consider $d c_1(\mathcal S) = rc_1(\mathcal S) - (r-d)
  c_1(\mathcal S)$ with $r-d > 0$.
\end{NB}%
\end{proof}

\subsection{Casimir operators}\label{subsec:Casimir}

We generalize the vanishing result in the previous subsection to the
case when we integrate certain $K$-theoretic classes given by
universal sheaves on moduli spaces. 

Let $\psi^p$ be the $p^{\mathrm{th}}$ Adams operation (see e.g.,
\cite[I.\S6]{FL}). We will use it for $p < 0$ defined by
\(
   \psi^p(x) = \psi^{-p}(x^\vee)
\)
.
For indeterminates $\vec{\tau} = (\cdots, \tau_{-2},\tau_{-1},
\tau_1,\tau_2,\cdots)$ and $\vec{t} = (\cdots, t_{-2},t_{-1},
t_1,t_2,\cdots)$ we consider a generalization of \eqref{eq:Kth} with
$a=0$:
\begin{multline*}
  \Phi(\cE)
  = \td(\mathfrak N(\cE,\cE))\exp(lc_1(\Vcal_0(\cE))-d\ch_2(\cE)/[C])
\\
  \times
  \exp\left(\sum_{p\in\Z\setminus\{0\}}
    \tau_p \ch\left( \psi^p(\cE)/[\bC]\right)
    + t_p \ch \left(\psi^p(\cE)\otimes \shfO_C(-1)/[\bC]\right)
  \right),
\end{multline*}
where the $K$-theoretic slant product $\bullet/[\bC]$ is defined by
\(
  \bullet/[\bC] = q_{2*} (\bullet\otimes q_1^* p^*(K_{\C^2}^{1/2}))
\)
with $p\colon \bC\to\C^2$.
We have two remarks for the definition. First we define the
push-forward $q_{2*}$ by the localization formula, i.e., the sum of
the fixed point contributions, since $q_2$ is {\it not\/} proper. This
is a standard technique in instanton counting and its meaning was
explained in detail in \cite[\S4]{NY1}. Second $K_{\C^2}^{1/2}$ is the
trivial line bundle together twisted by the square root of the
character of $(\C^*)^2$, which we consider as a character of its
double cover.
Having the above two remarks in mind, we see that the above integral
is essentially defined by the equivariant $K$-theory push-forward as
before.

We expand $\int_{\bM^m(c)}\Phi(\cE)$ in $t_p$, $\tau_p$ and consider
coefficients
\begin{equation*}
  \int_{\bM^m(c)}\Phi(\cE)
  = 
  \sum_{\vec{n},\vec{m}} \prod_{p\neq 0} \tau_p^{n_p} t_p^{m_p}
  \int_{\bM^m(c)}\Phi_{\vec{n},\vec{m}}(\cE),
\end{equation*}
where $\vec{n} = (\cdots,n_{-1},n_1,\cdots)$, $\vec{m}
= (\cdots,m_{-1},m_1,\cdots)$.
If we set
\begin{equation*}
  \left(\pd{}{\vec{\tau}}\right)^{\vec{n}}
  \defeq \prod_{p\neq 0} \left(\pd{}{\tau_p}\right)^{n_p},
\quad
  \vec{n}\,! = \prod_{p\neq 0} n_p!,
\end{equation*}
we have
\begin{equation*}
  \begin{split}
  & \int_{\bM^m(c)}\Phi_{\vec{n},\vec{m}}(\cE)
  = \left.\frac1{\vec{n}\,!\; \vec{m}\,!}
  \left(\pd{}{\vec{\tau}}\right)^{\vec{n}}
  \left(\pd{}{\vec{t}}\right)^{\vec{m}}
  \int_{\bM^m(c)}\Phi(\cE)\right|_{\vec{\tau}=\vec{t}=0}
\\
  =\; &
  \frac1{\vec{n}\,!\; \vec{m}\,!} \int_{\bM^m(c)}
  \begin{aligned}[t]
  & \td(\mathfrak N(\cE,\cE))\exp(lc_1(\Vcal_0(\cE))-d\ch_2(\cE)/[C])
\\
  & \times 
  \ch\left(
    \bigotimes_p
    \left(\psi^p(\cE)/[\bC]\right)^{\otimes n_p}
    \otimes 
    \left(\psi^p(\cE)\otimes \shfO_C(-1)/[\bC]\right)^{\otimes m_p}
    \right).
  \end{aligned}
  \end{split}
\end{equation*}

\begin{Proposition}\label{prop:Casimirvanish}
Suppose $(c_1,[C]) = 0$.

  \textup{(1)} Assume the followings\textup:
  \begin{aenume}
    \item $0\le d + \sum_{p<0} p n_p + p m_p$, and
    \item $d + \sum_{p>0} p n_p + p m_p \le r$.
  \end{aenume}
  Then the wall-crossing term is zero, i.e.,
  \begin{equation*}
    \int_{\bM^m(c)}\Phi_{\vec{n},\vec{m}}(\cE)
    = \int_{\bM^0(c)}\Phi_{\vec{n},\vec{m}}(\cE).
  \end{equation*}

  \textup{(2)} We further assume $m_p\neq 0$ for some $p$. Then
  \begin{equation*}
    \int_{\bM^m(c)}\Phi_{\vec{n},\vec{m}}(\cE)
    = 0.
  \end{equation*}
\end{Proposition}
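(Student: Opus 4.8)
My plan is to rerun the proof of \thmref{thm:Kvanish}, carrying the extra Casimir factors through the residue computation, and then to invoke $\bM^0(c)\cong M(p_*(c))$ for the second assertion. First I would write down the Casimir analogue of \thmref{thm:K}: evaluating $\Phi$ at $\cEf\oplus\bigoplus_{i=1}^j C_{m'}\otimes e^{-\hbar_i}$ and using that $\psi^p$ is additive with $\psi^p(C_{m'}\otimes e^{-\hbar_i})=\psi^p(C_{m'})\otimes e^{-p\hbar_i}$, the argument deriving \thmref{thm:K} from \thmref{thm:m=0} goes through verbatim and produces, for the difference $\int_{\bM^{m'+1}(c)}\Phi(\cE)-\int_{\bM^{m'}(c)}\Phi(\cE)$, a $j$-th term which is an iterated residue over $x_1,\dots,x_j$ of $\Phi(\cEf)$ times the kernel of \thmref{thm:K} multiplied by the $x_i$-dependent scalar factor
\[
  \prod_{i=1}^j\exp\Bigl(\sum_{p\ne0}\bigl[\tau_p\,\ch(\psi^p(C_{m'})/[\bC])+t_p\,\ch(\psi^p(C_{m'})\otimes\shfO_C(-1)/[\bC])\bigr]\,(1+x_i)^p\Bigr),
\]
the coefficients of $(1+x_i)^p$ being $K$-theory constants independent of $x_i$ and of the moduli.

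For (1), extracting the coefficient of $\prod_p\tau_p^{n_p}t_p^{m_p}$ distributes $n_p,m_p$ over slots $i=0,1,\dots,j$ (slot $0$ being absorbed into $\Phi(\cEf)$), so that the $i$-th slot of the kernel acquires an extra scalar $(1+x_i)^{E_i}$, $E_i:=\sum_p p(n_p^{(i)}+m_p^{(i)})$, $\sum_i n_p^{(i)}=n_p$, $\sum_i m_p^{(i)}=m_p$. This simply replaces the exponent $lm'+d$ appearing in the proof of \thmref{thm:Kvanish} by $lm'+d+E_i$, and that proof (with $a=0$ and $(c_1,[C])=0$) then writes the $j$-th integrand as $x_1^{-N}$ times a power series in $x_1^{-1}$ with $N=m'(r-l)+(r-d-E_i)+j+1$. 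Hypotheses (a) and (b) give $0\le d+E_i\le r$ for every such distribution: since $0\le n_p^{(i)}\le n_p$ and $p<0$ reverses inequalities, $E_i\ge\sum_{p<0}p(n_p+m_p)$, so (a) gives $d+E_i\ge0$; and $E_i\le\sum_{p>0}p(n_p+m_p)$, so (b) gives $d+E_i\le r$. The first ensures $lm'+d+E_i\ge0$, so $(1+x_i)^{lm'+d+E_i}$ is an honest polynomial absorbed exactly as in \thmref{thm:Kvanish}; the second gives $N\ge j+1\ge2$, so $\Res_{x_1=0}$, and hence the whole iterated residue, vanishes. Summing the vanishing wall-crossing terms over $m'=0,1,\dots,m-1$ gives $\int_{\bM^m(c)}\Phi_{\vec{n},\vec{m}}(\cE)=\int_{\bM^0(c)}\Phi_{\vec{n},\vec{m}}(\cE)$.

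For (2), by (1) it suffices that $\int_{\bM^0(c)}\Phi_{\vec{n},\vec{m}}(\cE)=0$ when some $m_p\ne0$. Since $(c_1,[C])=0$, on $\bM^0(c)\cong M(p_*(c))$ we have $\cE=(p\times\Pi)^*\mathcal F$, and, $p$ contracting $C$ to the point $0\in\C^2$, the derived restriction of $\cE$ to $C\times\bM^0(c)$ is pulled back from $\bM^0(c)$ along the projection; concretely, using a two-term locally free resolution of $\cE$ near $C\times\bM^0(c)$ (which exists because $\mathcal F$ is torsion free on a smooth surface and $L^{<0}(p\times\Pi)^*\mathcal F=0$, cf.\ \subsecref{subsub:c_1=0}), this restriction equals $\pi^*v$ for a $K$-class $v$ on $\bM^0(c)$, $\pi\colon C\times\bM^0(c)\to\bM^0(c)$. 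Hence for every $p$ the restriction of $\psi^p(\cE)\otimes^L\shfO_C(-1)$ to $C\times\bM^0(c)$ is $\pi^*(\psi^p(v))\otimes\shfO_{\proj^1}(-1)$ up to an equivariant character, and, its support being proper over $\bM^0(c)$, its image under $\bullet/[\bC]$ is $\psi^p(v)\otimes(\text{character})\otimes R\Gamma(\proj^1,\shfO_{\proj^1}(-1))=0$ as $R\Gamma(\proj^1,\shfO_{\proj^1}(-1))=0$. Thus $\psi^p(\cE)\otimes\shfO_C(-1)/[\bC]=0$ in $K^{\hT}(\bM^0(c))$ for all $p$; when $m_p\ge1$ this forces $\bigotimes_q(\psi^q(\cE)/[\bC])^{\otimes n_q}\otimes(\psi^q(\cE)\otimes\shfO_C(-1)/[\bC])^{\otimes m_q}=0$, so $\Phi_{\vec{n},\vec{m}}(\cE)=0$ on $\bM^0(c)$ and the integral vanishes.

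The main obstacle is the bookkeeping of the $x_1$-expansion: one has to check that (a) is precisely the condition that keeps $lm'+d+E_i$ nonnegative (so that the computation of \thmref{thm:Kvanish} applies unchanged) and (b) precisely the one that forces $N\ge2$. For (2) the only point requiring care is that the operation $\bullet/[\bC]$, defined by localization because $q_2$ is not proper, genuinely returns $0$ on the acyclic sheaf $\shfO_C(-1)$, which reduces to $\cE$ being derived-constant along the exceptional $\proj^1$.
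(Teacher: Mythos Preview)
Your proposal is correct and follows essentially the same approach as the paper. For (1), the paper's proof is literally the one sentence ``$\psi^p(\cEf\oplus C_m\otimes(1+x_i)) = \psi^p(\cEf) + \psi^p(C_m)\otimes(1+x_i)^p$ as the Adams operation is a homomorphism; then the proof exactly goes as before,'' and you have correctly supplied the implicit details: the extra Casimir factors contribute only a scalar $(1+x_i)^{E_i}$ in each slot, and hypotheses (a),(b) bound $d+E_i$ in $[0,r]$ exactly as needed to rerun the $x_1$-expansion from \thmref{thm:Kvanish}. For (2), the paper argues more directly: since $\cE=(p\times\Pi)^*\mathcal F$ on $\bM^0(c)\cong M(p_*(c))$, the projection formula gives $\psi^p(\cE)\otimes\shfO_C(-1)/[\bC]=0$ immediately from $Rp_*\shfO_C(-1)=0$; your restriction-to-$C$ argument reaches the same vanishing $R\Gamma(\proj^1,\shfO(-1))=0$ by a slightly longer route, but it is equally valid.
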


\begin{proof}
(1)  We note that
  \begin{equation*}
%    \begin{split}
%    & 
    \psi^p(\cEf\oplus C_m\otimes(1+x_i)) 
    = \psi^p(\cEf) + \psi^p(C_m) \otimes(1+x_i)^p,
% \\
%     & \psi^k(C_m\otimes(1+x_i))\otimes \shfO_C(-1)
%     = \psi^k(C_m)\otimes \shfO_C(-1) \otimes(1+x_i)^k,
%     \end{split}
  \end{equation*}
as the Adams operation is a homomorphism. Then the proof exactly goes
as before.
\begin{NB}
  The assumption $d + \sum p n_p + p m_p$ is needed in stead of
$a l + d \ge 0$.
\end{NB}

(2) Recall that $\bM^0(c)\cong M(p_*(c))$ under $(E,\Phi)\mapsto
(p_*(E),\Phi)$. Then $\psi^k(\cE)\otimes\shfO_C(-1)/[\bC]$ vanishes,
since $p_*(\shfO_C(-1)) = 0$.
\end{proof}

\begin{NB}
Here is Kota's note.

\section{K-theoretic cases}

We set $x:=e^t-1$.
Then ${\Bbb C}[[t]]={\Bbb C}[[x]]$.

We understand the denominator of the
Todd class of ${\cal O}(-\alpha) e^{-t}$ as
\begin{equation}
\begin{split}
\frac{1}{1-e^{t+\alpha}}= &
\frac{1}{1-(1+x)e^{\alpha}}\\
=&\frac{1}{1-e^{\alpha}-xe^{\alpha}}\\
= & -\frac{1}{xe^{\alpha}}
\frac{1}{1+\frac{1-e^{-\alpha}}{x}} \in {\Bbb C}((x))[[\alpha]].
\end{split}
\end{equation}
So
the denominator of the
Todd class of ${\cal O}(\alpha) e^{t}$ is regarded as

\begin{equation}
\begin{split}
\frac{1}{1-e^{-(t+\alpha)}}&=
\frac{-e^{t+\alpha}}{1-e^{t+\alpha}}\\
& =\frac{x+1}{x}
\frac{1}{1+\frac{1-e^{-\alpha}}{x}} \in {\Bbb C}((x))[[\alpha]]
\end{split}
\end{equation}

These are contained in
${\Bbb C}(x)[[\alpha]]$.

\begin{NB2}
\begin{equation}
\begin{split}
e^{-t}= & \frac{1}{x+1}=1-x+x^2-\cdots \in {\Bbb C}((x)) \\
 \not = & x^{-1}(1-x^{-1}+x^{-2}-\cdots) \in {\Bbb C}((1/x)).
\end{split}
\end{equation}
Since $\deg(1/(1+x))=\infty$,
we need to be careful to compute the residue.
More precisely, in order to generalize the argument
in the homological case, it is better to get rid of these terms.
\end{NB2}

\begin{equation}
dt=\frac{dt}{dx}dx=e^{-t}dx=\frac{1}{1+x}dx.
\end{equation}

\begin{equation}
\begin{split}
\prod_{i=1}^p \frac{1}{1-e^{t+\alpha_i}}= &
\frac{1}{(-x)^p e^{\sum_i \alpha_i}}
\prod_i \frac{1}{1+\frac{1-e^{-\alpha_i}}{x}}\\
\prod_{i=1}^q \frac{1}{1-e^{-t+\beta_i}}= &
\left(\frac{1+x}{x} \right)^q
\prod_i \frac{1}{1+\frac{1-e^{\beta_i}}{x}}
\end{split}
\end{equation}

\begin{equation}\label{eq:mu}
\begin{split}
e^{d\mu(C)}= & e^{d(t-\alpha)},\; \alpha \in H^2(M(c-C_0)).\\
\det H^1(E_c)^{\otimes l}= & \det H^1(E_{c-C_0})^{\otimes l}e^{lmt}.
\end{split}
\end{equation}

For ${\frak N}(E_{c-C_0}(-mC),C_0)e^t=(\sum_i e^{-\beta_i})e^t$,
\begin{equation}
\begin{split}
\frac{\td({\frak N}(E_{c-C_0}(-mC),C_0)e^t)}
{e({\frak N}(E_{c-C_0}(-mC),C_0)e^t)}
=& \prod_{i=1}^{rm+1+(c_1(c),C)} \frac{1}{1-e^{-t+\beta_i}}\\
=& \left(\frac{1+x}{x} \right)^{rm+1+(c_1(c),C)}
\prod_i \frac{1}{1+\frac{1-e^{\beta_i}}{x}}
\end{split}
\end{equation}

For
${\frak N}(C_0,E_{c-C_0}(-mC))e^{-t}=(\sum_i e^{-\alpha_i})e^{-t}$,
\begin{equation}
\begin{split}
\frac{\td({\frak N}(C_0,E_{c-C_0}(-mC))e^{-t})}
{e({\frak N}(C_0,E_{c-C_0}(-mC))e^{-t})}
=& \prod_{i=1}^{rm+r+1+(c_1(c),C)} \frac{1}{1-e^{t+\alpha_i}}\\
=& \frac{1}{(-x)^{rm+r+1+(c_1(c),C)} e^{\sum_i \alpha_i}}
\prod_i \frac{1}{1+\frac{1-e^{-\alpha_i}}{x}}.
\end{split}
\end{equation}

Assume that $c_1(c)=0$.
Then $(1+x)^{rm+1+(c_1(c),C)}dt=
(1+x)^{rm+(c_1(c),C)}dx \in {\Bbb C}[x]dx$.
For $0 \leq l \leq r$, we have
\begin{equation}
(rm+r+1+(c_1(c),C))+1-d-lm
\geq 2.
\end{equation}
Hence the wall-crossing terms are 0.

\subsubsection{The contribution of iterated residue}
\begin{NB2}
We don't need this for $d \leq r$.
\end{NB2}

We set $x_i:=e^{t_i}-1$.

For ${\frak N}(E_{c-nC_m}(-mC),C_0)e^{t_i}=(\sum_j e^{-\beta_j})e^{t_i}$,
\begin{equation}
\begin{split}
\frac{\td({\frak N}(E_{c-nC_m}(-mC),C_0)e^{t_i})}
{e({\frak N}(E_{c-nC_m}(-mC),C_0)e^{t_i})}
=& \prod_{j=1}^{rm+1+(c_1(c),C)} \frac{1}{1-e^{-t_i+\beta_j}}\\
=& \left(\frac{1+x_i}{x_i} \right)^{rm+n+(c_1(c),C)}
\prod_j \frac{1}{1+\frac{1-e^{\beta_j}}{x_i}}
\end{split}
\end{equation}

For
${\frak N}(C_0,E_{c-nC_m}(-mC))e^{-t_i}=
(\sum_j e^{-\alpha_j})e^{-t_i}$,
\begin{equation}
\begin{split}
\frac{\td({\frak N}(C_0,E_{c-nC_m}(-mC))e^{-t_i})}
{e({\frak N}(C_0,E_{c-nC_m}(-mC))e^{-t_i})}
=& \prod_{j=1}^{rm+r+n+(c_1(c),C)} \frac{1}{1-e^{t_i+\alpha_j}}\\
=& \frac{1}{(-x_i)^{rm+r+n+(c_1(c),C)} e^{\sum_j \alpha_j}}
\prod_j \frac{1}{1+\frac{1-e^{-\alpha_j}}{x_i}}.
\end{split}
\end{equation}

\begin{equation}
\prod_{i \ne j}(1-e^{t_i-t_j})=
-\prod_{i< j}\frac{(x_i-x_j)^2}{(1+x_i)(1+x_j)}.
\end{equation}

Assume that $c_1(c)=0$.
\begin{equation}
\begin{split}
& \prod_{i=1}^n (1+x_i)^d
\prod_{i=1}^n\frac{1}{(-x_i)^{r(m+1)+n}}
\prod_{i=1}^n\left( \frac{1+x_i}{x_i} \right)^{rm+n}
\prod_{i \ne j}(1-e^{t_i-t_j})\\
=& -
\prod_{i=1}^n (1+x_i)^d
\prod_{i=1}^n\frac{(1+x_i)^{rm+1}}{(-1)^{r(m+1)+n} x_i^{2rm+r+2n}}
\prod_{i< j}(x_i-x_j)^2.
\end{split}
\end{equation}

\begin{multline}
 \prod_{i=1}^n 
\frac{\td({\frak N}(E_{c-nC_m}(-mC),C_0)e^{t_i})}
{e({\frak N}(E_{c-nC_m}(-mC),C_0)e^{t_i})}
\frac{\td({\frak N}(C_0,E_{c-nC_m}(-mC))e^{-t_i})}
{e({\frak N}(C_0,E_{c-nC_m}(-mC))e^{-t_i})}\\
e^{d\mu(C)}\det H^1(E_c)^{\otimes l}
\prod_{i \ne j}(1-e^{t_i-t_j})
\prod_i dt_i\\
= -
\prod_{i=1}^n (1+x_i)^{d+ml}
\prod_{i=1}^n\frac{(1+x_i)^{rm+1}}{(-1)^{r(m+1)+n} x_i^{2rm+r+2n}}
\prod_{i< j}(x_i-x_j)^2 \\
F(A_1/x_1,...,A_n/x_n)\prod_{i=1}^n \frac{dx_i}{1+x_i}
\end{multline}
where $A_i$ are cohomology classes of positive degree
and $F(y_1,...,y_n)$ is a formal power series whose coefficients
are cohomology classes. 

We expand $\prod_{i< j}(x_i-x_j)^2$ as
\begin{equation}
\prod_{i< j}(x_i-x_j)^2=\sum_{q_1,...,q_n} a_{q_1,...,q_n}
\prod_i x_i^{q_i}.
\end{equation}
Since $\sum q_i=n(n-1)$,
$q:=\min\{q_1,...,q_n \} \leq n-1$.
Then
\begin{equation}
2rm+r+2n-d-q_i-lm-rm \leq
m(r-l)+r+n-1-d+2. 
\end{equation}
Hence if $d \leq r+n-1$, then
the wall crossing term is zero.

\subsection{}
\begin{NB2}
The following argument only works for $r=2$, since I don't know how to
treat the contribution of $n \geq 2$ at this moment.
\end{NB2}

Moreover we further assume that $l<r$.
Then
for $m \geq 1$ and $d \leq 2r-l$,
we have
\begin{equation}
rm+r+1+(c_1(c),C)+1-d-lm
\geq 2.
\end{equation}
Assume that $m=0$.
Then 
\begin{equation}
\begin{split}
\frac{\td({\frak N}(E_{c-C_0}(-mC),C_0)e^t)}
{e({\frak N}(E_{c-C_0}(-mC),C_0)e^t)}
=& \prod_{i=1}^{rm+1+(c_1(c),C)} \frac{1}{1-e^{-t+\beta_i}}\\
=& \left(\frac{1+x}{x} \right)^{rm+1+(c_1(c),C)}
\prod_i \frac{1}{1+\frac{1-e^{\beta_i}}{x}}
\end{split}
\end{equation}
where $\beta_1=\lambda$ and
$\beta_i \in H^2(M(c'))$ for $i \geq 2$.
For
${\frak N}(C_0,E_{c-C_0}(-mC))e^{-t}=(\sum_i e^{-\alpha_i})e^{-t}$,
\begin{equation}
\begin{split}
\frac{\td({\frak N}(C_0,E_{c-C_0}(-mC))e^{-t})}
{e({\frak N}(C_0,E_{c-C_0}(-mC))e^{-t})}
=& \prod_{i=1}^{rm+r+1+(c_1(c),C)} \frac{1}{1-e^{t+\alpha_i}}\\
=& \frac{1}{(-x)^{rm+r+1+(c_1(c),C)} e^{\sum_i \alpha_i}}
\prod_i \frac{1}{1+\frac{1-e^{-\alpha_i}}{x}}
\end{split}
\end{equation}
where $\alpha_1=-\lambda$ and 
$\alpha_i \in H^2(M(c'))$ for $i \geq 2$.
In \eqref{eq:mu}, we have
$\alpha=\lambda \mod H^2(M(c'))$.
Hence $-d\alpha-\sum_i \alpha_i+l\lambda=(-d+1+l)\lambda
\mod H^2(M(c'))$.
We consider the residue of 
\begin{equation}
e^{(-d+1+l)\lambda}\frac{(1+x)^d}{x^{r+2}}
\left(\frac{1-e^{\lambda}}{x}\right)^k
\end{equation}
at $x=0$.
If the residue is not zero, then
$d \geq r+k+1$.
If $r<d \leq \min\{2r-l,r+l \}$, then
$-r<-d+1+l \leq -d+1+l+k \leq l-r<0$.
Since ${\bf R} \pi_*({\cal O}(-i\lambda))=0$ for $0<i<r$,
the contribution of the wall-crossing term is 0.

The contribution of iterated residue will be treated in a similar way:

It is sufficient to treat the case where $m=0$.
We set $x_2:=e^{t_2}-1$.
We consider the residue of 
\begin{equation}
\begin{split}
& e^{(-d+1+l)(-t_2)}\frac{(1+x_1)^d}{x_1^{r+2}}
\left(\frac{1-e^{-t_2}}{x_1}\right)^k \frac{1}{x_2^{r+4}}\\
=& 
(1+x_2)^{d-k-1-l}\frac{(1+x_1)^d}{x_1^{r+2}}
\left(\frac{-x_2}{x_1}\right)^k \frac{1}{x_2^{r+4}}
\end{split}
\end{equation}
at $x_1=0$.
If the residue is not zero, then
$d \geq r+k+1$.
If $r<d \leq \min\{2r-l,r+l \}$, then
$r>d-1-l$ and $d-1-l-k \geq r-l>0$.
Since $dt_2=dx_2/(x_2+1)$,
we have 

\begin{equation}
\begin{split}
\mathrm{Res}_{x_2=0}
\mathrm{Res}_{x_1=0}
\left[(1+x_2)^{d-k-1-l}\frac{(1+x_1)^d}{x_1^{r+2}}
\left(\frac{x_2}{x_1}\right)^k \frac{dx_1 dx_2}{x_2^{r+4}}
\right]=0.
\end{split}
\end{equation}

%If $r=2$ and $l=1$, then for $d=r+1=3$,
%$-d+1+l=-1$.
%Since ${\bf R} \pi_*({\cal O}(-\lambda))=0$,
%the contribution of the wall-crossing term is 0.
\end{NB}

\section{Partition function and Seiberg-Witten curves}
\label{sec:SW}

In this section we explain an application of the vanishing theorems to
Nekrasov partition functions. Here a reader is supposed to be familiar
with \cite{NY1,NY2,NY3,GNY2}.

\subsection{Partition function}

Let us fix $l\in\Z$.
We define a partition function as the generating function of integrals
considered in \S\S\ref{subsec:K}, \ref{subsec:Casimir}:
\begin{multline}\label{eq:partition}
  \Zin_l(\ve_1,\ve_2,\vec{a};\Lambda,\vec{\tau})
  \defeq \sum_c (\Lambda^{2r} e^{-(r+l)(\ve_1+\ve_2)/2})^{(\Delta(c),[\proj^2])}
\\
  \times
  \int_{M(c)} \td M(c) \exp(l c_1(\Vcal(\cE)))
  \exp\left(\sum_{p\in\Z\setminus\{0\}}
    \tau_p \ch\left( \psi^p(\cE)/[\C^2]\right)
  \right),
\end{multline}
where the rank $r = r(c)$ is fixed. Here $\vec{a} = (a_1,\dots, a_r)$
($\sum a_\alpha = 0$) is the vector given by generators $a_i$ of
$H^*_{T}(\pt)$ and $\ve_1$, $\ve_2$ ones of $H^*_{(\C^*)^2}(\pt)$, and
the integrals, more coefficients of monomials in $\tau_p$'s, 
\begin{NB2}
  Added according to the referee's suggestion. 2010/04/01
\end{NB2}%
take values in the quotient field $\mathfrak S(\hT)$ of
$H^*_{\hT}(\pt)$ as explained in \subsecref{subsec:equivhom}. (More
precisely the quotient field of the representation ring $R(\hT) = \Z[
e^{\pm \ve_1}, e^{\pm \ve_2}, e^{\pm a_\alpha}]$ as in
\subsecref{subsec:K} since this is a $K$-theoretic partition
function.)
\begin{NB2}
  Editted according to the referee's suggestion. 2010/04/03
\end{NB2}%)
And the $K$-theoretic slant product $\bullet/[\C^2]$ is defined by
$q_{2*}(\bullet\otimes q_1^*(K_{\C^2}^{1/2}))$ as in
\subsecref{subsec:Casimir}.

We have
\begin{multline}\label{eq:deriv}
  \left(\pd{}{\vec{\tau}}\right)^{\vec{n}}
  \Zin_l(\ve_1,\ve_2,\vec{a};\Lambda,\vec{\tau}=0)
  = 
\sum_c (\Lambda^{2r} e^{-(r+l)(\ve_1+\ve_2)/2})^{(\Delta(c),[\proj^2])}
\\
  \times
  \int_{M(c)} 
  \ch\left(
   \bigotimes_p \left(\psi^{p}(\cE)/[\C^2]\right)^{\otimes n_p}\right)
  \td M(c) \exp(l c_1(\Vcal(\cE)))
%   \exp\left(\sum_{k\in\Z\setminus\{0\}}
%     \tau_k \ch\left( \psi^k(\cE)/[\C^2]\right)
%   \right)
  .
\end{multline}
Therefore $\Zin_l(\ve_1,\ve_2,\vec{a};\Lambda,\tau)$ gives us
integrals of any tensor products of various Adams operators applied to
the universal sheaves.

We consider fixed points of the $\hT$-action on $M(c)$ as in
\cite[\S2]{NY1}: they are parametrized by $r$-tuples of Young diagrams
$\vec{Y} = (Y_1,\dots, Y_r)$ with $|\vec{Y}| = \sum |Y_\alpha| =
(\Delta(c),[\proj^2])$ corresponding to direct sums of monomial ideals
in $\C[x,y]$.
The character of the fiber of $\Vcal(\cE)$ at the fixed point
$\vec{Y}$ is given by
\begin{equation*}
  \ch(\Vcal(\cE)|_{\vec{Y}})
  (\ve_1,\ve_2,\vec{a})
  = \sum_{\alpha=1}^{r} e^{a_\alpha} \sum_{s\in Y_\alpha}
  e^{-l'(s)\ve_1-a'(s)\ve_2},
\end{equation*}
where $a'(s)$, $l'(s)$ are as in \cite[\S2]{NY1}. We also have
\begin{equation*}
   \exp(lc_1(\Vcal(\cE)|_{\vec{Y}}))
   = \exp\left[
     l \sum_{\alpha=1}^{r}\sum_{s\in Y_\alpha}
      (a_\alpha -l'(s)\ve_1-a'(s)\ve_2)
     \right].
\end{equation*}
Therefore we have
{\allowdisplaybreaks
\begin{equation}%\label{eq:fixedptformula} multiple label
  \begin{split}
  & \Zin_l(\ve_1,\ve_2,\vec{a};\Lambda,\vec{\tau})
  = \sum_{\vec{Y}} 
  \frac{(\Lambda^{2r} e^{-r(\ve_1+\ve_2)/2})^{|\vec{Y}|}}
  {\displaystyle\prod_{\alpha,\beta} 
     n^{\vec{Y}}_{\alpha,\beta}(\ve_1,\ve_2,\vec{a})}
\\
  &\qquad\times
   \exp\left[
     l \sum_{\alpha=1}^{r}\sum_{s\in Y_\alpha}
      (a_\alpha -l'(s)\ve_1-a'(s)\ve_2 - \frac{\ve_1+\ve_2}2)
     \right]
\\
  &\qquad\times
   \exp\left[
     \sum_{p,\alpha} \tau_p \frac
     {e^{p a_\alpha} \left\{1 - (1-e^{-p\ve_1})(1-e^{-p\ve_2})
         \sum_{s\in Y_\alpha} e^{-pl'(s)\ve_1-pa'(s)\ve_2} \right\}}
     {(e^{\ve_1/2}-e^{-\ve_1/2})(e^{\ve_2/2}-e^{-\ve_2/2})}
     \right]
   ,
  \end{split}
\end{equation}
where} $n^{\vec{Y}}_{\alpha,\beta}(\ve_1,\ve_2,\vec{a})$ is the
alternating sum of characters of exterior powers of the cotangent
space of $M(r,n)$ at the fixed point $\vec{Y}$. Its explicit formula
was given in \cite[\S1.2]{NY3}, where it was denoted by
$n^{\vec{Y}}_{\alpha,\beta}(\ve_1,\ve_2,\vec{a};\bbeta)$, and we put
$\bbeta = 1$.

We have
\begin{equation}\label{eq:SerreDuality}
    \Zin_l(\ve_1,\ve_2,\vec{a};\Lambda,\vec{\tau})
    = \Zin_{-l}(-\ve_1,-\ve_2,-\vec{a};\Lambda,{}'\vec{\tau}),
\end{equation}
where ${}'\vec{\tau}$ is given by ${}'{\tau}_p = \tau_{-p}$. This
symmetry is a simple consequence of \cite[the displayed formula one
below (1.33)]{GNY2}, or the Serre duality.
\begin{NB2}
  The referee requested an explanation of the Serre duality. But we
  think that a detailed explanation is unnecessary. 2010/04/01
\end{NB2}

Let $d\in\Z_{\ge 0}$. We consider a similar partition function on the
blow-up:
\begin{multline*}
  \bZin_{l,k,d} (\ve_1,\ve_2,\vec{a};\Lambda,\vec{\tau},\vec{t}) 
  \defeq \sum_c
  (\Lambda^{2r} e^{-(r+l)(\ve_1+\ve_2)/2})^{(\Delta(c),[\proj^2])}
  \\
  \times 
  \begin{aligned}[t]
  & \int_{\bM(c)} \td \bM(c)
  \exp(lc_1(\Vcal_0(\cE))-d\ch_2(\cE)/[C])
\\
  & \qquad\times
  \exp\left(\sum_{p\in\Z\setminus\{0\}}
    \tau_p \ch\left( \psi^p(\cE)/[\bC]\right)
    + t_p \ch \left(\psi^p(\cE)\otimes \shfO_C(-1)/[\bC]\right)
  \right),
  \end{aligned}
\end{multline*}
where we also fix $(c_1(c),[C]) = -k$ in this case.

This is related to the partition function \eqref{eq:partition} by
{\allowdisplaybreaks
\begin{equation}\label{eq:blow-up2}
  \begin{split}
  & \bZin_{l,k,d}(\ve_1,\ve_2,\vec{a},\Lambda,\vec{\tau},\vec{t})
\\
=\; &
\sum_{\substack{\vec{k}=(k_\alpha)\in\Z^r \\ \sum k_\alpha = k}}
\frac{(e^{(\ve_1+\ve_2)(d-(r+l)/2)}\Lambda^{2r})^{(\vec{k}^2)/2}
e^{(d-l/2)(\vec{k},\vec{a})}}
{\prod_{\vec{\alpha} \in \Delta} 
l_{\vec{\alpha}}^{\vec{k}}(\ve_1,\ve_2,\vec{a})}
\\
& \quad\times
\exp \left[l\left(
\frac{1}{6}(\ve_1+\ve_2)\sum_\alpha k_\alpha^3+
\frac{1}{2}\sum_\alpha k_\alpha^2 a_\alpha \right) \right]
\\
& \quad\quad\times
\Zin_l(\ve_1,\ve_2-\ve_1,\vec{a}+\ve_1 \vec{k};
\Lambda e^{\frac{d-(r+l)/2}{2r}\ve_1},
e^{-\ve_1/2}(\vec{\tau}+(e^{\ve_1}-1)\vec{t}))
\\
& \quad\quad\quad\times
\Zin_l(\ve_1-\ve_2,\ve_2,\vec{a}+\ve_2 \vec{k};
\Lambda e^{\frac{d-(r+l)/2}{2r}\ve_2},
e^{-\ve_2/2}(\vec{\tau}+(e^{\ve_2}-1)\vec{t})),
  \end{split}
\end{equation}
where} $l_{\vec{\alpha}}^{\vec{k}}(\ve_1,\ve_2,\vec{a})$ is a function
given in \cite[(2.3)]{NY3}.

Let us briefly explain how this formula is proved.
It is a consequence of the Atiyah-Bott-Lefschetz fixed point formula
applied to the $\hT$-action on $\bM(c)$. The fixed points are
parametrized by $(\vec{k},\vec{Y}^1,\vec{Y}^2)$, where
$\vec{k}\in\Z^r$ corresponds to a line bundle $\shfO(k_\alpha C)$, and
$\vec{Y}^1$, $\vec{Y}^2$ are Young diagrams corresponding to monomial
ideals in the toric coordinates at the $\C^*\times\C^*$-fixed point
$p_1 = ([1:0:0],[1:0])$ and $p_2 = ([1:0:0],[0:1])$ in $\bp$. (See
\cite[\S3]{NY1} for more detail.) The structure of the above formula,
i.e., the sum over $\vec{k}$ of the product of two partition functions
comes from this description of the fixed point set.
The shift of variables in the partition functions come from
study of tangent bundles, universal sheaves at fixed points. All these
are done in \cite[\S3]{NY1}, \cite[\S1.7]{GNY2}, except the expression
$e^{-\ve_a/2}(\vec{\tau} + (e^{\ve_a}-1)\vec{t})$ ($a=1,2$) appears
for variables for the Adams operators.

If we replace the Adams operator $\psi^p$ by the degree $p$ part of
the Chern character, the expression was given in \cite[\S4]{NY2},
where we just need to change variables as $\vec{\tau}+\ve_a\vec{t}$.
In our situation, $\vec{t}$ is multiplied by $\ch(\shfO_C(-1))|_{p_a}
= e^{\ve_a} - 1$ instead of $\ve_a$.
The factor $e^{-\ve_a/2}$ appears as the `square root' of
$K_{\C^2}\otimes K_{\bC}^{-1}$ at the fixed point $p_a$, since the
$K$-theoretic slant product $\bullet/[\bC]$ was defined as
$q_{2*}(\bullet\otimes q_1^*p^*(K_{\C^2}^{1/2}))$, not as
`$q_{2*}(\bullet\otimes q_1^*(K_{\bC}^{1/2}))$'.
\begin{NB2}
  Corrected according to the referee's comment. 2010/04/01
\end{NB2}%
\begin{NB}
  We have
  \begin{equation*}
    \begin{split}
     & \tau_p \frac
     {e^{p (a_\alpha + \ve_1 k_\alpha)}
       \left\{1 - (1-e^{-p\ve_1})(1-e^{-p(\ve_2-\ve_1)})
         \sum_{s\in Y_\alpha} e^{-pl'(s)\ve_1-pa'(s)(\ve_2-\ve_1)} \right\}}
     {(1 - e^{-\ve_1})(1 -e^{-(\ve_2-\ve_1)})} e^{-(\ve_1+\ve_2)/2}
\\
  =\; &
   e^{-\ve_1/2} \tau_p \frac
     {e^{p (a_\alpha + \ve_1 k_\alpha)}
       \left\{1 - (1-e^{-p\ve_1})(1-e^{-p(\ve_2-\ve_1)})
         \sum_{s\in Y_\alpha} e^{-pl'(s)\ve_1-pa'(s)(\ve_2-\ve_1)} \right\}}
     {(e^{\ve_1/2} - e^{-\ve_1/2})(e^{(\ve_2-\ve_1)/2} -e^{-(\ve_2-\ve_1)/2})}
    \end{split}
  \end{equation*}
\end{NB}%
(We avoid $K_{\bC}^{1/2}$, which cannot be defined.)

We have
  \begin{equation*}
    \bZin_{l,k,d}(\ve_1,\ve_2,\vec{a};\Lambda,\vec{\tau}=0,\vec{t})
    =
    \bZin_{-l,k,r-d}(-\ve_1,-\ve_2,-\vec{a};\Lambda,
    \vec{\tau}=0,-{}'\vec{t}).
  \end{equation*}
  This is proved exactly as in \cite[the last displayed formula in
  \S1.7.1]{GNY2} and \eqref{eq:SerreDuality}, or the Serre duality. 
  \begin{NB}
  We also see from
  \begin{equation*}
    \begin{split}
    & \sum (-1)^i H^i(\bp,E\otimes \shfO_C(-1))
    = \sum (-1)^i H^i(\bp,E\otimes (\shfO(C) - \shfO))
\\
    = \; & \sum (-1)^i H^{2-i}(\bp,E^\vee \otimes
    (\shfO(-C) - \shfO)\otimes K_{\bp})^\vee
\\
    =\; & - \sum (-1)^i H^{2-i}(\bp,E^\vee \otimes
    (\shfO(C) - \shfO)\otimes \shfO(-C)\otimes K_{\bp})^\vee
\\
    =\; & - \sum (-1)^{2-i} H^{2-i}(\bp,E^\vee \otimes
    \shfO_C(-1))^\vee.
    \end{split}
  \end{equation*}
It seems that we do not have a clear symmetry for $\vec{\tau}$.
  \end{NB}

  We define the perturbation part, see \cite[\S4.2]{NY3} and
  \cite[\S1.7.2]{GNY2} for more details. We set
{\allowdisplaybreaks
\begin{equation*}\label{gammati}\begin{split}
\gamma_{\ve_1,\ve_2}(x;\Lambda)&:=
\begin{aligned}[t]
\frac{1}{2\ve_1\ve_2}& \left(
-\frac{1}{6}\left(x+\frac{1}{2}(\ve_1+\ve_2)\right)^3 
 +x^2\log\Lambda\right)
\\
&+\sum_{n \geq 1}\frac{1}{n}
\frac{e^{-nx}}{(e^{n\ve_1}-1)(e^{n\ve_2}-1)},
\end{aligned}
\\
\widetilde{\gamma}_{\ve_1,\ve_2}(x;\Lambda)
&:=\gamma_{\ve_1,\ve_2}(x;\Lambda)+
\frac{1}{\ve_1 \ve_2} \left(\frac{\pi^2 x}{6}-\zeta(3)\right)\\
&\qquad \qquad +\frac{\ve_1+\ve_2}{2\ve_1 \ve_2} 
\left( x \log \Lambda+\frac{\pi^2}{6} \right)+
\frac{\ve_1^2+\ve_2^2+3\ve_1 \ve_2}{12 \ve_1 \ve_2} \log\Lambda
\end{split}
\end{equation*}
for} $(x,\Lambda)$ in a neighborhood of
$\sqrt{-1}\R_{>0}\times\R_{>0}$. 

We define the full partition function by
\begin{equation*}
  \begin{split}
  & Z_l(\ve_1,\ve_2,\vec{a};\Lambda,\vec{\tau})
  \defeq \exp\left[ 
    -\sum_{\alpha\neq\beta}
      \widetilde\gamma_{\ve_1,\ve_2}(a_\alpha - a_\beta;\Lambda)
    - l \sum_{\alpha=1}^r \frac{a_\alpha^3}{6\ve_1\ve_2}
    \right]
    \Zin_l(\ve_1,\ve_2,\vec{a};\Lambda,\vec{\tau}),
\\    
  & 
  \begin{aligned}[t]
   & \bZ_{l,k,d}(\ve_1,\ve_2,\vec{a};\Lambda,\vec{\tau},\vec{t})
\\
  & \qquad \defeq \exp\left[ 
    -\sum_{\alpha\neq\beta}
      \widetilde\gamma_{\ve_1,\ve_2}(a_\alpha - a_\beta;\Lambda)
    - l \sum_{\alpha=1}^r \frac{a_\alpha^3}{6\ve_1\ve_2}
    \right]
    \bZin_{l,k,d}(\ve_1,\ve_2,\vec{a};\Lambda,\vec{\tau},\vec{t}).
  \end{aligned}
  \end{split}
\end{equation*}

Using the difference equation satisfied by the perturbation terms (see
\cite[\S4.2]{NY3} and \cite[\S1.7.2]{GNY2}), we can absorb the factors
in \eqref{eq:blow-up2} coming from line bundles $\shfO(k_\alpha C)$
into the partition function to get
\begin{equation}\label{eq:blow-up3}
\begin{split}
  &\bZ_{l,k,d}(\ve_1,\ve_2,\vec{a};\Lambda,\vec{\tau},\vec{t})
\\
  =\; & 
  \exp\left[\left\{
    -\frac{\left(4\left(d + l\left(-\nicefrac12+\nicefrac{k}r\right)\right)
        - r\right)
      (r-1)}{48}
   + \frac{k^3 l}{6r^2}
  \right\}
    (\ve_1+\ve_2)\right]
\\
&
   \times
   \!\!\sum_{\{\vec{l}\} = -\frac{k}{r}}\!\!%\!\!
   \begin{aligned}[t]
     & Z_l(\ve_1,\ve_2-\ve_1,\vec{a}+\ve_1\vec{l}; e^{\frac{\ve_1}{2r}
       \left\{d+l\left(-\frac12+\frac{k}r\right)-\frac{r}2\right\}}
     \Lambda, e^{\frac{k\ve_1}r}\star e^{-\frac{\ve_1}2}
     (\vec{\tau}+(e^{\ve_1}-1)\vec{t}))
     \\
     &\times Z_l(\ve_1-\ve_2,\ve_2,\vec{a}+\ve_2\vec{l};
     e^{\frac{\ve_2}{2r}
       \left\{d+l\left(-\frac12+\frac{k}r\right)-\frac{r}2\right\}}
     \Lambda,e^{\frac{k\ve_2}r}\star e^{-\frac{\ve_2}2}
     (\vec{\tau}+(e^{\ve_2}-1)\vec{t})),
   \end{aligned}
\end{split}
\end{equation}
where $\vec{l}$ runs over the set $\{ \vec{l} =
(l_\alpha)_{\alpha=1}^r\in\Q^r \mid \sum l_\alpha = 0, l_\alpha \equiv
-k/r\bmod\Z\}$, and $e^{k\ve_a/r}\star\vec{\tau}$ is defined as
\begin{equation*}
    e^{k\ve_a/r}\star\vec{\tau}
    = (\cdots, e^{-2k\ve_a/r}\tau_{-2},e^{-k\ve_a/r}\tau_{-1},
    e^{k\ve_a/r}\tau_1,e^{2k\ve_a/r}\tau_2,\cdots).
\end{equation*}
We shift from the previous $\vec{k}$ to $\vec{l}$ by $l_\alpha =
k_\alpha - k/r$. The effect of this shift was calculated in
\cite[(1.36)]{GNY2} when $\vec{\tau} = \vec{t} = 0$, and we have used
it here.
And $e^{k\ve_a/r}\star$ comes from $\shfO(k_\alpha C) =
\shfO((l_\alpha+k/r)C)$. The part $l_\alpha$ is absorbed into the
shift $\vec{a}+\ve_a\vec{l}$, but we need the remaining contribution
from $k/r$.

\begin{Remark}\label{rem:full}
  We do not make precise to which ring the full partition functions
  belong, as a function in $\Lambda$. We just use them formally to
  make a formula shorter as above. This applies all formulas below
  until they (more precisely their leading coefficients) are
  identified with one defined via Seiberg-Witten curves, which are 
  really functions defined over an appropriate open set in $\Lambda$.
\end{Remark}
\begin{NB2}
An explanation added according to the referee comment. 2010/04/03
\end{NB2}%

\subsection{Regularity at $\ve_1 = \ve_2 = 0$}

We assume $0\le l \le r$ hereafter.
\begin{NB}
  I allow $l=r$. Oct.10, 2009, HN
\end{NB}

\thmref{thm:Kvanish}(1) means
\begin{equation}\label{eq:blow-up4}
  \bZ_{l,0,d}(\ve_1,\ve_2,\vec{a};\Lambda,\vec{\tau}=0,\vec{t}=0)
  = Z_l(\ve_1,\ve_2,\vec{a};\Lambda,\vec{\tau}=0)
\end{equation}
for $0\le d\le r$. This was conjectured in \cite[(1.37)]{GNY2}.
Combined with \eqref{eq:blow-up2}, we see that coefficients of
$\Zin(\ve_1,\ve_2,\vec{a};\Lambda,\vec{\tau}=0)$ in $\Lambda^n$ are
determined recursively if the above holds two different values of $d$,
as explained in \cite[\S5.2]{NY2}.
The equation \eqref{eq:blow-up4}, the left hand side replaced by
\eqref{eq:blow-up3}, is called the {\it blow-up equation}. It gives
a strong constraint on the partition function
$Z(\ve_1,\ve_2,\vec{a};\Lambda,\vec{\tau}=0)$.

As an application, in [loc.\ cit., Prop.~1.38] we proved the
followings under [loc.\ cit., (1.37)]:
\begin{align}
  & \Zin_l(\ve_1,-2\ve_1,\vec{a};\Lambda,\vec{\tau}=0) =
    \Zin_l(2\ve_1,-\ve_1,\vec{a};\Lambda,\vec{\tau}=0) 
    \quad\text{if $l\neq r$},
    \label{eq:sym}
\\
  & \text{$\ve_1\ve_2\log Z_l(\ve_1,\ve_2,\vec{a};\Lambda,\vec{\tau}=0)$
is regular at $\ve_1=\ve_2=0$.}
    \label{eq:regular}
\end{align}
(More precisely only the proof of \eqref{eq:sym} was given in [loc.\
cit.,(1.37)]. The proof of \eqref{eq:regular} was omitted since it is
the same as \cite[Th.~4.4]{NY3}.)
\begin{NB2}
  Added according to the referee's suggestion. 2010/04/01
\end{NB2}

\begin{Remark}
Though it was not stated explicitly in [loc.~cit.], the second
assertion holds even if $l=r$. On the other hand, the first one
follows from \eqref{eq:blow-up4} for $d$ and $r+l-d$, which must be
{\it different}. Therefore $l\neq r$ is required.
\end{Remark}

Let us apply the same argument to \propref{prop:Casimirvanish}. We
expand $\Zin_l$ as before:
\begin{equation}\label{eq:tauexpand}
    \Zin_l(\ve_1,\ve_2,\vec{a};\Lambda,\vec{\tau})
    = \sum_{\vec{n}} \prod_{p\neq 0} \tau_p^{n_p}
     \Zin_{\vec{n}}(\ve_1,\ve_2,\vec{a};\Lambda)
\end{equation}
with $\vec{n} = (\cdots,n_{-1},n_1,\cdots)$. Thus
\begin{equation*}
  \Zin_{\vec{n}}(\ve_1,\ve_2,\vec{a};\Lambda)
  = \frac1{\vec{n}\,!}
  \left(\pd{}{\vec{\tau}}\right)^{\vec{n}}
  \Zin_l(\ve_1,\ve_2,\vec{a};\Lambda,\vec{\tau}=0).
\end{equation*}

By \propref{prop:Casimirvanish} we have
\begin{equation}\label{eq:blow-up5}
  \left(\pd{}{\vec{t}}\right)^{\vec{n}} \bZ_{l,0,d}(\ve_1,\ve_2,\vec{a};
  \Lambda,\vec{\tau}=0,\vec{t}=0)
  = 0
\end{equation}
if $\vec{n}$ is nonzero and satisfies 
\begin{equation}
  \label{eq:const}
 -\sum_{p<0} p n_p \le d \le r - \sum_{p>0} pn_p.
\end{equation}
After substituting \eqref{eq:blow-up3} to the left hand side, we also
call this as the {\it blow-up equation}.

For simplicity we assume $\vec{n}$ is supported on either $\Z_{>0}$ or
$\Z_{<0}$, i.e., $n_p = 0$ for any $p < 0$ or $n_p = 0$ for any $p >
0$. We say $\vec{n}$ is {\it positive\/} in the first case, and {\it
  negative\/} in the second case.

We define `$\log Z(\ve_1,\ve_2,\vec{a};\Lambda,\vec{\tau})$' as
follows. Since the perturbative part is already the exponential of
something, we only need to define `$\log\Zin$'. Then observe that
\begin{equation*}
  \Zin(\ve_1,\ve_2,\vec{a};\Lambda,\vec{\tau})
  = \exp\left(
         \sum_{p,\alpha} \frac
     {\tau_p e^{p a_\alpha}}{(e^{\ve_1/2}-e^{-\ve_1/2})(e^{\ve_2/2}-e^{-\ve_2/2})}
    \right)
    \times (1 + O(\Lambda)).
\end{equation*}
Therefore $\log\Zin$ can be defined as the sum of
\(
         \sum_{p,\alpha} \nicefrac
     {\tau_p e^{p a_\alpha}}{(e^{\ve_1/2}-e^{-\ve_1/2})(e^{\ve_2/2}-e^{-\ve_2/2})}
\)
and a formal power series in $\Lambda$.
\begin{NB}
In \cite[the third displayed formula in \S5]{NY2}, 
$/[\C^2]$ must be $/[\proj^2]$.
\end{NB}

\begin{NB}
Earlier version:

Let us apply the same argument to \propref{prop:Casimirvanish}, which
implies
\begin{equation}\label{eq:blow-up5'}
  \pd{}{t_p} \bZ_{l,0,d}(\ve_1,\ve_2,\vec{a};
  \Lambda,\vec{\tau}=0,\vec{t}=0)
  = 0
\end{equation}
for $|p| \le d \le r$ if $p < 0$, and $0\le d \le r-p$ if $p > 0$.
\end{NB}

\begin{Proposition}\label{prop:regular}
\begin{NB}
  \textup{(1)} If $-(r+l)/2 \le p \le (r-l)/2$ and $l\neq r$, we have
  \begin{equation*}
    \pd{}{\tau_p}Z_l(\ve_1,-2\ve_1,\vec{a};\Lambda,\vec{\tau}=0)
  = \pd{}{\tau_p}Z_l(2\ve_1,-\ve_1,\vec{a};\Lambda,\vec{\tau}=0).
  \end{equation*}
  \begin{NB2}
    I slightly change Kota's condition to allow the equality 
    $p=(r-l)/2$. (This holds only if $r-l$ is even.)
  \end{NB2}
\end{NB}%
\textup{(1)} Suppose that $\vec{n}$ satisfies the followings\textup:
\begin{aenume}
\item If $\sum n_p$ is odd, $-(r+l)/2 \le \sum_{p < 0} p n_p$
  \textup(negative case\textup) or
  $\sum_{p > 0} p n_p \le (r-l)/2$ \textup(positive case\textup).
\item If $\sum n_p$ is even, the strict inequality holds.
\end{aenume}
Then
\begin{equation*}
  Z_{\vec{n}}(\ve_1,-2\ve_1,\vec{a};\Lambda)
  = Z_{\vec{n}}(2\ve_1,-\ve_1,\vec{a};\Lambda).
\end{equation*}

  \textup{(2)} If $-r < \sum_{p<0} p n_p$ \textup(negative case\textup) or
  $\sum_{p>0} pn_p < r$ \textup(positive case\textup),
  \begin{equation*}
  \ve_1\ve_2
　 \left(\pd{}{\vec{\tau}}\right)^{\vec{n}}
  \log Z_l(\ve_1,\ve_2,\vec{a};\Lambda,\vec{\tau}=0)
  \end{equation*}
  is regular at $\ve_1=\ve_2=0$.
\end{Proposition}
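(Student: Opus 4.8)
The plan is to run, for general $\vec n$, the argument of \cite[Prop.~1.38]{GNY2} (equivalently \cite[Th.~4.4]{NY3}), which is the case $\vec n=0$, with the blow-up equation \eqref{eq:blow-up4} replaced by its Casimir refinement \eqref{eq:blow-up5}. The input on the blow-up side is that, whenever \eqref{eq:const} holds, $\left(\pd{}{\vec t}\right)^{\vec n}\bZ_{l,0,d}(\ve_1,\ve_2,\vec a;\Lambda,\vec\tau=0,\vec t=0)=0$; on the other hand \eqref{eq:blow-up3} (with $k=0$, so that $e^{k\ve_a/r}\star$ is trivial) writes $\bZ_{l,0,d}$ as a sum over $\vec l$ of products $Z_l(\ve_1,\ve_2-\ve_1,\cdots)\,Z_l(\ve_1-\ve_2,\ve_2,\cdots)$ in which $\vec\tau$ has been replaced by $e^{-\ve_a/2}(\vec\tau+(e^{\ve_a}-1)\vec t)$ ($a=1,2$) and $\Lambda,\vec a$ by the shifts displayed there. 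Since $\vec t$ enters only through $(e^{\ve_a}-1)\vec t$, applying $\left(\pd{}{\vec t}\right)^{\vec n}$ and then setting $\vec\tau=\vec t=0$ turns the right-hand side of \eqref{eq:blow-up3} into a finite combination of products $Z_{\vec m}\,Z_{\vec m'}$ with $\vec m+\vec m'=\vec n$, weighted by monomials in the $e^{\ve_a}-1$ and by the $\Lambda$- and $\vec a$-shifts; this is the identity from which both assertions are extracted, exactly as the case of two values of $d$ in \eqref{eq:blow-up4} determines $\Zin_l$ recursively.

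For part (1), the first point is that under hypotheses (a), (b) the range \eqref{eq:const} contains two values of $d$ that are genuinely distinct inputs — the parity condition on $\sum_p n_p$ is precisely what guarantees this once one uses the duality $\bZin_{l,0,d}\leftrightarrow\bZin_{-l,0,r-d}$ recorded after \eqref{eq:blow-up2} together with \eqref{eq:SerreDuality}. Writing the identity of the previous paragraph for these two values of $d$ and specialising $(\ve_1,\ve_2)$ to $(\ve_1,-2\ve_1)$ in one case and to $(2\ve_1,-\ve_1)$ in the other, one checks that the shifted arguments occurring in the two resulting sums over $\vec l$ match term by term; comparing the coefficient of each power of $\Lambda$ (a finite sum over $\vec l$), and inducting both on $\sum_p n_p$ and on the $\Lambda$-degree, the summands with both Casimir orders positive are known by the inductive hypothesis, while the remaining summands contain $Z_{\vec n}(\ve_1,-2\ve_1,\cdots)$, resp.\ $Z_{\vec n}(2\ve_1,-\ve_1,\cdots)$, linearly multiplied by the invertible uninserted factors whose symmetry is \eqref{eq:sym}. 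This forces $Z_{\vec n}(\ve_1,-2\ve_1,\vec a;\Lambda)=Z_{\vec n}(2\ve_1,-\ve_1,\vec a;\Lambda)$.

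For part (2), I would instead take $\ve_1\ve_2\log$ of the blow-up equation \eqref{eq:blow-up4} combined with \eqref{eq:blow-up3}, isolate the $\vec l=0$ summand (which dominates at each fixed order in $\Lambda$), and then apply $\left(\pd{}{\vec\tau}\right)^{\vec n}$ at $\vec\tau=0$. This produces a functional equation of the schematic shape
\[
  \ve_1\ve_2\Bigl(\pd{}{\vec\tau}\Bigr)^{\vec n}\log Z_l(\ve_1,\ve_2;\cdots)
  = \frac{\ve_2}{\ve_2-\ve_1}\,G_1+\frac{\ve_1}{\ve_1-\ve_2}\,G_2+R,
\]
where $R$ is regular at $\ve_1=\ve_2=0$, and $G_1,G_2$ are $\ve_1\ve_2$-rescaled $\vec\tau$-derivatives of $\log Z_l$ evaluated at the shifted arguments $(\ve_1,\ve_2-\ve_1,\cdots)$, $(\ve_1-\ve_2,\ve_2,\cdots)$ and of Casimir order $\le\sum_p n_p$, hence already regular at the origin by \eqref{eq:regular} and the inductive hypothesis. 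The standard argument of \cite[\S5]{NY2} and \cite[Th.~4.4]{NY3} — that a pole of minimal total order in $(\ve_1,\ve_2)$ cannot reproduce itself under the two substitutions $(\ve_1,\ve_2)\mapsto(\ve_1,\ve_2-\ve_1)$ and $(\ve_1,\ve_2)\mapsto(\ve_1-\ve_2,\ve_2)$, using the two distinct admissible values of $d$ available precisely when $\sum_{p>0}pn_p<r$ (resp.\ $-r<\sum_{p<0}pn_p$) — then shows no such pole exists. I expect the main obstacle, in both parts, to be the bookkeeping: making precise the dominance of the $\vec l=0$ term order by order in $\Lambda$, and verifying that the $\vec\tau$-shifts $e^{-\ve_a/2}(\vec\tau+(e^{\ve_a}-1)\vec t)$ interact with $\left(\pd{}{\vec\tau}\right)^{\vec n}$ and $\left(\pd{}{\vec t}\right)^{\vec n}$ cleanly enough that the inductive hypotheses genuinely apply.
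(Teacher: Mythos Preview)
Your overall plan for part (1)---use the Casimir blow-up vanishing \eqref{eq:blow-up5} for two values of $d$, expand in $\Lambda$ and in $\vec n$, and induct---is the same strategy as the paper. But there is a concrete gap in how you run it.

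The key specialization is \emph{not} $(\ve_1,\ve_2)\mapsto(\ve_1,-2\ve_1)$ in one copy of the equation and $(\ve_1,\ve_2)\mapsto(2\ve_1,-\ve_1)$ in the other. If you do that in \eqref{eq:blow-up3}, the two $Z_l$-factors land at $(\ve_1,-3\ve_1),(3\ve_1,-2\ve_1)$, etc., and the term-by-term matching you claim simply does not occur. The correct move is to set $\ve_2=-\ve_1$ in a \emph{single} blow-up equation: then the factors $Z_l(\ve_1,\ve_2-\ve_1,\cdots)$ and $Z_l(\ve_1-\ve_2,\ve_2,\cdots)$ in \eqref{eq:blow-up2}/\eqref{eq:blow-up3} become $Z_l(\ve_1,-2\ve_1,\cdots)$ and $Z_l(2\ve_1,-\ve_1,\cdots)$ automatically. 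One then extracts the coefficient of $\Lambda^{2rN}\prod\tau_p^{n_p}$ to obtain a bilinear identity in the $Z_{N_i,\vec n_i}$.

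Second, the ``two values of $d$'' are $d$ and $r+l-d$, and their comparison is governed not by the Serre duality $\bZin_{l,0,d}\leftrightarrow\bZin_{-l,0,r-d}$ you invoke (that flips $l$ and the signs of $\ve_a,\vec a$, which you do not want) but by an internal symmetry of the $\ve_2=-\ve_1$ expansion: substituting $d\mapsto r+l-d$, $\vec k\mapsto-\vec k$, $(N_1,N_2)\leftrightarrow(N_2,N_1)$, $(\vec n_1,\vec n_2)\leftrightarrow(\vec n_2,\vec n_1)$, together with the identity $e^{r(\vec k,\vec a)/2}/\prod l^{\vec k}_{\vec\alpha}(\ve_1,-\ve_1,\vec a)=e^{-r(\vec k,\vec a)/2}/\prod l^{-\vec k}_{\vec\alpha}(\ve_1,-\ve_1,\vec a)$ from \cite[Lem.~4.1, (4.2)]{NY3}, reproduces the same sum up to a sign $(-1)^{\sum n_p}$. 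Subtracting, and using induction on $N$ and on $\vec n$ (base case $\vec n=0$ is \eqref{eq:sym}), one is left with
\[
0=\bigl(e^{N(d-(r+l)/2)\ve_1}-(-1)^{\sum n_p}e^{-N(d-(r+l)/2)\ve_1}\bigr)\bigl\{Z_{N,\vec n}(\ve_1,-2\ve_1,\vec a)-Z_{N,\vec n}(2\ve_1,-\ve_1,\vec a)\bigr\}.
\]
This is precisely why the parity of $\sum_p n_p$ enters: for odd parity the bracket never vanishes; for even parity one needs $d\neq(r+l)/2$, which is exactly what your strict inequality in (b) provides. Your account of why ``two genuinely distinct inputs'' exist, via Serre duality, misses this mechanism.

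Your sketch for part (2) is acceptable and in line with the paper, which simply points to the proof of \eqref{eq:regular} in \cite[Th.~4.4]{NY3}.
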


\begin{NB}
  Copy of the proof for
  $Z_l(\ve_1,\ve_2,\vec{a};\Lambda,\vec{\tau}=0)$.

\begin{proof}
By \eqref{eq:SerreDuality} we may assume $l\le 0$. By the assumption
we have \eqref{eq:blow-up4} for $d=0$ and $d=r+l$. Note that 
we have $0\neq r+l$ as $l\neq -r$.

%Let us put $\bbeta=1$ for brevity.
We take the difference of both sides of \eqref{eq:blow-up5'} with
$d=r+m$, $0$ after setting $\ve_2 = -\ve_1$. We have
\begin{equation*}
\begin{split}
  & \left(Z_n(\ve_1,-2\ve_1,\vec{a}) - Z_n(2\ve_1,-\ve_1,\vec{a})\right)
  \left(e^{(r+m)n \ve_1/2} - e^{-(r+m)n \ve_1/2}\right)
\\
  =\; &
  -\sum_{\substack{(\vec{k},\vec{k})/2+l+l' = n\\
      l\neq n, l'\neq n}}
\begin{aligned}[t]
  & \frac{e^{r{(\vec{k},\vec{a})}/{2}}
   Z_{l'}(\ve_1,-2\ve_1,\vec{a}+\ve_1\vec{k})
   Z_l(2\ve_1,-\ve_1,\vec{a}-\ve_1\vec{k})}
  {\prod_{\vec\alpha\in\Delta} l^{\vec{k}}_{\vec{\alpha}}(\ve_1,-\ve_1,\vec{a})}  \\
  & \qquad
  \times\exp\left[ \frac{m\bbeta}2 \sum_\alpha k_\alpha^2
    a_\alpha\right] \\
  & \qquad
  \times\left(e^{(r+m){(\vec{k},\vec{a})}/{2}}
    e^{{(r+m)(l'-l)}\ve_1/{2}}-
    e^{{-(r+m)(\vec{k},\vec{a})}/{2}}e^{{-(r+m)(l'-l)}\ve_1/{2}}\right),
\end{aligned}
 \end{split}
\end{equation*}
where we expand $\Zin_m$ as
\begin{equation*}
    \Zin_m(\ve_1,\ve_2,\vec{a};\Lambda,\bbeta=1)
    = \sum_n Z_n(\ve_1,\ve_2,\vec{a}) \Lambda^{2rn}.
\end{equation*}

\begin{NB2}
{\bf Correction}: In the proof of \cite[Lemma~4.3]{NY3} we expand
$\Zin = \sum_n \left(
  \q\bbeta^{2r}e^{-r\bbeta(\ve_1+\ve_2)/2}\right)^n
  Z_n$.
But this was wrong and we expand it as above.
\end{NB2}

Let us show that
\(
Z_n(\ve_1,-2\ve_1,\vec{a})
   = Z_n(2\ve_1,-\ve_1,\vec{a})
\)
by using the induction on $n$. It holds for $n = 0$ as $Z_0 = 1$.
Suppose that it is true for $l,m< n$. Then the right hand side of the
above equation vanishes, as terms with $(\vec{k},l,l')$ and $(-\vec{k},
l', l)$ cancel thanks to \cite[Lemma~4.1(1) and (4.2)]{NY3}, and the
term $(0,l,l)$ is $0$. Therefore it is also true for $n$.
\end{proof}
\end{NB}

\begin{proof}
  Since the proof of (2) is the same as that of \eqref{eq:regular}, we
  only prove (1).

We expand $\Zin_l$ as
\begin{equation*}
    \Zin_l(\ve_1,\ve_2,\vec{a};\Lambda,\vec{\tau})
    = \sum_{N=0}^\infty Z_N(\ve_1,\ve_2,\vec{a},\vec{\tau}) \Lambda^{2rN}.
\end{equation*}
Note that
\begin{equation*}
  Z_0(\ve_1,\ve_2,\vec{a},\vec{\tau})
  = \exp\left[
     \sum_{p,\alpha} \tau_p \frac
     {e^{p a_\alpha}}
     {(e^{\ve_1/2}-e^{-\ve_1/2})(e^{\ve_2/2}-e^{-\ve_2/2})}
    \right].
\end{equation*}
Therefore the assertion is true for $Z_0$. We prove the assertion by
the induction on $N$.

\begin{NB}
Original proof (special case):

From \eqref{eq:blow-up5'} we have
{\allowdisplaybreaks
\begin{equation}\label{eq:blow-up6}
  \begin{split}
   0 &=
   \pd{}{\tau_p} Z_n(\ve_1,-2\ve_1,\vec{a},\vec{\tau}=0)
   e^{n(d-(r+l)/2)\ve_1}%(e^{\ve_1/2} - e^{-\ve_1/2})
\\
    & \quad
   - \pd{}{\tau_p} Z_n(2\ve_1,-\ve_1,\vec{a},\vec{\tau}=0)
   e^{-n(d-(r+l)/2)\ve_1}%(e^{\ve_1/2} - e^{-\ve_1/2})
\\
    &+
   \pd{}{\tau_p} Z_0(\ve_1,-2\ve_1,\vec{a},\vec{\tau}=0)
   Z_n(2\ve_1,-\ve_1,\vec{a},\vec{\tau}=0)
   e^{n(d-(r+l)/2)\ve_1}%(e^{\ve_1/2} - e^{-\ve_1/2})
\\
    &\quad-
   Z_n(\ve_1,-2\ve_1,\vec{a},\vec{\tau}=0)
   \pd{}{\tau_p} Z_0(2\ve_1,-\ve_1,\vec{a},\vec{\tau}=0)
   e^{-n(d-(r+l)/2)\ve_1}%(e^{\ve_1/2} - e^{-\ve_1/2})
\\
   & %\quad\quad
   + 
  \sum_{\substack{(\vec{k},\vec{k})/2+n_1+n_2 = n\\
      n_1, n_2 < n}}\!\!
  \frac{e^{(\vec{k},\vec{a})(d-l/{2})} e^{(n_1-n_2)(d-(r+l)/2)\ve_1}}
  {\prod_{\vec\alpha\in\Delta} l^{\vec{k}}_{\vec{\alpha}}(\ve_1,-\ve_1,\vec{a})}
    \exp\left[ \frac{l}2 \sum_\alpha k_\alpha^2
    a_\alpha\right] 
\\
  & \quad\quad\times %(e^{\ve_1/2} - e^{-\ve_1/2})
  \Bigl(\pd{}{\tau_p} 
   Z_{n_1}(\ve_1,-2\ve_1,\vec{a}+\ve_1\vec{k},\vec{\tau}=0)
   Z_{n_2}(2\ve_1,-\ve_1,\vec{a}-\ve_1\vec{k},\vec{\tau}=0)
\\
   & \quad\quad\quad\quad
   - Z_{n_1}(\ve_1,-2\ve_1,\vec{a}+\ve_1\vec{k},\vec{\tau}=0)
   \pd{}{\tau_p}
   Z_{n_2}(2\ve_1,-\ve_1,\vec{a}-\ve_1\vec{k},\vec{\tau}=0)\Bigr).
  \end{split}
\end{equation}
}
\begin{NB2}
  Modulo $t_q$ ($q\neq p$) and $t_p^2$, we have
\begin{equation*}
  \begin{split}
   &Z_n(\ve_1,-\ve_1,\vec{a},\vec{\tau}=0)
\\
   = \; &
   Z_n(\ve_1,-2\ve_1,\vec{a},(e^{\ve_1/2} - e^{-\ve_1/2})\vec{t})
   Z_{0}(2\ve_1,-\ve_1,\vec{a}-\ve_1\vec{k},-(e^{\ve_1/2} - e^{-\ve_1/2})\vec{t})
   e^{n(d-(r+l)/2)\ve_1}
\\
   & \quad
   + 
   Z_0(\ve_1,-2\ve_1,\vec{a},(e^{\ve_1/2} - e^{-\ve_1/2})\vec{t})
   Z_n(2\ve_1,-\ve_1,\vec{a},-(e^{\ve_1/2} - e^{-\ve_1/2})\vec{t})
   e^{-n(d-(r+l)/2)\ve_1}
\\
   & %\quad\quad
   + 
  \sum_{\substack{(\vec{k},\vec{k})/2+n_1+n_2 = n\\
      n_1, n_2 < n}}\!\!
  \frac{e^{(\vec{k},\vec{a})(d-l/{2})} e^{(n_1-n_2)(d-(r+l)/2)\ve_1}}
  {\prod_{\vec\alpha\in\Delta} l^{\vec{k}}_{\vec{\alpha}}(\ve_1,-\ve_1,\vec{a})}
    \exp\left[ \frac{l}2 \sum_\alpha k_\alpha^2
    a_\alpha\right] 
\\
  & \quad\times
   Z_{n_1}(\ve_1,-2\ve_1,\vec{a}+\ve_1\vec{k},(e^{\ve_1/2} - e^{-\ve_1/2})\vec{t})
   Z_{n_2}(2\ve_1,-\ve_1,\vec{a}-\ve_1\vec{k},-(e^{\ve_1/2} - e^{-\ve_1/2})\vec{t}).
  \end{split}
\end{equation*}
\end{NB2}

We consider \eqref{eq:blow-up6} when $d$ is replace by $r+l-d$. Then
$d - (r+l)/2$ is replaced by $-\{ d - (r+l)/2\}$. We also replace
$\vec{k}$ by $-\vec{k}$ and $(n_1,n_2)$ by $(n_2,n_1)$. Note
\begin{equation*}
  \frac{e^{r(\vec{k},\vec{a})/2}}
  {\prod_{\vec{\alpha}\in\Delta} l^{\vec{k}}_{\vec{\alpha}}(\ve_1,-\ve_1,\vec{a})}
  = \frac{e^{-r(\vec{k},\vec{a})/2}}
  {\prod_{\vec{\alpha}\in\Delta} l^{-\vec{k}}_{\vec{\alpha}} (\ve_1,-\ve_1,\vec{a})}
\end{equation*}
by \cite[Lem.~4.1 and (4.2)]{NY3}. Therefore the replacement just
changes the sign of the sum of the terms except the first two in above
by the induction hypothesis and \eqref{eq:sym}. Taking sum of
\eqref{eq:blow-up6} for $d$ and $r+l-d$, we get
\begin{multline*}
     \pd{}{\tau_p} Z_n(\ve_1,-2\ve_1,\vec{a},\vec{\tau}=0)
        (e^{n(d-(r+l)/2)\ve_1} + e^{-n(d-(r+l)/2)\ve_1})
\\
    =
   \pd{}{\tau_p} Z_n(2\ve_1,-\ve_1,\vec{a},\vec{\tau}=0)
           (e^{n(d-(r+l)/2)\ve_1} + e^{-n(d-(r+l)/2)\ve_1}).
\end{multline*}
Thus we have the assertion for $Z_n$.

Thus we only need to check that \eqref{eq:blow-up6} holds for both $d$
and $r + l - d$.
\begin{NB2}
  Since we took sum of two equations, we do not need to assume that
  `$d$ and $r+l-d$ are different'.
\end{NB2}%
Suppose $p > 0$. Since $p \le (r-l)/2$ by our assumption, we can take
an integer $d$ with
\(
   p + l \le d \le r - p
\)
Thus the assumptions for \propref{prop:Casimirvanish} $0\le d\le
r-p$ and $0 \le r + l - d \le r - p$ both hold. The same argument works
for the $p < 0$ case.
\end{NB}%

We further expand as
\begin{equation*}
  Z_N(\ve_1,\ve_2,\vec{a},\vec{\tau})
  = \sum_{\vec{n}} \prod_{p\neq 0} \tau_p^{n_p}
  Z_{N,\vec{n}}(\ve_1,\ve_2,\vec{a})
\end{equation*}
as in \eqref{eq:tauexpand}.

Fix $\vec{n}$ and $N$ and consider the coefficient of $\Lambda^{2rN}
\prod \tau_p^{n_p}$ in \eqref{eq:blow-up2}. Setting $\ve_2 = -\ve_1$,
we have
\begin{equation}\label{eq:blow-up7}
  \begin{split}
  0 = 
   \sum_{\substack{(\vec{k},\vec{k})/2 + N_1 + N_2 = N \\
    \vec{n}_1+\vec{n}_2 = \vec{n}}}
  & (-1)^{\sum n_{2,p}}
  \frac{e^{(d - l/2)(\vec{k},\vec{a})} e^{(N_1 - N_2)(d - (r+l)/2)\ve_1}}
  {\prod_{\alpha\in\Delta} l^{\vec{k}}_\alpha(\ve_1,-\ve_1,\vec{a})}
  \exp\left(\frac{l}2\sum k_\alpha^2 a_\alpha\right)
\\  
  & \times 
  Z_{N_1,\vec{n}_1}(\ve_1,-2\ve_1,\vec{a}+\ve_1\vec{k})
  Z_{N_2,\vec{n}_2}(2\ve_1,-\ve_1,\vec{a}-\ve_1\vec{k})
  \end{split}
\end{equation}
if $\vec{n}\neq 0$
\begin{NB2}
Added. 2010/04/01  
\end{NB2}%
by the blow-up equation (\eqref{eq:blow-up5} and
\eqref{eq:blow-up2}). Here the summation is over $\vec{k}$,
$\vec{n}_1$, $\vec{n}_2$, $N_1$, $N_2$ and we write $\vec{n}_2 =
(\cdots, n_{2,-1}, n_{2,1}, \cdots)$. We assume
\begin{equation*}\label{eq:inequ1}
  -\sum_{p < 0} p n_p \le d \le r - \sum_{p > 0} p n_p.
\end{equation*}

We suppose that the same equality holds for $r + l - d$. So we assume
\begin{equation*}\label{eq:inequ2}
  l + \sum_{p > 0} p n_p \le d \le r + l + \sum_{p < 0} p n_p.
\end{equation*}
By our assumption, there exists $d$ satisfying both
inequalities, e.g., we take
\(
  d = r - \sum_{p > 0} p n_p
\)
in the positive case, 
\(
r + l + \sum_{p < 0} p n_p
\)
in the negative case.
Moreover, we may assume $d\neq (r+l)/2$ if $\sum n_p$ is even.

Substituting $r + l - d$ into $d$ in \eqref{eq:blow-up7}, and
replacing $\vec{k}$ by $-\vec{k}$, $(N_1, N_2)$ by $(N_2, N_1)$ and
$(\vec{n}_1,\vec{n}_2)$ by $(\vec{n}_2,\vec{n}_1)$,
\begin{NB2}
  Corrected. 2010/04/01
\end{NB2}%
we get
\begin{equation*}
  \begin{split}
  0 = 
   \sum_{\substack{(\vec{k},\vec{k})/2 + N_1 + N_2 = N \\
    \vec{n}_1+\vec{n}_2 = \vec{n}}}
  & (-1)^{\sum n_{1,p}}
  \frac{e^{(d - l/2 - r)(\vec{k},\vec{a})} e^{(N_1 - N_2)(d - (r+l)/2)\ve_1}}
  {\prod_{\alpha\in\Delta} l^{-\vec{k}}_\alpha(\ve_1,-\ve_1,\vec{a})}
  \exp\left(\frac{l}2\sum k_\alpha^2 a_\alpha\right)
\\  
  & \times 
  Z_{N_1,\vec{n}_1}(2\ve_1,-\ve_1,\vec{a}+\ve_1\vec{k})
  Z_{N_2,\vec{n}_2}(\ve_1,-2\ve_1,\vec{a}-\ve_1\vec{k}).
  \end{split}
\end{equation*}
Note
\begin{equation*}
  \frac{e^{r(\vec{k},\vec{a})/2}}
  {\prod_{\vec{\alpha}\in\Delta} l^{\vec{k}}_{\vec{\alpha}}(\ve_1,-\ve_1,\vec{a})}
  = \frac{e^{-r(\vec{k},\vec{a})/2}}
  {\prod_{\vec{\alpha}\in\Delta} l^{-\vec{k}}_{\vec{\alpha}} (\ve_1,-\ve_1,\vec{a})}
\end{equation*}
by \cite[Lem.~4.1 and (4.2)]{NY3}. Note also that
$\sum n_{1,p} = \sum n_{p} - \sum n_{2,p}$ and hence we can replace
$(-1)^{\sum n_{1,p}}$ by $(-1)^{\sum n_{2,p}}$ in the above formula.
Then the only difference of the above two equations are variables
for $Z_{N_1,\vec{n}_1}$ and $Z_{N_2,\vec{n}_2}$. By the induction hypothesis
those are also equal if $N_1$, $N_2 < N$. Therefore we have
{\allowdisplaybreaks
\begin{equation}\label{eq:touse}
  \begin{split}
  0 &=
  \sum_{\vec{n}_1 + \vec{n}_2 = \vec{n}}
  (-1)^{\sum n_{2,p}}
  \begin{aligned}[t]
  \Bigl( 
  & e^{N(d - (r+l)/2)\ve_1}
  Z_{N,\vec{n}_1}(\ve_1,-2\ve_1,\vec{a})
  Z_{0,\vec{n}_2}(2\ve_1,-\ve_1,\vec{a})
\\
  & +
  e^{-N(d - (r+l)/2)\ve_1}
  Z_{0,\vec{n}_1}(\ve_1,-2\ve_1,\vec{a})
  Z_{N,\vec{n}_2}(2\ve_1,-\ve_1,\vec{a})
\\
  & -
  e^{N(d - (r+l)/2)\ve_1}
  Z_{N,\vec{n}_1}(2\ve_1,-\ve_1,\vec{a})
  Z_{0,\vec{n}_2}(\ve_1,-2\ve_1,\vec{a})
\\
  & -
  e^{-N(d - (r+l)/2)\ve_1}
  Z_{0,\vec{n}_1}(2\ve_1,-\ve_1,\vec{a})
  Z_{N,\vec{n}_2}(\ve_1,-2\ve_1,\vec{a}) \Bigr),
  \end{aligned}
\\
  &=
  \sum_{\vec{n}_1 + \vec{n}_2 = \vec{n}}
  (-1)^{\sum n_{2,p}}
  \begin{aligned}[t]
  \Bigl[
  & e^{N(d - (r+l)/2)\ve_1}  Z_{0,\vec{n}_2}(2\ve_1,-\ve_1,\vec{a})
\\ 
  & \qquad\times
  \left\{Z_{N,\vec{n}_1}(\ve_1,-2\ve_1,\vec{a})
  - Z_{N,\vec{n}_1}(2\ve_1,-\ve_1,\vec{a})\right\}
\\
  & +
  e^{-N(d - (r+l)/2)\ve_1}   Z_{0,\vec{n}_1}(\ve_1,-2\ve_1,\vec{a})
\\
  & \qquad\times
  \left\{
  Z_{N,\vec{n}_2}(2\ve_1,-\ve_1,\vec{a})
  - Z_{N,\vec{n}_2}(\ve_1,-2\ve_1,\vec{a})
  \right\}\Bigr],
  \end{aligned}
  \end{split}
\end{equation}
if} $\vec{n} \neq 0$.
\begin{NB2}
  A condition added. 2010/04/02
\end{NB2}

We now prove the assertion by induction on $\vec{n}$. The case
$\vec{n} = 0$ is treated already in \eqref{eq:sym}.
\begin{NB2}
Corrected according to the referee's comment. 2010/04/02

  Earlier version.

First suppose $\vec{n} = 0$. Then $\vec{n}_1 = \vec{n}_2 = 0$, and we
have
\begin{equation*}
  0 = 
  \left(e^{N(d - (r+l)/2)\ve_1} - e^{-N(d - (r+l)/2)\ve_1}\right)
  \left\{Z_{N,0}(\ve_1,-2\ve_1,\vec{a}) - Z_{N,0}(2\ve_1,-\ve_1,\vec{a})
  \right\}.
\end{equation*}
Since we take $d \neq (r+l)/2$ in this case, we have the assertion for
$Z_{N,0}$.
\end{NB2}

Now we assume that the assertion holds for smaller $\vec{n}$. Note
that the assumption on $\vec{n}$ implies that on smaller ones.
\begin{NB}
  If $\vec{n}$ is not either positive nor negative, this is not true
  in general. So if $\vec{n}$ is odd and satisfies the inequality,
  smaller even $\vec{n}$ may not satisfy the strict inequality.
\end{NB}%
Then the only remaining terms in \eqref{eq:touse} are either
$\vec{n}_1 = 0$ or $\vec{n}_2 = 0$. Therefore we have
\begin{equation*}
  0 = 
    \left(e^{N(d - (r+l)/2)\ve_1} - 
      (-1)^{\sum n_p} e^{-N(d - (r+l)/2)\ve_1}\right)
  \left\{
    Z_{N,\vec{n}}(\ve_1,-2\ve_1,\vec{a}) - Z_{N,\vec{n}}(2\ve_1,-\ve_1,\vec{a})
  \right\}.
\end{equation*}
Hence we have the assertion for $Z_{N,\vec{n}}$. Note that we take
$d\neq (r+l)/2$ when $\sum n_p$ is even, so the above is a nontrivial
equality.
\begin{NB}

 (2) Let
\begin{equation*}
  \Fin(\ve_1,\ve_2,\vec{a},\Lambda,\vec{\tau}) \defeq
  \ve_1 \ve_2 \log \Zin(\ve_1,\ve_2,\vec{a},\Lambda,\vec{\tau}).
\end{equation*}
We divide the right hand side of \eqref{eq:blow-up2} by
$\Zin(\ve_1,\ve_2,\vec{a};\Lambda,\vec{\tau}=0)$ after setting
$\vec{\tau} = 0$:
\begin{equation*}
  \begin{split}
   & \sum_{\substack{\vec{k}=(k_\alpha)\in\Z^r \\ \sum k_\alpha = k}}
    \frac{(e^{(\ve_1+\ve_2)(d-(r+l)/2)}\Lambda^{2r})^{(\vec{k}^2)/2}
      e^{(d-l/2)(\vec{k},\vec{a})}}
    {\prod_{\vec{\alpha} \in \Delta} 
      l_{\vec{\alpha}}^{\vec{k}}(\ve_1,\ve_2,\vec{a})}
\\
& \quad\times
\exp \left[l\left(
\frac{1}{6}(\ve_1+\ve_2)\sum_\alpha k_\alpha^3+
\frac{1}{2}\sum_\alpha k_\alpha^2 a_\alpha \right) \right]
\\
& \quad\quad\times
\exp\Biggl[
\begin{aligned}[t]
& \frac{\Fin(\ve_1,\ve_2-\ve_1,\vec{a}+\ve_1 \vec{k};
\Lambda e^{\frac{d-(r+l)/2}{2r}\ve_1},
(e^{\ve_1/2}-e^{-\ve_1/2})\vec{t})}{\ve_1(\ve_2-\ve_1)}
\\
& \; +
\frac{\Fin(\ve_1-\ve_2,\ve_2,\vec{a}+\ve_2 \vec{k};
\Lambda e^{\frac{d-(r+l)/2}{2r}\ve_2},
(e^{\ve_2/2}-e^{-\ve_2/2})\vec{t})}{(\ve_1-\ve_2)\ve_2}
\\
& \; -
\frac{\Fin(\ve_1,\ve_2,\vec{a};\Lambda,\vec{\tau}=0)}{\ve_1\ve_2}
\Biggr].
\end{aligned}
\end{split}
\end{equation*}
The last three lines in the exponential is expressed as
\begin{equation*}
  \begin{split}
    & \frac{1}{\ve_2-\ve_1}
    \left[\frac{\Fin(\ve_1,\ve_2-\ve_1,\vec{a}+\vec{k}\ve_1;
        \Lambda e^{\frac{d-r/2}{2r}\ve_1},(e^{\ve_1/2}-e^{-\ve_1/2})\vec{t})
        -\Fin(\ve_1,\ve_2,\vec{a},\Lambda,\vec{\tau}=0)}{\ve_1}
    \right. 
\\
    & \left.
      -\frac{\Fin(\ve_1-\ve_2,\ve_2,\vec{a}+\vec{k}\ve_2,
        \Lambda e^{\frac{d-r/2}{2r}\ve_2},(e^{\ve_2/2}-e^{-\ve_2/2})\vec{t})
        -\Fin(\ve_1,\ve_2,\vec{a},\Lambda,\vec{\tau}=0)}
      {\ve_2} 
    \right]
  \end{split}
\end{equation*}
To be continued....
\end{NB}%
\end{proof}

\begin{Remark}
  \begin{NB2}
    Remark added. 2010/04/02
  \end{NB2}%
  We need the vanishing \eqref{eq:blow-up7} for the case when $\sum
  n_p$ is odd, but it is enough to suppose that the right hand side of
  \eqref{eq:blow-up7} is the same for $d$ and $r+l-d$ when $\sum n_p$
  is even. In particular, if we use \eqref{eq:blow-up4} instead of
  \eqref{eq:blow-up5}, the above argument works even for $\vec{n}
  = 0$. This is nothing but the proof of \eqref{eq:sym} in \cite{GNY2}.
\end{Remark}

\subsection{Contact term equations}

We expand as
\begin{multline*}
  \ve_1\ve_2\log Z_l(\ve_1,\ve_2,\vec{a};\Lambda,\vec{\tau})
\\
  = F_0(\vec{a};\Lambda,\vec{\tau})
  + (\ve_1+\ve_2) H(\vec{a};\Lambda,\vec{\tau})
  + \ve_1\ve_2 A(\vec{a};\Lambda,\vec{\tau})
  + \frac{\ve_1^2+\ve_2^2}3 B(\vec{a},\Lambda,\vec{\tau}) + \cdots,
\end{multline*}
where we only consider
$\left.(\partial/\partial\vec{\tau})^{\vec{n}}
%  \ve_1\ve_2\log Z_l(\ve_1,\ve_2,\vec{a};\Lambda,\vec{\tau})
\right|_{\vec{\tau}=0}$ applied to this function, with $\vec{\tau}$
in the range in \propref{prop:regular}(2) so that singular terms do
not appear.

By \eqref{eq:sym} $H(\vec{a};\Lambda,\vec{\tau}=0)$ comes only from
the perturbative part. Thus we have $H(\vec{a};\Lambda,\vec{\tau}=0) =
-\pi\sqrt{-1}\langle\vec{a},\rho\rangle$, where $\rho$ is the half sum
of the positive roots. More generally by \propref{prop:regular}(1) we
have
\begin{equation*}
  \left(\pd{}{\vec{\tau}}\right)^{\vec{n}}
  H(\vec{a};\Lambda,\vec{\tau}=0) = 0
\end{equation*}
if $\vec{n}$ satisfies the condition there.

We introduce a new coordinate system for $\vec{a}$ by $a^i = a_1 + a_2
+ \cdots + a_i$ ($i=1,\dots, r-1$) as in \cite[\S1]{NY1}. We also
define $k^i$ for $\vec{k}$ in the same way.

Let
\begin{equation}\label{eq:tau}
  \tau_{ij} 
  = -\frac1{2\pi\sqrt{-1}}
  \frac{\partial^2 F_0(\vec{a};\Lambda,\vec{\tau}=0)}
  {\partial a^i\partial a^j}.
\end{equation}
\begin{NB2}
  Corrected. The referee requests an explanation to which this
  element is belonged. This is postponed below. 2010/04/10
\end{NB2}%
Let $\Theta_E(\vec{\xi}|\tau)$ be the Riemann theta function defined by
\begin{equation*}
  \Theta_E(\vec{\xi}|\tau)
  = \sum_{\vec{k}\in\Z^{r-1}} \exp\left(
    \pi\sqrt{-1}\sum_{i,j} \tau_{ij} k^i k^j
    + 2\pi\sqrt{-1} \sum_i k^i(\xi^i+\frac12)
    \right).
\end{equation*}

We substitute \eqref{eq:blow-up3} into the left hand side of
\eqref{eq:blow-up4}, and take the limit of $\ve_1, \ve_2\to 0$. Using
$H(\vec{a};\Lambda,\vec{\tau}=0) =
-\pi\sqrt{-1}\langle\vec{a},\rho\rangle$, we obtain
\begin{multline*}
  \exp(B - A) =
      \exp\left[
        -\frac1{8 r^2}
        \left(d - \frac{r+l}2\right)^2
        \frac{\partial^2 F_0}{(\partial\log\Lambda)^2}
      \right]
      \times 
\\
      \Theta_E\left(\left.
    - \frac1{2\pi\sqrt{-1}}\frac1{2r}
     \left(d - \frac{r+l}{2}\right)
     \frac{\partial^2 F_0}{\partial\log\Lambda\partial\vec{a}}\,
    \right|\tau \right)
\end{multline*}
for $0\le d\le r$ as in \cite[\S4]{NY3}. Here $F_0$, $A$, $B$ are
evaluated at $(\vec{a};\Lambda,\vec{\tau}=0)$.

\begin{NB}
Copy from \cite{NY3}:

Let us derive equations for $F_0$. We use
{\allowdisplaybreaks
\begin{equation*}
\begin{split}
   & \frac{F_0(\vec{a}+\ve_1\vec{k}; t_1^{\left(d-r/2\right)}\q,\bbeta)}
   {\ve_1(\ve_2-\ve_1)}
    +\frac{F_0(\vec{a}+\ve_2\vec{k}; t_2^{\left(d-r/2\right)}\q,\bbeta)}
    {(\ve_1-\ve_2)\ve_2}
\\
%  =\; &
  & \qquad\qquad\qquad =
  \frac{1}{\ve_1\ve_2}F_0
  - \left[%\sum_{p,q}
    \frac{\partial^2 F_0
    }{(\partial\log \q)^2} \frac{\bbeta^2}2\left(d-\frac{r}2\right)^2
    +
    %\sum_p
    \frac{\partial^2 F_0
    }{\partial\log \q\partial a^l}\bbeta\left(d-\frac{r}2\right) k^l
    +
    \frac{\partial^2 F_0
    }{\partial a^l\partial a^m}
      \frac{k^l k^m}{2}
  \right]
  + \cdots,
% \\
%  & -(\ve_1+\ve_2)
%  \left[\frac{\partial^3 F_0
%    }{\partial\tau_p\partial\tau_q\partial\tau_r}
%    \frac{t_p t_q t_r}{3!}
%    +
%   \frac{\partial^3 F_0
%   }{\partial\tau_p\partial\tau_q \partial a^l}
%   \frac{k^l t_p t_q}{2}
%   +
%   \frac{\partial^3 F_0
%   }{\partial \tau_p \partial a^l\partial a^m}\frac{k^l k^m t_p}{2}
%   +
%   \frac{\partial^3 F_0
%   }{\partial a^l\partial a^m\partial a^n}
%   \frac{k^l k^m k^n}{3!}
%   \right]
% \\
%   & -\left((\ve_1 + \ve_2)^2 - \ve_1\ve_2\right)
%   \Biggl[
%   \begin{aligned}[t]
%   & \frac{\partial^4 F_0
%   }{\partial\tau_p\partial\tau_q\partial\tau_r\partial\tau_s}
%   \frac{t_pt_qt_rt_s}{4!}+
%   \frac{\partial^4 \Fz%(\vec{a};\q)
%   }{\partial\tau_p\partial\tau_q\partial\tau_r\partial a^l}
%   \frac{k^l t_pt_qt_r}{3!}+
%   \frac{\partial^4 \Fz%(\vec{a};\q)
%   }{\partial\tau_p\partial\tau_q \partial a^l\partial a^m}
%   \frac{k^l k^m t_pt_q}{2\cdot 2}
% \\
%   & \qquad
%   + \frac{\partial^4 \Fz%(\vec{a};\q)
%   }{\partial\tau_p\partial a^l
%   \partial a^m \partial a^n}\frac{k^l k^m k^n t_p}{3!}+
%   \frac{\partial^4 \Fz%(\vec{a};\q)
%   }{\partial a^l\partial a^m \partial a^n \partial a^o}
%   \frac{k^l k^m k^n k^o}{4!}
%   \Biggr] +\cdots
%  \end{aligned}
\\
   & \frac{\ve_2 H(\vec{a}+\ve_1\vec{k}; t_1^{\left(d-r/2\right)}\q,\bbeta)}
   {\ve_1(\ve_2-\ve_1)}
    +\frac{\ve_1 H(\vec{a}+\ve_2\vec{k};t_2^{\left(d-r/2\right)}\q,\bbeta)}
   {(\ve_1-\ve_2)\ve_2}
\\
%  =
%  \; &
  & \qquad\qquad =
  \frac{\ve_1+\ve_2}{\ve_1\ve_2} H%(\vec{a};\q,\vec{\tau})
  + \left[\frac{\partial H
    }{\partial\log \q} \bbeta\left(d-\frac{r}2\right)
    +
    \frac{\partial H
    }{\partial a^l} k^l
  \right]
  + \cdots
  = \frac{\ve_1+\ve_2}{\ve_1\ve_2} H%(\vec{a};\q,\vec{\tau})
  - \pi\sqrt{-1}\langle\vec{k},\rho\rangle  + \cdots
  ,
% \\
%  & -\ve_1\ve_2
%  \left[\frac{\partial^3 H%(\vec{a};\q)
%    }{\partial\tau_p\partial\tau_q\partial\tau_r}
%    \frac{t_pt_qt_r}{3!}
%    +
%   \frac{\partial^3 H%(\vec{a};\q)
%   }{\partial\tau_p\partial\tau_q\partial a^l}
%   \frac{k^l t_pt_q}{2}
%   +
%   \frac{\partial^3 H%(\vec{a};\q)
%   }{\partial\tau_p\partial a^l\partial a^m}\frac{k^l k^m t_p}{2}
%   +
%   \frac{\partial^3 H%(\vec{a};\q)
%   }{\partial a^l\partial a^m\partial a^n}
%   \frac{k^l k^m k^n}{3!}
%   \right] + \cdots
\\
   & \frac{\ve_2^2 G(\vec{a}+\ve_1\vec{k};t_1^{\left(d-\frac1r\right)}\q,\bbeta)}
  {\ve_1(\ve_2-\ve_1)}
    +\frac{\ve_1^2 G(\vec{a}+\ve_2\vec{k};t_2^{\left(d-\frac1r\right)}\q,\bbeta)}
  {(\ve_1-\ve_2)\ve_2}
%\\
  =
%  \; &
  \frac{(\ve_1+\ve_2)^2-\ve_1\ve_2}{\ve_1\ve_2} G%(\vec{a};\q)
%   + (\ve_1+\ve_2)
%   \left[\frac{\partial G%(\vec{a};\q)
%     }{\partial\log \q} t{\left(d-\frac1r\right)}
%     +
%     \frac{\partial G%(\vec{a};\q)
%     }{\partial a^l} k^l \right]
% \\
%  & + \ve_1\ve_2
%  \left[\frac{\partial^2 G%(\vec{a};\q)
%    }{\partial\tau_p\partial\tau_q}
%    \frac{t_pt_q}{2}
%    +
%   \frac{\partial^2 G%(\vec{a};\q)
%   }{\partial\tau_p\partial a^l}{k^l t_p}
%   +
%   \frac{\partial^2 G%(\vec{a};\q)
%   }{\partial a^l\partial a^m}\frac{k^l k^m}{2}
%   \right]
  + \cdots,
\\
   & F_1(\vec{a}+\ve_1\vec{k}; t_1^{\left(d-\frac1r\right)}\q,\bbeta)
    + F_1(\vec{a}+\ve_2\vec{k}; t_2^{\left(d-\frac1r\right)}\q,\bbeta)
% \\
   = %\; &
  2 F_1
%   + (\ve_1+\ve_2)
%   \left[\frac{\partial F_1
%     }{\partial\tau_p} t_p
%     +
%     \frac{\partial F_1
%     }{\partial a^l} k^l
%   \right]
% \\
%  & + \left((\ve_1+\ve_2)^2 - 2\ve_1\ve_2\right)
%  \left[\frac{\partial^2 F_1
%    }{\partial\tau_p\partial\tau_q}
%    \frac{t_pt_q}{2}
%    +
%   \frac{\partial^2 F_1
%   }{\partial\tau_p\partial a^l}
%   {k^l t_p}
%   +
%   \frac{\partial^2 F_1
%   }{\partial a^l\partial a^m}\frac{k^l k^m}{2}
%   \right]
  + \cdots,
\end{split}
\end{equation*}
where} $F_0$, $H$, $G$, $F_1$ and their derivatives are evaluated at
$\vec{a}, \q$ in the right hand sides.
\end{NB}

In particular, the right hand side is independent of $d$. Dividing by
the expression for $d=(r+l)/2$ (if $r+l$ is even), $d=(r+l-1)/2$, (if
$r+l$ is odd), we have
\begin{equation}\label{eq:contact}
\begin{split}
   & \frac{\Theta_E\left(\left.
    -\frac1{4\pi\sqrt{-1}r}
     \left(d - \frac{r+l}{2}\right)
     \frac{\partial^2 F_0}{\partial\log\Lambda\partial\vec{a}}\,
    \right|\tau \right)}
   {\Theta_E(0|\tau)}
   = \exp\left[
        \frac1{8 r^2}
        \left(d - \frac{r+l}2\right)^2
        \frac{\partial^2 F_0}{(\partial\log\Lambda)^2}
      \right],
%   &\quad&\text{if $r+l$ is even},
\\
%\intertext{or}
   & \frac{\Theta_E\left(\left.
     -\frac1{4\pi\sqrt{-1} r}
     \left(d - \frac{r+l}{2}\right)
     \frac{\partial^2 F_0}{\partial\log\Lambda\partial\vec{a}}\,
    \right|\tau \right)}
   {\Theta_E\left(\left.
    \frac1{8\pi\sqrt{-1} r}
     \frac{\partial^2 F_0}{\partial\log\Lambda\partial\vec{a}}\,
    \right|\tau \right)}
   = \exp\left[
        \frac1{8 r^2}
        \left\{ \left(d - \frac{r+l}2\right)^2 - \frac14\right\}
        \frac{\partial^2 F_0}{(\partial\log\Lambda)^2}
     \right]
%   &\quad&\text{if $r+l$ is odd}.
\end{split}
\end{equation}
according to either $r+l$ is even or odd.
This equation is called the {\it contact term equation}. This equation
determines $F_0(\vec{a};\Lambda,\vec{\tau}=0)$ recursively in the
expansion with respect to $\Lambda$, starting from the perturbation
part.

Here we remind again that our full partition function, and hence
$\tau_{ij}$ and $\Theta_E(\vec{\xi}|\tau)$, etc, do not have the
rigorous meaning. (See \remref{rem:full}.) The rigorous form of the
contact term equation is given by rewriting it as an equation for the
instanton part. This was given in \cite[(7.5)]{NY1}, for the
homological version of Nekrasov partition function, but we do not give
here since it is not enlightening.
\begin{NB2}
  An explantion added according to the referee's comment. 2010/04/03
\end{NB2}

Similarly as the limit of \eqref{eq:blow-up5}, we obtain
\begin{multline}\label{eq:contact22}
  0 = - \frac1{2r}\left(d - \frac{r+l}2\right)
  \pd{}{\log\Lambda}
%  \left.
    \left(\pd{}{\vec{\tau}}\right)^{\vec{n}}\!F_0
%  \right|_{\vec{\tau}=0}
    (\vec{a};\Lambda,\vec{\tau}=0)
\\
  - \frac1{2\pi\sqrt{-1}}\sum_i 
    \left.\frac{\partial\log\Theta_E}{\partial \xi^i}\right|_{
      \vec{\xi} = - \frac1{4\pi\sqrt{-1}r}
      \left(d - \frac{r+l}2\right)
      \frac{\partial^2F_0}{\partial \log\Lambda\partial\vec{a}}}
    \pd{}{a^i}
%    \left.
      \left(\pd{}{\vec{\tau}}\right)^{\vec{n}}\!F_0
%    \right|_{\vec{\tau}=0}
    (\vec{a};\Lambda,\vec{\tau}=0)
%    \frac{\partial^2 F_0}{\partial a^i\partial \tau_p}
\end{multline}
when $\vec{n}$ is nonzero, either positive or negative and satisfies
\eqref{eq:const} and (\ref{prop:regular})(1)(a),(b).
\begin{NB}
Similarly as the limit of \eqref{eq:blow-up5'}, we obtain
\begin{multline}\label{eq:contact2}
  0 = - \frac1{2r}\left(d - \frac{r+l}2\right)
  \frac{\partial^2 F_0}{\partial\log\Lambda\partial\tau_p}
\\
  - \frac1{2\pi\sqrt{-1}}\sum_i 
    \left.\frac{\partial\log\Theta_E}{\partial \xi^i}\right|_{
      \vec{\xi} = - \frac1{4\pi\sqrt{-1}r}
      \left(d - \frac{r+l}2\right)
      \frac{\partial^2F_0}{\partial \log\Lambda\partial\vec{a}}}
    \frac{\partial^2 F_0}{\partial a^i\partial \tau_p}
\end{multline}
for $|p| \le d \le r$ if $p < 0$, $0\le d \le r-p$ if $p > 0$ and
$-(r+l)/2 \le p \le (r-l)/2$ (as we need to use
\propref{prop:regular}).
\end{NB}%
We call this as the {\it contact term equation\/} for
$(\partial/\partial\vec{\tau})^{\vec{n}}F_0$. This derivation of the
contact term equation from the blow-up equation can be done in the
same way as in \cite[\S5.3]{NY2}.
\begin{NB2}
  The reference added according to the referee's suggestion. 2010/04/01
\end{NB2}

\begin{NB}
{\allowdisplaybreaks
\begin{equation*}
\begin{split}
   & \frac{F_0(\vec{a}+\ve_1\vec{k}; t_1^{\left(d-r/2\right)}\q,
     (e^{\ve_1/2}-e^{-\ve_1/2})\vec{t})}
   {\ve_1(\ve_2-\ve_1)}
    +\frac{F_0(\vec{a}+\ve_2\vec{k}; t_2^{\left(d-r/2\right)}\q,
      (e^{\ve_2/2}-e^{-\ve_2/2})\vec{t})}
    {(\ve_1-\ve_2)\ve_2}
\\
%  =\; &
  & \qquad\qquad\qquad =
  \frac{1}{\ve_1\ve_2}F_0
  - \left[%\sum_{p,q}
    \frac{\partial^2 F_0
    }{(\partial\log \q)^2} \frac12\left(d-\frac{r}2\right)^2
    +
    %\sum_p
    \frac{\partial^2 F_0
    }{\partial\log \q\partial a^l}\left(d-\frac{r}2\right) k^l
    +
    \frac{\partial^2 F_0
    }{\partial a^l\partial a^m}
      \frac{k^l k^m}{2}
  \right]
\\
  & \qquad\qquad\qquad\qquad
  - \sum_p \left[
    \left(d - \frac{r}2\right)
    \frac{\partial^2 F_0}{\partial \log\q\partial \tau_p}
    + 
    \frac{\partial^2 F_0}{\partial a^i\partial \tau_p} k^i
    \right] t_p
  + \cdots,
\end{split}
\end{equation*}
}
\end{NB}

\begin{NB}
  Suppose $l = r-1$. Then the range of $p$ is $-r+1$, $-r+2$, \dots,
  $-1$. If $p = -r+1$, then $|p|\le d\le r$ holds if and only if $d =
  r-1$ and $r$. But $d - (r+l)/2 = -1/2$ and $1/2$, hence two
  equations \eqref{eq:contact2} for $d=r-1$, $r$ are equivalent. So we
  only get one nontrivial equation. 

  If $p \ge -r+2$, we also have the equation for $d = r-2$, i.e., $d -
  (r+l)/2 = -3/2$. So we at least have two equations.
\end{NB}
\begin{NB}
  If $r-l$ is even, the range of $p$ is $-(r+l)/2, -(r+l)/2 + 1,
  \dots, (r-l)/2$. If $p = -(r+l)/2$, then the inequality is $(r+l)/2
  \le d\le r$. If $p = (r-l)/2$, then $0\le d\le (r+l)/2$. Note that
  from the symmetry $d\leftrightarrow (r+l)-d$ of \eqref{eq:contact2},
  we have the inequality for $l\le d \le r$ and $0\le d\le r+l$
  respectively.  Taking the intersection, we have the above holds
  $l\le d\le r$ for any $p$ in the range.

  Next suppose $r-l$ is odd. The range of $p$ is from $-(r+l-1)/2$ to
  $(r-l-1)/2$. Then the inequality is $(r+l-1)/2 \le d \le r$ and
  $0\le d \le (r+l+1)/2$. From the symmetry, we again have $l\le d\le r$.
\end{NB}

Since $\partial \log\Theta_E/\partial \xi^i$ is divisible by $\Lambda$,
\begin{NB}
  If we differentiate the theta by $\xi_\alpha$, we get
  $\sum_{\vec{k}} k_\alpha \exp(..)$. This is $0$ if $\vec{k} = 0$.
\end{NB}%
this equation determines $(\partial/\partial\vec{\tau})^{\vec{n}}F_0$
recursively in the expansion with respect to $\Lambda$ starting from
the constant term:
\begin{equation}\label{eq:constterm}
  \begin{split}
   & %\left.
   \left(\pd{}{\vec{\tau}}\right)^{\vec{n}}\!F_0
    %\right|_{\vec{\tau},\Lambda = 0}
    (\vec{a};\Lambda=0,\vec{\tau}=0)
   =
   \left.
   \left(\pd{}{\vec{\tau}}\right)^{\vec{n}}\!
   \sum_{p,\alpha} \tau_p e^{p a_\alpha}
   \right|_{\vec{\tau}, \Lambda = 0}
\\
   =\; & 
   \begin{cases}
   \sum_\alpha \left.e^{p a_\alpha}\right|_{\Lambda=0} & 
   \text{if $n_p = 1$ and $n_q = 0$ for $q\neq p$},
\\
   0 & \text{otherwise}.
   \end{cases}
  \end{split}
\end{equation}
In particular, we have
\begin{equation*}
  %\left.
    \left(\pd{}{\vec{\tau}}\right)^{\vec{n}}\!F_0
  %\right|_{\vec{\tau}=0}
    (\vec{a};\Lambda,\vec{\tau}=0)
  = 0
\end{equation*}
unless $n_p = 1$ and $n_q = 0$ for $q\neq p$. The remaining
$\partial F_0/\partial \tau_p$ will be determined in the next subsection.

\begin{NB}
Thus we have
\begin{equation}
  \begin{split}
    &
  0 = \frac1{2r}%\left(d - \frac{r+l}2\right)
  \frac{\partial^2 F_0}{\partial\log\Lambda\partial\tau_p}
  - \frac1{2\pi\sqrt{-1}}\sum_i 
    \left.\frac{\partial\log\Theta_E}{\partial \xi^i}\right|_{
      \vec{\xi} = \frac1{4\pi\sqrt{-1}r}
 %     \left(d - \frac{r+l}2\right)
      \frac{\partial^2F_0}{\partial \log\Lambda\partial\vec{a}}}
    \frac{\partial^2 F_0}{\partial a^i\partial \tau_p},
%\intertext{or}
\\
    &
  0 = \frac1{4r}%\left(d - \frac{r+l}2\right)
  \frac{\partial^2 F_0}{\partial\log\Lambda\partial\tau_p}
  - \frac1{2\pi\sqrt{-1}}\sum_i 
    \left.\frac{\partial\log\Theta_E}{\partial \xi^i}\right|_{
      \vec{\xi} = \frac1{8\pi\sqrt{-1}r}
 %     \left(d - \frac{r+l}2\right)
      \frac{\partial^2F_0}{\partial \log\Lambda\partial\vec{a}}}
    \frac{\partial^2 F_0}{\partial a^i\partial \tau_p},
  \end{split}
\end{equation}
according to either $r+l$ is even or odd.
\begin{NB2}
  I take $d = (r+l)/2 - 1$ and $(r+l-1)/2$.
\end{NB2}
\end{NB}

\begin{NB}
By \eqref{eq:deriv} and the above, we have
\begin{multline*}
  \sum_c (\Lambda^{2r} e^{-(r+l)(\ve_1+\ve_2)/2})^{(\Delta(c),[\proj^2])}
%  \times
  \int_{M(c)} 
  \prod_i \ch\left(\psi^{\mu_i}(\cE)/[0]\right)
  \td M(c) \exp(l c_1(\Vcal(\cE)))
%   \exp\left(\sum_{k\in\Z\setminus\{0\}}
%     \tau_k \ch\left( \psi^k(\cE)/[\C^2]\right)
%   \right)
\\
  = \prod_i \left.\left(
    (1 - e^{-\ve_1})(1 - e^{-\ve_2})\pd{}{\tau_{\mu_i}}
  \right)
  \Zin_l(\ve_1,\ve_2,\vec{a};\Lambda,\vec{\tau})\right|_{\vec{\tau}=0}
\\
  = \prod_i \left(
    (1 - e^{-\ve_1})(1 - e^{-\ve_2})\pd{\Fin}{\tau_{\mu_i}}
  \right)
  \Zin_l(\ve_1,\ve_2,\vec{a};\Lambda,\vec{\tau} = 0)
\end{multline*}
\end{NB}

Since higher derivatives vanish, we have
\begin{multline*}
  \left.
    \frac{
   \left(
    (e^{\ve_1/2} - e^{-\ve_1/2})(e^{\ve_2} - e^{-\ve_2/2})
    \right)^{\sum_p n_p}
      \left(\pd{}{\vec{\tau}}\right)^{\vec{n}}
  Z_l(\ve_1,\ve_2,\vec{a};\Lambda,\vec{\tau}=0)%\right|_{\vec{\tau}=0}
  }
  {
    Z_l(\ve_1,\ve_2,\vec{a};\Lambda,\vec{\tau} = 0)
    }
  \right|_{\ve_1=\ve_2=0}
\\
= 
   \prod_p \left(
     \pd{F_0}{\tau_p} (\vec{a};\Lambda,\vec{\tau}=0)\right)^{n_p}.
\end{multline*}
By \eqref{eq:deriv} this is equal to
\begin{multline}\label{eq:deriv2}
  \frac1
  {\Zin_l(\ve_1,\ve_2,\vec{a};\Lambda,\vec{\tau} = 0)}
    \sum_c (\Lambda^{2r} e^{-(r+l)(\ve_1+\ve_2)/2})^{(\Delta(c),[\proj^2])}
\\
  \times
  \left.
  \int_{M(c)} 
  \ch\left(
   \bigotimes_p \left(\psi^{p}(\cE)/[0]\right)^{\otimes n_p}\right)
  \td M(c) \exp(l c_1(\Vcal(\cE)))
  \right|_{\ve_1=\ve_2=0},
\end{multline}
where $\bullet/[0]$ is the slant product
$q_{2*}(\bullet\otimes q_1^*(\C_0)) = 
(e^{\ve_1/2} - e^{-\ve_1/2})(e^{\ve_2/2} - e^{-\ve_2/2})\bullet/[\C^2]$
with the skyscraper sheaf $\C_0$ at the origin, which is
nothing but the restriction to the origin.

\subsection{Seiberg-Witten prepotential}
We give a quick review of the Seiberg-Witten prepotential for the
theory with $5$-dimensional Chern-Simons term in this subsection.
See \cite[App.~A]{GNY2} for more detail.
(We change the notation slightly: $m$ in [loc.\ cit.] is our $l$.
$a_i$ is our $a_\alpha$. $\bbeta$ is set to be $1$.)

We consider a family of hyperelliptic curves parame\-trized by $\vec{U}
= (U_1,\dots, U_{r-1})$:
\begin{equation*}
\begin{split}
  & C_{\vec{U},l} : Y^2 = P(X)^2 - 4 (-X)^{r+l}\Lambda^{2r}
\\
  & \qquad P(X) = 
     X^r + U_1 X^{r-1} + U_2 X^{r-2} + \cdots + U_{r-1}X + (-1)^r
\end{split}
\end{equation*}
for $-r < l < r$, $l\in\Z$.
Note that we set $\bbeta = 1$ from [loc.\ cit.] for brevity.
We call them {\it Seiberg-Witten curves}. We define the {\it
  Seiberg-Witten differential\/} by
\begin{equation*}
  dS = \frac1{2\pi \sqrt{-1}% \bbeta
  }\, {\log X}\,  \frac{2XP'(X) - (r+l) P(X)}{2XY} dX.
\end{equation*}

We choose $z_\alpha$ ($\alpha=1,\dots,r$) so that $X_\alpha =
e^{-\sqrt{-1}z_\alpha}$ are zeroes of $P(X) = 0$. Then we take
the cycles $A_\alpha$, $B_\alpha$ ($\alpha=2,\dots, r$)
in a way explained in [loc.\ cit.].

We define $a_\alpha$, $a^D_\alpha$ by
\begin{equation*}
  a_\alpha = \int_{A_\alpha} dS, \qquad
  a^D_\alpha = \int_{B_\alpha} dS.
\end{equation*}
We then invert the role of $a_\alpha$ and $U_p$, so we consider
$a_\alpha$ as variables and $U_p$ are functions in $a_\alpha$. Here we use
\(
  a_\alpha = -\sqrt{-1}z_\alpha + O(\Lambda)
\)
[loc.\ cit., (A.1)].

Then one can show that there exists a locally defined function $\cF_0
= \cF_0(\vec{a};\Lambda)$ such that
\begin{equation*}
  a^D_\alpha = -\frac1{2\pi\sqrt{-1}}\pd{\cF_0}{a_\alpha}.
\end{equation*}
This defines $\cF_0$ up to a function (in $\Lambda$) independent of
$\vec{a}$. This ambiguity is fixed by specifying
$\partial\cF_0/\partial\log\Lambda$ and the perturbation part of
$\cF_0$. See [loc.\ cit.] for detail.

It was proved in [loc.\ cit., (A.27), (A.33)] that $\cF_0$ satisfies
the contact term equations \eqref{eq:contact} where the period matrix
$(\tau)$ is defined by the same formula as \eqref{eq:tau} by replacing
$F_0$ by $\cF_0$. Moreover, it was also proved that $\cF_0$ has the
same perturbation part as $F_0$ [loc.\ cit., Prop.~A.6]. Therefore the
recursive structure of \eqref{eq:contact} implies that $\cF_0 = F_0$,
i.e., the leading part of the Nekrasov partition function is equal to
the Seiberg-Witten prepotential.

In [loc.~cit., (A.25)], it was proved
\begin{equation}\label{eq:contact3}
    0 = \frac1{2r}\pd{U_p}{\log\Lambda}
\\
  + \frac1{2\pi\sqrt{-1}}\sum_i 
    \left.\frac{\partial\log\Theta_E}{\partial \xi^i}\right|_{
      \vec{\xi} = - \frac1{4\pi\sqrt{-1}r}
%      \left(d - \frac{r+l}2\right)
      \frac{\partial^2F_0}{\partial \log\Lambda\partial\vec{a}}}
%    \frac{\partial^2 F_0}{\partial a^i\partial \tau_p}
    \pd{U_p}{a^i}
\end{equation}
under the assumption $r+l$ is even. This is the same as the equation
\eqref{eq:contact22} with $d-(r+l)/2 = \pm 1$.
\begin{NB}
  $d = (r+l)/2 \pm 1$
\end{NB}
When $r+l$ is odd, we assume that $U_p$ satisfies \eqref{eq:contact22}
with $d=(r+l+1)/2$ for a moment, and gives a proof later.
\begin{NB2}
  Corrected according to the referee's question. 2010/04/01
\end{NB2}%

The initial condition for $U_p$ is
\begin{equation*}
   U_p = (-1)^p e_p(X_1,\dots, X_r)
   % =(-1)^p e_p(e^{-\sqrt{-1}z_1},\dots, e^{-\sqrt{-1}z_r})
   = (-1)^p e_p(e^{a_1},\dots, e^{a_r}) \quad\text{at $\Lambda=0$},
\end{equation*}
where $e_p$ is the $p^{\mathrm{th}}$ elementary symmetric
polynomial. Noticing that \eqref{eq:contact22} holds for polynomials
in $\partial F_0/\partial \tau_p$, we see that $(-1)^pU_p$ and
$\partial F_0/\partial \tau_p$ are related exactly in the same way as
an elementary symmetric polynomial and a power sum by
\eqref{eq:constterm}.

\begin{Theorem}
  \begin{equation*}
    \pd{F_0}{\tau_p}(\vec{a};\Lambda,\vec{\tau}=0)
    = X_1^p + \cdots + X_r^p
  \end{equation*}
holds for $-(r+l)/2\le p \le (r-l)/2$.
\end{Theorem}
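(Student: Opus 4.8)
The plan is to recognise both sides of the asserted identity as the unique formal power series in $\Lambda$ annihilated by one and the same linear contact term operator and having one and the same value at $\Lambda=0$.

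\emph{Matching at $\Lambda=0$.} For $\vec{n}$ the unit vector supported at $p$, \eqref{eq:constterm} gives $(\partial F_0/\partial\tau_p)(\vec{a};\Lambda=0,\vec{\tau}=0)=\sum_{\alpha=1}^r e^{p a_\alpha}$. On the Seiberg--Witten side, at $\Lambda=0$ the chosen zeroes $X_\alpha=e^{-\sqrt{-1}z_\alpha}$ of $P(X)$ satisfy $X_\alpha=e^{a_\alpha}$, because $a_\alpha=-\sqrt{-1}z_\alpha+O(\Lambda)$; hence $\sum_\alpha X_\alpha^p|_{\Lambda=0}=\sum_\alpha e^{p a_\alpha}$, so the two sides agree at $\Lambda=0$.

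\emph{A common equation.} Since $-(r+l)/2\le p\le (r-l)/2$, the unit vector $\vec{n}$ at $p$ is positive or negative and satisfies the hypotheses of \propref{prop:regular}(1) together with \eqref{eq:const}, so one can choose an admissible $d$ with $d-(r+l)/2=\pm1$ when $r+l$ is even and $d-(r+l)/2=\pm\tfrac12$ when $r+l$ is odd. For such $d$, \eqref{eq:contact22} with this $\vec{n}$ reads $D[\partial F_0/\partial\tau_p]=0$, where $D$ is the first-order differential operator in $\log\Lambda$ and the $a^i$ occurring on the right-hand side of \eqref{eq:contact22}, whose coefficients depend only on $F_0(\vec{a};\Lambda,\vec{\tau}=0)$. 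The very same operator $D$ annihilates every $U_q$: for $r+l$ even this is \eqref{eq:contact3}, identified in the text with \eqref{eq:contact22} at $d-(r+l)/2=\pm1$, and for $r+l$ odd it is the equation assumed just before the statement, both brought to our $d$ via the $d\leftrightarrow r+l-d$ symmetry of \eqref{eq:contact22}. As $D$ is a derivation, $\{V:DV=0\}$ is a subalgebra; from $P(X)=\prod_\alpha(X-X_\alpha)$ one has $U_q=(-1)^q e_q(X_1,\dots,X_r)$, so Newton's identities write $\sum_\alpha X_\alpha^p$ as a polynomial in $U_1,\dots,U_p$ (all defined, since $p<r$), whence $D[\sum_\alpha X_\alpha^p]=0$ as well.

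\emph{Conclusion and the hard point.} Because $\partial\log\Theta_E/\partial\xi^i$ is divisible by $\Lambda$ and $d-(r+l)/2\ne0$, in the $\Lambda$-adic filtration $D$ equals $\tfrac1{2r}(d-\tfrac{r+l}2)\,\Lambda\,\partial/\partial\Lambda$ plus terms strictly raising the $\Lambda$-degree, so $DV=0$ determines $V$ uniquely by recursion from $V|_{\Lambda=0}$. Applying this to $V=\partial F_0/\partial\tau_p$ and to $V=\sum_\alpha X_\alpha^p$, which have equal values at $\Lambda=0$, gives the theorem (and in particular fixes the $\partial F_0/\partial\tau_p$ left open in the previous subsection). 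The step demanding the most care is the matching in the previous paragraph: pinning down which admissible $d$, and which branch of the $d\leftrightarrow r+l-d$ symmetry, makes the operator annihilating $\partial F_0/\partial\tau_p$ literally coincide with the one annihilating the $U_q$'s; this is intertwined with the input, deferred in the text, that $U_p$ satisfies \eqref{eq:contact22} with $d=(r+l+1)/2$ when $r+l$ is odd, on which the odd values of $p$ rely.
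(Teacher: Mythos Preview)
Your argument is correct and takes essentially the same route as the paper: both sides are annihilated by the same first-order contact term operator \eqref{eq:contact22} (at an admissible $d$ with $d-(r+l)/2=\pm1$ or $\pm\tfrac12$), they agree at $\Lambda=0$, and the recursive structure forces equality. One small lacuna: your Newton-identity step ``$\sum_\alpha X_\alpha^p$ as a polynomial in $U_1,\dots,U_p$ (all defined, since $p<r$)'' is only phrased for $p>0$; for $p<0$ you need the constraint $X_1\cdots X_r=1$ (from the constant term $(-1)^r$ of $P(X)$) to express $\sum_\alpha X_\alpha^p$ as a polynomial in $U_1,\dots,U_{r-1}$, as the paper remarks immediately after the theorem.
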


\begin{NB}
  The condition $|p| \le d \le r$ if $p < 0$, $0\le d \le r-p$ if $p >
  0$ for $d = (r+l)/2 \pm 1$ holds. More precisely, first we take $d =
  (r+l)/2 - 1$. Then the condition is $|p|\le (r+l)/2 - 1$ for $p<0$
  and $p\le (r-l)/2 + 1$ for $p>0$. Thus the range is $-(r+l)/2 + 1\le
  p \le (r-l)/2 + 1$. Second we take $d = (r+l)/2 + 1$. Then the range
  is $-(r+l)/2 - 1 \le p \le (r-l)/2 - 1$. Thus the union of two ranges
  contains $-(r+l)/2\le p \le (r-l)/2$.
\end{NB}

Note that all $U_p$'s are written by polynomials in
$\pd{F_0}{\tau_p}(\vec{a};\Lambda,\vec{\tau}=0)$ in the above range, as
$X_1\cdots X_r = 1$.
Thanks to \eqref{eq:deriv2} the polynomials can be replaced by those
in Adams operators.
We thus have
\begin{multline*}
  U_p 
  = 
    \frac{(-1)^p}
  {\Zin_l(\ve_1,\ve_2,\vec{a};\Lambda,\vec{\tau} = 0)}
    \sum_c (\Lambda^{2r} e^{-(r+l)(\ve_1+\ve_2)/2})^{(\Delta(c),[\proj^2])}
\\
  \times
  \left.
  \int_{M(c)} 
  \ch\left(
   \Wedge^p (\cE/[0])\right)
  \td M(c) \exp(l c_1(\Vcal(\cE)))
  \right|_{\ve_1=\ve_2=0}
\end{multline*}
for $0 < p \le (r-l)/2$. For $(r-l)/2 \le p < r$, the equation holds if
we replace $\Wedge^p (\cE/[0])$ by $\Wedge^{r-p} (\cE/[0])^\vee$.
\begin{NB}
  $0 > p - r \ge - (r+l)/2$.
\end{NB}%
\begin{NB}
  In particular, we have two expressions when $p=(r-l)/2$.
\end{NB}%
\begin{NB}
  Here we have used that $\bullet/[0]$ commutes with the Adams
  operator $\psi^p$ as it is nothing but the restriction to $0$.
\end{NB}%
This equation gives a moduli theoretic description of the coefficients
$U_p$ in the Seiberg-Witten curves.

\begin{NB}
  The following attempt for $\Wedge^p(\cE/[0])$ does not work:

As we pointed out, all $U_p$'s can be written by polynomials in
$\pd{F_0}{\tau_p}(\vec{a};\Lambda,\vec{\tau}=0)$ in the above range,
but we can write them more directly as follows.

Let $\Wedge_\tau \alpha = \sum_{p=0}^\infty \tau^p \Wedge^p \alpha$.
This is multiplicative, i.e., $\Wedge_\tau (\alpha+\beta)
= \Wedge_\tau\alpha\times \Wedge_\tau\beta$. Then we define
  \begin{multline*}
    \Zin_l(\ve_1,\ve_2,\vec{a};\Lambda,\tau,\tau')
    \defeq \sum_c
    \left(\Lambda^{2r} e^{-(r+l)(\ve_1+\ve_2)/2}\right)^{(\Delta(c),[\proj^2])}
\\
  \times
  \int_{M(c)} \td M(c) \exp(l c_1(\Vcal(\cE)))\,
  \Wedge_\tau \cE/[\C^2]
  \otimes\Wedge_{\tau'}\cE^\vee/[\C^2]
\end{multline*}
and also
$Z_l(\ve_1,\ve_2,\vec{a};\Lambda,\tau,\tau')$ as
before.
We consider the similar partition function
$\bZ(\ve_1,\ve_2,\linebreak[1]\vec{a};\Lambda,\tau,\tau',t,t')$ on the
blow-up as before.
Thanks to the multiplicativity of $\Wedge_\tau$, we get the blow-up
formula \eqref{eq:blow-up3} in this setting also, where the last variable
of the $Z_l$ in the right hand side becomes
$e^{\frac{k\ve_\alpha}r} e^{-\frac{\ve_\alpha}2}\left((\tau,\tau')
+(e^{\ve_\alpha}-1)(t,t')\right)$ ($\alpha=1,2$).
The proof of \propref{prop:Casimirvanish} also works for this version
to get
\begin{equation*}
   \left(
     \pd{}{t}
   \right)^n
   \left(
     \pd{}{t'}
   \right)^{n'}
   \bZ_{l,0,d}(\ve_1,\ve_2,\vec{a};
  \Lambda,\vec{\tau}=0,\vec{t}=0)
  = 0
\end{equation*}
for $n' \le d \le r - n$ as in \eqref{eq:blow-up5}.
  Let $\Wedge_\tau \bullet = \sum_{p=0}^\infty \tau^p \Wedge^p \bullet$. We
  then consider

Then
  \begin{equation*}
    \begin{split}
    & \Wedge_\tau\left(\cEf\oplus (1+x_i)C_m\right) 
    = \Wedge_\tau \cEf \otimes\left(\Wedge_\tau (1+x_i)C_m\right)
    = \Wedge_\tau \cEf \otimes \sum_p
    \left\{(1+x_i) \tau\right\}^p \Wedge^p C_m
\\
   =\; &
   \Wedge_\tau \cEf \otimes \Wedge_{(1+x_i)\tau} C_m.
    \end{split}
  \end{equation*}
Here we have used that $(1+x_i)$ is a line bundle. Similarly we have
\begin{equation*}
  \Wedge_{\tau'}\left(\cEf\oplus (1+x_i)C_m\right)^\vee
  = \Wedge_{\tau'} \cEf^\vee \otimes \Wedge_{(1+x_i)^{-1} \tau'} C_m^\vee.
\end{equation*}
\end{NB}

It remains to show that $U_p$ satisfies \eqref{eq:contact22}
with $d = (r+l+1)/2$ in the case $r+l$ is odd.
% This can be treated exactly as in [loc.\ cit., \S A.7]. 
Let us give a sketch of the argument. We use the notation in [loc.\
cit.], e.g., $E(X_1,X_2)$ is the prime form, $\omega_{\infty_+ - 0_-}$
is the meromorphic differential with the vanishing $A$-periods having
poles $0_-$ and $\infty_+$ of residue $-1$, $+1$ respectively, etc.
\begin{NB2}
  Added according to the referee's suggestion. 2010/04/01
\end{NB2}%

By \cite[Cor.~2.11]{Fay} we have
\begin{multline*}
%\begin{equation*}
  \frac{
    \Theta_E^2(\frac12 \int^{2X}_{0_-+\infty_+}\vec\omega)
%     \Theta_E((\int^X_{0_-} - \frac12 \int_{0_-}^{\infty_+})\vec\omega)
%     \Theta_E((\int^X_{\infty_+} + \frac12 \int_{0_-}^{\infty_+})\vec\omega)
  }
  {\Theta_E^2(\frac12 \int_{0_-}^{\infty_+}\vec\omega)}
    \frac{E(0_-,\infty_+)}{E(X,0_-) E(X,\infty_+)}
\\
  = \omega_{\infty_+-0_-}(X)
  + 2\times\frac1{2\pi\sqrt{-1}}\sum_i
      \left.\frac{\partial\log\Theta_E}{\partial \xi^i}\right|_{
      \vec{\xi} = \frac12\int_{0_-}^{\infty_+}\vec\omega}
    \omega_i(X).
%\end{equation*}
\end{multline*}

By [loc.\ cit., (A.18)], we have
\begin{equation*}
  \frac{(P(X) - Y)dX}{2XY}
  - \frac1{2r} \sum_{p=1}^{r-1} \pd{U_p}{\log\Lambda} \frac{X^{r-p-1}dX}{Y}
  = \omega_{\infty_+ - 0_-}(X).
\end{equation*}
Therefore it is enough to show
\begin{equation}
  \label{eq:want}
      \frac{(P(X) - Y)dX}{2XY}
    = 
      \frac{
    \Theta_E^2(\frac12 \int^{2X}_{0_-+\infty_+}\vec\omega)
%     \Theta_E((\int^X_{0_-} - \frac12 \int_{0_-}^{\infty_+})\vec\omega)
%     \Theta_E((\int^X_{\infty_+} + \frac12 \int_{0_-}^{\infty_+})\vec\omega)
  }
  {\Theta_E^2(\frac12 \int_{0_-}^{\infty_+}\vec\omega)}
  \frac{E(0_-,\infty_+)}{E(X,0_-) E(X,\infty_+)} 
  .
\end{equation}

As in [loc.\ cit., \S A.7] we take the branched double cover $p\colon
\widehat C_{\vec{U},l}\to C_{\vec{U},l}$ given by $W\mapsto X =
W^2$. It is given by
\begin{equation}\label{eq:cover}
  \begin{split}
  & Y^2 = P(W^2)^2 - 4 (-W^2)^{r+l}\Lambda^{2r}
\\
  =\; & \left( P(W^2) - 2(\sqrt{-1} W)^{r+l}\Lambda^r\right)
    \left( P(W^2) + 2(\sqrt{-1} W)^{r+l}\Lambda^r\right).
  \end{split}
\end{equation}
\begin{NB}
  Note that the formula \cite[(A.28)]{GNY2} was wrong.
\end{NB}

We consider the Szeg\"o kernel for $\widehat C_{\vec{U},l}$ given by
\begin{equation*}
  \Psi_{\widehat E}(W_1,W_2) = 
  \frac{\widehat\Theta_{\widehat E}(\int_{W_1}^{W_2} \vechatom)}
  {\widehat\Theta_{\widehat E}(0) E(W_1,W_2)},
\end{equation*}
where $\widehat\Theta_{\widehat E}$ is the Riemann theta function for
the curve $\widehat C_{\vec{U},l}$.
As in [loc.\ cit., the first two displayed formulas in \S A.6] we have
\begin{equation*}
  \Psi_{\widehat E}(W,\infty_+)^2
  = - \frac{Y - P(W^2)}{2 Y} dW \,
  \left.d\left(\frac1{W_2}\right)\right|_{W_2=\infty_+}.
\end{equation*}
\begin{NB}
  Note that $Y \sim -P(W)$ at $W\to \infty_+$, as $w = 0$.
\end{NB}%
\begin{NB2}
  The referee asks the branch of $\sqrt[4]{\psi_E(X)}$ in
  \cite[A.6]{GNY2}. But in the second line of the first displayed
  formula in \cite[A.6]{GNY2} (and \cite[p.13 (17)]{Fay}), we only
  have the ambiguity of the sign. We think that it is not appropriate
  to explain the detail in this paper, as it is about a formula in
  other papers. 2010/04/02
\end{NB2}%
From the defining equation \eqref{eq:cover} of the double cover,
\begin{NB2}
  I put the equation number for the defining equation. 2010/04/02
\end{NB2}%
this has zero of order $2(r+l)$ at
$0_+$ and of order $2(r-l-1)$ at $\infty_-$.
\begin{NB}
  \begin{equation*}
    \frac{Y-P(W^2)}{Y} = - \frac{4(-W^2)^{r+l}\Lambda^{2r}}{Y(Y + P(W^2))}
  \end{equation*}
and $Y = P(W^2) = (-1)^r$ at $0_+$. We also have
\begin{equation*}
  \frac{Y-P(W^2)}{Y} dW = - \frac{4(-W^2)^{r+l+1}\Lambda^{2r}}{Y(Y + P(W^2))}
  \frac{dW}{W^2}
\end{equation*}
and $Y\sim P(W^2) \sim W^{2r}$ at $\infty_-$.
\end{NB}%
Therefore
\begin{equation*}
   \operatorname{div}\widehat\Theta_{\widehat E}(W - \infty_+)
   = (r+l)\cdot 0_+ + (r - l - 1) \cdot \infty_-.
\end{equation*}
(See \cite[pp.16, 17]{Fay} for basic properties of $E(W_1,W_2)$.)
\begin{NB2}
  Added according to the referee's request. 2010/04/01
\end{NB2}%

If we identify the half-integer characteristic $\widehat E$ with a
vector in $\C^{2r-1}$ so that $\widehat{\Theta}_{\widehat E}(\xi) =
\widehat{\Theta}(\xi-\widehat{E})$, we have
\begin{equation}\label{eq:hate}
  \widehat E = 
  (r+l)\cdot 0_+ + (r - l - 1) \cdot \infty_- - \infty_+ - \widehat\Delta,
\end{equation}
where $\widehat\Delta$ is the Riemann's divisor class
(\cite[Th.~1.1]{Fay}).

By [loc.\ cit., Lem.~A.32] we have
\begin{equation}\label{eq:A32}
  \widehat E = p^* E - [0, c_*, 0],
\end{equation}
where $p^*\colon J_0(C_{\vec{U},l})\to J_0(\widehat C_{\vec{U},l})$.

On the other hand, we have
\begin{equation}\label{eq:hattheta}
  \widehat\Delta - p^*\Delta
  = 0_- + \infty_-
  + p^*\left(\frac12 \int_{0_-}^{\infty_+} \vec{\omega}\right)
  + [0, c_*, 0]
\end{equation}
by \cite[Prop.~5.3]{Fay}.

From (\ref{eq:hate},\ref{eq:A32},\ref{eq:hattheta}) we get
\begin{equation*}
  E = \frac{r+l-1}2\cdot 0_+ + \frac{r-l-1}2\cdot \infty_- - 0_- 
  - \frac12 \int_{0_-}^{\infty_+}\vec\omega - \Delta,
\end{equation*}
where we have used $0_+ - \infty_- = \infty_+ - 0_-$.
Therefore $\Theta_E(\frac12 \int^{2X}_{0_- + \infty_+}\vec\omega)$ has
zero of order $(r+l-1)/2$ at $0_+$, and of order $(r-l-1)/2$ at
$\infty_-$ again by \cite[Th.~1.1]{Fay}.

On the other hand, the left hand side of \eqref{eq:want} has
zero of order $r+l-1$ at $0_+$, and of order $r-l-1$ at $\infty_-$.
\begin{NB}
We have 
\begin{equation*}
  P(X) - Y = \frac{4(-X)^{r+l}\Lambda^{2r}}{Y+P(X)}.
\end{equation*}
At $0_+$, we have $Y = P(0) = (-1)^r \neq 0$. Therefore
\begin{equation*}
  \frac{(P(X) - Y)dX}{2XY}
  \sim (-X)^{r+l-1}\Lambda^{2r} dX
\end{equation*}
has zero of order $r+l-1$.
At $\infty_-$, we have $Y \sim P(X) \sim X^r$. Hence
  \begin{equation*}
    \frac{(P(X) - Y)dX}{2XY}
    \sim (-X)^{-r+l+1}\Lambda^{2r} \frac{dX}{X^2}
  \end{equation*}
has zero of order $r-l-1$.
\end{NB}%
Both sides of \eqref{eq:want} have poles at $0_-$, $\infty_+$ with residues
$-1$, $1$ respectively.
\begin{NB}
  We have $E(X,X') \sim (X'-X)/\sqrt{dX}\sqrt{dX'}$. Therefore
$E(X,0_-) \sim -X$.
\end{NB}%
(See \cite[Cor.~2.11]{Fay}.)
\begin{NB2}
  Added according to the referee's suggestion. 2010/04/01
\end{NB2}%
Therefore we have \eqref{eq:want}.
\begin{NB}
  The ratio of both sides of \eqref{eq:want} has no zeroes or
  poles. Therefore it must be constant. But the constant must be $1$
  since the residue matches.
\end{NB}

Although it is not necessary, let us also sketch how to prove
\eqref{eq:contact22} for more general $d$.

We first assume $r+l$ is even and generalize \eqref{eq:contact3} as
\begin{equation}\label{eq:contact4}
    0 = 
%    \left(d - \frac{r+l}2\right)
    \frac{d}{2r}\pd{U_p}{\log\Lambda}
\\
  + \frac1{2\pi\sqrt{-1}}\sum_i 
    \left.\frac{\partial\log\Theta_E}{\partial \xi^i}\right|_{
      \vec{\xi} = - \frac{d}{4\pi\sqrt{-1}r}
%      \left(d - \frac{r+l}2\right)
      \frac{\partial^2F_0}{\partial \log\Lambda\partial\vec{a}}}
%    \frac{\partial^2 F_0}{\partial a^i\partial \tau_p}
    \pd{U_p}{a^i}
\end{equation}
for $|d| \le (r-l)/2$. This equation is nothing but
\eqref{eq:contact22} with $d - (r+l)/2$ replaced by $d$. In terms of
the $d$ in \eqref{eq:contact22}, the condition is $l\le d\le r$. This
is exactly one under which we have proved \eqref{eq:contact22} for all
$p$ from the vanishing theorem.

To show \eqref{eq:contact4} we use [loc.\ cit., the first displayed
formula in \S A.6.2], which is \cite[Cor.~2.19 (43)]{Fay}.
\begin{NB2}
  The reference for \cite{Fay} corrected. 2010/04/01
\end{NB2}%
We replace
$d$ by $d+1$ and take $x_0 = X$, $y_0 = X'$, $x_1 = \cdots = x_d =
0_-$, $y_1 = \cdots = y_d = \infty_+$. We then get
\begin{multline*}
% \begin{equation*}
%   \begin{split}
%  & 
  \frac{\Theta_E\left(\int^{d \infty_+ + X'}_{d 0_- + X} \vec{\omega}\right)}
   {\Theta_E(0)E(X,X')}
   \left( \frac{E(X,0_-) E(\infty_+,X')}
   {E(X,\infty_+) E(0_-,X')}\right)^d\!
  \left(E(0_-,\infty_+) \left.\sqrt{dX_1}\right|_{\scriptscriptstyle X_1=0}
    \left.\frac{\sqrt{dX_2}}{X_2}\right|_{\scriptscriptstyle X_2=\infty}\right)^{-d^2}
\\
  = %\; &
  \Psi_E(X,X')
  - \frac{\Psi_E(0_-,X')\Psi_E(X,\infty_+)}{\Psi_E(0_-,\infty_+)}
  \frac{1 - (X/X')^d}{1 - (X/X')}.
%   \end{split}
% \end{equation*}
\end{multline*}
\begin{NB2}
  The sign is corrected according to the referee's comment. 2010/04/08
\end{NB2}%
This is proved exactly as in [loc.\ cit., \S A.6.2], so the detail is
omitted.
\begin{NB2}
  I add a comment. 2010/04/08
\end{NB2}%
We multiply both sides $E(X,X')$, differentiate with respect to $X'$
and set $X'=X$.
We get
\begin{equation}\label{eq:contact5}
  \sum_{\alpha=2}^r 
  \pd{\log \Theta_E}{\xi_\alpha}(d \int^{\infty_+}_{0_-}\vec{\omega})
  \omega_\alpha(X)
  + d\times \omega_{\infty_+ - 0_-}(X)
  = d \frac{\Psi_E(X,0_-)\Psi_E(X,\infty_+)}{\Psi_E(0_-,\infty_+)}
\end{equation}
as in \cite[IIIb \S3, p.226]{Mumford}. When $d=1$, this is nothing but
[loc.\ cit., (A.24)]. From the argument in [loc.\ cit., \S A.6.1] we
get \eqref{eq:contact4}.

\begin{NB}
Let
\begin{equation*}
  \Psi_E(X_1,X_2)
  \defeq \frac{\Theta_E(\int^{X_2}_{X_1}\vec{\omega}|\tau)}
  {\Theta_E(0)E(X_1,X_2)}.
\end{equation*}
This is called the Szeg\"o kernel.

By \cite[Cor.~2.19 (43)]{Fay} we have
\begin{equation*}
\begin{split}
  & \frac{\Theta_E(\sum_{i=1}^d y_i - \sum_{i=1}^d x_i)}
   {\Theta_E(0)}
   \frac{\prod_{i<j} E(x_i,x_j) E(y_j,y_i)}{\prod_{i,j} E(x_i,y_j)}
   = 
   \det\left(\frac{\Theta_E(y_j - x_i)}{\Theta_E(0) E(x_i,x_j)}\right)
%\\
  =
%\; &
  \det\left(\Psi_E(x_i,y_j)\right).
\end{split}
\end{equation*}
Let us replace $d$ by $d+1$ and take $\{ x_i\}_{i=0}^d$, $\{
y_i\}_{i=0}^d$. Then we take the limit of this equation when all $x_i$
(resp.\ $y_j$) goes to $0_-$ (resp.\ $\infty_+$) for $i=1,\dots,d$
and set $X=x_0$, $X'=y_0$.
As 
\(
   E(x_i,x_j) = \frac{(x_i - x_j)}{\sqrt{dx_i}\sqrt{dx_j}}
   \left(1 + O(x_i - x_j)^2\right),
\)
we have
\begin{equation*}
  \begin{split}
   & \pm \frac{\det\left(\Psi_E(x_i,y_j)\right)}
   {\prod_{0<i<j} E(x_i,x_j) E(y_j,y_i)}
   \left(\left.\sqrt{dx}\right|_{x=0}\left.\frac{\sqrt{dy}}y\right|_{y=\infty}
     \right)^{-d(d-1)}
   \to
\\
   & 
%         (-1)^{d(d-1)/2}
         \left|
     \begin{array}{c|cccc}
       \Psi_E(X,X') & \Psi_E(X,\infty_+) & \partial_y \Psi_E(X,\infty_+)
       & \cdots & \frac1{(d-1)!} \partial_y^{d-1} \Psi_E(X,\infty_+)
      \\
       \hline
       \Psi_E(0_-,X') & & & &
       \\
      \partial_x \Psi_E(0_-,X') & & & &
        \\
        \vdots & & \multicolumn{3}{c}{
          \left( \left.\frac1{i! j!}\partial_x^i \partial_y^j
              (\Psi_E)(x,y)\right|_{\substack{x=0_-\\ y=\infty_+}}
          \right)_{0\le i,j\le d-1}}
        \\
      \frac1{(d-1)!} \partial_x^{d-1} \Psi_E(0_-,X') & & & &
       \end{array}
       \right|.
  \end{split}
\end{equation*}
Therefore
\begin{equation}\label{eq:thetaid}
  \begin{split}
  & \pm\frac{\Theta_E\left(\int^{d \infty_+ + X'}_{d 0_- + X} \vec{\omega}\right)}
   {\Theta_E(0)E(X,X')}
   \left( \frac{E(X,0_-) E(\infty_+,X')}
   {E(X,\infty_+) E(0_-,X')}\right)^d
  \frac1{E(0_-,\infty_+)^{d^2}}
     \left(\left.\sqrt{dx}\right|_{x=0}\left.\frac{\sqrt{dy}}y\right|_{y=\infty}
     \right)^{-d(d-1)}
\\
  =\;&      % (-1)^{d(d-1)/2}
   \left|
     \begin{array}{c|cccc}
       \Psi_E(X,X') & \Psi_E(X,\infty_+) & \partial_y \Psi_E(X,\infty_+)
       & \cdots & \frac1{(d-1)!} \partial_y^{d-1} \Psi_E(X,\infty_+)
      \\
       \hline
       \Psi_E(0_-,X') & & & &
       \\
      \partial_x \Psi_E(0_-,X') & & & &
        \\
        \vdots & & \multicolumn{3}{c}{
          \left( \left.\frac1{i! j!}\partial_x^i \partial_y^j
              (\Psi_E)(x,y)\right|_{\substack{x=0_-\\ y=\infty_+}}
          \right)_{0\le i,j\le d-1}}
        \\
      \frac1{(d-1)!} \partial_x^{d-1} \Psi_E(0_-,X') & & & &
       \end{array}
       \right|.
  \end{split}
\end{equation}

Recall that the Szeg\"o kernel of the hyperelliptic curve is
explicitly given by
\begin{equation*}
\begin{split}
\Psi_E(X_1,X_2)
  &= \frac{\Theta_E(\int^{X_2}_{X_1}\vec{\omega}|\tau)}
  {\Theta_E(0)E(X_1,X_2)}
  = \frac12 \left(
     \sqrt[4]{\frac{\psi_E(X_1)}{\psi_E(X_2)}}
     +
     \sqrt[4]{\frac{\psi_E(X_2)}{\psi_E(X_1)}}
     \right)
     \frac{\sqrt{dX_1 dX_2}}{X_2-X_1}
\\
 &= 
\frac{Y_2 \prod (X_1 - X_\alpha^+) + Y_1 \prod (X_2 - X_\alpha^+)}
   {2(X_2 - X_1)}
   \sqrt{\frac{dX_1 dX_2}{Y_1 Y_2 \prod(X_1 - X_\alpha^+)(X_2 - X_\alpha^+)}},
\end{split}
\end{equation*}
where $E$ is the prime form and
\begin{equation*}
  \psi_E(X) = \frac{\prod (X - X_\alpha^+)}{\prod (X - X_\alpha^-)}
  = \frac{P(X) - 2(-\sqrt{-1}\bbeta\Lambda)^rX^{(r+m)/2}}
  {P(X) + 2(-\sqrt{-1}\bbeta\Lambda)^rX^{(r+m)/2}}.
\end{equation*}
See \cite[p.12 Example]{Fay}.

And we have \cite[the third displayed formula in \S A.6.2]{GNY2}
\begin{equation*}
  \psi_E(X) = 
   \begin{cases}
   1 + O(X^{(r+m)/2}) & \text{as $X\to 0$},
   \\
   1 + O(X^{-(r-m)/2}) & \text{as $X\to \infty$}.
   \end{cases}
\end{equation*}
This implies
\begin{equation*}
  \left. \frac1{i! j!}\partial_x^i \partial_y^j
              (\Psi_E)(x,y)\right|_{\substack{x=0_-\\ y=\infty_+}}
            = \delta_{ij} \left.\sqrt{dx}\right|_{x=0}
            \left.\frac{\sqrt{dy}}y\right|_{y=\infty}
\end{equation*}
if $d \le \max(r+l,r-l)/2$, as in \cite[(A.26)]{GNY2}.
\begin{NB2}
Let us check an identity in \cite[(A.26)]{GNY2} (after setting
$Y_j = 1/y_j$ for coordinates around $\infty_+$):
\begin{equation*}
  \begin{split}
    & \left.\prod_{j=1}^d Y_j^{-1} \det\left(\frac1{\frac1{Y_j} - x_i}
      \right)\frac1{\prod_{i<j} E(x_i,x_j)
        E(Y_j,Y_i)}\right|_{\substack{ x_i = 0_- \\ Y_j = 1/\infty_+}}
    \\
    =\; & \left.\det\left(\frac1{1- x_i{Y_j} }
      \right)\frac1{\prod_{i<j} E(x_i,x_j)
        E(Y_j,Y_i)}\right|_{\substack{ x_i = 0_- \\ Y_j = 1/\infty_+}}
    \\
    =\; & (-1)^{d(d-1)/2}\det\left[\left.
        \frac1{i!j!}\partial_x^i \partial_Y^j (\frac1{1 - xY})_{0\le
          i,j\le d-1}\right|_{x=Y=0}\right] \left(
      \left.dx\right|_{x=0} \left.dY\right|_{Y=0}\right)^{d(d-1)/2}
\\
    =\; &
    \left( \left.dx\right|_{x=0}
      \left.\frac{dx}{x^2}\right|_{x=\infty}\right)^{d(d-1)/2}
  \end{split}
\end{equation*}
It seems the sign was correct. Anyway $\sqrt{dy_i}$ has the sign
ambiguity. But we can check the sign is $+$ by considering the limit
$\bbeta\to 0$.
\end{NB2}%
We also have
\begin{equation*}
  \begin{split}
  & \frac1{i!} \partial_y^i \Psi_E(X,\infty_+)
  = \frac12 \left(\sqrt[4]{\psi_E(X)} + \frac1{\sqrt[4]{\psi_E(X)}}\right)
    \sqrt{dX}\left.\frac{\sqrt{dy}}y\right|_{y=\infty}
    X^i
  = \Psi_E(X,\infty_+) X^i,
\\
  & \frac1{i!} \partial_x^i \Psi_E(0_-,X')
  = \frac12 \left(\sqrt[4]{\psi_E(X')} + \frac1{\sqrt[4]{\psi_E(X')}}\right)
    \left.\sqrt{dx}\right|_{x=0}\sqrt{dX'}
    \frac1{(X')^{i+1}}
  = \Psi_E(0_-,X') \frac1{(X')^i}
  \end{split}
\end{equation*}
if $d \le \min(r+l,r-l)/2$. Plugging into \eqref{eq:thetaid}, we get
\begin{equation*}
  \begin{split}
  & \pm \frac{\Theta_E\left(\int^{d \infty_+ + X'}_{d 0_- + X} \vec{\omega}\right)}
   {\Theta_E(0)E(X,X')}
   \left( \frac{E(X,0_-) E(\infty_+,X')}
   {E(X,\infty_+) E(0_-,X')}\right)^d
  \left(E(0_-,\infty_+) \left.\sqrt{dx}\right|_{x=0}
    \left.\frac{\sqrt{dy}}y\right|_{y=\infty}\right)^{-d^2}
\\
  =\; &
  \begin{NB2}
  \Psi_E(X,X') - \sum_{i=0}^{d-1} \frac1{i!}\frac1{i!}
  \frac{\partial^i_x \Psi_E(0_-,X')\partial^i_y\Psi_E(X,\infty_+)}
  {\Psi_E(0_-,\infty_+)}
  =
  \end{NB2}%
  \Psi_E(X,X')
  - \frac{\Psi_E(0_-,X')\Psi_E(X,\infty_+)}{\Psi_E(0_-,\infty_+)}
  \frac{1 - (X/X')^d}{1 - (X/X')}.
  \end{split}
\end{equation*}
\begin{NB2}
  The sign is corrected. 2010/04/08
\end{NB2}%
Multiplying both sides by $E(X,X')$ and taking the limit $X'\to X$, we
find the sign must be $+$ from \cite[(A.26)]{GNY2}, i.e., 
\begin{equation*}
  \left(E(0_-,\infty_+) \left.\sqrt{dx}\right|_{x=0}
    \left.\frac{\sqrt{dy}}y\right|_{y=\infty}\right)^{d^2}
  = \frac{\Theta_E\left(d\int^{\infty_+}_{0_-} \vec{\omega}\right)}
   {\Theta_E(0)}.
\end{equation*}
\begin{NB2}
  We check the sign of \cite[(A.26)]{GNY2} as above.
\end{NB2}%

We now multiply both sides by $E(X,X')$, differentiate with respect to
$X'$ and set $X'=X$. We get
\begin{equation*}
  \sum_{\alpha=2}^r 
  \pd{\log \Theta_E}{\xi_\alpha}(d \int^{\infty_+}_{0_-}\vec{\omega})
  \omega_\alpha(X)
  + d\times \omega_{\infty_+ - 0_-}(X)
  = d \frac{\Psi_E(X,0_-)\Psi_E(X,\infty_+)}{\Psi_E(0_-,\infty_+)}.
\end{equation*}
Here we have used
\begin{equation*}
  \begin{split}
   &\left( \frac{E(X,0_-)}{E(X,\infty_+)} \right)^d
       \partial_X \left( \frac{E(\infty_+,X)}{E(0_-,X)}\right)^d
\\
   =\; & 
   d \left( \frac{E(X,0_-)}{E(X,\infty_+)} \right)^d
   \left( \frac{E(\infty_+,X)}{E(0_-,X)}\right)^d
   \left( \frac{E(\infty_+,X)}{E(0_-,X)}\right)^{-1}
      \partial_X \left( \frac{E(\infty_+,X)}{E(0_-,X)}\right)
   = 
   d \times \omega_{\infty_+ - 0_-}(X)
  \end{split}
\end{equation*}
(see \cite[IIIb, Lemma, p.226]{Mumford} for the last equality) and
\begin{equation*}
    \left.\partial_{X'} \frac{E(X,X')}{E(0_-,X')}\right|_{X'=X}
    = \frac1{E(X,0_-)}
\end{equation*}
(see \cite[IIIb, Lemma, p.225]{Mumford}).
This identity is exactly \cite[(A.24)]{GNY2} for $d=1$.
\end{NB}

We next consider the case $r+l$ is odd.
We take the branched double cover $p\colon \widehat C_{\vec{U},l}\to
C_{\vec{U},l}$ given by $W\mapsto X = W^2$ as before. Then $r$, $l$
become $2r$, $2l$ for $\widehat C_{\vec{U},l}$, and hence we have
\eqref{eq:contact5} for $\widehat C_{\vec{U},l}$, with $d$ replaced by
$2d$:
\begin{multline*}
% \begin{equation*}
%   \begin{split}
  \sum_{\alpha=2}^r 
  \pd{\log \widehat\Theta_{\widehat E}}{\widehat\xi_\alpha}
  (2d \int^{\infty_+}_{0_-}\vechatom)
  \widehat\omega_\alpha(W)
  + \sum_{\alpha=2}^r 
  \pd{\log \widehat\Theta_{\widehat E}}{\widehat\xi'_\alpha}
  (2d \int^{\infty_+}_{0_-}\vechatom)
  \widehat\omega'_\alpha(W)
\\
  + \pd{\log \widehat\Theta_{\widehat E}}{\widehat\xi'_*}
  (2d \int^{\infty_+}_{0_-}\vechatom)
  \widehat\omega_*(W)
  + 2d\times \widehat\omega_{\infty_+ - 0_-}(W)
  = 2d \frac{\widehat\Psi_{\widehat E}(W,0_-)\widehat\Psi_{\widehat E}(W,\infty_+)}
  {\widehat\Psi_{\widehat E}(0_-,\infty_+)},
%   \end{split}
% \end{equation*}
\end{multline*}
where we take cycles $A_\alpha$, $B_\alpha$, $A_*$, $B_*$,
$A'_\alpha$, $B'_\alpha$ as in [loc.\ cit., \S A.7] and the
corresponding coordinates $\widehat\xi_\alpha$, $\widehat \xi_*$,
$\widehat\xi'_\alpha$ on $J_0(\widehat C_{\vec{U},l})$.
This holds if $|d|\le (r-l)/2$ as above.
\begin{NB}
Therefore $d = 0$ if $l = r-1$. But then it becomes a trivial identity.

When $l = r-1$, we need to use the more precise limiting behavior
of $\psi_E$, i.e., the coefficient of $X^{-(r-l)/2}$.
\end{NB}%
The right hand side is
\begin{equation*}
  2d \frac{(P(W^2) - Y) dW}{2 WY} 
  = d p^*\left(\frac{(P(X) - Y) dX}{2XY}\right)
\end{equation*}
as [loc.\ cit., the second displayed equation in \S A.6.1]. We also have
\begin{equation*}
  \widehat\omega_{\infty_+-0_-}(W) = \frac12 p^* \omega_{\infty_+-0_-}(X)
\end{equation*}
by definition.

On the other hand, we rewrite the theta function
$\widehat\Theta_{\widehat E}$ by $\Theta_E$ by using [loc.\ cit.,
(A.29) and the second displayed formula in p.1105].
\begin{NB}
Then \cite[p.91 (102)]{Fay} says that there exists a unique half-period
$\left[0,c_*,0\right]\in J_0(\widehat C_{\vec{U}})$ such that
\begin{equation}\label{eq:ThetaCover}
  k_0 \defeq
  \frac{\widehat \Theta_{\left[c,c_*,-c\right]}(\pi^*\xi)}
    {\Theta_c(\xi+\xi_0)\Theta_c(\xi-\xi_0)} 
\end{equation}
is independent of $\xi\in\C^{r-1}$ and a half-integer characteristic $c$
for the curve $C_{\vec{U}}$.

*****************************************

We have
\begin{equation*}
   2d \int^{\infty_+}_{0_-} \vechatom
   = \left[ d \int^{\infty_+}_{0_-} \vec\omega,
     d, 
     -d \int^{\infty_+}_{0_-} \vec\omega\right]
\end{equation*}
and
\begin{equation*}
  \xi_0 = \frac12 \int_{0_-}^{\infty_+} \vec\omega.
\end{equation*}

\end{NB}%
We then get
\begin{multline*}
  \frac12\sum_{\alpha=2}^r 
  \left\{
  \pd{\log \Theta_E}{\xi_\alpha}
   ((d+\frac12) \int^{\infty_+}_{0_-}\vec{\omega})
  +
  \pd{\log \Theta_E}{\xi_\alpha}
   ((d-\frac12) \int^{\infty_+}_{0_-}\vec{\omega})
    \right\}
  \omega_\alpha(X)
\\
  + d\times \omega_{\infty_+ - 0_-}(X)
  = d \frac{(P(X) - Y) dX}{2XY}.
\end{multline*}
From this we get
\begin{multline*}
    0 = 
%    \left(d - \frac{r+l}2\right)
    \frac{d}{2r}\pd{U_p}{\log\Lambda}
\\
  + \frac1{2\pi\sqrt{-1}}\sum_i \frac12 \left(
    \left.\frac{\partial\log\Theta_E}{\partial \xi^i}\right|_{
      \vec{\xi} = - \frac{d+1/2}{4\pi\sqrt{-1}r}
%      \left(d - \frac{r+l}2\right)
      \frac{\partial^2F_0}{\partial \log\Lambda\partial\vec{a}}}
  +  \left.\frac{\partial\log\Theta_E}{\partial \xi^i}\right|_{
      \vec{\xi} = - \frac{d-1/2}{4\pi\sqrt{-1}r}
%      \left(d - \frac{r+l}2\right)
      \frac{\partial^2F_0}{\partial \log\Lambda\partial\vec{a}}}
    \right)
%    \frac{\partial^2 F_0}{\partial a^i\partial \tau_p}
    \pd{U_p}{a^i}
\end{multline*}
as in [loc.\ cit., \S A.6.1]. This is nothing but the sum of
\eqref{eq:contact22} for $d$ replaced by $d + (r+l+1)/2$
and $d + (r+l-1)/2$.
\begin{NB}
  Suppose that we want to have the above for $d=1$ or $-1$, i.e.,
  \eqref{eq:contact2} with $d = (r+l\pm 3)/2$ and $d = (r+l\pm 1)/2$.

  If $p = - (r+l - 1)/2$ (the minimum value), $|p|\le d$ holds only
  the $\pm = +$ case. The other inequality $d\le r$ means
  $l\le r - 3$. Thus $l = r-1$ is excluded.

  If $p = (r - l - 1)/2$ (the maximum value), $d \le r - p = (r + l +
  1)/2$ holds only if $\pm = -$. The other inequality $0 \le d$ means
  $r+l \ge 3$. This is always true, as we assume $r+l$ is odd and
  $l\ge 0$, $r\ge 2$.

  Thus unless $l = r-1$, the above equation is satisfied for
  ${\partial F_0}/{\partial \tau_p}$.
\end{NB}%
Since we have already proved \eqref{eq:contact22} for $(r+l-1)/2$, we have
\eqref{eq:contact22} for $l\le d\le r$.
\begin{NB}
  Take $d = -(r-l-1)/2$ in the above formula. Then
$d + (r+l-1)/2 = l$. The lower bound comes from here. Take
$d = (r-l-1)/2$ in the above formula. Then 
$d + (r+l+1)/2 = r$. The upper bound comes from here.
\end{NB}

\section{Quiver description}\label{sec:quiver}

In this section we review the result of \cite{perv}, rephrase that of
\cite{perv2} in the quiver description, and add a few things on
$\Ext$-groups.

\subsection{Moduli spaces of $m$-stable sheaves}\label{subsec:mstable}
We take vector spaces $V_0$, $V_1$, $W$ with
\begin{equation*}
  r = \dim W, \quad (c_1,[C]) = \dim V_0 - \dim V_1, \quad
  (\ch_2,[\bp]) = -\frac12(\dim V_0+\dim V_1).
\end{equation*}

We consider following datum $X = (B_1,B_2,d,i,j)$
\begin{itemize}
\item $B_1, B_2\in \Hom(V_1, V_0)$, $d\in \Hom(V_0,V_1)$, $i\in
  \Hom(W,V_0)$, $j\in \Hom(V_1,W)$,
\begin{equation*}
% \begin{diagram}[height=1.4em,width=2em,scriptlabels]
%    V_0 & & \pile{\lTo^{B_1,B_2}\\ \lTo \\ \rTo_d} & & V_1
% \\
%     & \luTo_i & & \ldTo_j &
% \\
%     & & W & & 
% \end{diagram}
\xymatrix{
V_0 \ar@<-1ex>[rr]_{d} && \ar@<-1ex>[ll]_{B_1,B_2} \ar[ll] \ar[ld]^j V_1 \\
& \ar[lu]^i W & \\
}
\end{equation*}
\item $\mu(B_1,B_2,d,i,j) = B_1 d B_2 - B_2 d B_1 + ij = 0$.
\end{itemize}

Let $Q \defeq \mu^{-1}(0)$ be the subscheme of the vector space 
\(
  \Hom(V_1, V_0)^{\oplus 2}\oplus \Hom(V_0,V_1)
  \oplus \Hom(W,V_0) \oplus \Hom(V_1,W)
\)
defined by the equation $\mu=0$.
It is acted by $G \defeq \GL(V_0)\times \GL(V_1)$
\begin{equation*}
   g\cdot (B_1,B_2,d,i,j) =
   (g_0 B_1 g_1^{-1}, g_0 B_2 g_1^{-1}, g_1 d g_0^{-1}, g_0 i, j g_1^{-1}).
\end{equation*}

Let $\zeta = (\zeta_0,\zeta_1)\in\Q^2$.
\begin{Definition}
We say $X = (B_1,B_2,d,i,j)$ is {\it $\zeta$-semistable\/} if
\begin{enumerate}
\item for subspaces $S_0\subset V_0$, $S_1\subset V_1$ such that
$B_\alpha(S_1)\subset S_0$ ($\alpha=1,2$),
$d(S_0)\subset S_1$, $\Ker j\supset S_1$, we have 
\(
    \zeta_0 \dim S_0 + \zeta_1 \dim S_1 \le 0.
\)
\item for subspaces $T_0\subset V_0$,
$T_1\subset V_1$ such that $B_\alpha(T_1)\subset T_0$ ($\alpha=1,2$),
$d(T_0)\subset T_1$, $\Ima i\subset T_0$, we have 
\( 
   \zeta_0 \codim T_0 + \zeta_1 \codim T_1 \ge 0.
\)
\end{enumerate}
We say $X$ is {\it $\zeta$-stable\/} if the inequalities are strict
unless $(S_0,S_1) = (0,0)$ and $(T_0,T_1) = (V_0,V_1)$ respectively.

We say $X_1$, $X_2$ are {\it $S$-equivalent\/} when the closures of
orbits intersect in the $\zeta$-semistable locus of $Q$.
\end{Definition}

By a standard argument, we can see that these come from quotients of
$Q$ by $G$ in the geometric invariant theory. We only explain the
result, see \cite{King-mod} for detail.

Let $\chi\colon G\to \C^*$ be the character given by $\chi(g) = \det
g_0^{-\zeta_0} \det g_1^{-\zeta_1}$, where we assume
$(\zeta_0,\zeta_1)\in \Z^2$ by multiplying a positive integer if necessary.
\begin{NB2}
  Added. 2010/04/05
\end{NB2}%
We have a lift of $G$ action
on the trivial line bundle $Q\times\C$ given by
\(
   (B_1,B_2,d,i,j,z) = (g\cdot(B_1,B_2,d,i,j), \chi(g) z).
\)
Let $L_\zeta$ denote the corresponding $G$-equivariant line
bundle. Then we can consider the GIT quotient
\begin{equation*}
   \bM_\zeta = \operatorname{Proj}\left(\bigoplus_{n \ge 0} A(Q)^{\chi,n}\right),
\end{equation*}
where $A(Q)^{\chi,n}$ is the relative invariants in the coordinate
ring $A(Q)$ of $Q$: $\{ f\in A(Q)\mid f(g\cdot X) = \chi(g)^n f(X)\}$,
which is the space of invariant sections of $L_\zeta^{\otimes n}$.
Then $\bM_\zeta$ is the quotient of $\zeta$-semistable locus modulo
$S$-equivalence relation.
\begin{NB}
$X = (B_1,B_2,d,i,j)$ is called $\zeta$-semistable if
there is an equivariant section $s$ of $L_\zeta^{\otimes n}$ for some
$n > 0$ with $s(X)\neq 0$.
\end{NB}
It contains $\bM_\zeta^{\mathrm{s}}$ of the quotient of $\zeta$-stable
locus modulo the action of $G$.

We have a natural projective morphism $\widehat\pi\colon \bM_\zeta\to
\mu^{-1}(0)\dslash G$, where $\mu^{-1}(0)\dslash G$ is the affine
geometric invariant theory quotient of $\mu^{-1}(0)$ by $G$, i.e.,
$\bM_0$. By \cite[\S1.3]{perv} $\mu^{-1}(0)\dslash G$ is isomorphic to
$M_0$, the Uhlenbeck partial compactification on $\proj^2$.

Now the main result of \cite{perv} says
\begin{Theorem}\label{thm:quiver}
  Let $m\in\Z_{\ge 0}$. Suppose that $\zeta_0 < 0$, $0 > m\zeta_0 +
  (m+1)\zeta_1 \gg -1$. Then we have $\bM_{\zeta} =
  \bM_{\zeta}^{\mathrm{s}}$ and it is bijective to the set of
  isomorphism classes of $m$-stable framed sheaves on $\bp$.
\end{Theorem}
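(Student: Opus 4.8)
The assertion is the main theorem of \cite{perv}; the plan below is the strategy one carries out there. The idea is to construct two mutually inverse maps between the $\zeta$-stable locus of $Q$ modulo $G$ and the set of isomorphism classes of $m$-stable framed sheaves on $\bp$, and then to show that in the chamber $\zeta_0 < 0$, $0 > m\zeta_0 + (m+1)\zeta_1 \gg -1$ there are no properly $\zeta$-semistable points, so that $\bM_\zeta = \bM_\zeta^{\mathrm{s}}$.

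First I would pass from the data to a sheaf. Using a Beilinson-type resolution of the diagonal on $\bp$ adapted to the divisors $C$ and $\linf$, one writes down a three-term complex on $\bp$ whose terms are built from $V_0\otimes\shfO$, $V_1\otimes\shfO(-C)$ (suitably twisted by $\shfO(-\linf)$) and $W\otimes\shfO$, with differentials assembled from $B_1,B_2,d,i,j$ and the tautological sections of $\shfO(C)$ and $\shfO(\linf)$; the relation $\mu(X)=0$ says precisely that this is a complex. The first step is to prove that when $X$ is $\zeta$-semistable and $\zeta$ lies in the stated chamber, the left differential is injective as a map of sheaves and the right differential is surjective, so that the complex is a monad. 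Its middle cohomology $E$ is then a coherent sheaf, locally free near $\linf$; the part of the complex along $\linf$ produces a canonical trivialization $\Phi\colon E|_{\linf}\cong\shfO_{\linf}^{\oplus r}$ out of $W$; and Riemann--Roch applied to the monad gives $\ch(E)=c$ with $(c,[\linf])=0$ and $H^i(E(-\linf))=0=H^i(E(C-\linf))$ for $i\neq 1$, with $H^1(E(-\linf))=V_0$ and $H^1(E(C-\linf))=V_1$ canonically. Conversely, starting from an $m$-stable $(E,\Phi)$, I would set $W=\C^r$ via $\Phi$, $V_0=H^1(E(-\linf))$, $V_1=H^1(E(C-\linf))$; the required vanishing of the other cohomology groups has to be checked from torsion-freeness outside $C$ together with conditions (1), (2) of \defref{def:m-stable}, and then $B_1,B_2,d$ are recovered as the maps induced by multiplication by the defining sections of $C$, while $i$ and $j$ come from the framing. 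Feeding $E$ back through the resolution of the diagonal shows that these two constructions are inverse to one another.

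The crux --- and the step I expect to be the main obstacle --- is the exact matching of GIT stability with $m$-stability. The key lemma to establish is a dictionary: $G$-invariant subspaces $(S_0,S_1)$ of $(V_0,V_1)$ preserved by $B_\alpha$, $d$ and contained in $\Ker j$ correspond bijectively, via $S_\alpha=V_\alpha(E')$ ($\alpha=0,1$), to coherent subsheaves $E'\subset E$ trivial along $\linf$; dually $(T_0,T_1)$ corresponds to quotients $E\twoheadrightarrow E''$; and along this dictionary $\zeta_0\dim S_0+\zeta_1\dim S_1$ is a positive multiple of the $\zeta$-slope of $E'$, and similarly for quotients. Granting this, the three conditions defining $m$-stability are read off: torsion-freeness outside $C$ is equivalent to $\zeta_0<0$ (a torsion subsheaf supported off $C$ would give an invariant $(S_0,S_1)$ with $\zeta_0\dim S_0+\zeta_1\dim S_1>0$, destabilizing $X$); and conditions (1), (2) translate into the non-existence of a destabilizing quotient, resp.\ sub, whose attached data is proportional to that of $\shfO_C(-m-1)=C_m$, resp.\ $\shfO_C(-m)$ --- configurations for which $\zeta_0\dim S_0+\zeta_1\dim S_1$ is a multiple of $m\zeta_0+(m+1)\zeta_1$, using that $C_n$ carries the data $\dim V_0=n$, $\dim V_1=n+1$. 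The hypothesis $0>m\zeta_0+(m+1)\zeta_1$ is exactly what makes such configurations destabilizing, hence forbidden for $\zeta$-semistable $X$, which is exactly $\Hom(E,C_m)=0$ and $\Hom(\shfO_C(-m),E)=0$; and the genericity $m\zeta_0+(m+1)\zeta_1\gg -1$, together with $\zeta_0<0$, pins $\zeta$ into the interior of the $m$-th chamber, off every other wall, so that no proper sub or quotient can have $\zeta$-slope exactly $0$. This last point yields $\bM_\zeta^{\mathrm{s}}=\bM_\zeta$: a properly semistable $X$ would produce a subsheaf of slope $0$, which is impossible.

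Finally I would assemble the pieces. The two constructions are $G$-equivariant and carry $\zeta$-stable data to $m$-stable sheaves and back, and since the only stabilizers in the $\zeta$-stable locus are the trivial ones (scalars act trivially because of the framing $W$), they descend to mutually inverse maps between $\bM_\zeta=\bM_\zeta^{\mathrm{s}}$ and the set of isomorphism classes of $m$-stable framed sheaves with the prescribed $\ch=c$. The remaining verifications --- that the $E$ produced from the monad really is torsion-free outside $C$, and that the projective morphism $\widehat\pi\colon\bM_\zeta\to\mu^{-1}(0)\dslash G\cong M_0$ is compatible with these identifications --- are formal consequences of the monad description. Upgrading the set-theoretic bijection to an isomorphism of schemes is done by repeating the constructions in families over an arbitrary base, but since the statement only asserts a bijection, that refinement is not needed here.
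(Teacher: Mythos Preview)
Your outline is essentially the strategy carried out in \cite{perv}, and the present paper does not reprove the theorem: it cites it and then records the construction (the monad \eqref{eq:cpx}, the inverse via $V_0=H^1(E(-\linf))$, $V_1=H^1(E(C-\linf))$, and the observation that $C_m=\shfO_C(-m-1)$ has quiver data $V_0=\C^m$, $V_1=\C^{m+1}$). So on the level of approach you are aligned with the paper.

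There is, however, a concrete slip in your numerics. You write that both conditions (1) and (2) of \defref{def:m-stable} correspond to ``configurations for which $\zeta_0\dim S_0+\zeta_1\dim S_1$ is a multiple of $m\zeta_0+(m+1)\zeta_1$''. That is true for condition (1): a quotient $E\twoheadrightarrow C_m^{\oplus p}$ gives a sub $T$ with $\zeta_0\codim T_0+\zeta_1\codim T_1=p(m\zeta_0+(m+1)\zeta_1)<0$, violating $\zeta$-semistability, so $\Hom(E,C_m)=0$. But condition (2) concerns $\shfO_C(-m)=C_{m-1}$, whose data is $(m-1,m)$; a sub $S\cong C_{m-1}^{\oplus p}$ contributes $p\bigl((m-1)\zeta_0+m\zeta_1\bigr)$, and it is the \emph{adjacent} wall $(m-1)\zeta_0+m\zeta_1=0$ that governs this. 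The hypothesis $0>m\zeta_0+(m+1)\zeta_1\gg -1$ places $\zeta$ strictly between the $m$-th and $(m-1)$-st walls, so that $(m-1)\zeta_0+m\zeta_1>0$; this is what forces $\Hom(\shfO_C(-m),E)=0$. Your argument as written would not distinguish the $m$-chamber from the $(m+1)$-chamber.

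A second point worth tightening: to conclude $\bM_\zeta=\bM_\zeta^{\mathrm s}$ you need more than ``off every other wall''. One must know that any destabilizing sub/quotient with $W$-component zero is necessarily a direct sum of $C_n$'s (this is the classification in \cite[\S2]{perv}), so that the only possible walls in the region $\zeta_0<0$ are the lines $n\zeta_0+(n+1)\zeta_1=0$. Without that classification, ``genericity'' alone does not rule out strictly semistable points.
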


We use this result as the definition of the moduli scheme of
$m$-stable framed sheaves in this paper. Therefore $\bM_{\zeta}$ will
be denoted by $\bM^m$ (or $\bM^m(c)$ when we want to write the Chern
character of sheaves) hereafter.
It was proved in \cite[2.4]{perv} that (a) $d\mu$ is surjective and
(b) the action of $G$ is free on the $\zeta$-stable locus. Therefore
$\bM_{\zeta}$ is a smooth fine moduli scheme.

The construction is given as follows: We consider the complex
\begin{equation}\label{eq:cpx}
\begin{CD}
\begin{matrix}
   V_0 \otimes \shfO(C-\linf)
\\
   \oplus
\\ V_1\otimes\shfO(-\linf)
\end{matrix}
@>{\alpha}>>
\begin{matrix}
   \C^2\otimes V_0\otimes\shfO \\ \oplus \\ \C^2\otimes V_1\otimes\shfO \\
%   \oplus \\ V_0\otimes\shfO \\ \oplus \\ V_1\otimes\shfO \\
   \oplus \\ W\otimes\shfO
\end{matrix}
@>{\beta}>>
\begin{matrix}
  V_0\otimes \shfO(\linf)
\\
  \oplus
\\
  V_1\otimes\shfO(-C+\linf)
\end{matrix},
\end{CD}
\end{equation}
with
\begin{equation*}
   \alpha = 
   \begin{bmatrix}
     z & z_0 B_1           \\
     w & z_0 B_2           \\
     0 & z_1 - z_0 d B_1   \\
     0 & z_2 - z_0 d B_2   \\
     0 & z_0 j
   \end{bmatrix},
\qquad
   \beta =
   \begin{bmatrix}
    z_2 & - z_1 & B_2 z_0 & -B_1 z_0 & i z_0\\
    dw  & - d z & w       & -z       & 0
   \end{bmatrix}.
\end{equation*}
The equation $\mu(B_1,B_2,d,i,j) = B_1 d B_2 - B_2 d B_1 + ij = 0$ is
equivalent to $\beta\alpha = 0$. The stability condition ensures the
injectivity of $\alpha$ and the surjectivity of $\beta$.  Then the
sheaf corresponding to $(B_1,B_2,d,i,j)$ is defined by $E =
\Ker\beta/\Ima\alpha$. By the definition it is endowed with the framing
$E|_{\linf} \to W\otimes\shfO_\linf$. The $\zeta$-stability is identified with
the $m$-stability.

The inverse construction is given by
\begin{equation*}
   V_0 \defeq H^1(E(-\linf)), \quad V_1 \defeq H^1(E(C-\linf)),
\end{equation*}
and $B_1$, $B_2$, $d$, $i$, $j$ are homomorphisms between cohomology
groups induced from certain natural sections.

From this construction $V_0$, $V_1$ naturally define vector bundles
over $\bM^m$, which are denoted by $\Vcal_0$, $\Vcal_1$
respectively. The above $\alpha$, $\beta$ in \eqref{eq:cpx} are
interpreted as homomorphisms between vector bundles and the universal
family $\cE$ is given by $\Ker\beta/\Ima\alpha$.

When we prove that the $\zeta$-stability corresponds to the
$m$-stability in \defref{def:m-stable}, it is crucial to observe that
the sheaf $\shfO_C(-m-1)$ corresponds to the datum
$V_0 = \C^m$, $V_1 = \C^{m+1}$, $W = 0$, $d=0$ and
\begin{equation*}
  B_1 = \left[\begin{smallmatrix} 1_m & 0
    \end{smallmatrix}\right], \quad
  B_2 = \left[\begin{smallmatrix} 0 & 1_m
    \end{smallmatrix}\right],
\end{equation*}
where $1_m$ is the identity matrix of size $m$. We denote this datum
by $C_m$ as above.

\subsection{Tangent complex}
From the construction the tangent space is the middle cohomology group
of the complex
\begin{equation}\label{eq:tangentcpx}
\begin{CD}
\begin{matrix}
   \Hom(V_0,V_0)
\\
   \oplus
\\ \Hom(V_1,V_1)
\end{matrix}
@>{\iota}>>
\begin{matrix}
   \Hom(V_0,V_1)\\ \oplus \\ \C^2\otimes \Hom(V_1, V_0) \\ 
%   \oplus \\ V_0\otimes\shfO \\ \oplus \\ V_1\otimes\shfO \\
   \oplus \\ \Hom(W,V_0) \\ \oplus \\ \Hom(V_1,W)
\end{matrix}
@>{d\mu}>>
\begin{matrix}
  \Hom(V_1,V_0)
\end{matrix},
\end{CD}
\end{equation}
with
\begin{equation*}
\begin{split}
%&
  \iota
   \begin{bmatrix}
     \xi_0 \\ \xi_1
   \end{bmatrix}
   = 
   \begin{bmatrix}
     d \xi_0 - \xi_1 d     \\
     B_1 \xi_1 - \xi_0 B_1 \\
     B_2 \xi_1 - \xi_0 B_2 \\
     \xi_0 i \\
      - j \xi_1
   \end{bmatrix},
%\\
%& 
   \qquad
   (d\mu)
   \begin{bmatrix}
    \widetilde d \\ \widetilde B_1 \\ \widetilde B_2 \\ \widetilde i \\
    \widetilde j
   \end{bmatrix}
   = 
   \begin{aligned}
   & B_1 d \widetilde B_2 + B_1 \widetilde d B_2 + \widetilde B_1 d
   B_2
\\
   & \quad
   - B_2 d \widetilde B_1 - B_2 \widetilde d B_1 - \widetilde B_2 d
   B_1
\\
   & \quad\quad
   + \widetilde i j + i\widetilde j,
   \end{aligned}
\end{split}
\end{equation*}
where $d\mu$ is the differential of $\mu$, and $\iota$ is the
differential of the group action. Remark that $d\mu$ is surjective and
$\iota$ is injective by the above remark if $X$ is $\zeta$-stable.

\subsection{A modified quiver}

We fix the vector space $W$ with $\dim W = r\neq 0$.
We define a new quiver with three vertexes $0$, $1$, $\infty$. We
write two arrows from $1$ to $0$ corresponding to the data $B_1$,
$B_2$, and one arrow from $0$ to $1$ corresponding to the data $d$.
Instead of writing one arrow from $\infty$ to $0$, we write
$r$-arrows. Similarly we write $r$-arrows from $1$ to $\infty$.  And
instead of putting $W$ at $\infty$, we replace it the one dimensional
space $\C$ on $\infty$. We denote it by $V_\infty$.
It means that instead of considering the homomorphism
$i$ from $W$ to $V_0$, we take $r$-homomorphisms
$i_1$,$i_2$,\dots, $i_{r}$ from $V_\infty$ to $V_0$ by taking a base of $W$.
(See Figure~\ref{fig:newquiver}.)
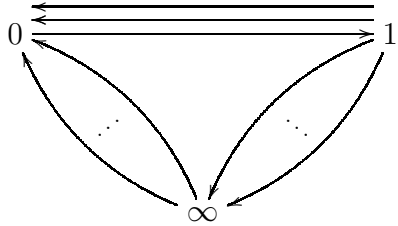
\begin{figure}[htbp]
  \centering
\begin{equation*}
\xymatrix{
0 \ar[rrrr] &&&& \ar@<-1ex>[llll] \ar@<-2ex>[llll] \ar@/^1pc/[lldd]_\ddots
\ar@<-1ex>[llll] \ar@/_1pc/[lldd]  1
\\
&&\\
&& \ar@/^1pc/[lluu]_\iddots
  \ar@/_1pc/[lluu]  \infty & \\
}
\end{equation*}
  \caption{modified quiver}
  \label{fig:newquiver}
\end{figure}

We consider the full subcategory of the abelian category of
representations of the new quiver with the relation, consisting of
representations such that $\dim V_\infty = 0$ or $1$. An object can be
considered as a representation of the original quiver with $\dim W =
0$, or $\dim W = r$, according to $\dim V_\infty = 0$ or $1$. Note
that we do not allow a representation of the original quiver with
$\dim W\neq r,0$.

It is also suitable to modify the stability condition for
representations for the new quiver.
Let $(\zeta_0,\zeta_1,\zeta_\infty)\in\Q^3$.
For a representation $X$ of the modified quiver, let us denote the
underlying vector spaces by $X_0$, $X_1$, $X_\infty$.
We define the {\it rank\/}, {\it degree\/} and {\it slope\/} by
\begin{equation}
  \begin{split}
    & \rank X \defeq \dim X_0+\dim X_1+\dim X_\infty, 
\\
    & \zeta\cdot\vdim X \defeq 
    \zeta_0 \dim X_0 + \zeta_1 \dim X_1 +
    \zeta_\infty \dim X_\infty,
\\    
    & \theta(X) \defeq \frac{\zeta\cdot\vdim X}{\rank X},
  \end{split}
\end{equation}
where $\theta(X)$ is defined only when $X\neq 0$.
We only consider the case $\dim X_\infty = 0$ or $1$ as before. We
say $X$ is {\it $\theta$-semistable\/} if we have
\begin{equation*}
    \theta(S) \le \theta(X)
\end{equation*}
for any subrepresentation $0\neq S$ of $X$. We say $X$ is {\it
  $\theta$-stable\/} if the inequality is strict unless $S = X$. If
$\theta(X) = 0$, $\theta$-(semi)stability is equivalent to
$\zeta$-(semi)stability.
In fact, if a subrepresentation $S$ has $S_\infty = 0$, then
$\theta(S) \le 0$ is equivalent to $\zeta_0\dim S_0 + \zeta_1\dim
S_1\le 0$. If a subrepresentation $T$ has $T_\infty = \C$, then
$\theta(T)\le 0$ is equivalent to $\zeta_0\codim T_0 + \zeta_1\codim
T_1\ge 0$.

The $\theta$-stability is unchanged even if we add $c(1,1,1)$
($c\in\R$) to $(\zeta_0,\zeta_1,\zeta_\infty)$. Therefore once we fix
$\dim X_0$, $\dim X_1$, $\dim X_\infty$, we can always achieve the
condition $\theta(X) = 0$ without the changing stable objects.

\subsection{Wall-crossing}\label{subsec:wall-crossing}

Let us fix a wall $\{ \zeta \mid m\zeta_0 + (m+1)\zeta_1 = 0, \zeta_0
< 0\}$ and a parameter $\zeta^0 = (\zeta^0_0,\zeta^0_1)$ from the
wall. We take $\zeta^+$, $\zeta^-$ sufficiently close to $\zeta^0$
with
\begin{equation*}
    1 \gg m \zeta^+_0 + (m+1)\zeta^+_1 > 0, \qquad
    -1 \ll m \zeta^-_0 + (m+1)\zeta^-_1 < 0.
\end{equation*}
(See Figure~\ref{fig:zeta}.) Then $\zeta^-$ is nothing but the
parameter $\zeta$ appeared in \thmref{thm:quiver} corresponding to the
$m$-stability. On the other hand, we also know that $\zeta^+$
corresponds to the $(m+1)$-stability by the determination of the
chamber structure in \cite[\S2]{perv}.
\begin{figure}[htbp]
\def\JPicScale{.8}
  \centering
\ifx\JPicScale\undefined\def\JPicScale{1}\fi
\psset{unit=\JPicScale mm}
\psset{linewidth=0.3,dotsep=1,hatchwidth=0.3,hatchsep=1.5,shadowsize=1,dimen=middle}
\psset{dotsize=0.7 2.5,dotscale=1 1,fillcolor=black}
\psset{arrowsize=1 2,arrowlength=1,arrowinset=0.25,tbarsize=0.7 5,bracketlength=0.15,rbracketlength=0.15}
\begin{pspicture}(0,0)(95,65)
\psline[linewidth=0.5,arrowscale=2 2]{->}(20,10)(90,10)
\psline[linewidth=0.5,arrowscale=2 2]{->}(80,0)(80,60)
\psline[linewidth=0.4,linestyle=dotted](90,0)(40,50)
\rput(95,10){$\zeta_0$}
\rput(80,65){$\zeta_1$}
\rput(25,55){$\zeta_0+\zeta_1 = 0$}
\psline[linewidth=0.4,fillstyle=vlines](20,34)(90,6)
\rput(19,39){$m\zeta_0 + (m+1)\zeta_1 = 0$}
\rput{0}(50,22){\psellipse[fillstyle=solid](0,0)(1,-1)}
\rput{0}(49,19){\psellipse[fillstyle=solid](0,0)(1,-1)}
\rput{0}(51,25){\psellipse[fillstyle=solid](0,0)(1,-1)}
\rput(44,32){$\zeta^+$}
\rput(37,22){$\zeta^0$}
\rput(36,16){$\zeta^-$}
\psline[linewidth=0.2,fillstyle=solid]{->}(45,31)(50,26)
\psline[linewidth=0.2,fillstyle=solid]{->}(40,22)(49,22)
\psline[linewidth=0.2,fillstyle=solid]{->}(39,16)(48,19)
\end{pspicture}
\caption{wall-crossing}
\label{fig:zeta}
\end{figure}
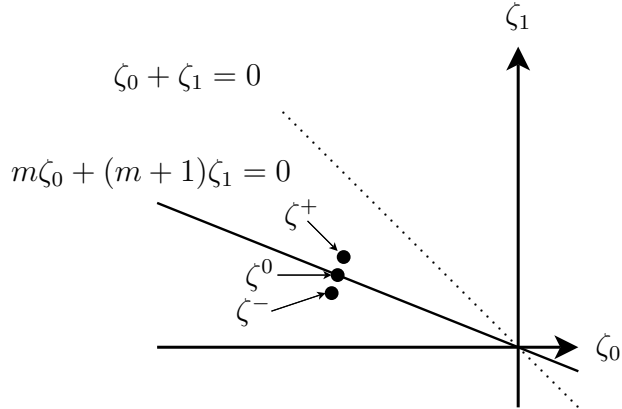

Let us define the scheme $\bM^{m,m+1}$ as the GIT quotient
$\bM_{\zeta^0}$ with respect to the $\zeta^0$-semistability.

From \cite{perv} it follows that $\bM^{m,m+1}$ is the $S$-equivalence
classes of $(m,m+1)$-semistable sheaves set-theoretically, where
\begin{NB2}
  An explanation of \cite{perv} is moved below. Also $(m,m+1)$-stable
  is corrected to $(m,m+1)$-{\it semistable}. 2010/04/03
\end{NB2}%
\begin{Definition}\label{def:m,m+1-stable}
  \textup{\bf (a)} A framed sheaf $(E,\Phi)$ on $\bp$ is called {\it
    $(m,m+1)$-semistable\/} if
  \begin{enumerate}
  \item $\Hom(E,\shfO_C(-m-2)) = 0$,
  \item $\Hom(\shfO_C(-m),E) = 0$, and
  \item $E$ is torsion free outside $C$.
  \end{enumerate}

  \textup{\bf (b)} A framed sheaf $(E,\Phi)$ on $\bp$ is called {\it
    $(m,m+1)$-stable\/} if it is either $\shfO_C(-m-1)$ (with the
  trivial framing) or both $m$-stable and $(m+1)$-stable,
  i.e., we have $\Hom(E,\shfO_C(-m)) = 0$ and $\Hom(\shfO_C(-m),E) =
  0$ instead of (1),(2).

  \textup{\bf (c)} A $(m,m+1)$-semistable framed sheaf $(E,\Phi)$ has
  a filtration $(0 = E^0 \subset E^1\subset \cdots\subset E^N = E)$
  such that $E^i/E^{i-1}$ is $(m,m+1)$-stable with the induced framing
  from $\Phi$.
  We say $(m,m+1)$-semistable framed sheaves $(E,\Phi)$ and
  $(E',\Phi')$ are {\it $S$-equivalent\/} if there exists an
  isomorphism from $\bigoplus_i E^i/E^{i-1}$ to $\bigoplus_j E^{\prime
    j}/E^{\prime j-1}$ respecting the framing in the $(m,m+1)$-stable
  factors.
\end{Definition}

The main result \cite[Th.~1.5]{perv} was stated for $m$-stable framed
sheaves, but it can be generalized to the case of $(m,m+1)$-stable
framed sheaves. It follows from Propositions~4.1,4.2 there that $X$ is
$\zeta^0$-semistable if and only if it satisfies the condition (S2) in
[loc.\ cit., Th.~1.1] and the condition corresponding to (a1-2)
above. Then the remaining arguments are the same.
\begin{NB2}
  The explanation expanded and moved here. 2010/04/03
\end{NB2}%

\begin{NB}
  I changed the definition of $(m,m+1)$-stability, and the following
  becomes meaningless. Feb.24 (HN)

During the study of the stability conditions, we also showed
\begin{Theorem}[\protect{\cite[Th.~2.13, Prop.~5.3]{perv}}]\label{thm:class}
  A sheaf $E$ of rank $0$ with the trivial framing is $(m,m+1)$-semistable
  if and only if $E = \shfO_C(-m-1)^{\oplus p}$ with $p\in\Z_{>0}$.
\end{Theorem}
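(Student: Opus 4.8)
The plan is to prove the two implications separately: the ``if'' direction by a direct computation of a few $\Hom$-groups on $\bp$, and the ``only if'' direction by feeding the Jordan--H\"older filtration of \defref{def:m,m+1-stable}(c) into the vanishing $\Ext^1_{\bp}(\shfO_C(-m-1),\shfO_C(-m-1))=0$.

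For the ``if'' part, let $E=\shfO_C(-m-1)^{\oplus p}$ with $p>0$ and trivial framing, and verify the three conditions of \defref{def:m,m+1-stable}(a). As $\Hom$ commutes with finite direct sums in each slot, it suffices to take $p=1$. The key remark is that any $\shfO_{\bp}$-linear homomorphism between $\shfO_C$-modules is automatically $\shfO_C$-linear, so $\Hom_{\bp}(\shfO_C(a),\shfO_C(b))=\Hom_{\proj^1}(\shfO_{\proj^1}(a),\shfO_{\proj^1}(b))=H^0(\proj^1,\shfO(b-a))$, which vanishes whenever $a>b$; applying this with $(a,b)=(-m-1,-m-2)$ and $(a,b)=(-m,-m-1)$ gives conditions (1) and (2), while condition (3) is automatic since $E$ is supported on $C$ and hence zero away from $C$.

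For the ``only if'' part, let $(E,\Phi)$ be a nonzero rank-$0$ $(m,m+1)$-semistable framed sheaf, so that $\Phi$ is vacuous. By \defref{def:m,m+1-stable}(c) there is a filtration $0=E^0\subset E^1\subset\cdots\subset E^N=E$ whose subquotients $E^i/E^{i-1}$ are $(m,m+1)$-stable. Ranks are additive in short exact sequences and non-negative, so every subquotient has rank $0$; but an $m$-stable or $(m+1)$-stable sheaf has positive rank (the remark after \defref{def:m-stable}), so \defref{def:m,m+1-stable}(b) forces each $E^i/E^{i-1}$ to be $\shfO_C(-m-1)$. Thus $E$ is an $N$-fold iterated self-extension of $\shfO_C(-m-1)$, and an induction on $N$ will conclude $E\cong\shfO_C(-m-1)^{\oplus N}$ once the extensions are shown to split; then $p:=N>0$ because $E\neq0$.

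The substantive step is therefore the vanishing $\Ext^1_{\bp}(\shfO_C(-m-1),\shfO_C(-m-1))=0$. Twisting both arguments by the line bundle $\shfO_{\bp}((m+1)C)$ (under which $\shfO_C\otimes\shfO_{\bp}((m+1)C)=\shfO_C(-m-1)$) reduces it to $\Ext^1_{\bp}(\shfO_C,\shfO_C)=0$; the low-degree terms of the local-to-global spectral sequence give an exact sequence $0\to H^1(C,\shfO_C)\to\Ext^1_{\bp}(\shfO_C,\shfO_C)\to H^0(C,N_{C/\bp})$, and with $N_{C/\bp}=\shfO_{\proj^1}(-1)$ both $H^1(\proj^1,\shfO)$ and $H^0(\proj^1,\shfO(-1))$ vanish. (Equivalently, Riemann--Roch gives $\chi(\shfO_C(-m-1),\shfO_C(-m-1))=1$, and Serre duality on $\bp$ together with $K_{\bp}|_C=\shfO_C(-1)$ gives $\dim\Hom=1$ and $\Ext^2=0$.) I expect no genuine obstacle beyond this: the filtration bookkeeping is harmless because every framing here is trivial, so splitting as sheaves and as framed sheaves is the same thing.
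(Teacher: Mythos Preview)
Your argument is correct. The ``if'' direction is a straightforward check, and for the ``only if'' direction you use the Jordan--H\"older filtration of \defref{def:m,m+1-stable}(c) together with the fact that the only rank-$0$ $(m,m+1)$-stable object is $\shfO_C(-m-1)$, and then split the extensions via $\Ext^1_{\bp}(\shfO_C(-m-1),\shfO_C(-m-1))=0$. The local-to-global computation of this $\Ext^1$ (using $N_{C/\bp}=\shfO_C(-1)$) is clean and valid.

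As for comparison: the paper does not supply its own proof of this statement. It is recorded with an explicit citation to \cite[Th.~2.13, Prop.~5.3]{perv}, and in fact sits inside an excluded note indicating that, after the definition of $(m,m+1)$-stability was changed, the statement ``becomes meaningless'' as a separate theorem---the relevant content is absorbed into \defref{def:m,m+1-stable} itself. The approach in \cite{perv} is different in flavor: there the classification is carried out on the quiver side, by analyzing representations with $W=0$ satisfying the stability condition on the wall (this is what Th.~2.13 and Prop.~5.3 of \cite{perv} do). Your proof is a direct sheaf-theoretic argument on $\bp$, avoiding the quiver description entirely; it is shorter and more self-contained, at the cost of taking \defref{def:m,m+1-stable}(c) (the existence of the Jordan--H\"older filtration) as given, whereas in \cite{perv} this is part of what is being established via the GIT/quiver machinery.
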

\end{NB}

Since $\zeta^\pm$-stability implies $\zeta^0$-semistability, we have
natural morphisms $\xi_m\colon \bM^m\to \bM^{m,m+1}$, $\xi_m^+\colon
\bM^{m+1}\to \bM^{m,m+1}$. Thus $\bM^m$, $\bM^{m+1}$ and $\bM^{m,m+1}$
form the diagram \eqref{eq:flip}.

This definition of $\bM^{m,m+1}$ is different from what are given in
\cite{perv2} for ordinary moduli spaces without framing, but they are
the same at least set-theoretically thanks to the construction in
[loc.\ cit., \S\S3.6,3.7]. It is also possible to show directly that
$\xi_m$, $\xi_m^+$ have structures of stratified Grassmann bundles
described there.

From the definition it is clear that $\xi_m$, $\xi_m^+$ are compatible with
projective morphisms $\bM^m\to M_0$, $\bM^{m,m+1}\to M_0$, $\bM^{m+1}\to M_0$,
(all denoted by $\widehat\pi$ before).

\begin{NB}
Recall that we set
\(
   C_m \defeq \shfO_C(-m-1).
\)
\end{NB}

The change of the moduli spaces under the wall-crossing is described
as follows:

\begin{Proposition}[\protect{\cite[3.15]{perv2}}]\label{prop:extension}
\textup{(1)} If $(E^-,\Phi)$ is $m$-stable, we have an exact sequence
\begin{equation*}
   0 \to V\otimes C_m \to E^- \to E' \to 0
\end{equation*}
with $V = \Hom(C_m,E^-)$ such that
\begin{aenume}
\item $E'$ is $(m,m+1)$-stable, and
\item the induced homomorphism $V\to \Ext^1(E',C_m)$ is injective.
\end{aenume}
Conversely if $(E',\Phi)$ is $(m,m+1)$-stable and a subspace $V\subset
\Ext^1(E',C_m)$ is given, $(E^-,\Phi)$, defined by the above
exact sequence, is $m$-stable.

\textup{(2)} If $(E^+,\Phi)$ is $(m+1)$-stable, we have an exact sequence
\begin{equation*}
   0 \to  E' \to E^+ \to U^\vee\otimes C_m \to 0
\end{equation*}
with $U = \Hom(E^+,C_m)$ such that
\begin{aenume}
\item $E'$ is $(m,m+1)$-stable, and
\item the induced homomorphism $U^\vee\to \Ext^1(C_m,E')$ is
  injective.
\end{aenume}
Conversely if $(E',\Phi)$ is $(m,m+1)$-stable and a subspace
$U^\vee\subset \Ext^1(C_m,E')$ is given, $(E^+,\Phi)$,
defined by the above exact sequence, is $(m+1)$-stable.
\end{Proposition}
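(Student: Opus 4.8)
The plan is a direct sheaf-theoretic argument built on the $\Ext$-computations for $C_m=\shfO_C(-m-1)$; alternatively one may run a Jordan--H\"older argument on the wall in the quiver picture of \secref{sec:quiver}, using that $C_m$ is the unique $\zeta^0$-stable representation with $\dim W=0$. I will use the following, all consequences of $C\cong\proj^1$ with $N_{C/\bp}=\shfO_{\proj^1}(-1)$: $\mathcal{E}xt^q_{\bp}(\shfO_C(a),\shfO_C(b))\cong\shfO_{\proj^1}(b-a-q)$ for $q=0,1$ and $0$ for $q\ge2$, whence $\Hom(C_m,C_m)=\C$, $\Ext^1(C_m,C_m)=0$, $\Hom(C_{m-1},C_m)=0=\Hom(C_m,C_{m+1})$, $\Ext^1(C_{m-1},C_m)=0$; and the monotonicity of the stability conditions, namely $\Hom(E^-,C_m)=0$ forces $\Hom(E^-,C_k)=0$ for all $k\ge m$ (a nonzero $E^-\to\shfO_{\proj^1}(-k-1)$ surjects onto $\shfO_{\proj^1}(a)$ with $a\le-k-1$, which maps nontrivially to $\shfO_{\proj^1}(-m-1)$), and dually $\Hom(C_m,E^+)=0$ forces $\Hom(C_k,E^+)=0$ for $k\le m$.

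For (1): put $V=\Hom(C_m,E^-)$ and let $a\colon C_m\otimes V\to E^-$ be the evaluation. I first show $a$ is injective. Its image $F\subseteq E^-$ is an $\shfO_C$-module, hence a coherent sheaf on $\proj^1$; as a quotient of $\shfO_{\proj^1}(-m-1)^{\oplus\dim V}$ its torsion-free part has all summands of degree $\ge-m-1$, while $\Hom(C_{m-1},F)\hookrightarrow\Hom(C_{m-1},E^-)=0$ forces $F$ torsion-free with all summands $\le-m-1$; so $F\cong C_m^{\oplus k}$, hence $\ker a\cong C_m^{\oplus(\dim V-k)}$, and $\Hom(C_m,\ker a)=0$ (clear from the construction of $a$) gives $k=\dim V$ and $\ker a=0$. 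Set $E'=E^-/(C_m\otimes V)$; it is torsion-free outside $C$ (it agrees with $E^-$ there), of rank $r$, and framed. Applying $\Hom(-,C_m)$ to $0\to C_m\otimes V\to E^-\to E'\to0$ and using $\Hom(E^-,C_m)=0$ gives $\Hom(E',C_m)=0$ and an injection $\Hom(C_m\otimes V,C_m)\hookrightarrow\Ext^1(E',C_m)$, i.e.\ the stated injectivity out of $V$. Applying $\Hom(C_m,-)$ and using $\Hom(C_m,C_m)=\C$ and $\Ext^1(C_m,C_m)=0$ gives $\Hom(C_m,E')=0$; applying $\Hom(C_{m-1},-)$ and using $\Ext^1(C_{m-1},C_m)=0=\Hom(C_{m-1},E^-)$ gives $\Hom(C_{m-1},E')=0$; applying $\Hom(-,C_{m+1})$ and using $\Hom(C_m,C_{m+1})=0$ gives $\Hom(E',C_{m+1})\cong\Hom(E^-,C_{m+1})=0$ by monotonicity. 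By \defref{def:m,m+1-stable} these are exactly the conditions for $E'$ to be $(m,m+1)$-stable. For the converse, given $(m,m+1)$-stable $(E',\Phi)$ and a subspace $V\subseteq\Ext^1(E',C_m)$, let $E^-$ be the corresponding extension; torsion-freeness outside $C$ and the framing are immediate, $\Hom(\shfO_C(-m),E^-)=0$ follows from $\Hom(C_{m-1},C_m)=0$ and $\Hom(C_{m-1},E')=0$ via the long exact sequence, and $\Hom(E^-,C_m)=0$ follows from $\Hom(E',C_m)=0$ and injectivity of the connecting homomorphism (guaranteed by the choice of extension class).

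Part (2) is the dual argument with ``sub'' replaced by ``quotient'': put $U=\Hom(E^+,C_m)$, let $e\colon E^+\to C_m\otimes U^\vee$ be the coevaluation, and note that by the adjunction $\Hom_{\bp}(E^+,i_*\mathcal G)=\Hom_{\proj^1}(i^*E^+,\mathcal G)$ the vanishing $\Hom(E^+,C_{m+1})=0$ forces $i^*E^+$ to have no $\shfO_{\proj^1}$-summand of degree $\le-m-2$, which makes $e$ surjective (it matches the degree $-m-1$ part of $i^*E^+$ onto the target). The identification $U=\Hom(E^+,C_m)$, the injectivity of $U^\vee\to\Ext^1(C_m,E')$, the $(m,m+1)$-stability of $E'=\ker e$, and the converse then follow exactly as in (1), now using $\Hom(C_m,E^+)=0$, $\Hom(E^+,\shfO_C(-m-2))=0$, $\Ext^1(C_m,C_m)=0$ and $\Hom(C_m,C_{m+1})=0$. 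The main obstacle is the (co)evaluation (in)jectivity together with showing $E'$ is genuinely \emph{stable} for the intermediate chamber rather than merely semistable; both rest on the special vanishings $\Hom(C_{m-1},E^-)=0$ / $\Hom(E^+,C_{m+1})=0$ packaged into $m$- and $(m+1)$-stability and on $\Ext^1_{\bp}(C_m,C_m)=0$, the latter being precisely where the geometry of the $(-1)$-curve enters; the rest is bookkeeping with long exact sequences.
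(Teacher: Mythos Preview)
Your argument is correct. The paper does not supply its own proof here but imports the result from \cite[3.15]{perv2}; in that reference the statement is obtained via the quiver/GIT picture of \subsecref{subsec:wall-crossing}: on the wall $m\zeta_0+(m+1)\zeta_1=0$ the representation $C_m$ is the unique $\zeta^0$-stable object with $W=0$, so the Jordan--H\"older filtration of a $\zeta^\pm$-stable object with respect to $\zeta^0$-stability yields exactly the extensions in the statement, with the injectivity conditions coming from the non-splitting of the filtration. Your direct sheaf-theoretic route via the $\Ext$-vanishings $\Ext^1_{\bp}(C_m,C_m)=0$, $\Ext^1_{\bp}(C_{m-1},C_m)=0$, $\Hom(C_m,C_{m\pm1})=0$ is a genuine alternative that avoids the quiver description entirely; it is more elementary and self-contained, while the wall-crossing argument is more conceptual and packages all those vanishings into the single fact that $C_m$ is rigid and alone on its wall.

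One small point in your part~(2) that deserves a word: when you decompose $i^*E^+$ to argue surjectivity of the coevaluation $e$, bear in mind that $E^+$ need not be locally free along $C$, so $i^*E^+$ may have torsion. This does no harm since $\Hom_{\proj^1}(\text{torsion},\shfO(a))=0$, but a cleaner variant, symmetric to your part~(1), is to observe that $\im e\subset C_m^{\oplus\dim U}$ is torsion-free on $\proj^1$ with all summands of degree exactly $-m-1$ (the upper bound from being a subsheaf, the lower bound from $\Hom(E^+,C_{m+1})=0$), hence $\im e\cong C_m^{\oplus k}$ and $\coker e\cong C_m^{\oplus(\dim U-k)}$; then $\Hom(\coker e,C_m)=0$ by the coevaluation identity $u\circ e=u$, forcing $k=\dim U$.
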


\subsection{Computation of $\protect{\Ext}$-groups}\label{subsec:Ext}

In this subsection, we continue to fix a wall $m\zeta_0 + (m+1)\zeta_1
= 0$, $\zeta_0 < 0$.

%\subsection{}\label{subsec:-}
Take $X = (B_1,B_2,d,i,j)$ defined on $V_0$, $V_1$, $W$ such that
$\mu(B_1,B_2,d,i,j) = 0$. We consider the complex
\begin{equation}\label{eq:Ext(X,C_m)}
\begin{CD}
\begin{matrix}
   \Hom(V_0, \C^m)
\\
   \oplus
\\ \Hom(V_1, \C^{m+1})
\end{matrix}
@>{\sigma}>>
\begin{matrix}
   \Hom(V_0, \C^{m+1}) \\ \oplus \\ \C^2\otimes \Hom(V_1, \C^m) \\
%   \oplus \\ V_0\otimes\shfO \\ \oplus \\ V_1\otimes\shfO \\
   \oplus \\ \Hom(W, \C^m)
\end{matrix}
@>{\tau}>>
\begin{matrix}
  \Hom(V_1,\C^m)
\end{matrix},
\end{CD}
\end{equation}
with
{\allowdisplaybreaks
\begin{equation*}
\begin{split}
   & 
   \sigma
   \begin{bmatrix}
     \xi_0 \\ \xi_1
   \end{bmatrix}
   = 
   \begin{bmatrix}
     & \xi_1 d   \\
     & \xi_0 B_1 - \left[\begin{smallmatrix}
        1_m & 0
      \end{smallmatrix}\right] \xi_1 \\
     &\xi_0 B_2 - \left[\begin{smallmatrix}
        0 & 1_m
      \end{smallmatrix}\right] \xi_1 \\    
     & \xi_0 i    
   \end{bmatrix},
\\
   & 
   \tau
   \begin{bmatrix}
    \widetilde d \\ \widetilde B_1 \\ \widetilde B_2 \\ \widetilde i
   \end{bmatrix}
   = \left[\begin{smallmatrix}
        1_m & 0
      \end{smallmatrix}\right]
    \widetilde d B_2 
    -
   \left[\begin{smallmatrix}
        0 & 1_m
      \end{smallmatrix}\right]
    \widetilde d B_1
    + \widetilde i j
    + \widetilde B_1 d B_2 - \widetilde B_2 d B_1.
\end{split}
\end{equation*}
This} complex is constructed as follows. We take the dual of  
\eqref{eq:cpx}, and replace the part
\(
  \C^2\otimes V_0^*\otimes \shfO\xrightarrow{\left[
      \begin{smallmatrix}
        z & w
      \end{smallmatrix}
    \right]}
    V_0^*\otimes\shfO(-C+\linf)
\)
by $V_0^*\otimes\shfO(C-\linf)$.
We then take the tensor product with $C_m$ and apply
$H^*(\bp,\bullet)$. Therefore when $X$ corresponds to a framed sheaf
$(E,\Phi)$, the cohomology groups of the complex are
$\Ext^\bullet(E,C_m)$.

\begin{Lemma}\label{lem:Ext-}
  Suppose that $X$ corresponds to a framed sheaf $(E,\Phi)$.

  \textup{(1)} $\Hom(E,C_m)\cong \Ker\sigma$,
  $\Ext^1(E,C_m)\cong \Ker\tau/\Ima\sigma$, and
  $\Ext^2(E,C_m)\cong \Coker\tau$.

  \textup{(2)} Suppose further that $X$ is $(m,m+1)$-semistable. Then
  $\tau$ is surjective.
\end{Lemma}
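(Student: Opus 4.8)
Part (1) is a formal unwinding of the construction of the complex \eqref{eq:Ext(X,C_m)}. The plan is to recall that \eqref{eq:cpx} is a locally free resolution of $E$ on $\bp$ (the injectivity of $\alpha$ and surjectivity of $\beta$ hold by $\zeta$-stability, and $\beta\alpha=0$), so that $\mathcal H om(E,C_m)$ and its higher $\Ext$'s are computed by applying $\mathcal H om(-,C_m)$ to the complex of vector bundles, and then taking hypercohomology on $\bp$. First I would dualize \eqref{eq:cpx}, perform the indicated replacement of the summand $\C^2\otimes V_0^\ast\otimes\shfO\to V_0^\ast\otimes\shfO(-C+\linf)$ by $V_0^\ast\otimes\shfO(C-\linf)$ — this is legitimate because, after tensoring by $C_m=\shfO_C(-m-1)$ and taking cohomology, the two give the same answer (one checks $H^\ast(\bp,\shfO(aC+b\linf)\otimes C_m)$ in the relevant degrees, using $\shfO_C\otimes\shfO(C)=\shfO_C(-1)$). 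Then tensor with $C_m$ and take $H^\ast(\bp,-)$: since $C_m$ is supported on $C\cong\proj^1$ and each term is a sum of line bundles of the form $\shfO(kC+l\linf)$ with $H^{>0}=0$ after restriction to $C$ in the needed range, only $H^0$ survives and one reads off the three-term complex \eqref{eq:Ext(X,C_m)}, with $\sigma$, $\tau$ being exactly the induced maps. The cohomology of this complex is then $\Ext^\bullet(E,C_m)$ by the hypercohomology spectral sequence, giving the three isomorphisms. I expect this part to be routine bookkeeping, the only mild care being the vanishing of higher cohomology of the line bundle summands twisted by $C_m$.

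Part (2) is the substantive point: under $(m,m+1)$-semistability, $\tau$ is surjective, equivalently $\Ext^2(E,C_m)=0$. The plan is to use Serre duality on $\bp$: since $K_{\bp}|_C\cong\shfO_C(-1)$, one has
\[
  \Ext^2(E,C_m)=\Ext^2(E,\shfO_C(-m-1))\cong \Hom(\shfO_C(-m-1), E\otimes K_{\bp})^\vee
  \cong \Hom(\shfO_C(-m), E)^\vee,
\]
using $\shfO_C(-m-1)\otimes K_{\bp}^\vee|_C = \shfO_C(-m-1)\otimes\shfO_C(1)=\shfO_C(-m)$. By \defref{def:m,m+1-stable}(a)(2), an $(m,m+1)$-semistable framed sheaf satisfies $\Hom(\shfO_C(-m),E)=0$, hence $\Ext^2(E,C_m)=0$ and $\tau$ is surjective. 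Alternatively, and perhaps more in the spirit of the quiver-theoretic setup used in \cite{perv}, one can argue directly at the level of the linear-algebra complex: $\Coker\tau = \Ext^2(E,C_m)$ is dual to a $\Hom$-space between the quiver datum $X$ and the datum $C_m$ viewed with the roles reversed, and the stability inequality (S2) applied to the relevant sub/quotient forces this to vanish. The main obstacle will be making the Serre-duality identification $\shfO_C(-m-1)\otimes K_{\bp}^\vee|_C \cong \shfO_C(-m)$ and the degree shifts precise — i.e. keeping careful track of the adjunction $K_{\bp}|_C\cong\shfO_C(-1)$ and the twist conventions $C_m=\shfO_C(-m-1)$ — and then citing the correct clause of \defref{def:m,m+1-stable} (or the corresponding condition (S2) in \cite{perv}) to kill the resulting $\Hom$. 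Once that identification is in hand, the surjectivity of $\tau$ is immediate.
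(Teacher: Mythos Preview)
Your proposal is correct and follows essentially the same approach as the paper. For (1) the paper simply says ``these are already explained,'' referring to exactly the construction you outline; for (2) the paper uses Serre duality to write $\Ext^2(E,C_m)\cong \Hom(C_{m-1},E)^\vee=\Hom(\shfO_C(-m),E)^\vee$, which vanishes by condition~(2) of \defref{def:m,m+1-stable}(a) --- precisely your argument.
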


\begin{proof}
(1) These are already explained.

(2) By the Serre duality we have $\Ext^2(E,C_m) =
\Hom(C_{m-1},E)^\vee$. But the right hand side vanishes if $E$ is
$(m,m+1)$-semistable.
\end{proof}

If $X$ is $\zeta^-$-stable (i.e., $(E,\Phi)$ is $m$-stable), we also
have $\Ker\sigma = 0$. But this does not hold in general if $X$ is
only $(m,m+1)$-semistable.

Next we consider the complex
\begin{equation}\label{eq:Ext(C_m,X)}
\begin{CD}
\begin{matrix}
   \Hom(\C^m, V_0)
\\
   \oplus
\\ \Hom(\C^{m+1}, V_1)
\end{matrix}
@>{\sigma}>>
\begin{matrix}
   \Hom(\C^{m}, V_1) \\ \oplus \\ \C^2\otimes \Hom(\C^{m+1}, V_0) \\
%   \oplus \\ V_0\otimes\shfO \\ \oplus \\ V_1\otimes\shfO \\
   \oplus \\ \Hom(\C^{m+1},W)
\end{matrix}
@>{\tau}>>
\begin{matrix}
  \Hom(\C^{m+1}, V_0)
\end{matrix},
\end{CD}
\end{equation}
with
{\allowdisplaybreaks
\begin{equation*}
\begin{split}
   & \sigma
   \begin{bmatrix}
     \xi_0 \\ \xi_1
   \end{bmatrix}
   = 
   \begin{bmatrix}
     & d \xi_0   \\
     & B_1\xi_1 - \xi_0 \left[\begin{smallmatrix}
        1_m & 0
      \end{smallmatrix}\right] \\
     & B_2\xi_1 - \xi_0\left[\begin{smallmatrix}
        0 & 1_m
      \end{smallmatrix}\right]  \\    
     & j\xi_1
   \end{bmatrix},
\\
   & \tau
   \begin{bmatrix}
    \widetilde d \\ \widetilde B_1 \\ \widetilde B_2 \\ \widetilde j
   \end{bmatrix}
   = B_1 
    \widetilde d 
    \left[\begin{smallmatrix}
        1_m & 0
      \end{smallmatrix}\right]
    -
    B_2
    \widetilde d \left[\begin{smallmatrix}
        0 & 1_m
      \end{smallmatrix}\right]
    + i \widetilde j
    + B_1 d \widetilde B_2 - B_2 d \widetilde B_1.
\end{split}
\end{equation*}
}

\begin{Lemma}\label{lem:Ext+}
  Suppose that $X$ corresponds to a framed sheaf $(E,\Phi)$.

  \textup{(1)} $\Hom(C_m,E)\cong \Ker\sigma$,
  $\Ext^1(C_m,E)\cong \Ker\tau/\Ima\sigma$, and
  $\Ext^2(C_m,E)\cong \Coker\tau$.

  \textup{(2)} Suppose further that $X$ is $(m,m+1)$-semistable. Then
  $\tau$ is surjective.
\end{Lemma}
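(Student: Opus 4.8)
\textbf{Proof proposal for \lemref{lem:Ext+}.}
The plan is to mirror, essentially verbatim, the construction that produced the complex \eqref{eq:Ext(X,C_m)} and the proof of \lemref{lem:Ext-}, simply with the roles of $C_m$ and $E$ interchanged. First I would observe that the complex \eqref{eq:Ext(C_m,X)} arises by applying $\Hom(C_m, -)$ at the level of the monad. Concretely: the sheaf $C_m = \shfO_C(-m-1)$ is represented by the datum $C_m = (B_1,B_2,d,i,j)$ with $V_0 = \C^m$, $V_1 = \C^{m+1}$, $W=0$, $d=0$, $B_1 = [\,1_m\ 0\,]$, $B_2 = [\,0\ 1_m\,]$, so its monad \eqref{eq:cpx} reads $\C^m\otimes\shfO(C-\linf)\oplus\C^{m+1}\otimes\shfO(-\linf)\xrightarrow{\alpha}\C^2\otimes\C^m\otimes\shfO\oplus\C^2\otimes\C^{m+1}\otimes\shfO\xrightarrow{\beta}\C^m\otimes\shfO(\linf)\oplus\C^{m+1}\otimes\shfO(-C+\linf)$. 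Taking $\mathcal Hom(\text{this monad}, E)$ and using $\beta$, $\alpha$ for $E$ gives a double complex; as in \subsecref{subsec:Ext}, one of the three terms in the $\infty$-direction involves the cohomology of $E\otimes\shfO(-\linf)$, $E(C-\linf)$, $E(\linf)$, $E(-C+\linf)$, and the only surviving cohomology (by the standing assumption $H^i(E(-\linf))=H^i(E(C-\linf))=0$ for $i\ne1$, together with the vanishing $H^*(E(\pm\linf))$ forced by the framing and the fact that $E$ is locally free near $\linf$) leaves precisely the three-term complex \eqref{eq:Ext(C_m,X)} whose cohomology computes $\Ext^\bullet(C_m,E)$. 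I would spell out the identification of $\sigma$ and $\tau$ with the displayed matrices by unwinding the connecting maps, exactly as was done (and merely asserted as ``already explained'') for \eqref{eq:Ext(X,C_m)}. This proves (1).

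For (2) I would invoke Serre duality on $\bp$. Since $K_{\bp}|_C \cong \shfO_C(-1)$, one has $\Ext^2(C_m,E) = \Ext^2(\shfO_C(-m-1),E) \cong \Hom(E, \shfO_C(-m-1)\otimes K_{\bp})^\vee \cong \Hom(E,\shfO_C(-m-2))^\vee$. But the definition of $(m,m+1)$-semistability (\defref{def:m,m+1-stable}(a)(1)) says exactly that $\Hom(E,\shfO_C(-m-2)) = 0$. Hence $\Ext^2(C_m,E)=0$, and by part (1) this means $\Coker\tau = 0$, i.e., $\tau$ is surjective. (This is the precise analogue of the argument in \lemref{lem:Ext-}(2), where one instead used $\Hom(C_{m-1},E)^\vee = \Hom(\shfO_C(-m),E)^\vee$, which vanishes for $(m,m+1)$-semistable $X$ by \defref{def:m,m+1-stable}(a)(2); note the Serre-duality computations quoted in the boxed ``NB'' near the start of \secref{sec:1st} give the relevant $\Ext^i(\shfO_C(-1),E)\cong\Ext^{2-i}(E,\shfO_C(-2))^\vee$, which is the $m=0$ shape of what is needed here.)

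The only genuinely non-routine point is the derivation of \eqref{eq:Ext(C_m,X)} as the total complex: one must check that all the ``unwanted'' $H^*$-terms of the double complex vanish, and that the differentials assemble into exactly the stated $\sigma$, $\tau$ (including getting the signs and the truncation of the $\C^2\otimes V_0^*$ summand right, just as in the construction preceding \lemref{lem:Ext-} where the part $\C^2\otimes V_0^*\otimes\shfO \to V_0^*\otimes\shfO(-C+\linf)$ was replaced by $V_0^*\otimes\shfO(C-\linf)$). I would handle this by citing the parallel passage in \subsecref{subsec:Ext} almost word for word, since the construction is symmetric in the two arguments of $\Ext$; no new idea is required, and part (2) is then immediate from Serre duality plus the defining vanishing of $(m,m+1)$-semistability. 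The remark following the lemma — that $\Ker\sigma = \Hom(C_m,E)$ can be nonzero for merely $(m,m+1)$-semistable $X$ but vanishes when $X$ is $\zeta^+$-stable, i.e., $(m+1)$-stable, because then $\Hom(\shfO_C(-m-1),E)=0$ by \defref{def:m-stable}(2) applied with $m+1$ — I would note as an immediate consequence, parallel to the corresponding remark after \lemref{lem:Ext-}.
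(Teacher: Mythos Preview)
Your proposal is correct and follows essentially the same approach as the paper: for (1) the paper simply says the proof is the same as in \lemref{lem:Ext-} and omits it, while you spell out the monad/double-complex construction in more detail; for (2) the paper invokes Serre duality to get $\Ext^2(C_m,E)\cong\Hom(E,C_{m+1})^\vee=\Hom(E,\shfO_C(-m-2))^\vee$ and then the vanishing from $(m,m+1)$-semistability, exactly as you do.
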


\begin{proof}
The proof of (1) is the same as in \lemref{lem:Ext-}, so is omitted.

(2) We have
\(
   \Ext^2(C_m,E) \cong \Hom(E,C_{m+1})^\vee
\)
by the Serre duality. But the right hand side vanishes if $E$ is
$(m,m+1)$-semistable.
\end{proof}

If $X$ is $\zeta^+$-stable (i.e., $(E,\Phi)$ is $(m+1)$-stable), we also
have $\Ker\sigma = 0$. But this does not hold in general if $X$ is
only $(m,m+1)$-semistable.

\begin{Corollary}
  Let $Q^{\mathrm{ss}}(\zeta^0)$ be the open subscheme of $Q$
  consisting of $\zeta^0$-semistable, i.e., $(m,m+1)$-semistable
  objects. The differential $d\mu$ is surjective on
  $Q^{\mathrm{ss}}(\zeta^0)$. Hence $Q^{\mathrm{ss}}(\zeta^0)$ is
  smooth.
\end{Corollary}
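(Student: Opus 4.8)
The plan is to deduce the corollary directly from the $\Ext$-computations just established in Lemmas~\ref{lem:Ext-} and~\ref{lem:Ext+}, together with the deformation-theoretic interpretation of the complex \eqref{eq:tangentcpx}. First I would recall that the tangent complex \eqref{eq:tangentcpx} at a point $X=(B_1,B_2,d,i,j)$ of $Q$ has $d\mu$ as its last map, so $d\mu$ fails to be surjective at $X$ precisely when the cokernel of \eqref{eq:tangentcpx} is nonzero, and this cokernel is (by the same sheaf-theoretic identification used throughout \secref{sec:quiver}) the obstruction space $\Ext^2(E,E)$ when $X$ corresponds to a framed sheaf $(E,\Phi)$. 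Thus the statement to prove is the vanishing $\Ext^2(E,E)=0$ for every $(m,m+1)$-semistable framed sheaf $(E,\Phi)$.

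Next I would invoke Serre duality on $\bp$: since $K_{\bp}$ restricted to the relevant locus behaves as in the displayed Serre-duality computations in the Notations section, $\Ext^2(E,E)\cong \Hom(E,E\otimes K_{\bp})^\vee$, and because $E$ is torsion free outside $C$ and framed (hence $\Hom$ into a twist that is negative along $\linf$), the standard argument gives $\Ext^2(E,E)=0$; more precisely one reduces to showing there are no nonzero homomorphisms $E\to E\otimes K_{\bp}$, which follows from the $(m,m+1)$-semistability hypotheses (1) $\Hom(E,\shfO_C(-m-2))=0$ and (2) $\Hom(\shfO_C(-m),E)=0$ exactly as in the proofs of part~(2) of Lemmas~\ref{lem:Ext-} and~\ref{lem:Ext+}, since any such homomorphism, restricted along $C$ and off $C$, would produce one of the forbidden maps. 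Alternatively, and perhaps more cleanly within the quiver framework, I would work directly with the complex \eqref{eq:tangentcpx}: its cohomology at the right end is dual, via the $\zeta^0$-analogue of the arguments in \cite[\S2.4]{perv}, to $\Hom$-spaces that vanish by the definition of $(m,m+1)$-semistability. Once $\Ext^2(E,E)=0$ for all such $E$, the surjectivity of $d\mu$ holds at every point of $Q^{\mathrm{ss}}(\zeta^0)$.

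Finally, surjectivity of $d\mu$ on $Q^{\mathrm{ss}}(\zeta^0)$ immediately yields smoothness: $Q^{\mathrm{ss}}(\zeta^0)$ is an open subscheme of $\mu^{-1}(0)$, and $\mu$ is a map from a smooth affine variety (the vector space of data $(B_1,B_2,d,i,j)$) to $\Hom(V_1,V_0)$; where $d\mu$ is surjective, $\mu^{-1}(0)$ is smooth of the expected dimension by the submersion theorem / Jacobian criterion. Hence $Q^{\mathrm{ss}}(\zeta^0)$ is smooth, as claimed.

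The main obstacle I anticipate is verifying the vanishing $\Ext^2(E,E)=0$ honestly for \emph{semistable} (not just stable) objects, since a $(m,m+1)$-semistable sheaf need not be simple and the na\"\i ve stability-implies-no-maps argument does not apply verbatim; one must instead exploit the precise homological conditions (1)–(2) of \defref{def:m,m+1-stable}(a) and the torsion-freeness outside $C$, tracking how a hypothetical nonzero element of $\Hom(E,E\otimes K_{\bp})$ would restrict to $C$ (where $K_{\bp}|_C\cong\shfO_C(-1)$) to manufacture a nonzero map to some $\shfO_C(-m-2)$ or from some $\shfO_C(-m)$. This is the same mechanism already used in the part~(2) arguments of Lemmas~\ref{lem:Ext-} and~\ref{lem:Ext+}, so the work is to assemble those local pieces into the global vanishing; everything after that is formal.
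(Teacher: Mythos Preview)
Your proposal takes a different route from the paper, and the route you sketch has a real gap at exactly the point you flag as the ``main obstacle.'' You try to prove $\Ext^2(E,E(-\linf))=0$ (or equivalently $\Hom(E,E\otimes K_{\bp}(\linf))=0$) directly for every $(m,m+1)$-semistable framed sheaf via Serre duality, but your restriction argument along $C$ and away from $C$ is only a gesture: a strictly semistable $E$ can have summands or subsheaves isomorphic to $C_m^{\oplus p}$, and neither the torsion-free-outside-$C$ condition nor the two $\Hom$-vanishings in \defref{def:m,m+1-stable}(a) obviously forbids a nonzero endomorphism-valued map into $E\otimes K_{\bp}(\linf)$. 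Your alternative suggestion of extending \cite[\S2.4]{perv} to the $\zeta^0$-semistable locus has the same problem: that argument is written for stable points and uses simplicity, which fails here.

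The paper sidesteps this difficulty entirely with a trick you did not use: surjectivity of $d\mu$ is an \emph{open} condition, so it suffices to verify it on the closed orbits in $Q^{\mathrm{ss}}(\zeta^0)$, i.e., at points of the form $X=X^0\oplus C_m^{\oplus p}$ with $X^0$ a $\zeta^0$-stable object having $X^0_\infty\neq 0$. At such a point the tangent complex \eqref{eq:tangentcpx} splits into four blocks: the tangent complex of $X^0$, $p$ copies of \eqref{eq:Ext(X,C_m)} for $X^0$, $p$ copies of \eqref{eq:Ext(C_m,X)} for $X^0$, and $p^2$ copies of the tangent complex of $C_m$. Surjectivity of the last map in each block is then exactly \cite[2.4]{perv} for the stable piece $X^0$, \lemref{lem:Ext-}(2) and \lemref{lem:Ext+}(2) for the cross terms, and any of these for $C_m$ (which is itself stable). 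This reduction to closed orbits is the missing idea; once you have it, Lemmas~\ref{lem:Ext-} and~\ref{lem:Ext+} are doing precisely the work you wanted Serre duality to do, but only on the pieces where it is already proven rather than on a general semistable $E$.
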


\begin{proof}
  Since the surjectivity of $d\mu$ is an open condition, it is enough
  to check the assertion when $X$ is a direct sum of $\zeta^0$-stable
  objects. As explained in \defref{def:m,m+1-stable}, we have $X =
  X^0\oplus C_m^{\oplus p}$, where $X^0$ is $\zeta^0$-stable with
  $X_\infty\neq 0$ and $p\ge 0$. Then the tangent complex
  \eqref{eq:tangentcpx} decomposes into four parts, the tangent
  complex for $X^0$, the sum of $p$-copies of the
  complex~\eqref{eq:Ext(X,C_m)} for $X^0$, the sum of $p$-copies of
  the complex~\eqref{eq:Ext(C_m,X)} for $X^0$, and the sum of
  $p^2$-copies of the tangent complex for $C_m$.

  The differential of $d\mu$ for $X^0$ is surjective since $X^0$ is
  $\zeta^0$-stable by \cite[2.4]{perv}. The surjectivity of $\tau$ for
  \eqref{eq:Ext(X,C_m)} and \eqref{eq:Ext(C_m,X)} was proved in
  \lemref{lem:Ext-} and \lemref{lem:Ext+} respectively.
  The surjectivity of $d\mu$ for $C_m$ follows from either
  \cite[2.4]{perv}, \lemref{lem:Ext-} or \lemref{lem:Ext+} since $C_m$
  is $(-1/m,1/(m+1))$-stable by \cite[\S2.2]{perv}.
\end{proof}

\begin{NB}
I moved this subsection to \secref{sec:1st}.

\subsection{Relation to the framed moduli spaces on $\proj^2$}

As we mentioned before, $\bM^0(c)$ is isomorphic to the framed moduli
space $M(p_*(c))$ of torsion free sheaves on $\proj^2$ when $(c_1,[C])
= 0$. We review \cite[\S3.10]{perv2} describing a relation when $0 >
(c_1,[C]) > -r$. Since the condition $0\ge (c_1,[C]) > -r$ is always
achieved by twisting the line bundle $\shfO(C)$, this is the only
remaining situation.

\begin{Proposition}%\label{prop:Grassmann}
  Suppose $0 < n \defeq -(c_1,[C]) < r$. There is a variety $\widehat
  N(c,n)$ relating $\bM^1(c)$ and $\bM^1(c-ne)$ through a diagram
\begin{equation*}
\xymatrix@R=.5pc{
& \widehat N(c,n) \ar[ld]_{f_1} \ar[rd]^{f_2} &
\\
\bM^{1}(c) & & \bM^{1}(c-ne)
}
\end{equation*}
satisfying the followings\textup:

\textup{(1)} $f_1$ is surjective and birational.

\textup{(2)} $f_2$ is the Grassmann bundle 
\(
   \Gr(n,\Ext^1_{q_2}(C_0,\cE'))
\)
of $n$-planes in the vector bundle $\Ext^1_{q_2}(C_0,\cE')$ of rank $r$
over $\bM^1(c-ne)$.

\textup{(3)} We have a short exact sequence
\begin{equation*}
   0 \to (\id_{\bp}\times f_2)^*\cE' \to (\id_{\bp}\times f_1)^*\cE 
   \to C_0\boxtimes\mathcal S\to 0.
\end{equation*}
Here $\cE$, $\cE'$ are the universal sheaves for $\bM^1(c)$ and
$\bM^1(c-ne)$ respectively, and $\mathcal S$ is the universal rank $n$ subbundle
of $\Ext^1_{q_2}(C_0,\cE')$ over $\Gr(n,\Ext^1_{q_2}(C_0,\cE'))$.
\end{Proposition}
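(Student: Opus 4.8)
The plan is to identify $\widehat N(c,n)$ with the Grassmann bundle
$f_2\colon \Gr\bigl(n,\Ext^1_{q_2}(C_0,\cE')\bigr)\to\bM^1(c-ne_0)$,
where $C_0=\shfO_C(-1)$ and $\cE'$ is the universal sheaf on
$\bp\times\bM^1(c-ne_0)$, and to let $f_1$ be the classifying morphism
of the middle term of the tautological extension. One first records
that $\Ext^i_{q_2}(C_0,\cE')=0$ for $i=0,2$: the vanishing of
$\Ext^0$ is the $1$-stability condition $\Hom(C_0,E')=0$, and Serre
duality (cf.\ \lemref{lem:Ext+}) identifies $\Ext^2$ with
$\Hom(E',\shfO_C(-2))^\vee$, which vanishes again by $1$-stability;
hence $\Ext^1_{q_2}(C_0,\cE')$ is locally free, and Riemann--Roch with
$(c_1(c-ne_0),[C])=0$ gives $\chi(C_0,E')=-r$, so its rank is $r$. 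The
tautological subbundle $\mathcal S\subset f_2^*\Ext^1_{q_2}(C_0,\cE')$,
read as a family of extension classes, produces a short exact sequence
\begin{equation*}
  0\to (\id_{\bp}\times f_2)^*\cE'\to \cE_{\widehat N}\to C_0\boxtimes\mathcal S\to 0
\end{equation*}
on $\bp\times\widehat N(c,n)$; this is assertion (3), and by
additivity of $\ch$ the fibres of $\cE_{\widehat N}$ have Chern
character $c$.

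The first substantive point is that $\cE_{\widehat N}$ is a flat
family of $1$-stable sheaves, so that $f_1$ is defined and lands in
$\bM^1(c)$. Fibrewise one checks the three conditions of
\defref{def:m-stable} for the middle term $E$ of $0\to E'\to E\to
C_0^{\oplus n}\to 0$, where $E'$ is $1$-stable and the extension class
is an \emph{injection} $\C^n\hookrightarrow\Ext^1(C_0,E')$.
Torsion-freeness of $E$ outside $C$ is immediate from that of $E'$.
Applying $\Hom(-,\shfO_C(-2))$ to the sequence and using
$\Hom(E',\shfO_C(-2))=0$ and
$\Hom(C_0,\shfO_C(-2))=H^0(\shfO_{\proj^1}(-1))=0$ gives
$\Hom(E,\shfO_C(-2))=0$. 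Applying $\Hom(C_0,-)$ and using
$\Hom(C_0,E')=0$ identifies $\Hom(C_0,E)$ with the kernel of the
connecting map $\Hom(C_0,C_0^{\oplus n})\to\Ext^1(C_0,E')$, which is
precisely the given extension class; its injectivity forces
$\Hom(C_0,E)=0$. This is the ``converse'' half of
\propref{prop:extension} carried out with $n$ copies of $C_0$
simultaneously, and the $\Ext$-groups between $C_0$ and $\shfO_C(-2)$
used here are computed either on $C\cong\proj^1$ or from the quiver
complexes of \subsecref{subsec:Ext}.

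It remains to prove $f_1$ surjective and birational; for this I would
follow \cite[\S3.10]{perv2}. Given $E\in\bM^1(c)$, one has
$\dim\Hom(E,C_0)\ge n$, because $\chi(E,C_0)=-(c_1(E),[C])=n$ while
$\Ext^2(E,C_0)\cong\Hom(\shfO_C,E)^\vee=0$ by $1$-stability; the
canonical evaluation $E\to C_0\otimes\Hom(E,C_0)^\vee$ then has, after
choosing a suitable $n$-dimensional quotient, image $C_0^{\oplus n}$
with kernel $E'$, and one checks $E'$ is $1$-stable with
$\ch(E')=c-ne_0$, so that the corresponding extension class defines a
point of $\widehat N(c,n)$ mapping to $E$ under $f_1$; this gives
surjectivity. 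For birationality, on the open locus $U\subset\bM^1(c)$
where $\Ext^1(E,C_0)=0$, equivalently $\dim\Hom(E,C_0)=n$, the pair
$(E',\mathcal S)$ attached to $E$ is unique and varies algebraically,
so $E\mapsto(E',\mathcal S)$ is inverse to $f_1$ over $U$; since $U$
is dense (by upper semicontinuity, using that $\Ext^1(E',C_0)=0$ on a
dense open subset of $\bM^1(c-ne_0)$) and $\widehat N(c,n)$ is
irreducible (a Grassmann bundle over the smooth base $\bM^1(c-ne_0)$),
$f_1$ restricts to an isomorphism over $U$ and is therefore birational.
The main obstacle is the surjectivity step --- identifying the
canonical quotient of $E$ as $C_0^{\oplus n}$ with $1$-stable kernel
--- together with, for birationality, the precise (codimension $\ge 1$)
description of the locus where $\dim\Hom(E,C_0)$ jumps; both become
transparent in the quiver presentation of \thmref{thm:quiver}, where
$\Ext^1(E,C_0)$ and the fibres of $f_1$ reduce to explicit linear
algebra.
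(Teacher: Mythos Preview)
The paper does not give an in-text proof of this proposition; it simply cites \cite[\S3.10]{perv2}. Your sketch is the natural reconstruction of that argument and is essentially correct: define $\widehat N(c,n)$ as the Grassmann bundle, build $f_1$ from the tautological extension, and verify (1)--(3) fibrewise using the converse direction of \propref{prop:extension} with $m=0$.

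Two small comments. First, when you write ``$\Ext^2(E,C_0)\cong\Hom(\shfO_C,E)^\vee=0$ by $1$-stability'', note that the vanishing of $\Hom(\shfO_C,E)$ is not literally one of the two conditions in \defref{def:m-stable} for $m=1$; it follows because $1$-stability (i.e.\ $\zeta^+$-stability for the wall $m=0$) implies $(0,1)$-semistability in the sense of \defref{def:m,m+1-stable}, which does include $\Hom(\shfO_C,E)=0$. Second, in the surjectivity step you correctly anticipate the difficulty: $\dim\Hom(E,C_0)$ may exceed $n$. Your remedy---pass to a suitable $n$-dimensional quotient---does work, and the kernel $E''$ is indeed $1$-stable: $\Hom(\shfO_C(-1),E'')\hookrightarrow\Hom(\shfO_C(-1),E)=0$, while $\Hom(E'',\shfO_C(-2))\hookrightarrow\Ext^1(C_0,\shfO_C(-2))=0$ (the latter vanishing follows from $\Ext^1(C_0,C_0)=0$ and $\Hom(C_0,\C_p)\cong\Hom(C_0,C_0)$). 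The injectivity of the resulting extension class is forced by $\Hom(C_0,E)=0$, as you note. So the sketch is sound; the places you flag as obstacles are exactly where the work lies, and your indications of how to handle them are accurate.
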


Remark that $\Ext^i_{q_2}(C_0,\cE) = 0$ for $i=0,2$ by the remark
after \lemref{lem:Ext+}. Hence the rank of $\Ext^1_{q_2}(C_0,\cE)$ is
equal to $r$ by Riemann-Roch.

We have $(c_1(c-ne),[C]) = (c_1,[C]) + n = 0$. Therefore
$\bMm{1}(c-ne)$ becomes $M(p_*(c))$ after crossing the wall $\zeta_1 = 0$.

\begin{NB2}
  I am not sure that we really need this description. It is probably
  enough to explain $\bM^0(c) = \emptyset$ if $(c_1,[C]) < 0$. Then
  the wall-crossing formula will take care.
\end{NB2}%
\end{NB}

\section{Enhanced master spaces}\label{sec:mater spaces}

%\subsection{Mochizuki method}

A naive idea to show the weak wall-crossing formula
(=\thmref{thm:m=0}) is to compare the intersection products on
$\bM^m(c)$ and $\bM^{m+1}(c)$ through the diagram
\eqref{eq:flip}. Such an idea works when $\bM^m(c)$, $\bM^{m+1}(c)$
are replaced by moduli spaces of stable rank $2$ torsion free sheaves
over a surface with $p_g = 0$ with respect to two polarizations
separated by a wall which is good \cite{EG1}. (See also \cite{GNY}.)

However the idea does not work since the morphisms $\xi_m$, $\xi_m^+$
are much more complicated in our current situation, basically because
dimensions of vector spaces $V$, $U$ in \propref{prop:extension} can
be arbitrary. We use a refinement of the idea, due to Mochizuki
\cite{Moc}, which was used to study wall-crossing formula for moduli
spaces of higher ranks stable sheaves. It consists of two parts:
\begin{description}
\item[(Ma)] Consider a {\it pair\/} of a sheaf and a full flag in
the sheaf cohomology group.
\item[(Mb)] Use the fixed point formula for the $\C^*$-equivariant
  cohomology on Thaddeus' master space.
\end{description}

In this section we describe the part {\bf (Ma)}. The results are
straightforward modifications of those in \cite[\S4]{Moc}, possibly
except one in \subsecref{subsec:Grassmann}.

We remark that Yamada also used moduli spaces of pairs of a sheaf and a full flag in the sheaf cohomology group to study wall-crossing of moduli spaces of higher rank sheaves \cite{Ya}.
\begin{NB2}
  Added. 2010/04/07
\end{NB2}%

We continue to fix $m$ and choose parameters $\zeta^0$, $\zeta^\pm$ as
in the previous section. We fix a dimension vector ${\mathbf v} =
(v_0,v_1,1)$ and define $\zeta^0_\infty$ so that the normalization
condition $\zeta^0_0 v_0 + \zeta^0_1 v_1 + \zeta^0_\infty = 0$ is
satisfied.
\begin{NB2}
  Corrected. 2010/04/01
\end{NB2}%

\subsection{Framed sheaves with flags in cohomology groups}
\label{subsec:withflags}

Let $((E,\Phi),F^\bullet)$ be a pair of a framed sheaf and a full flag
$F^\bullet = (0 = F^0 \subset F^1 \subset \cdots \subset F^{N-1}
\subset F^N = V_1(E))$ ($N = \dim V_1(E)$). Let $\ell$ be an integer
between $0$ and $N$.

\begin{Definition}\label{def:m,l-stable}
  An object $((E,\Phi),F^\bullet)$ is called {\it $(m,\ell)$-stable\/}
  if the following three conditions are satisfied
  \begin{enumerate}
%   \item $\Hom(E,\shfO_C(-m-2)) = 0$,
%   \item $\Hom(\shfO_C(-m),E) = 0$,
%   \item $E$ is torsion free outside $C$,
  \item $(E,\Phi)$ is $(m,m+1)$-semistable.
  \item For a subsheaf $\mathfrak S\subset E$ isomorphic to
    $C_m^{\oplus p}$ with $p\in\Z_{>0}$, we have
    $V_1(\mathfrak S)\cap F^\ell = 0$.
  \item For a subsheaf $\mathfrak S\subset E$ such that the quotient
    $E/\mathfrak S$ is isomorphic to $C_m^{\oplus p}$ with
    $p\in\Z_{>0}$, we have $F^\ell\not\subset V_1(\mathfrak S)$.
  \end{enumerate}
\end{Definition}

This notion makes a bridge between the $m$-stability and the
$(m+1)$-stability by the following observation:
If $\ell = 0$ (resp.\ $= N$), then $(m,\ell)$-stability of
$((E,\Phi),F^\bullet)$ is equivalent to $m$ (resp.\ $(m+1)$)-stability
of $(E,\Phi)$. (See \propref{prop:extension}.)

\begin{Proposition}[\protect{\cite[4.2.5]{Moc}}]\label{prop:m,l-moduli}
  We have a smooth fine moduli scheme $\tM^{m,\ell}(c)$ of
  $(m,\ell)$-stable objects $((E,\Phi),F^\bullet)$ with $\ch(E) = c$.
  There is a projective morphism $\tM^{m,\ell}(c)\to M_0(p_*(c))$.
  \begin{NB2}
    Second statement added according to the referee's
    suggestion. 2010/04/03
  \end{NB2}%
\end{Proposition}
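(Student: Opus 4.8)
\textbf{Proof plan for Proposition \ref{prop:m,l-moduli}.}
The plan is to follow Mochizuki's construction in \cite[\S4]{Moc}, adapted to our quiver-theoretic setup from \secref{sec:quiver}. First I would recast the data $((E,\Phi),F^\bullet)$ in terms of representations of the modified quiver: by \thmref{thm:quiver} and the inverse construction, $(E,\Phi)$ with $\ch(E)=c$ and $(E,\Phi)$ $(m,m+1)$-semistable corresponds to a point $X=(B_1,B_2,d,i,j)$ of the $\zeta^0$-semistable locus $Q^{\mathrm{ss}}(\zeta^0)$, with $V_1(E)=V_1$ the vector space sitting at the vertex $1$; a full flag $F^\bullet$ in $V_1(E)$ is then a point of the flag variety $\Flag(V_1)$ of complete flags in $V_1$. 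So the parameter space for pairs is the product $Q^{\mathrm{ss}}(\zeta^0)\times\Flag(V_1)$, acted on by $G=\GL(V_0)\times\GL(V_1)$ diagonally (with $\GL(V_1)$ acting on $\Flag(V_1)$ in the standard way). One checks that the three conditions in \defref{def:m,l-stable} translate into an open condition on this product: condition (1) is openness of $Q^{\mathrm{ss}}(\zeta^0)$, and conditions (2),(3) — which involve only subsheaves isomorphic to $C_m^{\oplus p}$ or with such quotients, hence sub-/quotient-representations supported on the $C_m$-type factors — become linear-algebraic conditions on how $F^\ell$ meets the relevant subspaces of $V_1$, and these are open. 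Thus the $(m,\ell)$-stable locus $\widetilde Q^{m,\ell}$ is a $G$-invariant open subscheme.

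Next I would verify that the $G$-action on $\widetilde Q^{m,\ell}$ is free and that the quotient exists as a smooth scheme. Freeness: an automorphism of $X$ preserving the flag must in particular be an automorphism of $X$; by the analysis in \propref{prop:extension} and the stability conditions (2),(3), which exactly rule out the $C_m$-direction deformations that could produce stabilizers, one reduces to showing that a nonzero endomorphism compatible with the filtration forces triviality, just as in \cite[4.2.4]{Moc}. For the existence and smoothness of the quotient: $\widetilde Q^{m,\ell}$ is smooth because $Q^{\mathrm{ss}}(\zeta^0)$ is smooth (the Corollary after \lemref{lem:Ext+}) and $\Flag(V_1)$ is smooth, and the product of smooth schemes is smooth; then a free $G$-action with a quotient realized (via a suitable GIT linearization built from $\zeta^0$ together with the Plücker-type polarization on $\Flag(V_1)$, as in \cite{King-mod}) gives a smooth scheme $\tM^{m,\ell}(c)=\widetilde Q^{m,\ell}/G$. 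Finally, that it is a \emph{fine} moduli scheme follows because the universal framed sheaf $\cE$ on $\bp\times\bM^{m,m+1}$ descends, together with a tautological flag in $\Vcal_1(\cE)$, to a universal family over $\tM^{m,\ell}(c)$; the framing rigidifies as usual so there are no nontrivial automorphisms, which is what makes the moduli functor representable.

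For the second statement, the projective morphism $\tM^{m,\ell}(c)\to M_0(p_*(c))$: there is the forgetful map $\tM^{m,\ell}(c)\to\bM^{m,m+1}(c)$ obtained by forgetting the flag, and this map is proper — indeed it is a composition of flag-bundle-type morphisms, or more directly, at the level of GIT quotients it is induced by the $G$-equivariant projection $\widetilde Q^{m,\ell}\to Q^{\mathrm{ss}}(\zeta^0)$ with projective (flag-variety) fibers. Composing with the projective morphism $\bM^{m,m+1}(c)\to M_0(p_*(c))$ noted in \subsecref{subsec:wall-crossing} (the one induced from the affine GIT quotient $\mu^{-1}(0)\dslash G\cong M_0$) gives the desired projective morphism to the Uhlenbeck space.

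\textbf{Main obstacle.} The routine parts are smoothness and the GIT construction; the genuinely delicate point is checking that conditions (2) and (3) of \defref{def:m,l-stable} are both \emph{open} and \emph{cut out a locus on which $G$ acts freely}, because these conditions are phrased in terms of \emph{all} subsheaves of $E$ isomorphic to $C_m^{\oplus p}$ (resp.\ with such quotients) rather than in terms of a single numerical inequality. Handling this requires the structural input of \propref{prop:extension}, which identifies exactly which subsheaves of an $(m,m+1)$-semistable $(E,\Phi)$ are of this form and organizes them via $V=\Hom(C_m,E)$ and $U=\Hom(E,C_m)^\vee$; one must show the flag condition against these finitely-many-parameter families is a locally closed (in fact open) condition and that it eliminates stabilizers. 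This is precisely the content of \cite[4.2.1--4.2.5]{Moc}, and I expect the proof to consist of transcribing that argument into our notation, with the $C_m$ playing the role of Mochizuki's distinguished stable factor.
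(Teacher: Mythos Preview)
Your overall framework is right, but the key technical ingredient is missing, and your proposed linearization will not work as stated. You suggest building the GIT problem from $\zeta^0$ together with a Pl\"ucker-type weight on $\Flag(V_1)$; the trouble is that $\zeta^0$ sits \emph{on the wall}, so the resulting stability will have strictly semistable points (the $C_m$-factors) and the combined condition will not single out the $(m,\ell)$-stable locus. What the paper actually does is introduce a numerical slope $\mu_{\zeta,\bn}$ on pairs $(X,F^\bullet)$, with flag weights $\bn=(n_1,\dots,n_N)\in\Q_{>0}^N$, and then choose a \emph{perturbation} $\zeta$ of $\zeta^0$ together with $\bn$ satisfying an explicit, rather delicate list of inequalities (\ref{eq:cond}a--e). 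Lemma~\ref{lem:zeta,l} then shows that for such $(\zeta,\bn)$ the $(\zeta,\bn)$-stability coincides with $(m,\ell)$-stability \emph{and} that $(\zeta,\bn)$-semistable implies $(\zeta,\bn)$-stable. This single lemma delivers everything you listed as the ``main obstacle'' at once: openness (GIT semistability is open), freeness (\lemref{lem:isom} gives trivial stabilizers for stable objects), fineness, and smoothness.

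Your argument for the projective morphism via a forgetful map to $\bM^{m,m+1}(c)$ is more roundabout than necessary and the properness of that forgetful map is not obvious (the target parametrizes $S$-equivalence classes, so the fibers are not simply flag varieties). In the paper's setup the projective morphism to $M_0(p_*(c))=\mu^{-1}(0)\dslash G$ comes for free from the GIT construction: $\tM^{m,\ell}(c)$ is a $\operatorname{Proj}$ over the invariant ring, hence projective over its $\operatorname{Spec}$. So the whole proof reduces to formulating and verifying \lemref{lem:zeta,l}; the rest is standard GIT.
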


The proof will be given in the next subsection.

Let $\cE$ be the universal sheaf over $\bp\times \tM^{m,\ell}(c)$ and
$q_1$, $q_2$ be the projection to the first and second factors of
$\bp\times \tM^{m,\ell}(c)$ respectively as before.
As well as the vector bundle 
\(
   \Vcal_1 \equiv \Vcal_1(\cE) 
   \defeq R^1q_{2*}(\cE\otimes q_1^*\shfO(C-\linf)),
\)
\begin{NB}
Coming from the vector space $V_1$ in the quiver description
\end{NB}%
we also have the universal family $\Fcal^\bullet = (0 = \Fcal^0
\subset \Fcal^1 \subset \cdots \subset \Fcal^{N-1} \subset \Fcal^N =
\Vcal_1)$ of flags of vector bundles over $\tM^{m,\ell}(c)$.

\begin{NB}
The following is already explained:

We use the following notation hereafter:
For a sheaf $\mathcal S$ over $\bp\times \tM^{m,\ell}(c)$ (or
$\bp\times \text{other moduli spaces}$), we denote $R^1q_{2*}(\mathcal
S\otimes q_1^*\shfO(C-\linf))$ by $\Vcal_1(\mathcal S)$ or simply by
$\Vcal_1$ if $\mathcal S$ is clear from the context.
\end{NB}

For $\ell = 0$ or $N$, the preceding remark implies that $\tM^{m,0}(c)$,
$\tM^{m,N}(c)$ are the full flag bundles
\(
   \Flag(\Vcal_1,\underline{N})
\)
associated with vector bundles $\Vcal_1$ over $\bM^m(c)$,
$\bM^{m+1}(c)$ respectively. Here $\underline{N} = \{ 1,\dots, N\}$
and the notation $\Flag(\Vcal_1,\underline{N})$ means that the flag is
indexed by the set $\underline{N}$. This notation will become useful
when we consider the fixed points in the enhanced master space.

\subsection{Proof of \propref{prop:m,l-moduli}}

Let us rephrase the $(m,\ell)$-stability in the quiver description.

We consider a pair $(X,F^\bullet) = \left( (B_1,B_2,d,i,j),
  F^\bullet\right)$ of $X\in\mu^{-1}(0)$ and a flag $F^\bullet =
(0 = F^0 \subset F^1 \subset \cdots \subset F^{N-1} \subset F^N
= V_1)$ of $V_1$ with $\dim (F^i/F^{i-1}) = 0$ or $1$.
In \defref{def:m,l-stable} we have assumed that $F^\bullet$ is a full
flag, i.e., $\dim (F^i/F^{i-1}) = 1$, but we slightly generalize it
for a notational simplicity. 

In terms of a quiver representation, \defref{def:m,l-stable} is
expressed as follows:

\begin{Definition}
  For $0\le\ell\le N$, we say $(X,F^\bullet)$ is {\it
    $(m,\ell)$-stable\/} if the following conditions are satisfied:
\begin{enumerate}
\item $X$ is $\zeta^0$-semistable,
\item for a nonzero submodule $0\neq S \subset X$ with
\(
   \zeta^0\cdot \vdim S/\rank S = 0
\)
and $S_\infty = 0$,
we have $F^\ell\cap S_1 = 0$, and
\item for a proper submodule $S \subsetneq X$ with
\(
   \zeta^0\cdot \vdim S/\rank S = 0
\)
and $S_\infty = \C$,
we have $F^\ell\not\subset S_1$.
\end{enumerate}
\end{Definition}

The equivalence of (2),(3) and \ref{def:m,l-stable}(2),(3) is an
immediate consequence of \cite[Th.~2.13, Prop.~5.3]{perv}.

If $\ell=0$ (resp.\ $=N$), then $(m,\ell)$-stability is
equivalent to the $\zeta^-$ (resp.\ $\zeta^+$)-stability of $X$. (See
\propref{prop:extension}.)

The idea of the proof of \propref{prop:m,l-moduli} is to relate the
above condition to an usual stability condition for a linearization on
the product of $\mu^{-1}(0)$ and the flag variety with respect to the
group action of $G$.
It is the tensor product of linearizations on $\mu^{-1}(0)$ and the
flag variety.
For $\mu^{-1}(0)$, we take one as in \subsecref{subsec:mstable}. On
the flag variety, we take $\bigotimes (\det \Fcal^i)^{\otimes (-n_i)}$
for $\bn = (n_1,\dots,n_{N})\in \Q^{N}_{>0}$, where $\Fcal^i$ is the
$i^{\mathrm{th}}$ universal bundle. (See \cite[\S4.2]{Moc}.)
Then we have a natural projective morphism to $\mu^{-1}(0)\dslash G =
M_0(p_*(c))$ as before.
\begin{NB2}
  According to the referee request, we explain the choice of the linearization.
  2010/04/05
\end{NB2}%

The corresponding stability condition is expressed as before, but an
extra term for flags is added to $\theta$ (cf.\ \cite[Ch.~4,
\S4]{MFK}):
\begin{NB2}
  Reference added. 2010/04/07
\end{NB2}%%
For a nonzero graded submodule $S\subset X$ we define
\begin{equation*}
  \mu_{\zeta,\bn}(S)
  \defeq
  \frac{\zeta\cdot\vdim S + \sum_i n_i \dim (S\cap F^i)}{\rank S}.
\end{equation*}
We say $(X,F^\bullet)$ is {\it $(\zeta,\bn)$-\textup(semi\textup)stable\/} if
\begin{equation*}
   \mu_{\zeta,\bn}(S) (\le) \mu_{\zeta,\bn}(X)
\end{equation*}
holds for any nonzero proper submodule $0\neq S\subsetneq X$. 
Here $(\le)$ means that $\le$ for the semistable case, and $<$ for the
stable case.
We say $(X,F^\bullet)$ is {\it strictly $(\zeta,\bn)$-semistable\/} if
it is $(\zeta,\bn)$-semistable and not $(\zeta,\bn)$-stable.

A standard argument shows the following:
\begin{Lemma}\label{lem:isom}
  If $(X,F^\bullet)$, $(Y,G^\bullet)$ are $(\zeta,\bn)$-stable and
  have the same $\mu_{\zeta,\bn}$-value, a nonzero homomorphism
  $\xi\colon X\to Y$ with $\xi(F^i)\subset G^i$ must be an isomorphism.
\end{Lemma}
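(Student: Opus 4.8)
\textbf{Proof plan for Lemma~\ref{lem:isom}.} The statement is the standard ``stable objects of the same slope have only isomorphisms between them'' lemma, adapted to the category of pairs $(X,F^\bullet)$ with the slope function $\mu_{\zeta,\bn}$. The plan is to exploit that $\mu_{\zeta,\bn}$ behaves like an ordinary slope: for a short exact sequence $0\to S\to X\to Q\to 0$ of graded submodules with the induced flags, both $\rank$ and the numerator $\zeta\cdot\vdim(\bullet)+\sum_i n_i\dim(\bullet\cap F^i)$ are additive (the flag term is additive because $\dim(S\cap F^i)+\dim(\mathrm{im}(F^i\ \mathrm{in}\ Q))=\dim(X\cap F^i)$ once one defines the quotient flag as $G^i_Q=\mathrm{image}$ of $F^i$). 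Hence the usual ``seesaw'' inequalities hold: if $\mu_{\zeta,\bn}(S)\le\mu_{\zeta,\bn}(X)$ then $\mu_{\zeta,\bn}(X)\le\mu_{\zeta,\bn}(Q)$, and strict on one side forces strict on the other.

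First I would set up the kernel/image/cokernel of $\xi$ as graded submodules equipped with their natural flags: $\Ker\xi\subset X$ carries $F^i\cap\Ker\xi$, $\Ima\xi\subset Y$ carries the image of $F^i$, which by hypothesis lies in $G^i$, and $\Coker\xi=Y/\Ima\xi$ carries the induced flag. One has to check these are genuine submodules of the modified quiver (closed under $B_1,B_2,d,i,j$), which is immediate since $\xi$ is a module homomorphism, and that the $\dim X_\infty\in\{0,1\}$ constraint is respected — here one notes $\xi$ is nonzero and $X$, $Y$ are $(\zeta,\bn)$-stable, so $X_\infty$ and $Y_\infty$ each are either $0$ or $\C$, and I would treat the cases separately, the main one being $X_\infty=Y_\infty=\C$ with $\xi$ an isomorphism on the $\infty$-component (otherwise $\Ker\xi$ or $\Coker\xi$ would destabilize).

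Then the argument is the classical one. If $\Ker\xi\neq 0$, stability of $X$ gives $\mu_{\zeta,\bn}(\Ker\xi)<\mu_{\zeta,\bn}(X)$, hence $\mu_{\zeta,\bn}(\Ima\xi)>\mu_{\zeta,\bn}(X)=\mu_{\zeta,\bn}(Y)$ by additivity; but $\Ima\xi$ is a nonzero submodule of $Y$ (proper, since $\Coker\xi\ne 0$ when $\Ker\xi\ne0$ — if $\xi$ were surjective with nonzero kernel the ranks would not match), contradicting stability of $Y$. So $\xi$ is injective. Dually, if $\Coker\xi\neq 0$, stability of $Y$ gives $\mu_{\zeta,\bn}(\Ima\xi)<\mu_{\zeta,\bn}(Y)$, so with $\Ima\xi\cong X$ (as $\xi$ injective) and equal $\mu_{\zeta,\bn}$-values we get $\mu_{\zeta,\bn}(X)<\mu_{\zeta,\bn}(Y)=\mu_{\zeta,\bn}(X)$, a contradiction; hence $\xi$ is surjective, so an isomorphism of modules. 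Finally one checks it is an isomorphism of \emph{pairs}: $\xi(F^i)\subset G^i$ together with $\dim F^i=\dim(X\cap F^i)=\dim(Y\cap G^i)=\dim G^i$ (the middle equality from equality of $\mu_{\zeta,\bn}$-values and of the underlying dimension vectors, which follows because $X\cong Y$ as modules) forces $\xi(F^i)=G^i$.

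The only delicate point — the ``main obstacle'' in an otherwise routine lemma — is bookkeeping the flag term in $\mu_{\zeta,\bn}$ under taking sub/quotient and making sure the additivity $\dim(X\cap F^i)=\dim(S\cap F^i)+\dim(Q\cap G^i_Q)$ genuinely holds with the quotient flag defined as the image; this is where one must be careful that $F^\bullet$ need not be a full flag (jumps of dimension $0$ or $1$), but since the identity is just the rank-nullity for the linear map $F^i\to Q$ it goes through verbatim. I would also remark that the equality of the underlying dimension vectors of $X$ and $Y$ is needed and follows either from the isomorphism constructed along the way or can be assumed as part of ``the same $\mu_{\zeta,\bn}$-value'' in the intended application; I would state it explicitly to avoid circularity.
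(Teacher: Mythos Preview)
Your approach is the standard Schur-lemma argument and matches the paper's (which in fact just writes ``a standard argument shows''), but one step is flawed as written. To conclude from $\Ker\xi\neq 0$ that $\Ima\xi\subsetneq Y$, you assert ``the ranks would not match''; this presupposes $\rank X=\rank Y$, which is not part of the hypothesis---equal $\mu_{\zeta,\bn}$-value does not force equal rank. You then return to this at the end, suggesting equality of dimension vectors should be assumed or derived ``from the isomorphism constructed along the way''; the latter is circular.

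The repair, which is how the paper's (suppressed) proof runs, is to avoid the case split and chain the weak inequalities at once:
\[
\mu_{\zeta,\bn}(\Ker\xi)\ \le\ \mu_{\zeta,\bn}(X)\ \le\ \frac{\zeta\cdot\vdim\Ima\xi+\sum_i n_i\dim\xi(F^i)}{\rank\Ima\xi}\ \le\ \mu_{\zeta,\bn}(\Ima\xi)\ \le\ \mu_{\zeta,\bn}(Y).
\]
Here the outer two come from (semi)stability of $X$ and $Y$, the second is the seesaw for $X/\Ker\xi\cong\Ima\xi$ with the \emph{image} flag $\xi(F^\bullet)$, and the third is the step you only gesture at with ``by hypothesis lies in $G^i$'': since $\xi(F^i)\subset G^i\cap(\Ima\xi)_1$, replacing $\dim\xi(F^i)$ by $\dim(G^i\cap(\Ima\xi)_1)$ only increases the numerator. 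Because the endpoints are equal, every inequality is an equality; the strictness in stability then forces $\Ker\xi=0$ and $\Ima\xi=Y$ simultaneously. No rank comparison is needed, and equality of dimension vectors is a \emph{consequence}, not a hypothesis.
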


\begin{NB}
  \begin{proof}
    Let us consider submodules $\Ker\xi\subset X$, $\Ima\xi\subset
    Y$. From the $\mu_{\zeta,\bn}$-stability, we have
    \begin{equation*}
       \mu_{\zeta,\bn}(\Ker\xi) \le \mu_{\zeta,\bn}(X), \qquad
       \mu_{\zeta,\bn}(\Ima\xi) \le \mu_{\zeta,\bn}(Y)
    \end{equation*}
    and the inequalities are strict unless $\Ker\xi=0$, $\Ima\xi =
    Y$. The first inequality is equivalent to
    \begin{equation*}
      \mu_{\zeta,\bn}(X)\le 
        \frac{
         \zeta\cdot\vdim \Ima\xi + \sum_i n_i \dim (F^i/F^i\cap(\Ker\xi)_1)
       }{\rank \Ima\xi} 
    \end{equation*}
    as $X/\Ker\xi \cong \Ima\xi$. We have an injection $\xi\colon
    F^i/F^i\cap(\Ker\xi)_1 \to G^i\cap\Ima\xi$. Therefore the right
    hand side is less than or equal to
    $\mu_{\zeta,\bn}(\Ima\xi)$. Combining these inequalities with the
    assumption $\mu_{\zeta,\bn}(X) = \mu_{\zeta,\bn}(Y)$, we find that
    all inequalities must be equalities. Therefore $\Ker\xi=0$ and
    $\Ima\xi = Y$, i.e., $\xi$ is an isomorphism.
  \end{proof}
\end{NB}

We take $N = \dim V_1$ so that $F^\bullet$ is a full flag of $V_1$.
Consider the following conditions when $\ell\neq 0$:
{\allowdisplaybreaks
\begin{subequations}\label{eq:cond}
  \begin{align}
    & \zeta_0 v_0 + \zeta_1 v_1 + \zeta_\infty = 0,
\\
    & n_k > \rank X \sum_{i=k+1}^N i n_i > 0 \quad\text{for $k=1,\dots,N-1$},
\\
    & 
    \max_{\substack{S\subset V\\ \zeta^0\cdot \vdim S \neq 0
      }}
    \left| \frac{(\zeta^0 - \zeta) \cdot \vdim S}{\rank S} \right|
    + \sum_{i=1}^{N} i n_i < 
    \min_{\substack{S\subset V\\ \zeta^0\cdot \vdim S \neq 0}}
   \left|\frac{\zeta^0\cdot \vdim S}{\rank S}\right|,
\\
    & 0 < \sum_{i=1}^\ell i n_i - 
    \frac{\rank X}{2m+1} (m\zeta_0 + (m+1)\zeta_1) 
    < n_\ell,
\\
    & \rank X \sum_{i=l+1}^{N} i n_i
    < \min\Bigg(
    \begin{aligned}[t]
    &  \sum_{i=1}^\ell i n_i -
    \frac{\rank X}{2m+1} (m\zeta_0 + (m+1)\zeta_1),
    \\
    & \qquad n_\ell - \sum_{i=1}^\ell i n_i +
    \frac{\rank X}{2m+1} (m\zeta_0 + (m+1)\zeta_1)
    \Bigg),
    \end{aligned}
  \end{align}
\end{subequations}
where} $S\subset V$ runs over the set of all graded subspaces (not
necessarily submodules, as $X$ is not fixed yet) with $\zeta^0\cdot
\vdim S \neq 0$ in (c).
For example, we can take $\zeta$, $\bn$ in the following way: First
note that the right hand side of (c) is a fixed constant independent
of $\zeta$, $\bn$. Therefore it holds if $n_i$ and $\zeta^0-\zeta$ are
sufficiently small. Next note that (d) says that $\zeta^0 - \zeta$, up
to a constant multiplication, is between $(\sum_{i=1}^l in_i) - n_l$
and $\sum_{i=1}^l in_i$. Therefore we can first take $\zeta$ and
$n_1$, \dots, $n_\ell$ so that (b) (for $k=1,\dots,\ell$),(d) and (c)
hold if we set $n_{\ell+1} = \dots = n_N = 0$.
Then we take sufficiently small $n_i$ ($i=\ell+1,\dots, N$) so that (e)
holds. We can choose them very small so that (b) and (c) are not
violated.

\begin{Lemma}[\protect{\cite[4.2.4]{Moc}}]\label{lem:zeta,l}
Assume \eqref{eq:cond}. Then the followings hold:

\textup{(1)} The $(m,\ell)$-stability is equivalent to the
$(\zeta,\bn)$-stability.

\textup{(2)} The $(\zeta,\bn)$-semistability automatically implies
the $(\zeta,\bn)$-stability.
\end{Lemma}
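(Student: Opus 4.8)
\textbf{Proof proposal for \lemref{lem:zeta,l}.}
The plan is to follow Mochizuki's argument in \cite[4.2.4]{Moc} essentially verbatim, keeping careful track of how the conditions \eqref{eq:cond}(a)--(e) enter. The key observation is that the function $\mu_{\zeta,\bn}$ splits naturally into three pieces: the ``main'' term $\zeta^0\cdot\vdim S/\rank S$, a ``small perturbation'' term $(\zeta-\zeta^0)\cdot\vdim S/\rank S$, and the ``flag'' term $\sum_i n_i\dim(S\cap F^i)/\rank S$. Condition (c) guarantees that the perturbation and the total flag term together are strictly smaller in absolute value than the smallest nonzero value of the main term; consequently, for a submodule $S$ with $\zeta^0\cdot\vdim S\neq 0$, the sign of $\mu_{\zeta,\bn}(S)-\mu_{\zeta,\bn}(X)$ is already determined by $\zeta^0$, i.e.\ such $S$ never destabilizes a $\zeta^0$-semistable $X$ and never makes a $\zeta^0$-semistable $X$ strictly semistable via $\mu_{\zeta,\bn}$. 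So the only submodules that matter are those with $\zeta^0\cdot\vdim S=0$, which by \cite[Th.~2.13, Prop.~5.3]{perv} (used already in the equivalence of the stability conditions in the previous subsection) are built from $X^0$ with $X^0_\infty\neq 0$ and copies of $C_m$; these are exactly the submodules appearing in conditions (2),(3) of the $(m,\ell)$-stability definition.

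Next I would carry out the bookkeeping for these distinguished submodules. For a submodule $S$ with $\zeta^0\cdot\vdim S=0$ and $S_\infty=0$ (so $S$ is a sum of $C_m$'s, up to the flag it carries in $V_1$), the inequality $\mu_{\zeta,\bn}(S)\le\mu_{\zeta,\bn}(X)$ becomes, after cancelling the main terms, a comparison between $\sum_i n_i\dim(S\cap F^i)$ (weighted by $1/\rank S$) and the corresponding quantity for $X$, plus the controlled $(\zeta-\zeta^0)$ error; conditions (b) and (d),(e) are precisely what make this comparison equivalent to ``$F^\ell\cap S_1=0$''. The ordering condition (b), $n_k\gg \rank X\sum_{i>k} i\,n_i$, ensures the flag term is read off ``lexicographically'' from the top, so that $\dim(S\cap F^i)$ for $i\le\ell$ dominates; condition (d) pins the threshold exactly at level $\ell$ using the numerical value $\frac{\rank X}{2m+1}(m\zeta_0+(m+1)\zeta_1)$ coming from $\rank C_m=2m+1$ and $\zeta^0\cdot\vdim C_m$; condition (e) handles the residual contribution of the levels above $\ell$. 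The dual case $S_\infty=\C$ (so $X/S$ is a sum of $C_m$'s) is handled by the complementary inequality and yields ``$F^\ell\not\subset S_1$'', again using (d),(e). This establishes (1).

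For (2), once (1) is known, strict semistability of $(X,F^\bullet)$ would force the existence of a submodule $S$ with $\mu_{\zeta,\bn}(S)=\mu_{\zeta,\bn}(X)$ and $0\neq S\subsetneq X$; by the sign analysis above such $S$ must have $\zeta^0\cdot\vdim S=0$, and then the flag-term estimates coming from (b),(d),(e) show the equality $\mu_{\zeta,\bn}(S)=\mu_{\zeta,\bn}(X)$ is actually impossible for a proper nonzero $S$ — the flag contributions can never balance exactly because the $n_i$ are chosen in ``general position'' (strict inequalities in (b),(d),(e)). Hence $(\zeta,\bn)$-semistability coincides with $(\zeta,\bn)$-stability. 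Finally, \propref{prop:m,l-moduli} follows formally: $(\zeta,\bn)$-stability $=$ $(\zeta,\bn)$-semistability is an open condition cut out by a GIT linearization (the tensor product of the linearization on $\mu^{-1}(0)$ with $\bigotimes(\det\Fcal^i)^{\otimes(-n_i)}$ on the flag bundle), so the GIT quotient is a smooth projective-over-affine fine moduli scheme by \lemref{lem:isom} (which gives the needed injectivity of morphisms, hence freeness of the $G$-action on the stable locus) together with the smoothness of $Q^{\mathrm{ss}}(\zeta^0)$ proved in the previous corollary; the projective morphism to $M_0(p_*(c))$ is the map to $\mu^{-1}(0)\dslash G$.

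The main obstacle I expect is purely combinatorial rather than conceptual: verifying that the five conditions \eqref{eq:cond} are simultaneously satisfiable and that they translate \emph{exactly} into the level-$\ell$ threshold in \defref{def:m,l-stable}, i.e.\ carefully matching the constant $\frac{\rank X}{2m+1}(m\zeta_0+(m+1)\zeta_1)$ against the partial flag sums $\sum_{i=1}^\ell i\,n_i$ and the gaps $n_\ell$. This is the heart of \cite[4.2.4]{Moc} and the one place where our normalization $\rank C_m=2m+1$, $\zeta^0\cdot\vdim C_m=0$ must be plugged in with the right signs; everything else is a routine transcription of Mochizuki's argument, which we will indicate but not reproduce in full.
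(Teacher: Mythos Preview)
Your proposal is correct and follows essentially the same route as the paper's proof: use condition (\ref{eq:cond}c) to reduce to submodules $S$ with $\zeta^0\cdot\vdim S=0$ (which forces $\vdim S$ or $\vdim(X/S)$ to be $p(m,m+1,0)$), then translate the inequality $\mu_{\zeta,\bn}(S)\le\mu_{\zeta,\bn}(X)$ into the flag condition $F^\ell\cap S_1=0$ (resp.\ $F^\ell\not\subset S_1$), checking along the way that the inequalities are always strict. One small sharpening: in the paper's argument the threshold-at-level-$\ell$ step is carried out using only (\ref{eq:cond}d,e), not (b); condition (b) is reserved for the later analysis of the exceptional fixed-point components in \subsecref{subsec:C^*-action}.
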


\lemref{lem:isom} also implies that $(\zeta,\bn)$-stable objects have
the trivial stabilizer. Therefore we have \propref{prop:m,l-moduli}
from this lemma.

\begin{NB}
\begin{Corollary}\label{cor:m,l-moduli}
  We have a smooth fine moduli scheme $\tM^{m,\ell}$ of
  $(m,\ell)$-stable objects $(X,F^\bullet)$.
\end{Corollary}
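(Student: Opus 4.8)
The plan is to realize $\tM^{m,\ell}$ as a geometric invariant theory quotient, reducing everything to the two lemmas just proved. First I would fix the vector spaces $V_0$, $V_1$, $W$ with the prescribed dimensions and form the smooth variety $\widetilde Q \defeq Q \times \Flag(V_1,\underline{N})$, where $Q = \mu^{-1}(0)$ and $N = \dim V_1$, carrying the diagonal action of $G = \GL(V_0)\times\GL(V_1)$. On $\widetilde Q$ I would put the $G$-linearized line bundle $L_{\zeta,\bn} \defeq L_\zeta \boxtimes \bigotimes_{i=1}^{N} (\det\Fcal^i)^{\otimes(-n_i)}$, the tensor product of the linearization of \subsecref{subsec:mstable} with the one on the flag variety given by the universal subbundles $\Fcal^i \subset V_1\otimes\shfO$, for weights $(\zeta,\bn)$ satisfying \eqref{eq:cond}. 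By construction the induced numerical stability is exactly the $(\zeta,\bn)$-stability introduced above, so by \lemref{lem:zeta,l}(1) the $G$-(semi)stable points of $\widetilde Q$ are precisely the $(m,\ell)$-stable pairs $(X,F^\bullet)$.

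Next I would take the GIT quotient $\tM^{m,\ell} \defeq \widetilde Q^{\mathrm{ss}}_{(\zeta,\bn)}\dslash G$, which exists as a quasi-projective scheme and, since $L_\zeta$ and the flag linearization are both relatively ample over the affine quotient $Q\dslash G = M_0(p_*(c))$, comes equipped with a projective morphism $\tM^{m,\ell}\to M_0(p_*(c))$. The key simplification is \lemref{lem:zeta,l}(2): under \eqref{eq:cond} every $(\zeta,\bn)$-semistable point is already $(\zeta,\bn)$-stable, so $\widetilde Q^{\mathrm{ss}}_{(\zeta,\bn)} = \widetilde Q^{\mathrm{s}}_{(\zeta,\bn)}$ and the quotient is a genuine geometric (indeed orbit) quotient, with no strictly semistable points to worry about.

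Finally, for fineness and smoothness I would use \lemref{lem:isom}: applied with $(X,F^\bullet) = (Y,G^\bullet)$ it shows that a stable pair has no nontrivial endomorphism preserving the flag, and because the framing data $i,j$ rigidify the central scalars of $G$, the stabilizer of every stable point in $G$ is trivial. Thus $G$ acts freely on $\widetilde Q^{\mathrm{s}}$, the quotient map is a principal $G$-bundle (with $G$ a product of general linear groups, hence special), and the tautological quiver representation together with the tautological flag on $\widetilde Q^{\mathrm{s}}$ descends to a universal family $(\cE,\Fcal^\bullet)$ on $\tM^{m,\ell}$, making it a fine moduli scheme. Smoothness follows because $\widetilde Q^{\mathrm{s}}$ is open in $Q^{\mathrm{ss}}(\zeta^0)\times\Flag(V_1,\underline{N})$, which is smooth by the smoothness of $Q^{\mathrm{ss}}(\zeta^0)$ established in \subsecref{subsec:Ext}, and a free quotient of a smooth scheme is smooth. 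I expect the only genuinely delicate point to be the descent of the universal family, i.e.\ confirming that the $G$-action is set-theoretically free rather than merely having finite stabilizers; once \lemref{lem:isom} together with the rigidity of the framing guarantees trivial stabilizers, the corollary follows formally, all of the essential work having already been carried out in \lemref{lem:zeta,l} and \lemref{lem:isom}.
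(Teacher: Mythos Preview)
Your proposal is correct and follows exactly the paper's approach: the paper deduces the corollary from \lemref{lem:zeta,l} together with the observation (via \lemref{lem:isom}) that $(\zeta,\bn)$-stable objects have trivial stabilizer, which is precisely the GIT argument you outline. You have simply spelled out in detail what the paper compresses into the single sentence after \lemref{lem:zeta,l}.
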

\end{NB}

From the construction the universal family
$\Fcal^\bullet
= (0 = \Fcal^0 \subset \Fcal^1 \subset \cdots \subset \Fcal^{N-1}
\subset \Fcal^N = \Vcal_1)$ is the descent of the universal flag
over $\Flag(V_1,\underline{N})$.
Using descents of $V_0$, $V_1$ (which will be denoted by $\Vcal_0$,
$\Vcal_1$) in the complex \eqref{eq:cpx}, we also get the universal family
$\cE$ over $\bp\times \bM^m(c)$.

\begin{proof}[Proof of \lemref{lem:zeta,l}]
Suppose that $S\subset X$ is a nonzero submodule with
$\zeta^0\cdot\vdim S = 0$, $S_\infty = 0$. Then $\vdim S = p(m,m+1,0)$
for some $p\in \Z_{>0}$. Then $\mu_{\zeta,\bn}(S)\le \mu_{\zeta,\bn}(X)$
means
\begin{equation*}
   \frac{m\zeta_0 + (m+1)\zeta_1}{2m+1}
   + \frac{\sum_{i=1}^{N} n_i \dim (S_1\cap F^i)}{p(2m+1)}
   \le \frac{\sum_{i=1}^{N} i n_i}{\rank X}.
\end{equation*}
By (\ref{eq:cond}d,e) this holds if and only if $S_1\cap F^\ell = 0$.
\begin{NB}
Suppose this inequality holds. Then
  \begin{equation*}
    n_\ell \dim (S_1\cap F^\ell) \le
    \frac{\rank X}{p(2m+1)}\sum_{i=1}^{N} n_i \dim (S_1\cap F^i)
    \le \sum_{i=1}^{N} i n_i
    - \frac{\rank X}{2m+1}(m\zeta_0 + (m+1)\zeta_1)
    < n_\ell.
  \end{equation*}
Therefore $S_1\cap F^\ell = 0$. Conversely if $S_1\cap F^\ell = 0$, then
\begin{equation*}
  \begin{split}
   & \frac{m\zeta_0 + (m+1)\zeta_1}{2m+1}
   + \frac{\sum_{i=1}^{N} n_i \dim (S_1\cap F^i)}{p(2m+1)}
   - \frac{\sum_{i=1}^{N} i n_i}{\rank X}
\\
   =\; & \frac1{\rank X}\left(
        \frac{\rank X}{2m+1}(m\zeta_0 + (m+1)\zeta_1)
        - \sum_{i=1}^{N} n_i \dim F^i
        + \frac{\rank X}{p(2m+1)}
        \sum_{i=\ell+1}^{N} n_i \dim (S_1\cap F^i)
     \right) < 0
  \end{split}
\end{equation*}
\end{NB}%
Moreover the equality never holds.

Next suppose that $S\subset X$ is a proper submodule with
$\zeta^0\cdot\vdim S = 0$, $S_\infty = \C$. Then $\vdim X/S = p(m,m+1,0)$
for some $p\in \Z_{>0}$. Then $\mu_{\zeta,\bn}(S)\le \mu_{\zeta,\bn}(X)$
means
\begin{equation*}
   \frac{m\zeta_0 + (m+1)\zeta_1}{2m+1}
   + \frac{\sum_{i=1}^{N} n_i \dim (F^i/S_1\cap F^i)}{p(2m+1)}
   \ge \frac{\sum_{i=1}^{N} i n_i}{\rank X}.
\end{equation*}
By (\ref{eq:cond}d,e) this holds if and only if $F^i/S_1\cap F^i \neq
0$ for some $i=1,\dots, \ell$, i.e., $F^\ell\not\subset S_1$.
Moreover the equality never holds.

Now let us start the proof.
Suppose that $(X,F^\bullet)$ is $(\zeta,\bn)$-semistable. Then by
(\ref{eq:cond}c) $\mu_{\zeta,\bn}(S)\le \mu_{\zeta,\bn}(X)$ implies
$\zeta^0\cdot \vdim S\le 0$, i.e., $X$ is $\zeta^0$-semistable.
\begin{NB}
  \begin{equation*}
    \begin{split}
      & \frac{\zeta^0\cdot\vdim S}{\rank S}
      \\
      \le\; & \frac{\zeta^0\cdot\vdim S}{\rank S}
    - \left(\mu_{\zeta,\bn}(S) - \mu_{\zeta,\bn}(X)
      \right)
      = \frac{(\zeta^0 - \zeta)\cdot\vdim S}{\rank S}
        - \frac{\sum n_i \dim (S_1\cap F^i)}{\rank S}
        + \frac{\sum i n_i}{\rank X}
        \\
      \le\; &  
    \frac{(\zeta^0 - \zeta) \cdot \vdim S}{\rank S}
    + \frac{\sum i n_i}{\rank X}
     < \min_{\substack{S\subset V\\ \zeta^0\cdot \vdim S > 0}}
   \left(\frac{\zeta^0\cdot \vdim S}{\rank S}\right)
    \end{split}
  \end{equation*}
Therefore $\frac{\zeta^0\cdot \vdim S}{\rank S} > 0$ is not possible.
\end{NB}%
Then the above consideration shows that $(X,F^\bullet)$ is
$(m,\ell)$-stable and $(\zeta,\bn)$-stable.

Conversely suppose $(X,F^\bullet)$ is $(m,\ell)$-stable. We want to
show that $\mu_{\zeta,\bn}(S) < \mu_{\zeta,\bn}(X)$ for any nonzero
proper submodule $S$.
Thanks to (\ref{eq:cond}c), it is enough to check this inequality for
$S$ with $\zeta^0\cdot \vdim S = 0$. 
\begin{NB}
Suppose $\zeta^0\cdot\vdim S < 0$. Then
\begin{equation*}
   \begin{split}
  & \mu_{\zeta,\bn}(S) - \mu_{\zeta,\bn}(X)
  = \frac{\zeta^0\cdot\vdim S}{\rank S}
    - \frac{(\zeta^0 - \zeta)\cdot\vdim S}{\rank S}
        + \frac{\sum n_i \dim (S_1\cap F^i)}{\rank S}
        - \frac{\sum i n_i}{\rank X}
\\
  <\;& \frac{\zeta^0\cdot\vdim S}{\rank S}
    - \frac{(\zeta^0 - \zeta)\cdot\vdim S}{\rank S}
        + \sum i n_i
  \le \frac{\zeta^0\cdot\vdim S}{\rank S}
  + \max_{\substack{S\subset V}}
      \left|\frac{(\zeta^0 - \zeta)\cdot\vdim S}{\rank S}\right|
        + \sum i n_i < 0
  \end{split}
\end{equation*}
\end{NB}%
We have either $S_\infty = 0$ or $S_\infty = \C$, and the above
consideration shows that the inequality holds.
\end{proof}

\begin{Remark}
  A closer look of the argument gives that it is enough to assume
  (\ref{eq:cond}c) for $S\subset V$ satisfying either of
  \begin{enumerate}
  \item $\zeta^0\cdot\vdim S > 0$ and $\mu_{\zeta,\bn}(S)\le \mu_{\zeta,\bn}(X)$,
  \item $\zeta^0\cdot\vdim S < 0$ and $\mu_{\zeta,\bn}(S)\ge \mu_{\zeta,\bn}(X)$.
  \end{enumerate}
\end{Remark}

\subsection{Oriented sheaves with flags in cohomology groups}
\label{subsec:orientedsheaf}
The following variant of objects in the previous subsection will show
up during our analysis for the enhanced master space.

\begin{Definition}\label{def:m,+stable}
(1)
  Let $(E,F^\bullet)$ be a pair of a sheaf and a full flag $F^\bullet
  = (0 = F^0 \subset F^1 \subset \cdots \subset F^{N-1} \subset F^N =
  V_1(E))$ ($N = \dim V_1(E)$).  We say $(E,F^\bullet)$ is {\it
    $(m,+)$-stable\/} if the following two conditions are satisfied:
\begin{aenume}
\item $E\cong C_m^{\oplus p}$ for $p\in\Z_{>0}$.
\item For a proper subsheaf $\mathfrak S\subsetneq E$ isomorphic to
  $C_m^{\oplus q}$ with $q\in\Z_{>0}$, we have
  $V_1(\mathfrak S)\cap F^1 = 0$.
\end{aenume}

(2)
For $(m_0,m_1)\in\Z^2\setminus\{(0,0)\}$, an {\it orientation\/} of
$(E,F^\bullet)$ is an isomorphism $\rho\colon \det H^1(E)^{\otimes m_0}\otimes
\det H^1(E(C))^{\otimes m_1} \xrightarrow{\cong}\C$.
We set $D \defeq mm_0 + (m+1)m_1$.

(3) An oriented $(m,+)$-stable object means an $(m,+)$-stable object
$(E,F^\bullet)$ together with an orientation.
\end{Definition}

We will choose $(m_0,m_1)$ later when we define the enhanced master
space. At this stage we only need that we will have $D > 0$. Since the
orientation will be used frequently, we use the notation
\(
   L(E) \defeq \det H^1(E(-\linf))^{\otimes m_0}\otimes
   \det H^1(E(C-\linf))^{\otimes m_1}
\)
for a sheaf $E$ on $\bp$. (In the above case, $E$ is supported on $C$
and the twisting by $\shfO(-\linf)$ is unnecessary.) If $\cE$ is a
universal family for some moduli stack, we denote the corresponding
line bundle by $\cL(\cE)$. Note that we deal only with those sheaves
given by quiver representations, we have vanishing of $H^0$ and $H^2$.
\begin{NB2}
  The referee seems to suspect that we need to consider also $H^0$,
  $H^2$ for a general sheaf $E$. But we consider only those vanishing
  $H^0$, $H^2$. So we write it explicitly. 2010/04/01
\end{NB2}%

We will show that we have a moduli stack $\tM^{m,+}(pe_m)$ of oriented
$(m,+)$-stable objects with $\ch(E) = pe_m$ with the universal family
$(\cE,\Fcal^\bullet)$ where $\Fcal^\bullet = (0 = \Fcal^0 \subset
\Fcal^1 \subset \cdots \subset \Fcal^{N-1} \subset \Fcal^N = \Vcal_1)$
is a flag of vector bundles over $\tM^{m,+}(pe_m)$.

In the following proposition we identify $\tM^{m,+}(pe_m)$ with a
quotient stack related to the Grassmann variety $\Gr(m+1,p)$ of
$p$-dimensional quotients of $V_1(C_m)^* = H^1(C_m(C))^* = \C^{m+1}$.
Let us fix the notation.
Let $\cQ$ denote the universal quotient bundle over $\Gr(m+1,p)$,
$\det\cQ$ its determinant line bundle, and $(\det\cQ)^{\otimes D}$ its
$D^{\mathrm{th}}$ tensor power. Let $\pi_G\colon ((\det\cQ)^{\otimes
  D})^\times\to \Gr(m+1,p)$ be the associated $\C^*$-bundle.
\begin{NB}
The following will be explained in the statement of \propref{prop:m,+}.
We consider the $\C^*$-action on fibers $(\det\cQ^{\otimes D})^\times$
given $t\cdot u = t^{-pD} u$.
\end{NB}%
Let $\shfO_{\Gr(m+1,p)}\to V_1(C_m)\otimes\cQ$ be the homomorphism
obtained from the universal homomorphism
$V_1(C_m)^*\to \cQ$.

\begin{NB}
The old notation:
Then $H^1(C_m(C))\otimes\cQ$ contains a trivial line subbundle $\shfO$
spanned by the tautological section $\xi\in\Hom(H^1(C_m(C))^*,\cQ) =
H^1(C_m(C))\otimes\cQ$. Let us denote it by $[\xi]$.
\end{NB}
%
%The vector bundle $\Vs_1$ is identified with $\C^{m+1}\otimes\cQ$ and
%its line subbundle $\Fs^{\min(\Is)}$ is $[\xi]$.

\begin{Proposition}\label{prop:m,+}
\textup{(1)} 
  Forgetting $F^i$ for $i\neq 1$, we identify
  $\tM^{m,+}(pe_m)$ with the total space of the flag bundle
  $\Flag(\Vcal_1/\Fcal^1, \underline{N}\setminus\{1\})$, where the base is 
  isomorphic to the quotient stack
  \begin{NB2}
    Editted according to the referee's remark. 2010/04/05
  \end{NB2}%
  \begin{equation*}
     ((\det\cQ)^{\otimes D})^\times/\C^*,
  \end{equation*}
  where $\C^*$ acts by the fiber-wise multiplication with weight $-pD$.

\textup{(2)} Let $\C^*_s = \operatorname{Spec}\C[s,s^{-1}]$ be a copy
of $\C^*$ and let $\C^*\to \C^*_s$ be the homomorphism given by
$t\mapsto t^{-pD} = s$.
It induces an \'etale and finite morphism $h\colon ((\det\cQ)^{\otimes
  D})^\times/\C^* \to ((\det\cQ)^{\otimes D})^\times/\C^*_s =
\Gr(m+1,p)$ of degree $=1/pD$.
The vector bundles $\cV_1$, $\cF^1$ 
and the universal sheaf $\cE$ are related to objects on $\Gr(m+1,p)$ by
\begin{gather*}
   \Vcal_1\otimes (\cF^1)^* = h^*(V_1(C_m)\otimes\cQ),
   \qquad
   (\Fcal^1)^{\otimes -pD} = h^*((\det\Qcal)^{\otimes D}),
\\
   \cE\otimes (\cF^1)^* = (\id\times h)^*(C_m\boxtimes \cQ),
\end{gather*}
and the inclusion $(\cF^1\to \cV_1)$ is
$h^*(\shfO_{\Gr(m+1,p)}\to V_1(C_m)\otimes\cQ)\otimes \id_{\cF^1}$.
\end{Proposition}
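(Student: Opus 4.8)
The plan is to unwind the definition of an oriented $(m,+)$-stable object in the quiver language of \secref{sec:quiver} and recognize the resulting moduli problem as a GIT quotient associated with a Grassmannian, exactly parallel to the argument giving $\tM^{m,\ell}(c)$ in \propref{prop:m,l-moduli}. First I would recall that by \cite[Th.~2.13, Prop.~5.3]{perv} a sheaf $E$ of rank $0$ with trivial framing is $\zeta^0$-semistable if and only if $E\cong C_m^{\oplus p}$, and its quiver data $X = (B_1,B_2,d,i,j)$ has $V_0 = \C^{mp}$, $V_1 = \C^{(m+1)p}$, $W = 0$, $d=0$, with $(B_1,B_2)$ identified (up to $\GL(V_0)\times\GL(V_1)$) with $p$ copies of the data of $C_m$. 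Equivalently, $H^1(E(-\linf)) = V_0(E)$, $H^1(E(C-\linf)) = V_1(E)$, and the pair $(B_1,B_2)\colon V_1 \to V_0\oplus V_0$ is surjective with kernel of dimension $p$; in fact the datum of such an $E$ up to isomorphism is the same as that of a $p$-dimensional quotient of $V_1(C_m)^* = H^1(C_m(C))^* = \C^{m+1}$, i.e.\ a point of $\Gr(m+1,p)$, after one has chosen an isomorphism $V_1(E)\cong V_1(C_m)\otimes\C^p$. This identification is the substance of the formula $\cE\otimes(\cF^1)^* = (\id\times h)^*(C_m\boxtimes\cQ)$ and should be set up carefully on the level of the universal family.

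Next I would encode the extra data. The flag $F^\bullet$ on $V_1(E)$ together with condition \defref{def:m,+stable}(1b) — that $V_1(\mathfrak S)\cap F^1 = 0$ for every proper $C_m^{\oplus q}\subset E$ — is exactly the statement that the line $F^1\subset V_1(E) = V_1(C_m)\otimes\C^p$ maps isomorphically onto $V_1(C_m)\otimes(\text{the universal quotient})$ under no proper sub-; unwinding, $F^1$ determines and is determined by the tautological section $\shfO\to V_1(C_m)\otimes\cQ$ of \propref{prop:m,+}, and $\Vcal_1\otimes(\cF^1)^* = h^*(V_1(C_m)\otimes\cQ)$ records that $F^1$ trivializes the relevant rank-one quotient. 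The remaining steps $F^i$ for $i\ge 2$ just form a flag in $\Vcal_1/\Fcal^1$, giving the flag bundle $\Flag(\Vcal_1/\Fcal^1,\underline{N}\setminus\{1\})$ over the base, which accounts for the first sentence of (1). Finally the orientation $\rho\colon \det H^1(E)^{\otimes m_0}\otimes\det H^1(E(C))^{\otimes m_1}\xrightarrow{\cong}\C$ is a trivialization of $\cL(\cE) = \det\Vcal_0^{\otimes m_0}\otimes\det\Vcal_1^{\otimes m_1}$; using $\det\Vcal_0 = (\det\Fcal^1)^{\otimes ?}\otimes(\dots)$ and, more to the point, that for $E = C_m^{\oplus p}$ one computes $\det H^1(E)^{\otimes m_0}\otimes\det H^1(E(C))^{\otimes m_1}$ in terms of $(\det\cQ)^{\otimes(mm_0+(m+1)m_1)} = (\det\cQ)^{\otimes D}$ twisted by the framing-independent part, the choice of $\rho$ is precisely a point of the $\C^*$-bundle $((\det\cQ)^{\otimes D})^\times$ pulled back to the base. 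The $\C^*$ that one quotients by is the residual automorphism group of $(E,F^\bullet)$ once the orientation is imposed: $\operatorname{Aut}(C_m^{\oplus p})$ acting compatibly with a full flag reduces to the scalars $\C^*\subset\GL(V_1)$, which act on $\det H^1(E)^{\otimes m_0}\otimes\det H^1(E(C))^{\otimes m_1}$ — hence on the fiber of $(\det\cQ)^{\otimes D}$ — with weight $-pD$ (the sign and the exponent $pD$ coming from $\dim V_1(E) = (m+1)p$ versus the $D$-th power, together with our convention that the orientation maps \emph{to} $\C$). This gives the quotient stack presentation $((\det\cQ)^{\otimes D})^\times/\C^*$ and, via the isogeny $t\mapsto t^{-pD}=s$, the étale finite morphism $h$ of degree $1/pD$ onto $((\det\cQ)^{\otimes D})^\times/\C^*_s = \Gr(m+1,p)$; the degree is $1/pD$ because $h$ is the quotient by the kernel $\mu_{pD}$ of $t\mapsto t^{-pD}$ acting on a free $\C^*$-bundle. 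Then one reads off all the displayed isomorphisms of $\Vcal_1$, $\Fcal^1$, $\cE$ and the inclusion $\cF^1\hookrightarrow\cV_1$ by pulling back along $h$ the corresponding tautological objects on $\Gr(m+1,p)$.

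I expect the main obstacle to be bookkeeping rather than conceptual: tracking the precise twists and the precise $\C^*$-weight $-pD$ through the identifications $V_0(C_m) = \C^m$, $V_1(C_m) = \C^{m+1}$ and the passage from $\det H^1(E)^{\otimes m_0}\otimes\det H^1(E(C))^{\otimes m_1}$ to $(\det\cQ)^{\otimes D}$, making sure the contribution of $\det\Vcal_0$ is correctly absorbed (using $d=0$ and the complex \eqref{eq:cpx} for $C_m^{\oplus p}$, which forces $V_0$ to be a fixed function of the quotient $\cQ$, so that $\det\Vcal_0$ is a determined power of $\det\cQ$). One must also verify that the stability condition \defref{def:m,+stable}(1b) is \emph{equivalent} to the section $\shfO\to V_1(C_m)\otimes\cQ$ having no degeneration along proper sub-$C_m^{\oplus q}$'s, i.e.\ that the locus cut out is exactly $\Gr(m+1,p)$ (with no further open condition), which is where one uses that the quotient map $V_1(C_m)^{\oplus p}\to\C^p$ — equivalently $V_1(C_m)\otimes\C^p\to V_1(C_m)\otimes\cQ$ — together with a full flag already rigidifies the object. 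Apart from that, the argument is a direct transcription of \cite[\S4]{Moc} and of the proof of \lemref{lem:zeta,l}, so I would keep it brief, citing those where the routine verifications (representability, smoothness of the flag bundle, properness of $h$) are identical.
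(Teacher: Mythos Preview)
Your strategy is essentially the paper's, and you correctly identify the ingredients: $\operatorname{Aut}(C_m^{\oplus p})=\GL_p$, the stability condition on $F^1$ being a surjectivity condition, and the orientation contributing a $\C^*$-torsor. The gap is in your derivation of the quotient-stack presentation $((\det\cQ)^{\otimes D})^\times/\C^*$ with weight $-pD$. You say the $\C^*$ is ``the residual automorphism group of $(E,F^\bullet)$ once the orientation is imposed,'' but once $\rho$ is fixed the residual automorphism group is only the finite group $\mu_{pD}$, not $\C^*$; that finite stabilizer is precisely what makes $h$ \'etale of degree $1/pD$. The $\C^*$ you want is $\operatorname{Aut}(E,F^1)$ acting on the space of orientations \emph{before} quotienting, but then you must explain why the resulting quotient lives over $\Gr(m+1,p)$ rather than over the $\C^*$-gerbe that is the honest moduli of $(E,F^1)$, and this is exactly the step you leave as ``?''.

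The paper handles this cleanly by writing the base directly as the $\GL_p$-quotient
\[
\bigl(\{\xi\in\proj(\Hom(V_1(C_m)^*,\C^p)):\xi\ \text{surjective}\}\times\C^*\bigr)\big/\GL_p,
\]
where $g\in\GL_p$ acts on the orientation factor by $u\mapsto(\det g)^{D}u$; it then un-projectivizes, introducing an auxiliary $\C^*$ scaling $\xi$, and takes the $\GL_p$-quotient first to land on $L^\times=((\det\cQ)^{\otimes D})^\times$ over $\Gr(m+1,p)$. The identity $(t\xi,u)=(t\,\mathrm{id}_{\C^p})\cdot(\xi,t^{-pD}u)$ shows that the remaining $\C^*$ acts fiberwise with weight $-pD$; note $pD=p(mm_0+(m+1)m_1)$ receives contributions from both $V_0$ and $V_1$, not only from $\dim V_1=(m+1)p$ as your parenthetical suggests. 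The universal-bundle identities then follow by tracking the descent of $(\shfO\subset V_1(C_m)\otimes\C^p)$ through these two quotients. Two smaller points: verify explicitly that condition~(1b) is equivalent to surjectivity of $F^1\colon V_1(C_m)^*\to\C^p$ (write a generator of $F^1$ as $\sum e_i\otimes w_i$ and observe it lies in $V_1(C_m)\otimes U$ for some proper $U\subsetneq\C^p$ iff the $w_i$ fail to span $\C^p$); and the base parametrizes $(E,F^1,\rho)$, so the stabilizer to compute is that of $(E,F^1)$, not of the full flag --- the answer happens to be the same, but the reasoning should reflect which data are present.
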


The proof will be given in the next subsection.

\begin{NB}
  Check:

We have
\begin{equation*}
  \begin{split}
  & \cL(C_m\boxtimes\cQ\otimes \cF^1) 
  = \det (H^1(C_m)\otimes \cQ\otimes\cF)^{\otimes m_0}
  \otimes \det (H^1(C_m(C))\otimes\cQ\otimes\cF)^{\otimes m_1}
\\
  =\; & \det \cQ^{\otimes D}\otimes\cF^{\otimes pD}
  = \shfO_{((\det\cQ)^{\otimes D})^\times/\C^*}.
  \end{split}
\end{equation*}
\end{NB}

\begin{NB}
  Since $\Fcal^1 ``=" (\det\Qcal)^{\otimes -1/p}$,
it is independent of $(m_0,m_1)$.
\end{NB}

\begin{NB}
  Here is the original version:

  I am not 100\% sure on this claim yet. 
\begin{Proposition}%\label{prop:m,+}
  There is an \'etale morphism $H\colon \tM^{m,+}(pe_m) \to
  \Flag(\C^{m+1}\otimes\cQ/[\xi], \underline{N}\setminus\{1\})$ to the
  full flag bundle associated with $\C^{m+1}\otimes\cQ/[\xi]$ over
  $\Gr(m+1,p)$ of degree $=p(m m_0 + (m+1)m_1)$.
\end{Proposition}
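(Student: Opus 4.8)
The plan is to construct $\tM^{m,+}(pe_m)$ together with the asserted quotient presentation simultaneously, by reducing everything to the quiver description of $C_m$. Since $C_m=\shfO_C(-m-1)$ is a simple sheaf, any $(m,+)$-stable object has $E\cong C_m^{\oplus p}$ with $\operatorname{Aut}(E)=\GL(\C^p)=\GL_p$; choosing such an isomorphism identifies, using that $C_m$ corresponds to the quiver datum with $V_0(C_m)=\C^m$, $V_1(C_m)=\C^{m+1}$, the spaces $V_0(E)=V_0(C_m)\otimes\C^p$ and $\Vcal_1\defeq V_1(E)=V_1(C_m)\otimes\C^p$, with $\GL_p$ acting only through the multiplicity factor $\C^p$. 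Under this identification $L(E)=\det V_0(E)^{\otimes m_0}\otimes\det V_1(E)^{\otimes m_1}$ is canonically $(\det\C^m)^{\otimes pm_0}\otimes(\det\C^{m+1})^{\otimes pm_1}\otimes(\det\C^p)^{\otimes D}$, so after trivializing the two fixed factors once and for all an orientation $\rho$ becomes a trivialization of $(\det\C^p)^{\otimes D}$, on which the center $\C^*\subset\GL_p$ acts with weight $-pD$.

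Next I would rewrite the stability condition. By \cite[Th.~2.13, Prop.~5.3]{perv} (together with simplicity of $C_m$) the subsheaves of $E$ isomorphic to a direct sum of copies of $C_m$ are exactly the $C_m\otimes S$ for linear subspaces $S\subset\C^p$, and then $V_1(C_m\otimes S)=V_1(C_m)\otimes S$ inside $\Vcal_1$. Viewing a generator of the line $F^1\subset V_1(C_m)\otimes\C^p=\Hom(V_1(C_m)^*,\C^p)$ as a homomorphism $\phi\colon V_1(C_m)^*\to\C^p$, condition \defref{def:m,+stable}(1)(b) — that $V_1(C_m)\otimes S$ meets $F^1$ trivially for every proper $S$ — is equivalent, by applying it to hyperplanes, to the surjectivity of $\phi$. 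Since the higher steps $F^2,\dots,F^N$ are then unconstrained, forgetting them exhibits $\tM^{m,+}(pe_m)$ as the relative flag bundle $\Flag(\Vcal_1/\Fcal^1,\underline{N}\setminus\{1\})$ over the stack $\mathcal N$ whose families are triples $(\mathcal U,\ \cF^1\subset V_1(C_m)\otimes\mathcal U,\ \text{a trivialization of }(\det\mathcal U)^{\otimes D})$, with $\mathcal U$ of rank $p$, $\cF^1$ a line sub-bundle for which the induced $V_1(C_m)^*\otimes\shfO\to\mathcal U\otimes(\cF^1)^*$ is surjective, and with $\cE=C_m\boxtimes\mathcal U$, $\Vcal_1=V_1(C_m)\otimes\mathcal U$.

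The core of the argument is the identification $\mathcal N\cong((\det\cQ)^{\otimes D})^\times/\C^*$ with $\C^*$ of weight $-pD$. Sending a triple to the quotient bundle $\mathcal U\otimes(\cF^1)^*$ together with its surjection from $V_1(C_m)^*\otimes\shfO$ defines a morphism $h$ to $\Gr(m+1,p)$ with $h^*\cQ=\mathcal U\otimes(\cF^1)^*$; the trivialization of $(\det\mathcal U)^{\otimes D}$ then turns into an isomorphism $(\Fcal^1)^{\otimes -pD}\cong h^*((\det\cQ)^{\otimes D})$, and conversely $h$, a line bundle $\cF^1$, and such an isomorphism reconstruct the triple via $\mathcal U=(h^*\cQ)\otimes\cF^1$. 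Since a morphism to $((\det\cQ)^{\otimes D})^\times/\C^*$ (weight $-pD$) is by definition exactly a morphism $h$ to $\Gr(m+1,p)$ together with a line bundle $\cF^1$ and an isomorphism $(\cF^1)^{\otimes -pD}\cong h^*((\det\cQ)^{\otimes D})$, we obtain $\mathcal N\cong((\det\cQ)^{\otimes D})^\times/\C^*$; the weight is forced by the first paragraph, equivalently by the observation that a $g\in\GL_p$ fixing the line $F^1$ must be a scalar $\lambda$, which scales $\cF^1$ by $\lambda$ and the orientation by $\lambda^{-pD}$. Part (2) is then immediate from this description: $h$ is induced by the degree-$pD$ isogeny $\C^*\to\C^*_s$, $t\mapsto t^{-pD}=s$, so it is a $\mu_{pD}$-gerbe, hence \'etale and finite of degree $1/pD$; and the displayed formulas for $\Vcal_1\otimes(\cF^1)^*$, $(\Fcal^1)^{\otimes -pD}$, $\cE\otimes(\cF^1)^*$ and for the inclusion $\cF^1\hookrightarrow\Vcal_1$ are precisely the tautological relations obtained above, pulled back along $h$ (here $\cE\otimes(\cF^1)^*=C_m\boxtimes\mathcal U\otimes(\cF^1)^*=(\id\times h)^*(C_m\boxtimes\cQ)$).

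The main obstacle I expect is bookkeeping rather than anything conceptual. One must handle the redundancy $(\mathcal U,\cF^1)\rightsquigarrow(\mathcal U\otimes M,\cF^1\otimes M)$ — exactly what passing to $\Gr(m+1,p)$ absorbs, which is why the answer is a gerbe rather than a scheme — and must pin down that the relevant $\C^*$-weight is $-pD$, not $\pm D$ or $\pm p$, by tracking that it is the \emph{center} of $\GL_p=\operatorname{Aut}(E)$ acting on the $D$-th power of the determinant of the rank-$p$ multiplicity space. A secondary point requiring care is the passage from objects to families — namely that the universal sheaf over the quotient stack really is $C_m\boxtimes\cQ$ twisted by $\cF^1$ — which is where \cite[Th.~2.13, Prop.~5.3]{perv} and simplicity of $C_m$ enter; this also identifies the automorphism group of every object with $\mu_{pD}$ and hence shows $\tM^{m,+}(pe_m)$ is a smooth Deligne--Mumford stack.
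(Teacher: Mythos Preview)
Your proof is correct and follows essentially the same approach as the paper's: reduce to the base by forgetting $F^i$ for $i>1$, identify the $(m,+)$-stability with surjectivity of $\phi\colon V_1(C_m)^*\to\C^p$, and recognize the resulting moduli stack as $((\det\cQ)^{\otimes D})^\times/\C^*$ with the $\C^*$-action of weight $-pD$, from which the \'etale finite morphism $h$ of degree $1/pD$ to $\Gr(m+1,p)$ and the tautological formulas follow. The paper first writes the base as the GIT quotient $\bigl(\{\xi\in\proj(\Hom(V_1(C_m)^*,\C^p))\mid\xi\text{ surjective}\}\times\C^*\bigr)/\GL_p(\C)$ and then, by taking the $\GL_p$-quotient first, identifies it with $[L^\times/\C^*]$ for $L=(\det\cQ)^{\otimes D}$, whereas you describe the functor of points of $\mathcal N$ directly as triples $(\mathcal U,\cF^1,\text{trivialization})$ and match it with $[L^\times/\C^*]$; these are two presentations of the same identification.
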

Here $\C^{m+1}$ appears as $H^1(\shfO_C(-m-2))$. The proof will be
given in \subsecref{subsec:Grassmann}.
\end{NB}

\begin{NB}
  I do not yet understand whether we can say $\tM^{m,+}$ is
  the quotient of $\Flag(\C^{m+1}\otimes\cQ/[\xi],
  \underline{N}\setminus\{1\})$ by a cyclic group of order $=p(m m_0 +
  (m+1)m_1)$ with respect to the trivial action, or not. I believe
  that it is so....
\end{NB}

\subsection{Proof of \propref{prop:m,+}}\label{subsec:Grassmann}
Since $F^i$ with $i > 1$ does not appear in the stability condition,
the moduli stack has a structure of the flag bundle
$\Flag(\Vcal_1/\Fcal^1,\underline{N}\setminus\{1\})$ over
the moduli stack parametrizing $(X,F^1)$.
Here $\Vcal_1$ is a vector bundle over the moduli stack and
$\Fcal^{1}$ is its line subbundle, coming from $V_1$ and $F^{1}$
respectively.

Next we determine the moduli stack parametrizing $(X,F^1)$. Since we
already know $X\cong C_m^{\oplus p}$, the remaining parameter is only
a choice of $F^1$, which is a $1$-dimensional subspace in $V_1(X)
\cong V_1(C_m)\otimes \C^p$.
\begin{NB}
  $V_1(C_m) = H^1(C_m(C)) = H^1(\shfO_C(-m-2)) = \C^{m+1}$.
\end{NB}%
We have an action of the stabilizer $\GL_p(\C)$ of $X$. The above
stability condition means that $F^{1}$, viewed as a nonzero
homomorphism $V_1(C_m)^*\to \C^p$, is surjective. Therefore the
moduli stack is
\begin{equation}\label{eq:GLquot}
    \left(\{ \xi\in \proj(\Hom(V_1(C_m)^*,\C^p)) \mid
      \text{$\xi$ is surjective}\}
      \times \C^*\right)/\GL_p(\C),
\end{equation}
where the action of $\GL_p(\C)$ on $\C^*$ is given by $g\cdot u =
\left(\det g\right)^{D}u$ with $D = m m_0 + (m+1)m_1$. 

We consider \eqref{eq:GLquot} as
\begin{equation}
\label{eq:GLCquot}
%  \begin{split}
%     & \left(\{ \xi\in \proj(\Hom(V_1(C_m)^*,\C^p)) \mid \text{$\xi$
%         is surjective}\} \times \C^*\right)/\GL_p(\C)
%     \\
%    =\; & 
    \left(\{ \xi\in \Hom(V_1(C_m)^*,\C^p) \mid \text{$\xi$ is
        surjective}\} \times \C^*\right)/\GL_p(\C)\times\C^*,
%  \end{split}
\end{equation}
where $\C^*\ni t$ acts by $(\xi,u)\mapsto (t\xi,u)$. If we take the
quotient by $\GL_p(\C)$ first, we get
\(
  L^\times = L\setminus (\text{$0$-section}),
\)
where $L = (\det\cQ)^{\otimes D}$, the $D^{\mathrm{th}}$ tensor power
of the determinant line bundle $\det\cQ$ of the universal quotient
over the $\Gr(m+1,p)$. Since $\C^*\ni t$ acts by
\begin{equation}
  \label{eq:twist}
   (t\xi,u) = t\id_{\C^p}\cdot (\xi, t^{-pD}u),
\end{equation}
it is the fiber-wise multiplication with weight $-pD$ on the quotient
$L^\times$.
Thus the moduli stack parametrizing $(X,F^1)$ is isomorphic to the
quotient stack $[L^\times/\C^*]$.

This action factors through $\rho\colon\C^*\to \C^*$ given by
$t\mapsto s = t^{-pD}$, where the latter action is free on $L^\times$ and
the quotient is $\Gr(m+1,p)$. Let us denote the latter $\C^*$ by
$\C^*_s$. Then the stack $L^\times/\C^*_s$ is
represented by $\Gr(m+1,p)$, as $L^\times$ is a principal
$\C^*_s$-bundle. Since a $\C^*$-bundle induces a
$\C^*_s$-bundle by taking the quotient by $\Ker\rho\cong
\Z/pD$, we have a morphism $H\colon L^\times/\C^*\to
L^\times/\C^*_s = \Gr(m+1,p)$ between stacks.
\begin{NB}
(See \cite[Lemma~3.4]{Gomez}.)  
\end{NB}%
It is \'etale and finite of degree $1/pD$.

\begin{NB}
  Kota's message on Jan. 8:

For a $S$-flat family ${\cal E}$ of coherent sheaves
on $X \times S$ such that
${\cal E}_s={\cal O}_C(-m-1)^{\oplus p}$,
we set
$$
{\cal L}({\cal E}):=
\det p_{S!}({\cal E}\otimes(-m_0{\cal O}_X-m_1{\cal O}_X(C))).
$$
For a line bundle $M$ on $S$,
$P(M)$ denotes the ${\Bbb C}^{\times}$-bundle
associated to $M$.
Then we have a tautological isomorphism
${\cal O} \to M$ on $P(M)$.
Let $H^1({\cal O}_C(-m-2))^* \to {\cal U}$ be the universal quotient
on $Gr(m+1,p)$.
For ${\cal L}:={\cal L}({\cal O}_C(-m-1) \otimes {\cal U})$,
we consider $P({\cal L}) \to Gr(m+1,p)$.
We set
$$
n:=\rank(p_{S!}({\cal O}_C(-m-1) \otimes {\cal U}
\otimes(-m_0{\cal O}_X-m_1{\cal O}_X(C)))
\begin{NB2}
  = pD
\end{NB2}%
.
$$
We have an action of ${\Bbb C}^{\times}$
on the fiber of $P({\cal L}) \to Gr(m+1,p)$
as multiplication by $n$-th power of constant.
Let $[P({\cal L})/{\Bbb C}^{\times}]$ be the
quotient stack by this action.
\begin{NB2}
  That is the category whose objects are principal $\C^*$-bundle
  $\pi\colon P\to B$ together with $\C^\times$-equivariant morphisms
  $P\to P({\cal L})$.
\end{NB2}%

Let $({\cal E},\phi,L)$
be a triple of a $S$-flat family ${\cal E}$, an isomorphism
$\phi:{\cal O}_S \to {\cal L}({\cal E})$
and a subline bundle $L \subset R^1 p_{S*}({\cal E}(C))$.
Since
${\cal E} \cong
{\cal O}_C(-m-1)\boxtimes
p_{S*}({\cal E}((m+1)C))$,
\begin{NB2}
  we have
\end{NB2}%
$R^1 p_{S*}({\cal E}(C)) \cong
H^1({\cal O}_C(-m-2)) \otimes
p_{S*}({\cal E}((m+1)C))$.

%Hence $\phi$ induces an isomorphism
%${\cal L}(L^{-1} \otimes {\cal E}) \cong
%L^{-n}$, where
%$n=rank(p_{S!}({\cal E}\otimes(-m_0{\cal O}_X-m_1{\cal O}_X(C)))$.

By the isomorphism ${\cal O} \to L$ on
$P(L)$,
we have a morphism
$P(L) \to Gr(m+1,p)$ such that
${\cal O} \to L \to R^1 p_{S*}({\cal E}(C))$
is the pullback of ${\cal O} \to H^1({\cal O}_C(-m-2)) \otimes {\cal U}$.
This morphism is decomposed to $P(S) \to S \to Gr(m+1,p)$
and we have a commutative diagram
\begin{equation}
\begin{CD}
P(L) @>{g}>> P({\cal L})\\
@V{\pi}VV @VVV \\
S @>{f}>> Gr(m+1,p),
\end{CD}
\end{equation}
where
$\pi^*(f^*({\cal O}_C(-m-1) \otimes {\cal U})) \cong {\cal E}$
and
$g^*({\cal O}\to {\cal L})={\cal O}\to {\cal L}({\cal E})$.
$g$ is ${\Bbb C}^{\times}$-equivariant and we have a morphism
$S \to [P({\cal L})/{\Bbb C}^{\times}]$.
\end{NB}

Let us identify the pair $(\Fcal^1\subset \Vcal_1)$ of the vector bundle
$\Vcal_1$ and its line subbundle $\Fcal^1$ over the moduli stack in
the description $[L^\times/\C^*]$.
In the description \eqref{eq:GLquot}, it is the descent of the 
restriction of
\(
   \left(
    \shfO_{\proj}(-1)\boxtimes \shfO_{\C^*} \subset 
    V_1(C_m)\otimes\C^p\otimes \shfO_{\proj}\boxtimes \shfO_{\C^*}
   \right) 
\)
with respect to the natural $\GL_p(\C)$-action, where
$\proj = \proj(\Hom(V_1(C_m)^*,\C^p))$.
Then in the description \eqref{eq:GLCquot}, it becomes the descent of
the pair
\(
  \left(
    \shfO_V \subset V_1(C_m)\otimes\C^p\otimes\shfO_V
  \right),
\)
where $\GL_p(\C)$ acts naturally and the $\C^*$-action is twisted by
the weight $-1$ action on the first factor $\shfO_V$. Here
\(
   V = \{ \xi\in \Hom(V_1(C_m)^*,\C^p) \mid \text{$\xi$ is
        surjective}\} \times \C^*.
\)
\begin{NB}
  The convention is as follows: The tautological line bundle $L =
  \shfO(-1)$ over $\proj^n = \C^{n+1}\setminus\{0\}/\C^*$ is given by
\[
   L = \C^{n+1}\setminus\{0\}\times_{\C^*} \C
   = \{ (z, \zeta) \in \C^{n+1}\setminus\{0\}\times\C\} /
    (z,\zeta) \sim (\lambda z, \lambda^{-1} \zeta).
\]
The inclusion $L\subset \shfO^{\oplus n+1}$ is given by
$(z,\zeta)\mapsto z\zeta$. This is well-defined as $(\lambda
z)(\lambda^{-1}\zeta) = z\zeta$. 

We have an isomorphism $L^\times = \C^{n+1}\setminus
\{0\}\times_{\C^*}\C^* \cong \C^{n+1}\setminus \{0\}$ by
$[(z,\zeta)]\mapsto z\zeta$. The fiber-wise $\C^*$-action by
$[(z,\zeta)]\mapsto [(z,\lambda \zeta)]$ is identified with
$\C^{n+1}\setminus\{0\}\ni (z\zeta)\mapsto \lambda (z\zeta)$.  If we
twist the trivial line bundle $\shfO_{L^\times}$ by the weight $-1$
action, it descends to $L^\times\times_{\C^*}\C = L$ by the tautology.
\end{NB}
Finally in the description $[L^\times/\C^*]$, it is the descent of
\begin{equation*}
  \left(
    \shfO_{L^\times} \subset \pi_G^*(V_1(C_m)\otimes\Qcal)
  \right),
\end{equation*}
where the $\C^*$-action is twisted by weight $-1$ on both
$\shfO_{L^\times}$ and $\pi_G^*(V_1(C_m)\otimes\Qcal$.
Here $\pi_G\colon L^\times\to \Gr(m+1,p)$ is the projection.
\begin{NB}
  Recall that $\C^*$ acts on $L^\times$ with the weight $-pD$. If we
  replace it by the weight $1$ action, the descent of
  $\shfO_{L^\times}$ with weight $-1$ is identified with $L$ on
  $[L^\times/\C^*] = \Gr(m+1,p)$, as we remarked above.
  In this sense, it may be reasonable to
  write $\shfO_{L^\times}$ as $\pi_G^* L$.
\end{NB}%
The twist on the second factor $\pi_G^*(V_1(C_m)\otimes\Qcal)$ comes
from the term $t\id_{\C^p}$ in \eqref{eq:twist}.

From the above description of $\Vcal_1$, we have $\Vcal_1 \otimes
(\cF^1)^*= h^* (V_1(C_m) \otimes\Qcal)$. On the other hand,
$(\Fcal^1)^{\otimes -pD}$ is the descent of $\shfO_{L^\times}$ with
the $\C^*$-action twisted by weight $pD$. The action factors through
the $\C^*_s$-action, and it is twisted by weight $-1$. Therefore it
descends to $L$ on $\Gr(m+1,p)$.

\subsection{$2$-stability condition}
This subsection is devoted to preliminaries for a study of enhanced
master spaces.

We consider the following condition on $\bn$:
\begin{equation}
  \label{eq:cond2}
  \sum_{i=1}^{N} k_i n_i \neq 0 \quad
  \text{for any $(k_1,\dots,k_{N})\in \Z^{N}\setminus \{0\}$
    with $|k_i|\le 2N^2$}.
\end{equation}
Our flag $F^\bullet$ of $V_1$ again may have repetition, but assume
$\dim (F^i/F^{i-1}) = 0$ or $1$ as before.

\begin{Lemma}\label{lem:strictlysemistable}
  Assume that $\zeta$ satisfies $(m+1)\zeta_0+(m+2)\zeta_1 < 0$,
  $(m-1)\zeta_0 + m\zeta_1 > 0$, and $\bn$ satisfies \eqref{eq:cond2}.
  If $(X,F^\bullet)$ is strictly $(\zeta,\bn)$-semistable, then there
  exists a submodule $0\neq S\subsetneq X$ such that
  \begin{enumerate}
  \item $\mu_{\zeta,\bn}(S,S_1\cap F^\bullet) = \mu_{\zeta,\bn}(X,F^\bullet)$,
  \item $(S,S_1\cap F^\bullet)$ and $(X/S, F^\bullet/S_1\cap F^\bullet)$
    are $(\zeta,\bn)$-stable.
  \end{enumerate}
  Moreover the submodule $S$ is unique except when $(X,F^\bullet)$ is
  the direct sum $(S,S_1\cap F^\bullet)\oplus (X/S, F^\bullet/S_1\cap
  F^\bullet)$. In this case the another choice of the submodule is
  $X/S$.
\end{Lemma}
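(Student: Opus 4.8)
\textbf{Proof proposal for \lemref{lem:strictlysemistable}.}
The plan is to run the standard argument producing a maximal destabilizing (or rather ``saturated'') subobject, adapted to the $(\zeta,\bn)$-stability of pairs, and then to use the numerical hypotheses on $\zeta$ and \eqref{eq:cond2} to control the situation so that the Jordan--H\"older type phenomenon is as restrictive as claimed. First I would fix a strictly $(\zeta,\bn)$-semistable pair $(X,F^\bullet)$ and consider the (nonempty, since strict semistability fails stability) collection of nonzero proper submodules $S\subsetneq X$ with $\mu_{\zeta,\bn}(S,S_1\cap F^\bullet)=\mu_{\zeta,\bn}(X,F^\bullet)$, where the flag on $S$ is the induced one $S_1\cap F^\bullet$. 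I would show this collection is closed under sum: if $S,S'$ both realize the maximal value, then from the submodule/quotient inequalities applied to $S\cap S'$ and $S+S'$, together with the elementary fact that $\dim((S_1+S'_1)\cap F^i)+\dim((S_1\cap S'_1)\cap F^i)\le \dim(S_1\cap F^i)+\dim(S'_1\cap F^i)$ (a sub/super-modularity of flag intersections) and additivity of $\zeta\cdot\vdim$ and $\rank$, one gets $\mu_{\zeta,\bn}(S+S')\ge \mu_{\zeta,\bn}(X)$ as well, hence equality, and similarly $S\cap S'$ realizes the value or is zero. Picking $S$ of maximal rank among those realizing the value then forces $(X/S,F^\bullet/S_1\cap F^\bullet)$ to be $(\zeta,\bn)$-semistable (automatic from the definition of $\mu_{\zeta,\bn}(X)$ as the ``slope'') and in fact $(\zeta,\bn)$-stable: any submodule of $X/S$ of the same slope pulls back to a submodule of $X$ of the same slope and strictly bigger rank, contradicting maximality. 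Likewise $(S,S_1\cap F^\bullet)$ is $(\zeta,\bn)$-stable because a proper submodule of $S$ of the same slope would be a proper submodule of $X$ of the same slope, but then maximality of rank of $S$ is not contradicted directly --- here instead I would argue that if $S$ is not stable, replace $S$ by its maximal destabilizing sub-or-quotient of the same slope and iterate, using that ranks are bounded, to arrive at a stable $S$; the standard Harder--Narasimhan/Jordan--H\"older bookkeeping applies verbatim as in \cite[\S4]{Moc}.

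Next I would address uniqueness. Suppose $S$ and $S'$ are two such submodules, both yielding $(\zeta,\bn)$-stable sub and quotient with the common slope. The composite $S'\hookrightarrow X\twoheadrightarrow X/S$ is a homomorphism of $(\zeta,\bn)$-stable pairs of equal slope respecting flags, so by \lemref{lem:isom} it is either $0$ or an isomorphism. If it is $0$ then $S'\subset S$, and by stability of $S$ and equality of slopes $S'=S$ (a proper nonzero submodule of the stable $S$ would have strictly smaller slope). If it is an isomorphism, then $X=S\oplus S'$ as a module (and the flag splits compatibly), which is precisely the exceptional case, and then $X/S\cong S'$ so the ``other'' choice of destabilizing submodule is $X/S$, i.e.\ the complement. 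This gives exactly the stated dichotomy.

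Finally, the role of the hypotheses $(m+1)\zeta_0+(m+2)\zeta_1<0$, $(m-1)\zeta_0+m\zeta_1>0$ and \eqref{eq:cond2}: I expect the main obstacle to be showing these actually force the decomposition to be into \emph{two} pieces (rather than three or more) and, more delicately, to pin down the dimension vectors of $S$, $X/S$. The inequalities on $\zeta$ say that $C_{m-1}=\shfO_C(-m)$ and $C_{m+1}=\shfO_C(-m-2)$ are \emph{not} $\zeta^0$-semistable of slope $0$ while $C_m$ is, so the only simple modules of slope $0$ showing up in a Jordan--H\"older filtration of a strictly semistable $(X,F^\bullet)$ with $X_\infty=\C$ are $C_m$ and one ``big'' object $X^0$ with $X^0_\infty=\C$; condition \eqref{eq:cond2} then makes the $\mu_{\zeta,\bn}$-values of the finitely many candidate subspaces pairwise distinct enough that at most one proper submodule can share the slope of $X$ unless there is a genuine direct-sum splitting. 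I would make this precise by first reducing to $X=X^0\oplus C_m^{\oplus p}$ (as in the corollary on smoothness of $Q^{\mathrm{ss}}(\zeta^0)$), then computing $\mu_{\zeta,\bn}$ on submodules generated by sub-flags, and invoking \eqref{eq:cond2} to rule out coincidences among the relevant $\sum k_i n_i$ with $|k_i|\le 2N^2$ (the bound $2N^2$ being exactly what is needed to cover all $S_1\cap F^i$ combinations that can occur). The rest is routine numerics of the kind carried out in \cite[4.2.4--4.2.5]{Moc}, so I would not grind through it here.
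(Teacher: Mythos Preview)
Your uniqueness argument via \lemref{lem:isom} is essentially the paper's, but the existence part has a real gap. You correctly observe that taking $S$ of maximal rank with $\mu_{\zeta,\bn}(S)=\mu_{\zeta,\bn}(X)$ forces $X/S$ to be stable, and you correctly admit this does \emph{not} directly force $S$ itself to be stable. Your proposed fix (``iterate, standard JH bookkeeping'') only produces a Jordan--H\"older filtration of some length $\ge 2$; it does not by itself show the length is exactly $2$. You then gesture at the hypotheses, but the sketch you give (reduce to $X=X^0\oplus C_m^{\oplus p}$ and rule out coincidences among $\sum k_in_i$) is not an argument, and it is also not how the paper proceeds.

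What the paper actually does is short and direct. Take any $S$ with $\mu_{\zeta,\bn}(S)=\mu_{\zeta,\bn}(X)$; if either $S$ or $X/S$ fails to be stable, you get a $3$-step filtration $0\subsetneq X^1\subsetneq X^2\subsetneq X^3=X$ with all successive quotients of the same $\mu_{\zeta,\bn}$. Exactly two of the three quotients have $\infty$-component $0$; the inequalities $(m+1)\zeta_0+(m+2)\zeta_1<0$ and $(m-1)\zeta_0+m\zeta_1>0$ then force each of these two to have dimension vector a positive multiple $p_a(m,m+1,0)$. Equating their $\mu_{\zeta,\bn}$ gives $\sum_i n_i\bigl(p_3\dim F^i_2-p_2\dim F^i_3\bigr)=0$, and since the coefficients are integers bounded by $2N^2$, condition \eqref{eq:cond2} forces $p_3\dim(F^i_2/F^{i-1}_2)=p_2\dim(F^i_3/F^{i-1}_3)$ for every $i$. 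But $\dim(F^i/F^{i-1})\le 1$, so at most one of $\dim(F^i_a/F^{i-1}_a)$ ($a=1,2,3$) can be nonzero for each $i$; together with the proportionality this forces $\dim(F^i_2/F^{i-1}_2)=\dim(F^i_3/F^{i-1}_3)=0$ for all $i$, contradicting $\dim(X^a/X^{a-1})_1=p_a(m+1)>0$. This is the missing step you need.

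A minor side remark: your ``sub/super-modularity'' inequality is stated in the wrong direction. For subspaces $A,B\subset V$ and any $F$ one has $\dim((A+B)\cap F)+\dim(A\cap B\cap F)\ge \dim(A\cap F)+\dim(B\cap F)$, not $\le$. Fortunately the closure-under-sums argument still goes through (combine this with semistability applied to $S+S'$ and $S\cap S'$), but in any case the paper's proof bypasses this entirely.
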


\begin{proof}
  Take a submodule $S$ violating the $(\zeta,\bn)$-stability of
  $X$. Then we have (1). Moreover $(S,S_1\cap F^\bullet)$ and $(X/S,
  F^\bullet/S_1\cap F^\bullet)$ are $(\zeta,\bn)$-semistable. We have
  either $S_\infty = 0$ or $(X/S)_\infty = 0$.

  Assume either $(S,S_1\cap F^\bullet)$ or $(X/S, F^\bullet/S_1\cap
  F^\bullet)$ is strictly $(\zeta,\bn)$-semistable. Then we have a
  filtration $0 = X^0 \subsetneq X^1 \subsetneq X^2 \subsetneq X^3 =
  X$ with $\mu_{\zeta,\bn}(X^a/X^{a-1},F_a^\bullet) =
  \mu_{\zeta,\bn}(X,F^\bullet)$ for $a=1,2,3$, where $F_a^\bullet$
  denote the induced filtration on $X^a/X^{a-1}$ from $F^\bullet$.

  Among $X^a/X^{a-1}$ ($a=1,2,3$), one of them has $\C$ and two of
  them have $0$ at the $\infty$-component. Assume $X^1$ has $\C$ at
  the $\infty$-component for brevity, as the following argument can be
  applied to the remaining cases.

  We have $\vdim X^2/X^1 = p_2(m,m+1,0)$, $\vdim X^3/X^2 =
  p_3(m,m+1,0)$ for some $p_2$, $p_3\in \Z_{>0}$. 
  Then
  $\mu_{\zeta,\bn}(X^2/X^1,F^\bullet_2) =
  \mu_{\zeta,\bn}(X^3/X^2,F^\bullet_3)$
  implies
\begin{equation*}
   \sum n_i \left(\frac{\dim(F^i_2)}{p_2}
     - \frac{\dim(F^i_3)}{p_3}\right) = 0.
\end{equation*}
By the assumption \eqref{eq:cond2} we have
\( 
  p_3 \dim(F^i_2) = p_2\dim(F^i_3)
\)
and hence
\begin{equation}
  \label{eq:hosi}
  p_3 \dim(F^i_2/F^{i-1}_2)
     = p_2\dim(F^i_3/F^{i-1}_3)
\end{equation}
for any $i$. On the other hand, we have
\begin{equation*}
  \dim (F^i_1/F^{i-1}_1)
  + \dim(F^i_2/F^{i-1}_2)
  + \dim(F^i_3/F^{i-1}_3) = \dim (F^i/F^{i-1}) = 0 \text{ or } 1
\end{equation*}
for any $i$. Therefore at most one of three terms in the left hand
side can be $1$ and the other terms are $0$. Combined with
\eqref{eq:hosi} this implies $\dim(F^i_2/F^{i-1}_2) = \dim
(F^i_3/F^{i-1}_3) = 0$ for any $i$. Thus we get a contradiction $X^1 =
X^2 = X^3$.

If we have another submodule $S'$ of the same property, the
$(\zeta,\bn)$-stability implies $S\cap S' = 0$ or $S\cap S' = S =
S'$. In the former case we have $S' = X/S$.
\begin{NB}
  Consider $S\cap S'\subset S$ and $\subset S'$, $S/S\cap S'\subset
  X/S'$ and $S'/S\cap S'\subset X/S$.
\end{NB}%
\end{proof}

\begin{Lemma}[\protect{\cite[4.3.9]{Moc}}]\label{lem:stabilizer} Let
  $(\zeta,\bn)$ as in \lemref{lem:strictlysemistable}. If
  $(X,F^\bullet)$ is $(\zeta,\bn)$-semistable, its stabilizer is
  either trivial or $\C^*$. In the latter case, $(X,F^\bullet)$ has
  the unique decomposition $(\Xf,\Ff^\bullet)\oplus (\Xs,\Fs^\bullet)$
  such that both $(\Xf,\Ff^\bullet)$, $(\Xs,\Fs^\bullet)$ are
  $(\zeta,\bn)$-stable, and $\mu_{(\zeta,\bn)}(\Xf) =
  \mu_{(\zeta,\bn)}(\Xs)$. The stabilizer comes from that of the
  factor $(\Xs,\Fs^\bullet)$ with $(\Xs)_\infty = 0$.
\end{Lemma}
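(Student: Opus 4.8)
The plan is to follow Mochizuki \cite[4.3.9]{Moc}, translated into the quiver language, combining \lemref{lem:strictlysemistable} (which gives a Jordan--H\"older filtration of length exactly two) with \lemref{lem:isom} (which plays the role of Schur's lemma here), and then exploiting the extra rigidity coming from the fact that $G=\GL(V_0)\times\GL(V_1)$ acts trivially on the $\infty$-component $V_\infty=\C$ (note that every object in sight has $\dim X_\infty=1$). First I would dispose of the stable case: if $(X,F^\bullet)$ is $(\zeta,\bn)$-stable and $\phi$ is a flag-preserving endomorphism, then for an eigenvalue $\lambda$ of $\phi$ on $X_0$ the map $\phi-\lambda\,\id$ fails to be injective, hence is zero by \lemref{lem:isom}; thus $\operatorname{End}(X,F^\bullet)=\C$, and since a stabilizer element must restrict to the identity on $V_\infty\neq 0$ the scalar is $1$, so the stabilizer is trivial.

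Next, suppose $(X,F^\bullet)$ is strictly $(\zeta,\bn)$-semistable. By \lemref{lem:strictlysemistable} there is a submodule $0\neq S\subsetneq X$ with $\mu_{\zeta,\bn}(S,\,S_1\cap F^\bullet)=\mu_{\zeta,\bn}(X,F^\bullet)$ such that both $(S,S_1\cap F^\bullet)$ and $(X/S,F^\bullet/(S_1\cap F^\bullet))$ are $(\zeta,\bn)$-stable; exactly one of these two stable flagged objects has $\infty$-component $\C$ and the other has $0$, so in particular they are non-isomorphic. Using \lemref{lem:isom} one gets that the $\operatorname{Hom}$'s between them (in either direction) vanish and that each has endomorphism ring $\C$. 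I would then show that any flag-preserving endomorphism $\phi$ of $(X,F^\bullet)$ preserves $S$: composing $\phi$ with the projection $X\to X/S$ and restricting to $S$ gives a flag-preserving map $S\to X/S$, which is zero by the vanishing of $\operatorname{Hom}$, so $\phi(S)\subseteq S$. Hence $\phi$ induces a scalar on each of $S$ and $X/S$, giving an injection $\operatorname{End}(X,F^\bullet)\hookrightarrow\C\times\C$ (injectivity again from the vanishing of the relevant $\operatorname{Hom}$), and a stabilizer element must act as the identity on whichever of $S$, $X/S$ contains $V_\infty$; so the stabilizer embeds into $\C^*$ via the scalar on the other factor.

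It remains to show the stabilizer is either trivial or all of $\C^*$, and that the latter happens precisely when $(X,F^\bullet)$ splits as $(\Xf,\Ff^\bullet)\oplus(\Xs,\Fs^\bullet)$ with $(\Xs)_\infty=0$, $(\Xf)_\infty=\C$, both factors $(\zeta,\bn)$-stable; this is the main point. Given a stabilizer element $g$ acting by distinct scalars $1$ and $\lambda$ on the two factors, I would check that $g-\id$ is a flag-preserving endomorphism whose kernel is a submodule mapping isomorphically onto the quotient stable factor, and compatibly with the flags --- the flag-compatibility, which is the step requiring the most care, following from the fact that $g-\id$ restricts to an isomorphism of the corresponding graded pieces of the flag. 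This yields a flag-preserving splitting; conversely such a splitting visibly produces a full $\C^*$ of stabilizers acting by scalars on the $(\Xs)_\infty=0$ summand and trivially on the other, identifying the stabilizer with $\operatorname{Aut}(\Xs,\Fs^\bullet)=\C^*$. Uniqueness of the decomposition follows from \lemref{lem:isom} together with the observation that the two stable summands are distinguished by their $\infty$-components (any map between $\Xf$-type and $\Xs$-type summands of two such decompositions that respected the $\infty$-component structure would, if nonzero, be an isomorphism). The main obstacle I anticipate is precisely the bookkeeping in this last paragraph, keeping the flag structure compatible throughout; the rest is a routine Schur-type argument once \lemref{lem:strictlysemistable} and \lemref{lem:isom} are in hand.
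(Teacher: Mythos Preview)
Your proof is correct and rests on the same two inputs (\lemref{lem:strictlysemistable} and \lemref{lem:isom}) as the paper, but the organization is genuinely different. The paper argues by cases on a stabilizer element $g$: if $g$ has an eigenvalue $\neq 1$ it takes the generalized eigenspace decomposition of $(X,F^\bullet)$ to produce the splitting $(\Xf,\Ff^\bullet)\oplus(\Xs,\Fs^\bullet)$ directly, and then invokes \lemref{lem:strictlysemistable} to see that both summands are stable; if instead $g$ is unipotent, the paper shows $g=\id$ by applying \lemref{lem:isom} to the nonzero map $n^j\colon X/\Ker n^j\to\Ker n^j$ (with $n=g-\id$), obtaining a contradiction. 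You instead fix the length-two Jordan--H\"older filtration $0\subset S\subset X$ from \lemref{lem:strictlysemistable} at the outset, show every stabilizer element respects it, and deduce the embedding $\End(X,F^\bullet)\hookrightarrow\C\times\C$. This absorbs the paper's separate unipotent analysis into the injectivity of that embedding (which you correctly derive from $\Hom(X/S,S)=0$), a mild but genuine streamlining.

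One small point to tighten: in your splitting step you describe $\Ker(g-\id)$ as the complement to $S$, but this is only the right choice when $S_\infty=0$ (so that $g|_S=\lambda\,\id$ with $\lambda\neq 1$). When $S_\infty=\C$, so that $g$ is the identity on $S$ and scalar $\lambda$ on $X/S$, it is $\Ima(g-\id)$ that furnishes the complement. In either case your flag-compatibility argument goes through for the reason you give: $S$ and its complement are the two eigenspaces of the (necessarily diagonalizable) operator $g$, and each $F^i$ is $g$-invariant, hence splits along them.
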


\begin{proof}
  Suppose $g$ stabilizes $(X,F^\bullet)$. If $g$ has an eigenvalue
  $\lambda\neq 1$, then we have the generalized eigenspace
  decomposition $(X,F^\bullet) = (\Xf,\Ff^\bullet)\oplus
  (\Xs,\Fs^\bullet)$ with $(\Xf)_\infty = \C$, $(\Xs)_\infty = 0$.  By
  \lemref{lem:strictlysemistable} $(\Xf,\Ff^\bullet)$,
  $(\Xs,\Fs^\bullet)$ are $(\zeta,\bn)$-stable. Since they have the
  same $\mu_{\zeta,\bn}$ and are not isomorphic, there are no nonzero
  homomorphisms between them. Therefore the stabilizer is $\C^*$ in
  this case. The uniqueness follows from that in
  \lemref{lem:strictlysemistable}.

  Next suppose $g$ is unipotent and let $n= g - 1$. Assume $n\neq 0$
  and let $j$ such that $n^j \neq 0$, $n^{j+1} = 0$. We consider
  the submodule $0\neq \Ker n^j \subsetneq X$. From the
  $(\zeta,\bn)$-semistability of $(X,F^\bullet)$ we have
\(
  \mu_{\zeta,\bn}(\Ker n^j,F^\bullet\cap (\Ker n^j)_1)
  = \mu_{\zeta,\bn}(X,F^\bullet).
\)
\begin{NB}
From the proof of \lemref{lem:isom}.
\end{NB}
Therefore $(\Ker n^j,F^\bullet\cap (\Ker n^j)_1)$ and $(X/\Ker n^j,
F^\bullet/F^\bullet\cap (\Ker n^j)_1)$ are $(\zeta,\bn)$-stable by
\lemref{lem:strictlysemistable}. They are not isomorphic since they
have different $\infty$-components. However $n^j\colon X/\Ker n^j\to
\Ker n^j$ is a nonzero homomorphism, and we have a contradiction by
\lemref{lem:isom}.
\end{proof}

\subsection{Enhanced master space}

We continue to fix $c$, $m\in\Z_{\ge 0}$, $\ell\in\underline{N}$. As
we mentioned above, $\bM^m$ is constructed as a GIT quotient of a
common space $Q$ independent of $m$. Then the moduli schemes
$\tM^{m,0}$ and $\tM^{m,\ell}$ will be also constructed as GIT
quotients of
\(
  \widetilde Q = Q\times \Flag(V_1,\underline{N})
\)
by the action of the group $G$ with respect to a common polarization,
but with different lifts of the action. Here $V_1$ is a vector space
of dimension $N$, on which $G$ acts naturally.
And $\Flag(V_1, \underline{N})$ be the variety of full flags in $V_1$.
Let us denote by $L_-$ and $L_+$ the corresponding equivariant line
bundles over $\widetilde Q$ to define $\tM^{m,0}$ and $\tM^{m,\ell}$
respectively. Their descents will be denoted by the same notations.

We consider the projective bundle $\proj(L_-\oplus L_+)\to \widetilde
Q$ with the canonical polarization $\shfO_\proj(1)$.
Here $\proj(L_-\oplus L_+)$ is the space of $1$-dimensional {\it
    quotients\/} of $L_-\oplus L_+$.
\begin{NB}
  The convention of \cite[3.1]{Thaddeus} is as follows:
  The space of sections of $\shfO_\proj(n)$ is $\bigoplus_{j=0}^n
  H^0(L_-^{\otimes j})\otimes H^0(L_+^{\otimes n-j})$.
  Thus $\proj(L_-\oplus L_+)$ is the space of $1$-dimensional {\it
    quotients\/} of $L_-\oplus L_+$.

  I still need to check the definition of the master space. Probably I
  need to change the definition later.
\end{NB}%
We have the natural lifts of the $G$-action to $\proj(L_-\oplus L_+)$
and $\shfO_\proj(1)$. Let
\begin{equation}\label{eq:masterspace}
   \cM \equiv \cM(c) \equiv \cM^{m,\ell}(c)
   \defeq \proj(L_-\oplus L_+)^{\mathrm{ss}}/G
\end{equation}
be the quotient stack of $\shfO_\proj(1)$-semistable objects of
$\proj(L_-\oplus L_+)$ divided by $G$, where $\proj(L_-\oplus
L_+)^{\mathrm{ss}}$ denotes the semistable locus. This is called the
{\it enhanced master space}. This space was introduced in
\cite{Thaddeus} to study the change of GIT quotients under the change
of linearizations.
We have an inclusion $\cM_a \defeq \proj(L_a)^{\mathrm{ss}}/ G
\to \cM$ for $a = \pm$.

The tautological flag of vector bundles over
$\Flag(V_1,\underline{N})$ descends to $\cM$. We denote it by
$\Fcal^\bullet = (0 = \Fcal^0\subset \Fcal^1 \subset \cdots
\subset \Fcal^{N-1}\subset\Fcal^N = \Vcal_1)$.
We also have the universal sheaf $\cE$ over $\bp\times \cM$.

We have a natural $\C^*$-action on $\proj(L_-\oplus L_+)$ given by
$t\cdot[z_-:z_+] = [tz_-:z_+]$ where $[z_-:z_+]$ is the homogeneous
coordinates system of $\proj(L_-\oplus L_+)$ along fibers. It descends
to a $\C^*$-action on $\cM$. We have a natural $\C^*$-equivariant
structure on the universal family $\cE$, $\Fcal^\bullet$.

The following summarizes properties of $\cM$.

\begin{Theorem}\label{thm:master}
%  \begin{enumerate}
%  \begin{itemize}
%  \item\textup{(\subsecref{subsec:mastersmooth})} 
  \textup{(1)}
    $\cM$ is a smooth
    Deligne-Mumford stack. There is a projective morphism $\cM\to M_0(p_*(c))$.
    \begin{NB2}
      The second statement added according to the referee's suggestion.
      2010/04/05
    \end{NB2}%

%  \item\textup{(\subsecref{subsec:C^*-action})} 
    \textup{(2)}
    The fixed point set of the $\C^*$-action decomposes as
    \begin{equation*}
      \cM^{\C^*} = \cM_+\sqcup \cM_- \sqcup
      \bigsqcup_{\fI\in \mathcal D^{m,\ell}(c)} \cM^{\C^*}(\fI),
    \end{equation*}
    and we have isomorphisms $\cM_+\cong \tM^{m,\ell}(c)$, $\cM_-\cong
    \tM^{m,0}(c)$.
    The universal family $(\cE, \Fcal^\bullet)$ on $\cM$ is restricted
    to ones on $\cM_+ \cong \tM^{m,\ell}(c)$ and $\cM_-\cong
    \tM^{m,0}(c)$ \textup(which were denoted by the same notation
    $(\cE,\Fcal^\bullet)$\textup). And the restriction of the
    $\C^*$-equivariant structure are trivial.

    \textup{(3)} There is a diagram
    \begin{equation}\label{eq:exceptfixed}
  \xymatrix@R=.5pc{ & \mathcal S(\fI)
    \ar[rd]!UL^>>{G} \ar[ld]_{F} & 
\\
\cM^{\C^*}(\fI) & & \tM^{m,\min(\Is)-1}(c_\flat)
      \times \tM^{m,+}(c_\sharp),
}
\end{equation}
where $\mathcal S(\fI)$ is a smooth Deligne-Mumford stack and both
$F$, $G$ are \'etale and finite of degree $1/pD$.
There is a line bundle $L_{\mathcal S}$ over $\mathcal S(\fI)$
with
\(
  L_{\mathcal S}^{\otimes pD} = G^*(\cL(\cE_\flat)^*)
\)
and the restriction of the universal family $(\cE,\Fcal^\bullet)$ over
$\cM$ and the universal families $(\cE_\flat,\Fcal_\flat^\bullet)$,
$(\cE_\sharp,\Fcal_\sharp^\bullet)$ over
$\tM^{m,\min(\Is)-1}(c_\flat)$, $\tM^{m,+}(c_\sharp)$ are related by
\begin{gather*}
   F^* \cE
   \cong G^*(\cE_\flat)\oplus G^*(\cE_\sharp)\otimes L_{\mathcal S},
%\\
\qquad
   F^* \Fcal^\bullet \cong G^*(\Fcal_\flat^\bullet)\oplus 
   G^*(\Fcal_\sharp^\bullet)\otimes L_{\mathcal S}.
\end{gather*}
Moreover the restriction of the $\C^*$-equivariant structure on the
universal family $(\cE, \Fcal)$ is trivial on the factor
$(\cE_\flat,\Fcal_\flat)$ and of weight $1/pD$ on
$(\cE_\sharp,\Fcal_\sharp)$ under the above identification.
\end{Theorem}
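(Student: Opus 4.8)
The plan is to follow Mochizuki's treatment of master spaces \cite[\S4.3--4.5]{Moc}, transported into the quiver description of \secref{sec:quiver}. For (1), recall from \eqref{eq:masterspace} that $\cM$ is the GIT quotient by $G$ of the $\shfO_\proj(1)$-semistable locus in the $\proj^1$-bundle $\proj(L_-\oplus L_+)$ over $\widetilde Q = Q\times\Flag(V_1,\underline{N})$. First I would restrict everything over $Q^{\mathrm{ss}}(\zeta^0)$, which is smooth by the Corollary following \lemref{lem:Ext+}; since $\Flag(V_1,\underline{N})$ is smooth and $\proj(L_-\oplus L_+)$ is a projective bundle, the ambient space is smooth there. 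Next I would verify, combining \lemref{lem:zeta,l}, \lemref{lem:strictlysemistable}, \lemref{lem:stabilizer} with Thaddeus' description of semistability on a master space \cite[\S3]{Thaddeus}, that every $\shfO_\proj(1)$-semistable point has finite stabilizer in $G$; this yields that $\cM$ is a smooth Deligne--Mumford stack. The projective morphism $\cM\to M_0(p_*(c)) = \mu^{-1}(0)\dslash G$ comes from the $G$-equivariant projective morphism $\proj(L_-\oplus L_+)\to \widetilde Q\to Q$ together with the affine quotient $Q\to Q\dslash G$, exactly as for the moduli $\tM^{m,\ell}(c)$ in \propref{prop:m,l-moduli}.

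\emph{Part (2).} The $\C^*$-action on $\proj(L_-\oplus L_+)$ is fibrewise with precisely two fixed sections $\sigma_- = \{z_+ = 0\}$ and $\sigma_+ = \{z_- = 0\}$. A point of $\cM$ is $\C^*$-fixed exactly when some lift to $\proj(L_-\oplus L_+)^{\mathrm{ss}}$ is fixed modulo the action of a one-parameter subgroup $\lambda\colon\C^*\to G$. If the lift lies on $\sigma_-$ (resp.\ $\sigma_+$), master-space semistability for $\shfO_\proj(1)$ reduces to $L_-$- (resp.\ $L_+$-)semistability of the underlying $(X,F^\bullet)$, which by the choice of $L_\pm$ and \lemref{lem:zeta,l} is exactly $(m,0)$- (resp.\ $(m,\ell)$-)stability; this gives $\cM_-\cong\tM^{m,0}(c)$ and $\cM_+\cong\tM^{m,\ell}(c)$. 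Over $\sigma_\pm$ the fibrewise $\C^*$-twist is trivial, so the restrictions of the $\C^*$-equivariant universal family $(\cE,\Fcal^\bullet)$ carry the trivial $\C^*$-structure. The remaining fixed points are the subject of (3).

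\emph{Part (3).} Let $x\in\cM^{\C^*}$ have a lift with both coordinates nonzero. Being $\C^*$-fixed modulo $\lambda\colon\C^*\to G$ forces $X$ to decompose into $\lambda$-weight spaces; by \lemref{lem:stabilizer} there is exactly one nontrivial weight piece besides the one carrying the $\infty$-vertex, and since the only $\zeta^0$-stable objects with trivial $\infty$-component are sums of $C_m$'s, this gives $X = \Xf\oplus\Xs$ with $\Xs\cong C_m^{\oplus p}$, $p\in\Z_{>0}$, and a compatible splitting of the flag $F^\bullet$. The combinatorial type of this splitting — the subset $\Is\subset\underline{N}$ of flag indices lying in $\Xs$ and the induced $c = c_\flat + c_\sharp$ with $c_\sharp = p e_m$ — is the datum recorded by $\fI\in\mathcal D^{m,\ell}(c)$. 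By \lemref{lem:strictlysemistable} the factor $(\Xf,\Ff^\bullet)$ is stable for the parameter corresponding to $(m,\min(\Is)-1)$, while \subsecref{subsec:orientedsheaf} identifies $(\Xs,\Fs^\bullet)$, equipped with the orientation supplied by the master-space coordinate at the fixed point, as an oriented $(m,+)$-stable object of type $c_\sharp$. I would then define $\mathcal S(\fI)$ as the moduli stack of such split data together with this trivialization, and take $F$, $G$ in \eqref{eq:exceptfixed} to be the two obvious forgetful morphisms; the formulas for $F^*\cE$, $F^*\Fcal^\bullet$ are read off the splitting, and the line bundle $L_{\mathcal S}$ with $L_{\mathcal S}^{\otimes pD} = G^*(\cL(\cE_\flat)^*)$, the degree $1/pD$ of $F$ and $G$, and the fractional $\C^*$-weight $1/pD$ on $(\cE_\sharp,\Fcal_\sharp)$ all follow from the $\Z/pD$-gerbe structure of $\tM^{m,+}(c_\sharp)$ in \propref{prop:m,+}, the weight $1/pD$ being the pullback of the weight-one fibrewise action through $t\mapsto t^{-pD}$.

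The main obstacle will be (3): matching the combinatorial bookkeeping of $\mathcal D^{m,\ell}(c)$ (in particular why the stability shift for the $\flat$-factor is governed by $\min(\Is)$), showing that the $\lambda$-weight decomposition has exactly two pieces with the second forced to be $C_m^{\oplus p}$ and admitting no finer splitting, and carrying the various determinant/orientation line bundles through the GIT construction with the correct tensor powers so that $L_{\mathcal S}^{\otimes pD} = G^*(\cL(\cE_\flat)^*)$ and the weight $1/pD$ come out exactly. The GIT and fixed-point analysis in (1)--(2) — semistable $=$ finite stabilizer, and the identification of $\cM^{\C^*}$ with $\sigma_\pm$ plus the decomposed loci — is routine but must be carried out carefully following \cite[\S3]{Thaddeus} and \cite[\S4.4]{Moc}.
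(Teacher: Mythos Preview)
Your outline for parts (1) and (2) matches the paper's approach in \subsecref{subsec:mastersmooth} and \subsecref{subsec:C^*-action}: Thaddeus' description of master-space semistability together with \lemref{lem:stabilizer} gives finite stabilizers, and the identification of $\cM_\pm$ and of the exceptional fixed-point loci via the splitting $X=\Xf\oplus\Xs$ with $\Xs\cong C_m^{\oplus p}$ is exactly what the paper does. One small misattribution: the fact that $(\Xf,\Ff^\bullet)$ is $(m,\min(\Is)-1)$-stable is not a consequence of \lemref{lem:strictlysemistable} but requires the separate inequality argument carried out in the second lemma of \subsecref{subsec:C^*-action} (from \cite[4.4.4]{Moc}).

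For part (3) there is a genuine gap. Your $\mathcal S(\fI)$, defined as the moduli of split data together with the orientation $\rho\colon L(\Xf\oplus\Xs)\cong\C$ coming from the master-space coordinate $z_+/z_-$, is \emph{exactly} $\cM^{\C^*}(\fI)$ itself (this is \propref{prop:exceptfixed}); so your $F$ would be an isomorphism, not of degree $1/pD$. Likewise there is no ``obvious forgetful morphism'' to $\tM^{m,\min(\Is)-1}(c_\flat)\times\tM^{m,+}(c_\sharp)$: the target carries an orientation $\rho_\sharp$ on $E_\sharp$ alone, and you cannot extract $\rho_\sharp$ from $\rho$ without first choosing an orientation $\rho_\flat$ on $E_\flat$. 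This extra choice is the whole content of the construction.

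What the paper does (\subsecref{subsec:decomp}) is to introduce the auxiliary stack $\baM$ parametrizing $((E_\flat,\Phi),F^\bullet_\flat,\rho_\flat)$ with an orientation on the $\flat$-side; this is a principal $\C^*$-bundle over $\tM^{m,\min(\Is)-1}(c_\flat)$. One then has $\cM^{\C^*}(\fI)\cong(\baM\times\tM^{m,+})/\C^*$ with $\C^*$ acting by $\rho_\flat\mapsto t\rho_\flat$, $\rho_\sharp\mapsto t^{-1}\rho_\sharp$. Passing to the $pD$-fold cover $\C^*_s\to\C^*$, $s\mapsto s^{pD}=t$, defines $\mathcal S(\fI)\defeq(\baM\times\tM^{m,+})/\C^*_s$ and the map $F$ of degree $1/pD$. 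The key computation is that the $\C^*_s$-action on the second factor is absorbed by the automorphism $s^{-1}\id$ of $E_\sharp$ (this is the commutative diagram \eqref{eq:diagram}), so $\mathcal S(\fI)=\baM/\C^*_s\times\tM^{m,+}$; the map $G$ is then $\baM/\C^*_s\to\baM/\C^*=\tM^{m,\min(\Is)-1}$ (times the identity), again of degree $1/pD$. The twist $L_{\mathcal S}$ and its relation $L_{\mathcal S}^{\otimes pD}=G^*\cL(\cE_\flat)^*$ arise precisely from this absorption, on the $\flat$-side rather than from the gerbe structure of $\tM^{m,+}$ as you suggest.
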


\begin{NB}
(1)  Since $L_{\mathcal S}
  ``=" (\det R^1q_{2*}(\cE_\flat)^{\otimes -m_0}\otimes
        \det R^1q_{2*}(\cE_\flat\otimes q_1^*(\shfO(C-\linf)))^{\otimes -m_1})^{
          1/pD}$,
it does not change under the replacement $(m_0,m_1)\mapsto (Nm_0,Nm_1)$.

But this is not enough to see that it is independent of $(m_0,m_1)$.

(2) We have
\begin{equation*}
  \mathcal L(F^*(\cE))
   \cong \mathcal L(G^*(\cE_\flat))\otimes
   \mathcal L(G^*(\cE_\sharp)\otimes L_{\mathcal S})
   \cong G^*(\mathcal L(\cE_\flat))\otimes
   G^*(\mathcal L(\cE_\sharp)) \otimes L_{\mathcal S}^{\otimes pD}
   \cong\shfO_{\mathcal S(\fI)},
\end{equation*}
since $\mathcal L(\cE_\sharp)$ is trivial from the definition of
$\tM^{m,+}(c_\sharp)$ and $L_{\mathcal S}^{\otimes pD} =
G^*(\cL(\cE_\flat)^*)$.  This is compatible with the fact that
$\mathcal L(\cE)$ is trivial over the exceptional fixed point
$\cM^{\C^*}(\fI)$.
\end{NB}

We need to explain some notations:
\begin{itemize}
\item $\mathcal D^{m,\ell}(c)$ is the set of {\it decomposition types\/}:
\begin{equation}\label{eq:Sc}
  \mathcal D^{m,\ell}(c)
  \defeq \left\{ \fI = (\If,\Is)% \subset \underline{N}^2
  \left|\,
  \begin{aligned}[c]
  & \underline{N} = \If\sqcup \Is,\ \If,\Is\neq \emptyset,
\\
  & |\Is| = p(m+1) \text{ for $p\in \Z_{>0}$},
  \min(\Is)\le \ell
  \end{aligned}
  \right.\right\}.
\end{equation}
For $\fI\in\mathcal D^{m,\ell}(c)$, we set $c_\sharp = p e_m$, $c_\flat
= c - c_\sharp\in H^*(\bp)$.

\item integers $(m_0,m_1)$ appeared in the definition of an orientation
of a $(m,+)$-stable object (see \defref{def:m,+stable}) will be
determined by the choice of $L_\pm$.
\end{itemize}

The isomorphism 
\(
   F^* \Fcal^\bullet \cong G^*(\Fcal_\flat^\bullet)\oplus 
   G^*(\Fcal_\sharp^\bullet)\otimes L_{\mathcal S}
\)
of universal flags in (3) means as follows:
From the first statement
\(
   F^* \cE
   \cong G^*(\cE_\flat)\oplus G^*(\cE_\sharp)\otimes L_{\mathcal S}
\)
we have a decomposition
\(
  F^*(\Vcal_1(\cE))
  = G^*(\Vcal_1(\cE_\flat))
  \oplus G^*(\Vcal_1(\cE_\sharp)) \otimes L_{\mathcal S}.
\)
Then we have
$F^*(\Fcal^i) = G^*(\Fcal^i_\flat)\oplus G^*(\Fcal^i_\sharp)\otimes L_{\mathcal S}$
where $\Fcal^\bullet_\flat$, $\Fcal^\bullet_\sharp$ are flags indexed
by $\underline{N}$.
If we forget irrelevant factors
$\Fcal^i_\flat$ with $\Fcal^i_\flat = \Fcal^{i-1}_\flat$ and
$\Fcal^j_\sharp$ with $\Fcal^j_\sharp = \Fcal^{j-1}_\sharp$, we get
the universal flags over $\tM^{m,\min(\Is)-1}(c_\flat)$, $\tM^{m,+}(c_\sharp)$.
The above sets $\If$, $\Is$ consist of indexes of relevant factors.
In particular, $(\Fcal^1\subset \Vcal_1)$ appearing in
\propref{prop:m,+} is identified with $(\Fcal^{\min(\Is)}_\sharp\subset
\Fcal^{\max(\Is)}_\sharp = \Fcal^N_\sharp)$. Let us denote
$\Fcal^N_\sharp$ by $\Vcal_1^\sharp$ hereafter.

The fixed point substack $\cM^{\C^*}$ is defined as the zero locus of
the fundamental vector field generated by the $\C^*$-action. Note that
this does not imply that the action of $\C^*$ is trivial on
$\cM^{\C^*}$, but becomes trivial on the finite cover $\C^*_s =
\operatorname{Spec}\C[s,s^{-1}] \to \C^*; s\mapsto s^{pD} = t$.
\begin{NB}
  This is a general result. See the reference
  \cite[Appendix~C]{GP}. But we can see it directly.
\end{NB}%
Therefore the restriction of a $\C^*$-equivariant sheaf to the fixed
point locus is a sheaf tensored by a $\C^*_s$-module. In the
statements (2),(3), we wrote the weights of $\C^*_s$-modules divided
by $pD$, considered formally as weights of {\it rational\/}
$\C^*$-modules.

The proofs of (1),(2),(3) will be given in
\subsecref{subsec:mastersmooth},
\subsecref{subsec:C^*-action},
\subsecref{subsec:decomp}
respectively.

\begin{Remark}
  We can define the master space connecting $\bM^m(c)$ and
  $\bM^{m+1}(c)$ in the same way. However it will be not necessarily a
  Deligne-Mumford stack as a semistable point possibly has a
  stabilizer of large dimension. This is the reason why we, following
  Mochizuki, consider pairs of framed sheaves and flags in cohomology
  groups.
\end{Remark}

\subsection{Smoothness of the Enhanced Master
  Space}\label{subsec:mastersmooth}

\begin{NB}
  The following is a repetition.

We take $N = \dim V_1$ so that $F^\bullet$ is a full flag of
$V_1$. Let $\Flag(V_1, \underline{N})$ be the variety of full flags in
$V_1$.
\end{NB}

Let us write the enhanced master space in the quiver description.
We first take $\zeta^-$ sufficiently close to $\zeta^0$.
For $l=1,\dots, N$, we choose $(\zeta,\bn)$ satisfying
(\ref{eq:cond},\ref{eq:cond2}). We take $\zeta$ so that $|\zeta -
\zeta^0|$, $|\bn|$ are sufficiently smaller than $|\zeta - \zeta^-|$.
We take a large number $k$ so that $k(\zeta^-,\bn)$ and $k(\zeta,\bn)$
are integral. Let $L_-$ (resp.\ $L_+$) be the $G$-equivariant line
bundle over $\mu^{-1}(0)\times\Flag(V_1,\underline{N})$ corresponding
to the stability condition $k(\zeta^-,\bn)$ (resp.\ $k(\zeta,\bn)$). We
consider the projective bundle $\proj(L_-\oplus L_+)$ with the
canonical polarization $\shfO_\proj(1)$.
\begin{NB}
  I still need to understand the definition of the master
  space. Probably I need to change the definition later.
\end{NB}%
We have the natural lifts of the $G$-action to $\proj(L_-\oplus L_+)$
and $\shfO_\proj(1)$. Let
\begin{equation*}
   \cM \equiv \cM(c)
   \defeq \proj(L_-\oplus L_+)^{\mathrm{ss}} / G
\end{equation*}
be the quotient stack of the semistable locus $\proj(L_-\oplus
L_+)^{\mathrm{ss}}$ divided by $G$.
\begin{NB}
The old version:
the GIT quotient $\proj(L_-\oplus L_+)$ with respect
$\shfO_\proj(1)$, where $\proj(L_-\oplus L_+)^{\mathrm{ss}}$ denotes
the semistable locus.   
\end{NB}%
\begin{NB}
  The following is a repetition.

This is called the {\it enhanced master space}.
This space was introduced in \cite{Thaddeus} to study the change of
GIT quotients under the change of linearizations.
\end{NB}%

\begin{NB}
The action is given by
\begin{equation*}
  \begin{split}
    g\cdot \left((B_1,B_2,d,i,j),[z_-:z_+]\right)
   &= \left((g_0 B_1 g_1^{-1}, g_0 B_2 g_1^{-1}, g_1 d g_0^{-1},
     g_0 i, j g_1^{-1}), [ \chi_{\zeta^-}(g) z_-: \chi_{\zeta}(g) z_+ ]
     \right)
   \\
   &= \left((g_0 B_1 g_1^{-1}, g_0 B_2 g_1^{-1}, g_1 d g_0^{-1},
     g_0 i, j g_1^{-1}), [ z_-: \chi_{\zeta}(g) \chi_{\zeta^-}(g)^{-1}z_+ ]
     \right).
  \end{split}
\end{equation*}
\end{NB}

% We have a natural projective morphism
% \(
%     \cM \to M_0(p_*(c)) = \mu^{-1}(0)\dslash G.
% \)

The following was shown in e.g., \cite[\S\S3,4]{Thaddeus}.

\begin{Lemma}\label{lem:Th}
  A point $x$ of $\proj(L_-\oplus L_+) \setminus \left(
    \proj(L_-)\sqcup \proj(L_+)\right)$ is semistable if and only if
  the corresponding $(X,F^\bullet)$ is semistable with respect to a
  $\Q$-line bundle $L_t = L_-^{\otimes (1-t)}\otimes L_+^{\otimes t}$
  for some $t\in [0,1]\cap\Q$.
\end{Lemma}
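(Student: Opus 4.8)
The plan is to run the Hilbert--Mumford numerical criterion for the $G$-action on $\proj(L_-\oplus L_+)$ with the polarization $\shfO_\proj(1)$, following \cite[\S\S3--4]{Thaddeus} (together with \cite[Ch.~2]{MFK} and, for the quiver GIT on $\mu^{-1}(0)\times\Flag(V_1,\underline{N})$, \cite{King-mod}). Fix $x\in\proj(L_-\oplus L_+)$ lying over $(X,F^\bullet)$ with $x\notin\proj(L_-)\sqcup\proj(L_+)$, so that $x$ is a sub-line $K$ of the two-dimensional fibre $(L_-\oplus L_+)|_{(X,F^\bullet)}$ with both projections $K\to L_\pm|_{(X,F^\bullet)}$ nonzero. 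For a one-parameter subgroup $\lambda\colon\C^*\to G$ whose limit $y\defeq\lim_{s\to 0}\lambda(s)\cdot(X,F^\bullet)$ exists, let $\mu^{L_\pm}((X,F^\bullet),\lambda)$ be the weight of $\lambda$ on the fibre $L_\pm|_y$; this is the Mumford invariant attached to the descended linearization $L_\pm$, normalized so that $(X,F^\bullet)$ is $L_\pm$-semistable exactly when these invariants are $\ge 0$ for every $\lambda$.

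First I would compute $\mu^{\shfO_\proj(1)}(x,\lambda)$. Since both projections of $K$ are nonzero, the limit line $\lim_{s\to 0}\lambda(s)\cdot K$ over $y$ is the coordinate line carrying the smaller of the two weights of $\lambda$ on $L_-|_y$, $L_+|_y$; hence the weight of $\lambda$ on $\shfO_\proj(-1)$ at $\lim_{s\to 0}\lambda(s)\cdot x$ is $\min(\mu^{L_-}((X,F^\bullet),\lambda),\mu^{L_+}((X,F^\bullet),\lambda))$, and therefore
\[
   \mu^{\shfO_\proj(1)}(x,\lambda)=\max(\mu^{L_-}((X,F^\bullet),\lambda),\,\mu^{L_+}((X,F^\bullet),\lambda)),
\]
which is the weight computation of \cite[\S3]{Thaddeus}. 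Because $\mu^{L_t}((X,F^\bullet),\lambda)=(1-t)\,\mu^{L_-}((X,F^\bullet),\lambda)+t\,\mu^{L_+}((X,F^\bullet),\lambda)$ is affine in $t$, and an affine function on $[0,1]$ attains its maximum at an endpoint, the inequality $\mu^{\shfO_\proj(1)}(x,\lambda)\ge 0$ is equivalent to the existence of $t\in[0,1]$ with $\mu^{L_t}((X,F^\bullet),\lambda)\ge 0$. Setting $I_\lambda\defeq\{t\in[0,1]\mid \mu^{L_t}((X,F^\bullet),\lambda)\ge 0\}$, a closed subinterval of $[0,1]$, we thus get: $x$ is $\shfO_\proj(1)$-semistable iff $I_\lambda\ne\emptyset$ for every $\lambda$, whereas $(X,F^\bullet)$ is $L_t$-semistable for some $t\in[0,1]$ iff $\bigcap_\lambda I_\lambda\ne\emptyset$.

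It then remains to show these two conditions coincide. The implication $\bigcap_\lambda I_\lambda\ne\emptyset\Rightarrow$ (each $I_\lambda\ne\emptyset$) is trivial, giving the ``if'' part of the lemma; moreover the resulting $t$ can be taken rational because, with $c$ and $m$ fixed, \thmref{thm:quiver} leaves only finitely many filtration types of $(X,F^\bullet)$, hence only finitely many pairs $(\mu^{L_-},\mu^{L_+})$ and only finitely many distinct intervals $I_\lambda$, all with rational endpoints. For the converse, $\bigcap_\lambda I_\lambda$ is an intersection of closed subintervals of $[0,1]$, so it is empty only if some pair $I_{\lambda_1}$, $I_{\lambda_2}$ is already disjoint, say with $I_{\lambda_1}$ to the left of $I_{\lambda_2}$: then $\lambda_1$ keeps $\mu^{L_-}$ non-negative but forces $\mu^{L_+}<0$, while $\lambda_2$ does the reverse.

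The main obstacle is to rule this out when $x$ is semistable, and this is the technical core of the argument, exactly as in \cite[\S4]{Thaddeus}: after conjugating $\lambda_1$, $\lambda_2$ into a common parabolic adapted to the relevant filtrations of $(X,F^\bullet)$ and using that $\mu^{L_\pm}(-,\lambda)$ is linear on each cone of one-parameter subgroups inducing a fixed filtration, one combines positive multiples of suitable conjugates of $\lambda_1$ and $\lambda_2$ into a single one-parameter subgroup $\lambda_3$ with $\mu^{L_-}((X,F^\bullet),\lambda_3)<0$ and $\mu^{L_+}((X,F^\bullet),\lambda_3)<0$; then $\mu^{\shfO_\proj(1)}(x,\lambda_3)<0$, contradicting semistability of $x$. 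The fact that we are inside a fixed, finite-dimensional GIT problem (ensured by \thmref{thm:quiver} and the choices of $c$, $m$, $\ell$) is precisely what legitimizes this combining step, and it is the point I expect to need the most care; the rest is the bookkeeping of sign conventions between the ample polarization $\shfO_\proj(1)$ on the master space and the character linearizations $L_\pm$ on the base, which is routine.
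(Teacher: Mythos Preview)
The paper does not give its own proof of this lemma; it simply cites \cite[\S\S3,4]{Thaddeus}. Your proposal goes well beyond that, but along a harder road than the one Thaddeus actually takes, and the crucial step of your argument is not justified.

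Thaddeus's argument works directly with the section-theoretic definition of semistability rather than the Hilbert--Mumford criterion. Since the decomposition
\[
  H^0\bigl(\proj(L_-\oplus L_+),\shfO_\proj(n)\bigr)^G
  \;=\;\bigoplus_{j=0}^n H^0\bigl(\mu^{-1}(0)\times\Flag(V_1,\underline{N}),\,L_-^{\otimes j}\otimes L_+^{\otimes(n-j)}\bigr)^G
\]
is $G$-equivariant, a $G$-invariant section of $\shfO_\proj(n)$ is exactly a tuple $(s_j)$ with $s_j\in H^0(L_-^{\otimes j}\otimes L_+^{\otimes(n-j)})^G$. For $x$ lying over $(X,F^\bullet)$ with both fibre coordinates $z_\pm$ nonzero, the single summand $s_j$ gives a section of $\shfO_\proj(n)$ whose value at $x$ is nonzero iff $s_j(X,F^\bullet)\neq 0$. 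Hence $x$ is $\shfO_\proj(1)$-semistable iff some $s_j(X,F^\bullet)\neq 0$ for some $n,j$, i.e.\ iff $(X,F^\bullet)$ is $L_t$-semistable for $t=(n-j)/n\in[0,1]\cap\Q$. That is the whole proof.

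Your Hilbert--Mumford route correctly reduces the ``only if'' direction to the claim that if two intervals $I_{\lambda_1}$, $I_{\lambda_2}$ are disjoint then some $\lambda_3$ has $\mu^{L_-}(\lambda_3)<0$ and $\mu^{L_+}(\lambda_3)<0$. But your justification for this---``combine positive multiples of suitable conjugates of $\lambda_1$ and $\lambda_2$''---is where the gap lies. The Mumford numerical function is not additive in $\lambda$; even after moving $\lambda_1,\lambda_2$ into a common maximal torus, the filtrations they induce on $(X,F^\bullet)$ need not be comparable, and the weight function is only piecewise linear across such cones. Nothing in \cite[\S4]{Thaddeus} carries out this combination; that section treats the flip structure via sections and ample cones, not via merging one-parameter subgroups. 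So your attribution is off and the step remains unproven. The cleanest fix is to abandon the numerical criterion here and use the two-line sections argument above.
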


\begin{Proposition}\label{prop:mastersmooth}
  $\cM$ is a smooth Deligne-Mumford stack.
\end{Proposition}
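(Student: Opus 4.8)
The statement to prove is \propref{prop:mastersmooth}: that the enhanced master space $\cM = \proj(L_-\oplus L_+)^{\mathrm{ss}}/G$ is a smooth Deligne--Mumford stack. The strategy is the standard one for GIT quotient stacks: first show that the total space $\proj(L_-\oplus L_+)$ (equivalently the semistable locus $\proj(L_-\oplus L_+)^{\mathrm{ss}}$) is smooth, then show that stabilizers of semistable points are finite, and finally invoke the general fact that a quotient stack $[Z^{\mathrm{ss}}/G]$ with $Z^{\mathrm{ss}}$ smooth and all stabilizers finite is a smooth Deligne--Mumford stack (e.g.\ \cite{MFK}).

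\emph{Step 1: smoothness of the ambient space.} We know from the Corollary at the end of \secref{sec:quiver} that $d\mu$ is surjective on the $\zeta^0$-semistable locus $Q^{\mathrm{ss}}(\zeta^0) = \mu^{-1}(0)^{\mathrm{ss}}$, hence $\mu^{-1}(0)$ is smooth there. Since we have chosen $\zeta$, $\zeta^-$ and $\bn$ so that $|\zeta-\zeta^0|$, $|\bn|$ are sufficiently small compared with $|\zeta-\zeta^-|$, the semistability condition with respect to any $L_t = L_-^{\otimes(1-t)}\otimes L_+^{\otimes t}$ ($t\in[0,1]$) forces $\zeta^0$-semistability of the underlying quiver representation $X$ (this is exactly the estimate (\ref{eq:cond}c) used in \lemref{lem:zeta,l}, applied to the whole segment of stability parameters rather than to the two endpoints). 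Combined with \lemref{lem:Th}, every semistable point of $\proj(L_-\oplus L_+)$ lies over $\mu^{-1}(0)^{\mathrm{ss}} \times \Flag(V_1,\underline N)$, which is smooth since $\Flag(V_1,\underline N)$ is a smooth projective variety and the product with a smooth scheme is smooth. The projective bundle $\proj(L_-\oplus L_+)$ over a smooth base is smooth, so $\proj(L_-\oplus L_+)^{\mathrm{ss}}$ is smooth.

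\emph{Step 2: finiteness of stabilizers.} A point of $\proj(L_-\oplus L_+)^{\mathrm{ss}}$ not lying in $\proj(L_-)\sqcup\proj(L_+)$ is, by \lemref{lem:Th}, $L_t$-semistable for some $t$ with $t\in(0,1)$; here $L_t$ corresponds to a stability parameter of the form $(\zeta_t,\bn)$ interpolating between $(\zeta^-,\bn)$ and $(\zeta,\bn)$, which we may assume satisfies the hypotheses $(m+1)\zeta_0 + (m+2)\zeta_1 < 0$, $(m-1)\zeta_0 + m\zeta_1 > 0$ of \lemref{lem:stabilizer} and condition \eqref{eq:cond2} by our choice of $\zeta$ and $\bn$. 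Then \lemref{lem:stabilizer} shows the stabilizer in $G$ of such $(X,F^\bullet)$ is either trivial or $\C^*$; the residual $\C^*$ acting on the fiber coordinate $[z_-:z_+]$ of $\proj(L_-\oplus L_+)$ at a point outside $\proj(L_-)\sqcup\proj(L_+)$ (where both $z_-$ and $z_+$ are nonzero) acts with distinct weights on $L_-$ and $L_+$ by construction (the weights differ because $\zeta^-\neq\zeta$), so the stabilizer in $G$ of the point of $\proj(L_-\oplus L_+)$ is finite. For points in $\proj(L_\pm)^{\mathrm{ss}}$, the underlying $(X,F^\bullet)$ is $(\zeta^-,\bn)$- resp.\ $(\zeta,\bn)$-stable, hence by \lemref{lem:isom} (applied with the appropriate stability parameter, i.e.\ via \lemref{lem:zeta,l} these are $(m,0)$- resp.\ $(m,\ell)$-stable) has trivial stabilizer. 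So in all cases the stabilizer is finite.

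\emph{Step 3: conclusion.} Having a smooth semistable locus and finite (in fact, reduced and finite) stabilizers at every point, the quotient stack $\cM = [\proj(L_-\oplus L_+)^{\mathrm{ss}}/G]$ is a smooth Deligne--Mumford stack by the standard criterion. The main obstacle is really Step 1--2's bookkeeping: one must verify that the inequalities defining the stability chambers can be arranged simultaneously — namely that the same $\zeta$, $\bn$ can be chosen to satisfy \eqref{eq:cond}, \eqref{eq:cond2} and the hypotheses of \lemref{lem:stabilizer}, and that $|\zeta-\zeta^0|$, $|\bn|$ are small enough relative to $|\zeta-\zeta^-|$ that every $L_t$-semistable object on the whole segment $t\in[0,1]$ is $\zeta^0$-semistable — but this is exactly the compatibility of choices already spelled out at the start of \subsecref{subsec:mastersmooth}, so no genuinely new estimate is needed. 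Everything else is an appeal to results already established (the Corollary in \secref{sec:quiver}, \lemref{lem:Th}, \lemref{lem:stabilizer}, \lemref{lem:isom}) plus the general GIT fact.
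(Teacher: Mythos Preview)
Your proposal is correct and follows essentially the same approach as the paper's proof: use \lemref{lem:Th} to see that any semistable point has underlying $(X,F^\bullet)$ that is $(\zeta',\bn)$-semistable for some $\zeta'$ on the segment, apply \lemref{lem:stabilizer} to reduce the stabilizer to either trivial or $\C^*$, and then observe that in the $\C^*$ case the action on the fiber $[z_-:z_+]$ has nonzero weight (the paper computes this weight explicitly as $pk(m,m+1)\cdot(\zeta-\zeta^-)$), so the stabilizer of the point in $\proj(L_-\oplus L_+)$ is finite. Your Step~1 is in fact more careful than the paper, which simply asserts ``$\proj(L_-\oplus L_+)$ is smooth'' without remarking that $\mu^{-1}(0)$ need only be smooth on the $\zeta^0$-semistable locus; your observation that every $L_t$-semistable point has $\zeta^0$-semistable underlying $X$ (via the estimate (\ref{eq:cond}c)) fills this in.
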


\begin{proof}
  Let $x$ be a semistable point in $\proj(L_-\oplus L_+)$. Then the
  corresponding point $(X,F^\bullet)$ in
  $\mu^{-1}(0)\times\Flag(V_1,\underline{N})$ is $(\zeta',\bn)$-stable
  for some $\zeta'$ on the segment connecting $\zeta$ and $\zeta^-$
  (\lemref{lem:Th}). We can apply \lemref{lem:stabilizer} as $\zeta'$
  satisfies $(m+1)\zeta_0+(m+2)\zeta_1 < 0$, $(m-1)\zeta_0 + m\zeta_1
  > 0$, and $\bn$ satisfies \eqref{eq:cond2}. Therefore either the
  stabilizer of $(X,F^\bullet)$ is trivial or $(X,F^\bullet)$
  decomposes as $(\Xf,\Ff^\bullet)\oplus (\Xs,\Fs^\bullet)$. Since
  $(\Xs)_\infty = 0$, $\Xs \cong C_m^{\oplus p}$ for some
  $p\in\Z_{>0}$ as explained in \defref{def:m,m+1-stable}. In this
  case the stabilizer is $\C^*$, coming from the automorphisms of
  $(\Xs,\Fs^\bullet)$. Its action on the fiber is given by $t\cdot u =
  t^{pk (m,m+1)\cdot (\zeta - \zeta^-)}u$ for $t\in \C^*$. Therefore
  $x$ only has a finite stabilizer. It is also reduced as the base
  field is of characteristic $0$. Therefore $\cM$ is Deligne-Mumford.
  Since $\proj(L_-\oplus L_+)$ is smooth, $\cM$ is also smooth.
  \begin{NB}
    Kota, do you have a good reference ?
    
    Ans (e-mail, Oct. 17). This is trivial.
  \end{NB}%
\end{proof}

We set
\(
   (m_0,m_1)\defeq k(\zeta - \zeta^-) 
\)
(and hence
\(
  D = k(m,m+1)\cdot (\zeta - \zeta^-))
\)
which was used in \thmref{thm:master}.
From our choices of $\zeta$, $\zeta^-$, we have $D > 0$.

\subsection{$\C^*$-action}\label{subsec:C^*-action}
We have a natural $\C^*$-action on $\proj(L_-\oplus L_+)$ given by
$t\cdot[z_-:z_+] = [tz_-:z_+]$ where $[z_-:z_+]$ is the homogeneous
coordinates system of $\proj(L_-\oplus L_+)$ along fibers. It descends
to a $\C^*$-action on $\cM$, as it commutes with the $G$-action.
Letting $\C^*$ act trivially on $V_0$, $V_1$ and the universal flag
over $\Flag(V_1,\underline{N})$, we have the $\C^*$-equivariant
structure on the universal family $\cE$, $\Fcal^\bullet = (0 = \Fcal^0
\subset \Fcal^1 \subset \cdots \subset \Fcal^{N-1} \subset \Fcal^N =
\Vcal_1)$.

The fixed point substack $\cM^{\C^*}$ is defined as the zero locus of
the fundamental vector field generated by the $\C^*$-action. Note that
this does not imply that the action of $\C^*$ is trivial on
$\cM^{\C^*}$. The action becomes trivial after a finite cover
$\C^*\to \C^*$. 
\begin{NB}
  See the reference \cite[Appendix~C]{GP}.
\end{NB}

We have an inclusion $\cM_a \defeq \proj(L_a)^{\mathrm{ss}} / G
\to \cM$ for $a = \pm$. Then $\cM_a$ is a component of the fixed point
set $\cM^{\C^*}$. From the construction $\cM_a$ is the moduli stack of
objects $(X,F^\bullet)$, which are stable with respect to $L_a$. From
our choice of $(\zeta,\bn)$, we have $\cM_+ \cong \tM^{m,\ell}$ by
\lemref{lem:zeta,l}.
\begin{NB}
  We have a natural surjective homomorphism $(z_-,z_+)\colon L_-\oplus
  L_+ \to \shfO_{\proj}(1)$. On $\cM_\pm$, we have $z_\mp = 0$, and
  $L_\pm \cong \shfO_\proj(1)|_{\cM_\pm}$.

  Since $\shfO_\proj(1)_{[z_-:z_+]} = \C(z_-,z_+)$, we have
  $\cong L_a$.
\end{NB}%
Since $\bn$ is sufficiently smaller than $|\zeta - \zeta^-|$,
$(X,F^\bullet)$ is stable with respect to $L_-$ if and only if $X$ is
$\zeta^-$-stable. Thus we have $\cM_-\cong \tM^{m,0}$.

Next consider a fixed point in $\cM^{\C^*}$ other than $\cM_+\sqcup
\cM_-$. Suppose that a point $x = ((X,F^\bullet), [z_-:z_+])$ in
$\proj(L_-\oplus L_+)^{\mathrm{ss}}\setminus
(\proj(L_-)\sqcup\proj(L_+))$ is mapped to a fixed point in the
quotient $\cM$.
It means that the tangent vector generated by the $\C^*$-action at $x$
is contained in the subspace generated by the $G$-action. In view of
\lemref{lem:stabilizer} this is possible only if $(X,F^*)$ has a
nontrivial stabilizer, and hence decompose as $(\Xf,\Ff^\bullet)\oplus
(\Xs,\Fs^\bullet)$ to the direct sum of two $(\zeta',\bn)$-stable
objects with the equal $\mu_{\zeta',\bn}$ for some $\zeta'$ on the
segment connecting $\zeta$ and $\zeta^-$. (See \lemref{lem:Th} and
\lemref{lem:stabilizer}.) We number the summand so that $(\Xf)_\infty
= \C$.
Therefore $(\Xs)_\infty = 0$ and hence $\Xs \cong
C_m^{\oplus p}$ for some $p\in\Z_{>0}$.
The data $u = z_+/z_-$ corresponds to an isomorphism $L(X) \cong \C$.

Conversely suppose we have such a decomposition $(X,F^\bullet) =
(\Xf,\Ff^\bullet)\oplus(\Xs,\Fs^\bullet)$. Let $V = \Vf\oplus \Vs$ be
the corresponding decomposition of $V$.
We lift the $\C^*$-action on $\cM$ to $\proj(L_-\oplus L_+)^{\mathrm{ss}}$ by
\begin{equation}\label{eq:C*}
   ((X,F^\bullet),[z_-:z_+]) \mapsto
   (\id_{\Vf}\oplus t^{1/pD} \id_{\Vs})\cdot ((X,F^\bullet),[t z_-:z_+]),
\end{equation}
which is well-defined on the covering $\C^*\to \C^*; s\mapsto s^{pD} =
t$, and fixes $((X,F^\bullet),[z_-:z_+]) =
((\Xf,\Ff^\bullet)\oplus(\Xs,\Fs^\bullet),[z_-:z_+])$. 
\begin{NB}
Let $g = \id_{\Vf}\oplus t^{1/pD} \id_{\Vs}$. Then
\begin{equation*}
  \begin{split}
  g\cdot ((\Xf,\Ff^\bullet)
  \oplus (\Xs,\Fs^\bullet),[t z_-:z_+])
  &= ((\Xf,\Ff^\bullet)
  \oplus (\Xs,\Fs^\bullet),[t z_-: \chi_{\zeta}(g)\chi_{\zeta^-}(g)^{-1} z_+])
\\
  &= ((\Xf,\Ff^\bullet)
  \oplus (\Xs,\Fs^\bullet),[t z_-: (t^{1/pD})^{pD} z_+])
\\
  &= ((\Xf,\Ff^\bullet)
  \oplus (\Xs,\Fs^\bullet),[z_-: z_+]).
  \end{split}
\end{equation*}
\end{NB}%
Since this $\C^*$-action is equal to the original one up to the $G$-action,
it is the same on the quotient $\cM$. Therefore
the point $x = ((X,F^\bullet),[z_-:z_+])$ is mapped to a fixed point in $\cM$.

Let
\begin{equation*}
   I_\alpha \defeq \{ i\in \underline{N} \mid
   \dim (F_\alpha^i/F_\alpha^{i-1}) = 1 \}
\end{equation*}
for $\alpha=\flat$, $\sharp$. Then we have the decomposition
$\underline{N} = \If\sqcup \Is$. The datum $(\If,\Is)$ is called the
{\it decomposition type\/} of the fixed point. Since $\dim (\Xs)_1 =
p(m+1)$, we have $|\Is| = p(m+1)$.

\begin{Lemma}[\protect{\cite[4.4.3]{Moc}}]
  $\min(\Is)\le \ell$.
\end{Lemma}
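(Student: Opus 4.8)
The goal is to show that for any fixed point of type $(\If,\Is)$ occurring in $\cM^{\C^*}$ outside of $\cM_+\sqcup\cM_-$, we have $\min(\Is)\le\ell$. Recall that such a fixed point corresponds to a decomposition $(X,F^\bullet) = (\Xf,\Ff^\bullet)\oplus(\Xs,\Fs^\bullet)$ into $(\zeta',\bn)$-stable pieces of equal $\mu_{\zeta',\bn}$-slope for some $\zeta'$ on the segment joining $\zeta^-$ and $\zeta$, where $(\Xf)_\infty = \C$, $(\Xs)_\infty = 0$, and $\Xs\cong C_m^{\oplus p}$. The plan is to argue by contradiction: suppose $\min(\Is) > \ell$. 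This means that $\Fs^{\ell} = 0$, equivalently $F^\ell \subseteq (\Xf)_1$, i.e. the flag step $F^\ell$ lies entirely in the $\flat$-summand. I will then play this against the $(\zeta',\bn)$-stability (and $\zeta^0$-semistability) of $X$ together with the location of $\zeta'$ between $\zeta^-$ and $\zeta$.

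First I would set up the slope comparison. Since $(\Xf,\Ff^\bullet)$ and $(\Xs,\Fs^\bullet)$ are the two stable constituents of $(X,F^\bullet)$ with respect to $\zeta'$, equality of $\mu_{\zeta',\bn}$ gives a precise linear relation between $\zeta'$, $\bn$, the dimension vectors, and the flag-dimensions $\dim(F^i\cap (\Xs)_1)$. Because $\vdim\Xs = p(m,m+1,0)$, the $\zeta'$-part of $\mu_{\zeta',\bn}(\Xs)$ is $(m\zeta'_0+(m+1)\zeta'_1)/(2m+1)$, and under our choice of $\zeta^-$ this quantity is negative while at $\zeta$ it is positive; $\zeta'$ interpolates. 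The flag contribution to $\mu_{\zeta',\bn}(\Xs,\Fs^\bullet)$ is $\sum_i n_i\dim(F^i\cap(\Xs)_1)/(p(2m+1))$, and under the hypothesis $\min(\Is)>\ell$ the indices $i\le\ell$ contribute nothing. The key is then to compare $\mu_{\zeta',\bn}(\Xs,\Fs^\bullet)$ with $\mu_{\zeta',\bn}(X,F^\bullet)$ and show that the inequalities forced by $(\zeta',\bn)$-semistability (applied to the submodule $\Xs\subset X$, using $(\Xs)_\infty = 0$) are incompatible with the inequalities (\ref{eq:cond}d,e) that govern the location of $\zeta$, $\zeta^-$ relative to the $\bn$'s when $\min(\Is)>\ell$. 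Concretely, the same computation that proved \lemref{lem:zeta,l}(1) — namely that $S_1\cap F^\ell = 0$ forces a strict inequality $\mu_{(\zeta,\bn)}(S) < \mu_{(\zeta,\bn)}(X)$ for a submodule $S$ with $\zeta^0\cdot\vdim S = 0$ and $S_\infty = 0$ — should be run with $S = \Xs$ and $\zeta$ replaced by $\zeta'$, yielding $\mu_{(\zeta',\bn)}(\Xs) < \mu_{(\zeta',\bn)}(X)$ on one side, while equality of the two stable constituents forces $\mu_{(\zeta',\bn)}(\Xs) = \mu_{(\zeta',\bn)}(X)$ on the other — a contradiction. One must check that the relevant inequalities (\ref{eq:cond}d,e), stated for $\zeta$, still hold (with the needed direction) for the intermediate parameter $\zeta'$; this is where one uses that $\zeta'$ lies on the small segment from $\zeta^-$ to $\zeta$ and that $|\zeta-\zeta^0|,|\bn|$ were taken very small compared to $|\zeta-\zeta^-|$.

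The main obstacle I anticipate is bookkeeping the inequalities across the whole segment $[\zeta^-,\zeta]$ rather than just at the endpoint $\zeta$: the conditions in \eqref{eq:cond} were imposed for $\zeta$, and one needs the analogous (or weaker but sufficient) statements to persist for every $\zeta'$ that can actually arise as a wall value in \lemref{lem:Th}. Since the set of such $\zeta'$ is a segment with one endpoint $\zeta^-$ satisfying $m\zeta^-_0+(m+1)\zeta^-_1<0$ and the other $\zeta$ satisfying $0 < m\zeta_0+(m+1)\zeta_1 \ll 1$, the quantity $m\zeta'_0+(m+1)\zeta'_1$ ranges over an interval containing $0$, and the delicate case is exactly when $\zeta'$ is near the wall $m\zeta'_0+(m+1)\zeta'_1 = 0$. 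There one invokes $\zeta^0$-semistability of $X$ directly (which holds for all fixed points since $\zeta'$-stability implies $\zeta^0$-semistability) to rule out problematic submodules, reducing to the clean dichotomy $S_\infty\in\{0,\C\}$ exactly as in the proof of \lemref{lem:zeta,l}. Following Mochizuki \cite[4.4.3]{Moc}, the argument is essentially the translation of the $\ell = 0$ endpoint case: if $\min(\Is)>\ell\ge\ell_{\min}$ one can shrink to $\ell=0$, where $(m,0)$-stability is $\zeta^-$-stability and $\Xs\cong C_m^{\oplus p}$ can never split off, giving the contradiction. So the proof is short modulo these interpolation estimates, and I would write it by citing the computation inside the proof of \lemref{lem:zeta,l} and specializing $S = \Xs$.
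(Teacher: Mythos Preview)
Your contradiction setup and the use of $\Xs$ as a destabilizing submodule are correct, and you correctly identify that the computation inside the proof of \lemref{lem:zeta,l} gives $\mu_{\zeta,\bn}(\Xs) < \mu_{\zeta,\bn}(X)$ when $(\Xs)_1\cap F^\ell = 0$. But you then try to transplant this computation to the intermediate parameter $\zeta'$, and you flag the obstacle yourself: conditions \eqref{eq:cond}(d),(e) were imposed for $\zeta$, and there is no reason for them to survive at $\zeta'$. Indeed near $\zeta^-$ the quantity $m\zeta'_0+(m+1)\zeta'_1$ is negative, so (d) fails outright. Your fallback suggestion of ``shrinking to $\ell=0$'' is too vague to be a proof.

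The paper avoids this interpolation problem entirely by checking the strict inequality at \emph{both endpoints} and using that $\mu_{\zeta',\bn}(\Xs)-\mu_{\zeta',\bn}(X)$ is affine in $\zeta'$. At $\zeta$, the argument from \lemref{lem:zeta,l} applies verbatim and gives $\mu_{\zeta,\bn}(\Xs) < \mu_{\zeta,\bn}(X)$. At $\zeta^-$, one has $m\zeta^-_0+(m+1)\zeta^-_1 < 0$, and since $\bn$ was chosen much smaller than $|\zeta-\zeta^-|$, the flag term cannot overcome the negative $\zeta^-$-term, so $\mu_{\zeta^-,\bn}(\Xs) < \mu_{\zeta^-,\bn}(X)$ as well. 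An affine function negative at both endpoints of a segment is negative throughout, so $\mu_{\zeta',\bn}(\Xs) < \mu_{\zeta',\bn}(X)$ for every $\zeta'$ on the segment, contradicting the equality required at a fixed point. This removes the need for any estimate at intermediate $\zeta'$.
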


\begin{proof}
  Suppose $\min(\Is) > \ell$. Then \eqref{eq:cond} implies
  $\mu_{\zeta,\bn}(\Xs) < \mu_{\zeta,\bn}(X)$. (See the proof of
  \lemref{lem:zeta,l}.) On the other hand we have
  $\mu_{\zeta^-,\bn}(\Xs) < \mu_{\zeta^-,\bn}(X)$ since $\bn$ is
  sufficiently smaller than $|\zeta - \zeta^-|$. Therefore we cannot
  have $\mu_{\zeta',\bn}(\Xs) = \mu_{\zeta',\bn}(X)$ for any $\zeta'$
  on the segment connecting $\zeta$ and $\zeta^-$.
  \begin{NB2}
    Corrected. 2010/04/02
  \end{NB2}%
  This contradicts with the assumption.
\end{proof}

\begin{NB}
Let $\mathcal D^{m,\ell}(c)$ be the set of decomposition types given in
\eqref{eq:Sc}.
\begin{equation*}
  \mathcal D^{m,\ell}(c) \defeq \left\{ \fI = (\If,\Is)% \subset \underline{N}^2
  \mid \underline{N} = \If\sqcup \Is, \If,\Is\neq \emptyset,
  |\Is| = p(m+1) \text{for $p\in \Z_{>0}$},
  \min(\Is)\le \ell
  \right\}.
\end{equation*}
The datum $\fI = (\If,\Is)$ determines the topological data of
$\Xf$, $\Xs$ as $c - pe_m$, $p e_m$. Let us denote them by $c_\flat$,
$c_\sharp$ respectively.
\end{NB}%

Conversely suppose an object $(X,F^\bullet) = (\Xf,\Ff^\bullet)\oplus
(\Xs,\Fs^\bullet)$ with the decomposition type $(\If,\Is)$ with
$\min(\Is)\le \ell$ is given. We also suppose $\Xs\cong
C_m^{\oplus p}$.
We take a point $x$ of $\proj(L_-\oplus L_+) \setminus \left(
  \proj(L_-)\sqcup \proj(L_+)\right)$ from the fiber over
$(X,F^\bullet)$.
Since we have $\mu_{\zeta,\bn}(\Xs) > \mu_{\zeta,\bn}(X)$ and
$\mu_{\zeta^-,\bn}(\Xs) < \mu_{\zeta^-,\bn}(X)$ by the same argument
as above, we can find $\zeta'$ with $\mu_{\zeta',\bn}(\Xs) =
\mu_{\zeta',\bn}(X)$. Then $x$ is semistable if and only if both
$(\Xf,\Ff)$ and $(\Xs,\Fs)$ are $(\zeta',\bn)$-stable.

\begin{Lemma}[\protect{\cite[4.4.4]{Moc}}]
  \textup{(1)} $(\Xf,\Ff)$ is $(\zeta',\bn)$-stable if and only if
  $(m,\min(\Is) -1)$-stable.
  
  \textup{(2)} $(\Xs,\Fs)$ is $(\zeta',\bn)$-stable if and only if
  $(m,+)$-stable, i.e., $\Xs\cong C_m^{\oplus p}$, and we have
  $S_1\cap \Fs^{\min(\Is)} = 0$ for any proper submodule $S\subsetneq
  \Xs$ of a form $S\cong C_m^{\oplus q}$.
\end{Lemma}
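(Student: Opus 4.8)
The plan is to prove this as the analogue of \lemref{lem:zeta,l}, carried out for the intermediate parameter $\zeta'$ and for the two summands of the fixed point; this is \cite[4.4.4]{Moc}. The essential preliminary is to locate $\zeta'$ precisely. By construction $\mu_{\zeta',\bn}(\Xf)=\mu_{\zeta',\bn}(\Xs)=\mu_{\zeta',\bn}(X)$, and since $\mu_{\zeta',\bn}$ is additive (as a weighted mean) on short exact sequences, this amounts to one linear condition on $\zeta'$. Using that $F^\bullet$ is a full flag of $V_1=\C^N$ (so $\dim F^i=i$ and $\sum_i n_i\dim F^i=\sum_i i\,n_i$), that $\zeta'$ may be taken normalized ($\zeta'\cdot\vdim X=0$, as it lies on the segment joining the normalized $\zeta$, $\zeta^-$), and that $\vdim\Xs=p(m,m+1,0)$ with $\dim(\Xs_1\cap F^i)=\#\{j\in\Is:j\le i\}$, the relation $\mu_{\zeta',\bn}(\Xs)=\mu_{\zeta',\bn}(X)$ pins down $m\zeta'_0+(m+1)\zeta'_1$ explicitly in terms of the $n_i$ and of $\Is$. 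Granting this, I would check, using \eqref{eq:cond2} and the fact that $\zeta'$ is close to $\zeta^0$ while $\bn$ is small, that after the harmless renormalization by a small multiple of $(1,1,1)$ forced by $\vdim\Xf$, $\vdim\Xs$ not being normalized, $\zeta'$ satisfies the estimates (\ref{eq:cond}b,c,d,e) relative to $c_\flat$ at level $\ell_\flat\defeq\min(\Is)-1$, and also relative to $c_\sharp$.

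Given this, part (1) follows at once from the proof of \lemref{lem:zeta,l} applied verbatim to $(\Xf,\Ff^\bullet)$ (whose $\infty$-component is $\C$): its $(\zeta',\bn)$-stability is equivalent to its $(m,\ell_\flat)$-stability, and the level is exactly $\ell_\flat=\min(\Is)-1$ because the indices $1,\dots,\min(\Is)-1$ all lie in $\If$ and are therefore unchanged under the order-preserving reindexing $\If\to\underline{|\If|}$ passing to the reduced flag. Both implications emerge, as in \lemref{lem:zeta,l}, from the dictionary matching submodules $S\subset\Xf$ violating one of the two stabilities with those violating the other.

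For part (2), recall that in our situation $\Xs\cong C_m^{\oplus p}$ is already known from the fixed-point analysis, so the content is the flag condition. Any submodule $S\subsetneq\Xs$ with $\zeta^0\cdot\vdim S<0$ satisfies $\mu_{\zeta',\bn}(S)<\mu_{\zeta',\bn}(\Xs)$ automatically (by the smallness built into $\zeta'$, $\bn$, as in the use of (\ref{eq:cond}c)), and no submodule has $\zeta^0\cdot\vdim S>0$ since $\Xs$ is $\zeta^0$-semistable; so only submodules with $\zeta^0\cdot\vdim S=0$ can destabilize. Such an $S$ is itself $\zeta^0$-semistable of slope $0$, hence a direct sum of copies of $C_m$, i.e.\ $S\cong C_m^{\oplus q}$ with $0<q<p$ (using that $C_m$ is $\zeta^0$-stable, \cite[\S2.2]{perv}, so simple in the abelian subcategory of $\zeta^0$-semistable objects of slope $0$; compare \cite[Th.~2.13, Prop.~5.3]{perv}). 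For such $S$ the quantity $\zeta'\cdot\vdim S/\rank S$ coincides with that of $\Xs$, so $\mu_{\zeta',\bn}(S)<\mu_{\zeta',\bn}(\Xs)$ reduces to a combinatorial inequality among the $n_i$, $\dim(S_1\cap\Fs^i)$ and $\dim\Fs^i$; feeding in the relation of the first paragraph and \eqref{eq:cond2} one sees, just as in the computation inside \lemref{lem:zeta,l}, that this inequality holds for every proper such $S$ if and only if $S_1\cap\Fs^{\min(\Is)}=0$ for every proper such $S$. Since $\Fs^{\min(\Is)}$ is the first nonzero step of the reduced flag on $\Xs$, this is precisely condition \defref{def:m,+stable}(1)(b), so $(\zeta',\bn)$-stability of $(\Xs,\Fs^\bullet)$ is equivalent to its $(m,+)$-stability.

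The main obstacle is the numerical bookkeeping of the first paragraph: transferring \eqref{eq:cond}, \eqref{eq:cond2} from $X$ at level $\ell$ to the summands $\Xf$, $\Xs$ with the correct renormalizations and shifted levels, and confirming that the relation defining $\zeta'$ places it in the regime where the dictionary of \lemref{lem:zeta,l} applies with level exactly $\min(\Is)-1$ on the $\flat$-side and with the $(m,+)$ condition on the $\sharp$-side. Everything past that is a rerun of the arguments already given for \lemref{lem:zeta,l}, so no new idea beyond Mochizuki's is needed.
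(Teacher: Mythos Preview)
Your plan is plausible but takes a detour the paper avoids. You want to verify that the intermediate parameter $\zeta'$, after renormalization to $\vdim\Xf$, satisfies the analogue of (\ref{eq:cond}b--e) for the summand $\Xf$ at level $\min(\Is)-1$, and then invoke \lemref{lem:zeta,l} verbatim. You correctly flag this transfer as the main obstacle, and you do not carry it out.

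The paper's argument sidesteps the transfer entirely. The point is that, by construction, $\mu_{\zeta',\bn}(\Xf)=\mu_{\zeta',\bn}(\Xs)$, so for a submodule $S\subset\Xf$ one may test the inequality $\mu_{\zeta',\bn}(S)<\mu_{\zeta',\bn}(\Xs)$ instead. When $S_\infty=0$ and $\zeta^0\cdot\vdim S=0$, both $S$ and $\Xs$ have dimension vector a multiple of $(m,m+1,0)$, so $\zeta'\cdot\vdim S/\rank S=\zeta'\cdot\vdim\Xs/\rank\Xs$ and the $\zeta'$--terms cancel outright. The stability inequality reduces to
\[
\frac{\sum_i n_i\dim(S_1\cap\Ff^i)}{\rank S}\ <\ \frac{\sum_i n_i\dim\Fs^i}{\rank\Xs}.
\]
Since $\dim\Fs^i=0$ for $i<\min(\Is)$, the right side involves only $n_i$ with $i\ge\min(\Is)$; by (\ref{eq:cond}b) alone these are dominated by $n_{\min(\Is)-1}$, and the inequality is equivalent to $S_1\cap\Ff^{\min(\Is)-1}=0$ (using also that $\Ff^{\min(\Is)}=\Ff^{\min(\Is)-1}$ because the jump at $\min(\Is)$ lies in $\Xs$). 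The case $S_\infty=\C$ is handled dually, and part~(2) by the same device. No renormalization of $\zeta'$, no reindexing of flags, and none of (\ref{eq:cond}c,d,e) are needed; only the hierarchy (\ref{eq:cond}b) enters.

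So the paper's proof is strictly simpler than your outline: the comparison with $\Xs$ makes your ``main obstacle'' disappear. Your paragraph for part~(2) already comes close to this idea; the missing step is to use the same cancellation for part~(1) rather than routing through a transferred version of \lemref{lem:zeta,l}.
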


Note that $\dim \Fs^{\min(\Is)} = 1$, $\dim \Ff^{\min(\Is)-1} =
\min(\Is)-1$. So the definitions in \subsecref{subsec:withflags} apply
though $\Ff$, $\Fs$ are flags which possibly have repetitions.

\begin{proof}
  (1) Let $S\subset \Xf$ be a submodule. We need to study the
  stability inequalities when $\zeta^0\cdot\vdim S = 0$. We first
  suppose $S_\infty = 0$. Then the inequality
\(
   \mu_{(\zeta',\bn)}(S) < \mu_{(\zeta',\bn)}(\Xf) = \mu_{(\zeta',\bn)}(\Xs)
\)   
is equivalent to
\begin{equation*}
    \frac{\sum_i n_i \dim (S_1\cap \Ff^i)}{\rank S}
    < \frac{\sum_i n_i \dim \Fs^i}{\rank \Xs}
\end{equation*}
since $\zeta'\cdot \vdim S/\rank S = \zeta'\cdot \vdim \Xs/\rank \Xs
= (2m+1)^{-1}\left(m\zeta'_0+(m+1)\zeta'_1\right)$.
\begin{NB2}
  Corrected and added according to the referee's comment. 2010/04/02
\end{NB2}%
Since $n_i$ ($i \ge \min(\Is)$) is much smaller than $n_{\min(\Is)-1}$
by (\ref{eq:cond}b), we must have $S_1\cap \Ff^{\min(\Is)-1} = 0$ if
the inequality holds.
\begin{NB}
  Suppose $S_1\cap \Ff^{\min(\Is)-1}\neq 0$. Then the LHS is greater
  than or equal to
\[
    \frac{n_{\min(\Is)-1}}{\rank X}.
\]
On the other hand the right hand side is smaller than or equal to
\[
   \sum_{i=\min(\Is)}^N i n_i.
\]
Therefore we have a contradiction to (\ref{eq:cond}b).
\end{NB}
Conversely suppose $S_1\cap \Ff^{\min(\Is)-1} = 0$. Then $S_1\cap
\Ff^{\min(\Is)} = 0$. Thus the inequality holds again by (\ref{eq:cond}b).

Next suppose $S_\infty = \C$. Then the inequality
\(
   \mu_{(\zeta',\bn)}(S) < \mu_{(\zeta',\bn)}(\Xf) = \mu_{(\zeta',\bn)}(\Xs)
\)   
is equivalent to
\begin{equation*}
    \frac{\sum_i n_i \dim (\Ff^i/S_1\cap \Ff^i)}{\rank (\Xf/S)}
    > \frac{\sum_i n_i \dim \Fs^i}{\rank \Xs}.
\end{equation*}
This is equivalent to
\(
    \Ff^{\min(\Is)-1} \not\subset S_1
\)
by the same argument as above. Thus $(\Xf,\Ff)$ is
$(\zeta',\bn)$-stable if and only if $(m,\min(\Is)-1)$-stable.

(2) First note that $\Xs$ must be $\zeta^0$-semistable as $\zeta'$ is
close to $\zeta^0$ and $\bn$ is small. Then $\Xs \cong C_m^{\oplus
  p}$, as explained in \defref{def:m,m+1-stable}. To prove the
remaining part, the same argument as above works.
\end{proof}

If we rephrase what we have observed in terms of sheaves, we get
\begin{Proposition}[\protect{\cite[4.5.2]{Moc}}]\label{prop:exceptfixed}
  $\cM^{\C^*}(\fI)$ is the moduli stack of objects
$(((E_\flat,\Phi),F^\bullet_\flat),(E_\sharp,F^\bullet_\sharp),\rho)$ where
\begin{itemize}
\item $((E_\flat,\Phi),F^\bullet_\flat)$ is $(m,\min(I_\sharp)-1)$-stable,
\item $(E_\sharp,F^\bullet_\sharp)$ is $(m,+)$-stable,
\item $\rho$ is an isomorphism
$L(E_\flat\oplus E_\sharp)\xrightarrow{\cong}\C$.
\end{itemize}
Moreover the restriction of the universal family $(\cE,\Fcal^\bullet)$
on $\cM$ decomposes as 
\begin{gather*}
   \cE = {}^{\cM}\cE_\flat\oplus{}^{\cM}\cE_\sharp,
\qquad
   \Fcal^\bullet = {}^{\cM}\Fcal^\bullet_\flat\oplus {}^{\cM}\Fcal^\bullet_\sharp,
\end{gather*}
where ${}^{\cM}\Fcal^\bullet_\flat$, ${}^{\cM}\Fcal^\bullet_\sharp$
are flags $(0 = {}^{\cM}\Fcal^0_\flat\subset\cdots \subset
{}^{\cM}\Fcal^N_\flat = \Vcal_1({}^{\cM}\cE_\flat))$,
$(0 = {}^{\cM}\Fcal^0_\sharp\subset\cdots \subset
{}^{\cM}\Fcal^N_\sharp = \Vcal_1({}^{\cM}\cE_\sharp))$.

The restriction of the $\C^*$-equivariant structure on the universal
family $(\cE, \Fcal)$ is trivial on the factor
$({}^{\cM}\cE_\flat,{}^{\cM}\Fcal_\flat)$ and of weight $1/pD$ on
$({}^{\cM}\cE_\sharp,{}^{\cM}\Fcal_\sharp)$ under the above
identification.
\end{Proposition}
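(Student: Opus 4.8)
The final statement to prove is \propref{prop:exceptfixed}, which describes $\cM^{\C^*}(\fI)$ as a moduli stack of triples together with the decomposition of the universal family. This is essentially a translation of the quiver-theoretic analysis of the preceding subsection into sheaf-theoretic language, following \cite[4.5.2]{Moc}.

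The plan is to exploit the analysis already carried out in \subsecref{subsec:C^*-action}. First I would recall that a point of $\cM$ mapping into $\cM^{\C^*}(\fI)$, other than those in $\cM_\pm$, is represented by a semistable point $x = ((X,F^\bullet),[z_-:z_+])$ with $[z_-:z_+]$ off the two sections, such that $(X,F^\bullet)$ admits a decomposition $(\Xf,\Ff^\bullet)\oplus(\Xs,\Fs^\bullet)$ into $(\zeta',\bn)$-stable pieces with equal slope for some $\zeta'$ on the segment joining $\zeta$ and $\zeta^-$, with $(\Xs)_\infty = 0$ and hence $\Xs\cong C_m^{\oplus p}$, of decomposition type $\fI = (\If,\Is)$. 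The content of the two lemmas just proved (\cite[4.4.4]{Moc}) is that $(\Xf,\Ff^\bullet)$ being $(\zeta',\bn)$-stable is equivalent to $((E_\flat,\Phi),F_\flat^\bullet)$ being $(m,\min(\Is)-1)$-stable, and $(\Xs,\Fs^\bullet)$ being $(\zeta',\bn)$-stable is equivalent to $(E_\sharp,F_\sharp^\bullet)$ being $(m,+)$-stable in the sense of \defref{def:m,+stable}. So I would assemble these equivalences: a point of $\cM^{\C^*}(\fI)$ is the data of such a pair, plus the datum $u = z_+/z_-$, which — via the identification $L_\pm \cong \shfO_\proj(1)|_{\cM_\pm}$ and the definition $(m_0,m_1) = k(\zeta-\zeta^-)$ from \subsecref{subsec:mastersmooth} — is precisely an isomorphism $\rho\colon L(E_\flat\oplus E_\sharp)\xrightarrow{\cong}\C$ (here using that the fiber of $L_\pm$ over $(X,F^\bullet)$ is a tensor power of $\det V_0$, $\det V_1$, i.e., of $\det H^1(E(-\linf))$, $\det H^1(E(C-\linf))$). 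To make this identification of moduli functors precise I would check that families over an arbitrary base are classified the same way on both sides, but this is routine once the pointwise statement and the stability equivalences are in hand.

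Next I would treat the decomposition of the universal family. Since the underlying vector space $V = V_0\oplus V_1$ decomposes $G$-equivariantly as $\Vf\oplus\Vs$ over $\cM^{\C^*}(\fI)$ — this is the generalized eigenspace decomposition under the stabilizer $\C^*$, as in \lemref{lem:stabilizer} — the complex \eqref{eq:cpx} defining $\cE$ splits accordingly into the subcomplex built from $\Vf$, $W$ and the one built from $\Vs$ (with $W$-component zero, since $(\Xs)_\infty = 0$). Hence $\cE = {}^{\cM}\cE_\flat \oplus {}^{\cM}\cE_\sharp$ and correspondingly the tautological flag $\Fcal^\bullet = R^1q_{2*}$-version of $V_1$ splits as ${}^{\cM}\Fcal^\bullet_\flat\oplus{}^{\cM}\Fcal^\bullet_\sharp$, where the two flags are indexed by $\underline N$ with relevant indices $\If$, $\Is$ respectively. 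Finally, for the weights of the $\C^*$-equivariant structure: by construction the $\C^*$-action was lifted in \eqref{eq:C*} via $\id_{\Vf}\oplus t^{1/pD}\id_{\Vs}$, so $\C^*$ (after the $\C^*_s$-cover $s\mapsto s^{pD}=t$) acts trivially on the $\flat$-summand and with weight $1$ on the $\sharp$-summand, i.e., with rational weight $1/pD$ in the normalization adopted in the discussion after \thmref{thm:master}. This gives the asserted weights on $({}^{\cM}\cE_\flat,{}^{\cM}\Fcal_\flat)$ and $({}^{\cM}\cE_\sharp,{}^{\cM}\Fcal_\sharp)$.

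I expect the main obstacle to be not any single hard argument but the bookkeeping in the second lemma's translation — namely verifying carefully that the precise numerical stability inequalities for $(\Xf,\Ff)$ and $(\Xs,\Fs)$ with respect to $\zeta'$ on the segment match exactly the $(m,\min(\Is)-1)$-stability and $(m,+)$-stability conditions, including the boundary cases where flags have repetitions (so that the conventions of \subsecref{subsec:withflags} still apply with $\dim\Fs^{\min(\Is)} = 1$, $\dim\Ff^{\min(\Is)-1} = \min(\Is)-1$). Once that dictionary is established, and the identification of $u$ with the orientation $\rho$ via the chosen linearizations is pinned down, the rest follows formally from the constructions in \subsecref{subsec:mstable} and \subsecref{subsec:C^*-action}. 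Since all of this is a faithful adaptation of \cite[4.5.2]{Moc} to the framed setting, I would keep the proof brief, citing [loc.\ cit.] for the parts that are verbatim the same and spelling out only the framing-dependent modifications.
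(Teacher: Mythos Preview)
Your proposal is correct and follows exactly the approach of the paper: the proposition is presented there as a direct rephrasing in sheaf language of the quiver-theoretic analysis of \subsecref{subsec:C^*-action} (the text introduces it with ``If we rephrase what we have observed in terms of sheaves, we get''), and the only explicit justification given is that the weight assertion follows from the description of the $\C^*$-action at \eqref{eq:C*}. Your write-up is more detailed than the paper's, but the content and route are the same.
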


The last assertion follows from the description of the $\C^*$-action
at \eqref{eq:C*}.
\begin{NB}
  This is different from \cite[4.6.1]{Moc}. This is because Mochizuki
  considers the quotient by $\SL(V)$, and \eqref{eq:C*} is replaced by
  $t^{-1}$ on $\Vs$ and $t^{\rank \Vf/\rank \Vs}$ on $\Vf$. (Notation
  is different, but was appeared in \cite[4.4.1]{Moc}.)
\end{NB}

\subsection{Decomposition into product of two moduli stacks}
\label{subsec:decomp}
Let $\baM$ be the moduli stack of objects
$((E_\flat,\Phi),F^\bullet_\flat,\rho_\flat)$ where
\begin{itemize}
\item $((E_\flat,\Phi),F^\bullet_\flat)$ is $(m,\min(\Is)-1)$-stable,
\item $\rho_\flat$ is an isomorphism $L(E_\flat)
  \xrightarrow{\cong}\C$.
\end{itemize}
We have a natural projection $\baM\to \tM^{m,\min(\Is)-1}$ forgetting
$\rho_\flat$. It is a principal $\C^*$-bundle. 
\begin{NB}
  Consider the associated line bundle $L \defeq \baM \times_{\C^*} \C$
  where $\C^*$ acts $\C$ by $t\cdot z = t^{-1}z$. So $L$ consists of
  equivalence classes $[\rho_\flat,z]$, where $(\rho_\flat,z)\sim (t
  \rho_\flat, tz)$.
  We have an isomorphism $L \cong \det H^1(E_\flat)^{\otimes
    m_0}\otimes \det H^1((E_\flat)(C))^{\otimes m_1}$ given by
\(
  [\rho_\flat,z] \mapsto \rho_\flat^{-1}(z).
\)
This is well-defined as $(t\rho_\flat)^{-1}(tz) = \rho_\flat^{-1}(z)$.
\end{NB}%
On the other hand, let $\tM^{m,+}$ be the moduli stack of oriented
$(m,+)$-stable sheaves with flags as in
\subsecref{subsec:orientedsheaf}. In order to distinguish from $\rho$,
we denote the orientation by $\rho_\sharp$.
Then we have
\begin{equation*}
  \cM^{\C^*}(\fI) \cong (\baM \times \tM^{m,+})/\C^*,
\end{equation*}
where $\C^*$ acts by $\rho_\flat \mapsto t\rho_\flat$,
$\rho_\sharp\mapsto t^{-1}\rho_\sharp$. Let us take a covering
$\C^*_s\to \C^*$ $s\mapsto s^{pD}= t$. Then we have an
\'etale and finite morphism $F\colon (\baM \times
\tM^{m,+})/\C^*_s\to \cM^{\C^*}(\fI)$ of degree
$1/pD$.

The action of $\C^*_s$ on the second factor $\tM^{m,+}$ is
trivial, since it can be absorbed in the isomorphism $s^{-1}\id \colon
E_\sharp\xrightarrow{\cong} E_\sharp$ as
\begin{equation}\label{eq:diagram}
  \begin{aligned}[m]
  \xymatrix{
    E_\sharp \ar[d]^{s^{-1}\id} && 
    L(E_\sharp) 
    = \det H^1(E_\sharp)^{\otimes m_0}\otimes \det H^1((E_\sharp)(C))^{\otimes m_1}
    \ar[r]^>>>>{s^{-pD} \rho_\sharp} \ar[d]^{s^{-pD}\id} & \C \ar@2{-}[d]^{\id}
\\
    E_\sharp && 
    L(E_\sharp) 
    = \det H^1(E_\sharp)^{\otimes m_0}\otimes \det H^1((E_\sharp)(C))^{\otimes m_1}
    \ar[r]_>>>>{\rho_\sharp} & \C.
}
%   \begin{CD}
%     E_\sharp @>>>
%     \det H^1(E_\sharp)^{\otimes m_0}\otimes \det H^1((E_\sharp)(C))^{\otimes m_1}
%     @> s^{-pD} \rho >> \C
% \\
%     @Vs^{-1}\id @Vs^{-pD}\id VV @|
% \\
%     E_\sharp @>>>
%     \det H^1(E_\sharp)^{\otimes m_0}\otimes \det H^1((E_\sharp)(C))^{\otimes m_1}
%     @>>\rho > \C.
%   \end{CD}
  \end{aligned}
\end{equation}
Therefore we have
\begin{equation*}
   (\baM \times \tM^{m,+})/\C^*_s 
   = \baM/\C^*_s \times \tM^{m,+}.
\end{equation*}
Furthermore we have an \'etale and finite morphism $G\colon
\baM/\C^*_s\to \baM/\C^*$ of degree $1/pD$. But the latter is
nothing but $\tM^{m,\min(\Is)-1}$. Hence we have the diagram in
\thmref{thm:master} with $\mathcal S(\mathfrak I) = (\baM \times
\tM^{m,+})/\C^*_s$.

From \eqref{eq:diagram} the universal sheaf $\cE_\sharp$ over
$\tM^{m,+}$ is twisted by the line bundle over $\baM/\C^*$ associated
with the representation of $\C^*$ with weight $1$.
\begin{NB}
  Here is my convention: The $G$-action on the principal $G$-bundle
  $P$ is from right, the representation $\rho$ is a left
  action. Therefore $(p,v) \sim (pg, \rho(g)^{-1}v)$ is the
  equivalence relation used to define the associated vector bundle.
\end{NB}%
It is a line bundle $L_{\mathcal S}$ such that 
\(
  L_{\mathcal S}^{\otimes pD} = G^* L(E_\flat)^*.
\)
Therefore we have
\(
   F^* \left({}^{\cM}\cE_\sharp\right) = G^*(\cE_\sharp)\otimes L_{\mathcal S}.
\)
On the other hand, we have
\(
   F^* \left({}^{\cM}\cE_\flat\right) = G^*(\cE_\flat).
\)

\subsection{Normal bundle}\label{subsec:normal}

Let us describe the normal bundles $\fN(\cM_\pm)$ of $\cM_\pm$ and
$\fN(\cM^{\C^*}(\fI))$ of $\cM^{\C^*}(\fI)$ in $\cM$
in this subsection. We need to prepare several notations.

Recall first that the covering $\C^*_s\to \C^*; s\mapsto s^{pD} = t$
acts trivially on $\cM^{\C^*}(\fI)$ (while the original $\C^*$
does not). Hence the tangent space at a fixed point has a natural
$\C^*_s$-module structure. We formally consider it as a module
structure of the original $\C^*$ dividing weights by $pD$.

Recall also that the restriction of the universal sheaf $\cE$
decomposes as ${}^\cM\cEf\oplus{}^\cM\cEs$ over $\cM^{\C^*}(\mathfrak
I)$ (see \propref{prop:exceptfixed}). Let $\Ext^\bullet_{q_2}$ denotes
the higher derived functor of the composite functor
$q_{2*}\circ{\mathcal H}om$. Let
\begin{equation}\label{eq:notation_normal}
  \begin{split}
    & \fN({}^\cM\cEf,{}^\cM\cEs) \defeq 
    - \sum_{a=0}^2 (-1)^a \Ext^a_{q_2}({}^\cM\cEf,{}^\cM\cEs),
%     \\
%     & \fN({}^\cM\cEs,{}^\cM\cEf) \defeq 
%     - \sum_{a=0}^2 (-1)^a \Ext^a_{q_2}({}^\cM\cEs,{}^\cM\cEf),
  \end{split}
\end{equation}
where this is a class in the equivariant $K$-group of
$\cM^{\C^*}(\fI)$. We use similar notation
$\fN({}^\cM\cEs,{}^\cM\cEf)$ exchanging the first and second factors.
Later we will also use $\fN(\bullet,\bullet)$ replacing 
${}^\cM\cEs$, ${}^\cM\cEf$ by similar universal sheaves.
We have already used this notation in \thmref{thm:m=0}.

% Let $\cEf$ be the universal sheaf on
% $\bp\times\tM^{m,\min(\Is)-1}(c_\flat)$, and $\Ext^\bullet_{q_2}$
% denotes the higher derived functors of the composite functor
% $q_{2*}\circ{\mathcal H}om$. Let
% \begin{equation*}
%   \begin{split}
%     & \fN(\cEf,C_m) \defeq 
%     - \sum_{a=0}^2 (-1)^a \Ext^a_{q_2}(\cEf,C_m),
%     \\
%     & \fN(C_m,\cEf) \defeq 
%     - \sum_{a=0}^2 (-1)^a \Ext^a_{q_2}(C_m,\cEf),
%   \end{split}
% \end{equation*}
% where these are classes in the equivariant $K$-group of
% $\tM^{m,\min(\Is)-1}(c_\flat)$ or $\cM^{\C^*}(\fI)$.

Let
\begin{equation*}
   \Flag(V_1^\alpha,I_\alpha)
   \defeq \{ \text{a flag $F_\alpha^\bullet$ 
     of $V_1^\alpha$, indexed by $\underline{N}$,
     $F^i_\alpha/F_\alpha^{i-1} = 0$ if and only if $i\notin I_\alpha$}\}
\end{equation*}
for $\alpha=\flat$, $\sharp$. We have an embedding
$\Flag(\Vf_1,\If)\times \Flag(\Vs_1,\Is)\to \Flag(V_1,\underline{N})$
given by $(\Ff^\bullet,\Fs^\bullet)\mapsto \Ff^\bullet\oplus
\Fs^\bullet$. Let $N_0$ denote its normal bundle. It has a natural
$\C^*$-equivariant structure as $\Flag(\Vf_1,\If)\times
\Flag(\Vs_1,\Is)$ is a component of $\C^*$-fixed points in
$\Flag(V_1,\underline{N})$ with respect to the $\C^*$-action induced by
$\C^*\ni t\mapsto \id_{\Vf_1}\oplus t^{1/pD}\id_{\Vs_1}\in \GL(V_1)$.
More precisely, when we write
\begin{equation*}
  N_0 = \bigoplus_{i>j} \Hom(F^i_\flat/F^{i-1}_\flat, F^j_\sharp/F^{j-1}_\sharp)
  \oplus
  \bigoplus_{i>j} \Hom(F^i_\sharp/F^{i-1}_\sharp, F^j_\flat/F^{j-1}_\flat),
\end{equation*}
the first term has weight $1/pD$ and the second term has weight $-1/pD$.
We have an associated vector bundle, denoted also by $N_0$, over
$\cM^{\C^*}(\fI)$, induced from the flag bundle structure
$\widetilde Q/G\to Q/G$ between quotient stacks.

\begin{Theorem}\label{thm:normalbundle}
  \textup{(1)} The normal bundle $\fN(\cM_\pm)$ of $\cM_\pm$ is
  $L_\mp^*\otimes L_\pm$ with the $\C^*$-action of weight $\pm 1$.

  \textup{(2)} The normal bundle $\fN(\cM^{\C^*}(\fI))$ of
  $\cM^{\C^*}(\fI)$ is equivariant $K$-theoretically given by
  \begin{equation*}
%  \begin{multline*}
    N_0 + \fN({}^\cM\cEf,{}^\cM\cEs)\otimes I_{1/pD}
%\\
    + \fN({}^\cM\cEs,{}^\cM\cEf) %\otimes L_{\mathcal S}^*
    \otimes I_{-1/pD},
%  \end{multline*}
  \end{equation*}
  where $I_n$ denotes the trivial line bundle over
  $\cM^{\C^*}(\fI)$ with the $\C^*$-action of weight $n$.
\end{Theorem}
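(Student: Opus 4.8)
The strategy is to compute each normal bundle as the moving part (under the $\C^*$-action) of the tangent complex of $\cM$ restricted to the relevant fixed locus, using the GIT presentation $\cM = \proj(L_-\oplus L_+)^{\mathrm{ss}}/G$. First I would recall that for a smooth quotient stack $[\widetilde P/G]$ the tangent complex at a point is the two-term complex $[\operatorname{Lie}G \to T_{\widetilde P}]$, the differential of the $G$-action; here $\widetilde P = \proj(L_-\oplus L_+)$ over $\widetilde Q = Q\times\Flag(V_1,\underline N)$, so $T_{\widetilde P}$ splits as the pullback of the tangent complex of $\widetilde Q/G$ (whose moving part I control via \eqref{eq:tangentcpx}, \eqref{eq:Ext(X,C_m)}, \eqref{eq:Ext(C_m,X)}) plus the relative tangent bundle of the $\proj^1$-bundle. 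The $\C^*$-action is the fiberwise scaling $t\cdot[z_-:z_+]=[tz_-:z_+]$, composed with the extra torus factor $\id_{\Vf}\oplus t^{1/pD}\id_{\Vs}$ at exceptional fixed points as in \eqref{eq:C*}, so I can read off weights summand by summand.

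For part (1): along $\cM_\pm = \proj(L_\pm)^{\mathrm{ss}}/G$ the base directions in $\widetilde Q/G$ are already the tangent directions of $\tM^{m,\ell}$ (resp.\ $\tM^{m,0}$) and carry $\C^*$-weight $0$, hence are fixed. The only moving direction is the relative tangent bundle of $\proj(L_-\oplus L_+)\to\widetilde Q$ restricted to the section $\proj(L_\pm)$, which is the standard computation $T_{\proj^1/\mathrm{base}}|_{[1:0]} = L_-^*\otimes L_+$ and $|_{[0:1]} = L_+^*\otimes L_-$; the $\C^*$-weights are $+1$ on $\cM_+$ (where $z_-$ is the small coordinate) and $-1$ on $\cM_-$, after descending. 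This gives $\fN(\cM_\pm) = L_\mp^*\otimes L_\pm$ with weight $\pm1$, which is the asserted formula. I would also note $L_+^*\otimes L_-$ restricted to $\cM_-$ is nonzero precisely because $\zeta\neq\zeta^-$, so the two loci are genuinely separated.

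For part (2): at a point of $\cM^{\C^*}(\fI)$ the underlying object is $(\Xf,\Ff^\bullet)\oplus(\Xs,\Fs^\bullet)$ with $\Xs\cong C_m^{\oplus p}$, and the homogeneous coordinate $[z_-:z_+]$ is not on either section, so the $\proj^1$-direction is \emph{fixed} (the scaling $t\cdot[z_-:z_+]$ is undone by the $\Vs$-torus, exactly as the computation in \eqref{eq:C*} shows). Thus $\fN(\cM^{\C^*}(\fI))$ comes entirely from the moving part of the tangent complex of $\widetilde Q/G$. That tangent complex decomposes, using the block decomposition $V = \Vf\oplus\Vs$ and the flag splitting $V_1 = \Vf_1\oplus\Vs_1$, into: the $(\flat,\flat)$-block and $(\sharp,\sharp)$-block (both $\C^*$-fixed, contributing to the tangent of $\cM^{\C^*}(\fI)$ and so excluded from the normal bundle), the $(\flat,\sharp)$ and $(\sharp,\flat)$ off-diagonal sheaf-theoretic blocks, and the flag-variety off-diagonal block $N_0$. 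By the identification of cohomology of \eqref{eq:tangentcpx} with $\fN(\cE,\cE)$ (and of \eqref{eq:Ext(X,C_m)}, \eqref{eq:Ext(C_m,X)} with $\Ext^\bullet(E,C_m)$, $\Ext^\bullet(C_m,E)$), the off-diagonal sheaf blocks give $\fN({}^\cM\cEf,{}^\cM\cEs)$ and $\fN({}^\cM\cEs,{}^\cM\cEf)$ in the equivariant $K$-group; the weights $1/pD$ and $-1/pD$ are read off from $\eqref{eq:C*}$ since a $\Hom(\Vf,\Vs)$-type direction is scaled by $t^{1/pD}$ and a $\Hom(\Vs,\Vf)$-type by $t^{-1/pD}$. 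The flag block contributes $N_0$ with the weights recorded in its explicit description. Collecting these gives the stated formula.

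The main obstacle I anticipate is the bookkeeping of the $\proj^1$-direction at the exceptional fixed loci and, relatedly, the correct normalization of the $\C^*_s$-versus-$\C^*$ weights: one must check carefully that the scaling on $[z_-:z_+]$ together with the character of $G$ through which $L_\pm$ are defined (namely $D = k(m,m+1)\cdot(\zeta-\zeta^-)$, so that $L_+^*\otimes L_-$ has $G$-weight $-pD$ on the $\Xs$-factor) exactly cancels the fiberwise scaling, making that direction fixed — this is the content of \eqref{eq:C*} and must be used. Once that cancellation is established, the rest is the routine decomposition of the tangent complex into $G$-equivariant blocks and matching each block to the $\Ext$-complexes already set up in \secref{sec:quiver}. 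A secondary point requiring care is that the statement is only $K$-theoretic: individual $\Ext^a$ groups need not be locally free, so one should phrase everything in the equivariant $K$-group and invoke the surjectivity of $d\mu$ on the $(m,m+1)$-semistable locus (the Corollary after \lemref{lem:Ext+}) to know the tangent complex has cohomology only in the middle, legitimizing the identification of the moving part of the virtual tangent space with the displayed alternating sums.
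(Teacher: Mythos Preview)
Your approach is correct and matches the paper's: identify the normal bundle as the nonzero-weight part of the tangent complex of the GIT quotient $\proj(L_-\oplus L_+)^{\mathrm{ss}}/G$, which decomposes into the relative $\proj^1$-direction (giving $L_\mp^*\otimes L_\pm$ for (1), and weight $0$ for (2) via the cancellation in \eqref{eq:C*}), the off-diagonal blocks of the deformation complex \eqref{eq:tangentcpx} (giving the two $\fN$ terms with weights $\pm 1/pD$), and the flag off-diagonal block $N_0$. One harmless slip in your part (1): with the paper's quotient convention for $\proj(L_-\oplus L_+)$, the tautological subbundle at $\proj(L_\pm)$ is $L_\mp$, so the relative tangent there is $\Hom(L_\mp,L_\pm)=L_\mp^*\otimes L_\pm$; your displayed formulas at $[1:0]$ and $[0:1]$ are swapped, though your final conclusion $\fN(\cM_\pm)=L_\mp^*\otimes L_\pm$ is stated correctly.
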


\begin{NB}
  \begin{equation*}
    \begin{split}
    & \dim \cM = 
    - \chi(\cE,\cE(-\linf)) + \dim \Flag(V_1,\underline{N}) + 1 
\\
    =\; &
    \begin{aligned}[t]
    & - \chi(\cEf,\cEf(-\linf))
     - \chi(\cEf,C_m^{\oplus p}) 
     - \chi(C_m^{\oplus p},\cE_1) - p^2
\\
    &\qquad
    + \dim \Flag(\Vf_1,\If) + \dim \Flag(\Vs_1,\Is) + \rank N_0 + 1
    \end{aligned}
    \end{split}
  \end{equation*}
On the other hand, the dimension of $\cM^{\C^*}(\fI)$ is
\begin{equation*}
  \begin{split}
  & \dim (\text{moduli of $(\zeta^0,l)$-stable objects})
      + \dim \Flag\left[\C^{m+1}\otimes\cQ/[\xi]\to \Gr((\C^{m+1})^*,p)\right]
\\
  =\; & - \chi(\cEf,\cEf(-\linf)) + \dim \Flag(\Vf_1,\If)
  + p(m+1 - p) + \dim \Flag(\Vs_1,\Is) - (p(m+1)-1)
\\
  =\; & - \chi(\cEf,\cEf(-\linf)) + \dim \Flag(\Vf_1,\If)
  + \dim \Flag(\Vs_1,\Is) - p^2 + 1.
  \end{split}
\end{equation*}
Therefore
\begin{equation*}
  \dim \cM - \dim \cM^{\C^*}(\fI)
  = - \chi(\cEf,C_m^{\oplus p}) 
    - \chi(C_m^{\oplus p},\cEf) + \rank N_0.
\end{equation*}
\end{NB}

\begin{proof}
First consider the case of $\cM_\pm = \proj(L_\pm)^{\mathrm{ss}}/G$.
The normal bundle is the descent of the normal bundle
$\proj(L_\pm)\subset \proj(L_-\oplus L_+)$. Then it is $L_\mp^*\otimes
L_\pm$ with the $\C^*$-action of weight $\pm 1$.
\begin{NB}
  The relative tangent bundle is $\Hom(\text{tautological
    subbundle},\text{universal quotient})$. Therefore its restriction
  to $\proj(L_\pm)$ is $\Hom(L_\mp,L_\pm)\cong L_\mp^*\otimes L_\pm$.

  On the other hand, the weight of $\C^*$-action can be calculated
  locally. We can consider $[z_-:z_+]\to [tz_-:z_+]$, the action on
  $\proj^1$. At $[0:1]$, the weight is $1$. At $[1:0]$, the weight is $-1$.
\end{NB}

In the remaining of the proof we consider the case of
$\cM^{\C^*}(\fI)$. The normal bundle is the sum of nonzero
weight subspaces in the restriction of the tangent bundle of $\cM$ to
$\cM^{\C^*}(\fI)$.

Take a point $((X,F^\bullet),[z_-:z_+])\in \proj(L_-\oplus
L_+)^{\mathrm{ss}}$ which descends to a fixed point in
$\cM^{\C^*}(\fI)$. We have a decomposition $(X,F^\bullet) =
(\Xf,\Ff^\bullet)\oplus(\Xs,\Fs^\bullet)$ as above.
Let $V = \Vf\oplus \Vs$ be the corresponding decomposition of $V$.

We lift the $\C^*$-action on $\cM$ to $\proj(L_-\oplus L_+)^{\mathrm{ss}}$
as in \eqref{eq:C*}.
\begin{NB}
by
\begin{equation*}
   ((X,F^\bullet),[z_-:z_+]) \mapsto
   (\id_{\Vf}\oplus t^{1/pD} \id_{\Vs})\cdot ((X,F^\bullet),[t z_-:z_+]),
\end{equation*}
which is well-defined on the covering $\C^*\to \C^*; s\mapsto s^{pD} =
t$, and fixes $((X,F^\bullet),[z_-:z_+]) =
((\Xf,\Ff^\bullet)\oplus(\Xs,\Fs^\bullet),[z_-:z_+])$. 
\begin{NB2}
Let $g = \id_{\Vf}\oplus t^{1/pD} \id_{\Vs}$. Then
\begin{equation*}
  \begin{split}
  g\cdot ((\Xf,\Ff^\bullet)
  \oplus (\Xs,\Fs^\bullet),[t z_-:z_+])
  &= ((\Xf,\Ff^\bullet)
  \oplus (\Xs,\Fs^\bullet),[t z_-: \chi_{\zeta}(g)\chi_{\zeta^-}(g)^{-1} z_+])
\\
  &= ((\Xf,\Ff^\bullet)
  \oplus (\Xs,\Fs^\bullet),[t z_-: (t^{1/pD})^{pD} z_+])
\\
  &= ((\Xf,\Ff^\bullet)
  \oplus (\Xs,\Fs^\bullet),[z_-: z_+]).
  \end{split}
\end{equation*}
\end{NB2}%
Since this $\C^*$-action is equal to the original one up to the $G$-action,
it is the same on the quotient $\cM$.
\end{NB}

The tangent space at the point corresponding to
$((X,F^\bullet),[z_-:z_+])$ is the quotient vector space
\begin{equation*}
   \Ker d\mu \oplus T_{F^\bullet}\Flag(V_1,\underline{N})
   \oplus T_{[z_-:z_+]}\proj^1
   / \Hom(V_0,V_0)\oplus \Hom(V_1,V_1),
\end{equation*}
where $d\mu$ and $\Hom(V_0,V_0)\oplus \Hom(V_1,V_1)\to \Ker d\mu$ are
as in \eqref{eq:tangentcpx}, and $\Hom(V_0,V_0)\oplus \Hom(V_1,V_1)\to
T_{F^\bullet}\Flag(V_1,\underline{N})\oplus T_{[z_-:z_+]}\proj^1$ is
the differential of the $G$-action.
This quotient space has the $\C^*$-module structure induced from the
above lift of the $\C^*$-action.

In the equivariant $K$-group, we can replace this space by
\(
  \Ker d\mu - (\Hom(V_0,V_0)\oplus \Hom(V_1,V_1))
  + T_{F^\bullet}\Flag(V_1,\underline{N})
  + T_{[z_-:z_+]}\proj^1
\)

The expression
\(
  \Ker d\mu - (\Hom(V_0,V_0)\oplus \Hom(V_1,V_1))
\)
is equal to the alternating sum of cohomology groups of the tangent
complex \eqref{eq:tangentcpx} in the $K$-group.
Moreover, the complex \eqref{eq:tangentcpx} decomposes into four
parts, the tangent complex for $\Xf$, the sum of $p$-copies of the
complex~\eqref{eq:Ext(X,C_m)} for $\Xf$, the sum of $p$-copies of the
complex~\eqref{eq:Ext(C_m,X)} for $\Xf$, and the sum of $p^2$-copies
of the tangent complex for $C_m$. Since $\C^*$ acts on $\Vf$ with
weight $0$ and $\Vs$ with weight ${1/pD}$, the first and fourth
parts do not contribute to the normal bundle. The second and third
terms give
\begin{equation*}
  \begin{split}
    & \begin{aligned}[t]
      & \left(
        -\Ext^0_p({}^\cM\cEf,{}^\cM \cEs) + \Ext^1_p({}^\cM\cEf,{}^\cM \cEs)
      \right)\otimes I_{1/pD}
      \\
      & \quad + \left(
        -\Ext^0_p({}^\cM \cEs,{}^\cM\cEf) + \Ext^1_p({}^\cM \cEs,{}^\cM\cEf)
      \right) \otimes I_{-1/pD}
    \end{aligned}
    \\
    =\; &
    \fN({}^\cM\cEf,{}^\cM \cEs)\otimes I_{1/pD}
    +
    \fN({}^\cM \cEs, {}^\cM\cEf)\otimes I_{-1/pD}.
  \end{split}
\end{equation*}

The contribution from $T_{F^\bullet}\Flag(V_1,\underline{N})$ is given
by $N_0$. Thus we have \thmref{thm:normalbundle}. We have no contribution
from $T_{[z_-:z_+]}\proj^1$.
\begin{NB}
  Any choice of $[z_-:z_+]$ (other than $[1:0]$, $[0:1]$) gives a
  fixed point. Thus the fixed point locus contains the fiber $\proj^1$
  (at least locally).
\end{NB}
\end{proof}

\subsection{Relative tangent bundles of flag bundles}

Let $\Trel$ be the bundles over various moduli stacks
induced from the relative tangent bundle of the flag bundle
$\widetilde Q/G\to Q/G$ as in \subsecref{subsec:1st_formula}

On the other hand, we have vector bundles $\Trel^\flat$,
$\Trel^\sharp$ over
$\tM^{m,\min(\Is)-1}(c_\flat)\times\tM^{m,+}(c_\sharp)$ coming from
the tangent bundles of $\Flag(\Vf_1,\If)$ and $\Flag(\Vs_1,\Is)$.
Recall the normal bundle $N_0$ of $\Flag(\Vf_1,\If)\times
\Flag(\Vs_1,\Is)$ in $\Flag(V_1,\underline{N})$ is considered as a
vector bundle over
$\tM^{m,\min(\Is)-1}(c_\flat)\times\tM^{m,+}(c_\sharp)$ (see
\subsecref{subsec:normal}), so we have an exact sequence
\begin{equation}
  \label{eq:reltan1}
   0 \to G^*(\Trel^\flat\oplus \Trel^\sharp) \to
   F^* \Trel|_{\cM^{\C^*}(\fI)} \to G^*(N_0) \to 0
\end{equation}
of $\C^*$-equivariant vector bundles, where $\Trel^\flat\oplus
\Trel^\sharp$ has weight $0$ and $N_0$ has the $\C^*$-equivariant
structure as describe in \subsecref{subsec:normal}.

Recall also that the second factor $\tM^{m,+}(c_\sharp)$ is the flag
bundle
$\Flag(\Vcal_1^\sharp/\Fcal_\sharp^{\min(\Is)},\Is\setminus\{\min(\Is)\})$
over the quotient stack $(\det\Qcal^{\otimes D})^\times/\C^*$ (see
\propref{prop:m,+}). Let $\Trel'$ be the relative tangent bundle of
the fiber. Then we have an exact sequence of vector bundles
\begin{equation}\label{eq:reltan2}
   0 \to \Trel'
     \to \Trel^\sharp
     \to
    \Hom(\Vcal_1^\sharp/\Fcal_\sharp^{\min(\Is)},\Fcal_\sharp^{\min(\Is)}) \to 0
\end{equation}
coming from the fibration
$\Flag(\Vcal_1^\sharp/\Fcal_\sharp^{\min(\Is)}) \to
\Flag(\Vcal_1^\sharp) \to \proj(\Vcal_1^\sharp)$.
\begin{NB}
The following was wrong.
\begin{NB2}
The last term is the pullback of the bundle over $\Gr({m+1},p)$.
\end{NB2}%
\end{NB}
\begin{NB}
  This decomposition will be important when we cut the enhanced master
  space $\cM$ by $e(\Trel^\sharp)$.
\end{NB}

\section{Wall-crossing formula}\label{sec:wallcrossing}

We now turn to Mochizuki method {\bf (Mb)}. (See \cite[\S7.2]{Moc}.)
We will apply the fixed point formula to the equivariant homology
groups $H^{\hT\times\C^*}_*(\cM(c))$ of the enhanced moduli space
$\cM(c)$, which is a module over $H_{\C^*}^*(\pt)\cong \C[\hbar]$.
For the definition of the homology group of a Deligne-Mumford stack,
see \cite{Vistoli}.
\begin{NB}
%  I probably need to explain the homology group of a smooth
%  Delinge-Mumford stack. 
  But it is just a homology group of the
  underlying scheme \cite{Vistoli}.
  Since it is just an orbifold, or a Satake V-manifold, and in
  particular is an $\Q$-homology manifold. In particular, we have
  Poincar\'e duality theorem on the rational homology groups.
\end{NB}

\subsection{Equivariant Euler class}\label{subsec:Euler}

In the fixed point formula we put an equivariant Euler class in the
denominator and we later consider an equivariant Euler class of a
class in the $K$-group. Let us explain how we treat them in this
subsection.

Let $Y$ be a variety (or a Deligne-Mumford stack) with a trivial
$\C^*$-action.
The Grothendieck group of $\C^*$-equivariant vector bundles decomposes
as
\(
   K_{\C^*}(Y) = K(Y)\otimes_{\Z} R(\C^*)
\)
where $R(\C^*)$ is the representation ring of $\C^*$. Let $I_n$ denote the 
$1$-dimensional representation of $\C^*$ with weight $n$. For a class
$\alpha\in K(Y)$, we set
\begin{equation*}
   e(\alpha\otimes I_n)
   \defeq \sum_{i\ge 0} c_i(\alpha) (n\hbar)^{r(\alpha)-i}
   \in H^*(Y)[\hbar^{-1},\hbar] \defeq
   H^*(Y)\otimes_{\C} \C[\hbar^{-1},\hbar],
\end{equation*}
where
$c_i(\alpha)$ is the $i^{\mathrm{th}}$ Chern class of $\alpha$
and $r(\alpha) = \ch_0(\alpha)$ is the (virtual) rank of $\alpha$.
If $n\neq 0$, this element is invertible.
\begin{NB}
  \begin{equation*}
    \frac1{e(\alpha\otimes I_n)}
    = \frac{(n\hbar)^{-r(\alpha)}}
    {1 + c_1(\alpha)\hbar^{-1} + c_2(\alpha)\hbar^{-2} + \cdots}
    = (n\hbar)^{-r(\alpha)}\left(
       1 - c_1(\alpha)\hbar^{-1} + \cdots
      \right).
  \end{equation*}
\end{NB}%
In general, if $\alpha\in K_{\C^*}(Y)$ is a sum of $\alpha_n\otimes
I_n$ with $n\neq 0$, its equivariant Euler class $e(\alpha)$ is defined
in $H^*(Y)[\hbar^{-1},\hbar]$.

We also consider the case when another group $\hT$ acts on $Y$. Then
$e(\alpha\otimes I_n)$ can be still defined as an element in
$\varprojlim_n H^*(Y\times_{\hT} E_n)[\hbar^{-1},\hbar]$, where
$E_n\to E_n/\hT$ is a finite dimensional approximation of the
classifying space $E\hT\to B\hT$ for $\hT$.
Note $c_i(\alpha) \neq 0$ for possibly infinite $i$'s.
\begin{NB}
Here $\varprojlim_n H^*(Y\times_{\hT} E_n)[\hbar^{-1},\hbar]$ is equal to 
the direct {\it product\/}
\(
   \prod_{k=0}^\infty \left(H^k_{\hT}(Y)[\hbar^{-1},\hbar]\right).
\)

On the other hand, Mochizuki used $R[[\hbar^{-1},\hbar]$, the algebra
of formal power series $\sum a_j \hbar^j$ such that $\{ j > 0\mid
a_j\neq 0\}$ is finite for an algebra $R$.
\end{NB}

\subsection{Wall-crossing formula (I)}\label{subsec:1st_formula}

The projective morphism $\widehat\pi\colon \cM(c)\to M_0(p_*(c))$
induce a homomorphism $\widehat\pi_*\colon H^{\hT\times\C^*}_*(\cM(c))
\to H^{\hT\times\C^*}_*(M_0(p_*(c)))$. Since $\C^*$ acts trivially on
$M_0(p_*(c))$, we have $H^{\hT\times\C^*}_*(M_0(p_*(c)))\cong
H^{\hT}_*(M_0(p_*(c)))\otimes\C[\hbar]$.
For a cohomology class $\bullet$ on $\cM(c)$, we denote the
pushforward $\widehat\pi_*(\bullet\cap [\cM(c)])$ by
$\int_{\cM(c)}\bullet$ as in \secref{sec:1st}. We also use similar
push-forward homomorphisms from homology groups of various moduli
stacks and denote them in similar ways, e.g., $\int_{\cM_\pm(c)}$,
$\int_{\cM^{\C^*}(\fI)}$, etc.
\begin{NB2}
  Editted according to the referee's suggestion. 2010/04/01
\end{NB2}%

Let $e(\Fcal)$ denote the equivariant Euler class of the equivariant
vector bundle $\Fcal$, that is the top Chern class
$c_{\topdeg}(\Fcal)$.  See \subsecref{subsec:Euler} for its
generalization to a $K$-theory class $\Fcal$ in our situation.

Let $\Phi(\cE)$ be as in \subsecref{subsec:statement}.
We denote also by $\Phi(\cE)$ the class on $\tM^{m,\ell}(c)$ given by
the same formula.
On the enhanced master space $\cM(c)$, we can consider the class
defined by the same formula as $\Phi(\cE)$, which is regarded as a
$\C^*$-equivariant class.
\begin{NB}
I abandon this convention.

Let us denote it by $\Phi_t(\cE)$ to emphasize its
$\C^*$-equivariance.
\end{NB}%

Let $\Trel$ be the relative tangent bundle of the flag bundle
$\widetilde Q/G\to Q/G$. Then we have the induced bundle over
$\tM^{m,\ell}(c)$ by the restriction. We denote it also by $\Trel$ for
brevity. We also have the pullback to the enhanced master space
$\cM(c)$, which is again denoted by $\Trel$.  It has a natural
$\C^*$-equivariant structure.
We introduce
\(
   \widetilde\Phi(\cE) \defeq \frac1{v_1(c)!} \Phi(\cE)\cup e(\Trel)
\)
so that we have
\begin{equation*}
    \int_{\tM^{m,0}(c)} \widetilde\Phi(\cE)
    = \int_{\bM^m(c)} \Phi(\cE).
\end{equation*}
Here $v_1(c) = \dim V_1(E)$ for a sheaf $E$ with $\ch(E) = c$.
\begin{NB}
We also set
\(
   \widetilde\Phi_t(\cE) 
   \defeq \frac1{v_1(c)!} \Phi_t(\cE)\cup e(\Trel).
\)
\end{NB}

Then the fixed point formula in the equivariant homology group gives us
\begin{equation*}
  \int_{\bM^m(c)} \Phi(\cE) 
  - \int_{\tM^{m,\ell}(c)} \widetilde\Phi(\cE)
   = \sum_{\fI\in\mathcal D^{m,\ell}(c)}
   \Res_{\hbar=0} \int_{\cM^{\C^*}(\fI)}
      \frac{\widetilde\Phi(\cE)}{e(\fN(\cM^{\C^*}(\fI)))},
\end{equation*}
where `$\Res_{\hbar=0}$' means taking the coefficient of $\hbar^{-1}$.

By \propref{prop:m,+} and Theorem~\ref{thm:master} together with
computations of normal bundles \subsecref{subsec:normal}, we can
rewrite the right hand side to get

\begin{Theorem}\label{thm:1st}
\begin{multline*}
   \int_{\tM^{m,\ell}(c)} \widetilde\Phi(\cE) 
   - \int_{\bM^m(c)} \Phi(\cE)
\\
  = 
  \sum_{\fI\in\mathcal D^{m,\ell}(c)}
   \int_{\tM^{m,\min(\Is)-1}(c_\flat)} 
   \Res_{\hbar=0} \left[\widetilde\Phi(\cEf\oplus
   (C_m\boxtimes\Qcal
      \otimes I_{-1}))
   \Phi'(\cEf)\right],
\end{multline*}
where $\widetilde\Phi(\cEf\oplus (C_m\boxtimes\Qcal \otimes I_{-1}))$
is defined exactly as $\widetilde\Phi(\cE)$ by replacing $\cE$ by
$\cEf\oplus (C_m\boxtimes\Qcal \otimes I_{-1})$ everywhere, and
$\Phi'(\cEf)$ is another
\begin{NB}
  I think this is wrong: Feb.25 (HN)
  multiplicative
\end{NB}%
cohomology class given by
\begin{equation}\label{eq:Phi'}
  \Phi'(\cEf) =
    \frac{(v_1(c_\sharp) - 1)! v_1(c_\flat)!}{v_1(c)!}
     \int_{\Gr(m+1,p)}
      \frac{
%       \Phi(C_m\boxtimes\Qcal
%       \otimes I_{-1})
%       \, 
      e\left(
        (V_1(C_m)\otimes\Qcal/
        \shfO)^*
        \right)
    }{
      e(\fN(\cEf,C_m)\otimes \Qcal
      \otimes I_{-1})
      \, e(\fN(C_m,\cEf)\otimes \Qcal^*
      \otimes I_{1})
    }.
%  \end{split}
%   \begin{split}
%   & \Phi'(\cEf) =
%     \frac{(v_1(c_\sharp) - 1)! v_1(c_\flat)!}{v_1(c)!}
%      \int_{\Gr(m+1,p)} \heartsuit
% \\
%   & \heartsuit
%       =  \frac{
%       \Phi(C_m\boxtimes\Qcal
%       \otimes I_{-1})
%       \, e\left(
%         (V_1(C_m)\otimes\Qcal/
%         \shfO)^*
%         \right)
%     }{
%       e(\fN(\cEf,C_m)\otimes \Qcal
%       \otimes I_{-1})
%       \, e(\fN(C_m,\cEf)\otimes \Qcal^*
%       \otimes I_{1})
%     }.
%   \end{split}
%
%   \begin{aligned}[t]
%     &
%     \frac{(v_1(c_\sharp) - 1)! v_1(c_\flat)!}{v_1(c)!}
% \\
%     & %\quad 
%       \times \Res_{\hbar=0} \int_{\Gr(m+1,p)} \frac{
%       \Phi_t(C_m\boxtimes\Qcal
%       \otimes I_{-1})
%       e\left(
%         \Hom(V_1(C_m)\otimes\Qcal/
%         \det\Qcal^{-1/p},\det\Qcal^{-1/p})
%         \right)
%     }{
%       e(\fN(\cEf,C_m)\otimes \Qcal
%       \otimes I_{-1})
%       e(\fN(C_m,\cEf)\otimes \Qcal^*
%       \otimes I_{1})
%     }.
%   \end{aligned}
\end{equation}
Here $I_n$ denotes the trivial line bundle with the $\C^*$-action of
weight $n$, and $\fN(\bullet,\bullet)$ is the equivariant $K$-theory
class given by the negative of the alternating sum of $\Ext$-groups
(see \eqref{eq:notation_normal}).
\textup(Note that $\Phi'$ depends on $\fI$.\textup)
\begin{NB}
  It should be independent of $c_\flat$ after we go to
  $\int_{\bM^m(c_\flat)}$.
\end{NB}
\end{Theorem}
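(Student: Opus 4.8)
\textbf{Proof plan for \thmref{thm:1st}.}
The plan is to apply the Atiyah--Bott--Lefschetz localization formula to the $\C^*$-action on the enhanced master space $\cM(c)$, exactly in the spirit of Mochizuki's method {\bf (Mb)}. First I would recall from \thmref{thm:master}(2) that the $\C^*$-fixed locus of $\cM(c)$ decomposes as $\cM_+\sqcup\cM_-\sqcup\bigsqcup_{\fI}\cM^{\C^*}(\fI)$, with $\cM_-\cong \tM^{m,0}(c)$ and $\cM_+\cong\tM^{m,\ell}(c)$, and that by construction $\int_{\tM^{m,0}(c)}\widetilde\Phi(\cE) = \int_{\bM^m(c)}\Phi(\cE)$ since $\widetilde\Phi = \frac1{v_1(c)!}\Phi\cup e(\Trel)$ and $\tM^{m,0}(c)$ is the full flag bundle over $\bM^m(c)$ with fibrewise integral $\int e(\Trel) = v_1(c)!$ (and similarly for $\tM^{m,\ell}(c)$ over $\cM_+$). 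Since $M_0(p_*(c))$ carries the trivial $\C^*$-action, the class $\widehat\pi_*\big(\widetilde\Phi(\cE)\cap[\cM(c)]\big)$ lies in $H^{\hT}_*(M_0(p_*(c)))\otimes\C[\hbar]$; on the other hand $\cM(c)$ is proper over $M_0(p_*(c))$ and is a smooth Deligne--Mumford stack (\thmref{thm:master}(1)), so localization in $\hT\times\C^*$-equivariant Borel--Moore homology applies after inverting $\hbar$, and the $\hbar^{0}$-coefficient of the left-hand side (a polynomial, in fact constant, in $\hbar$) must vanish. Taking $\Res_{\hbar=0}$, i.e.\ extracting the $\hbar^{-1}$-coefficient, of the sum of fixed-point contributions then yields
\begin{equation*}
  \int_{\bM^m(c)}\Phi(\cE) - \int_{\tM^{m,\ell}(c)}\widetilde\Phi(\cE)
  = \sum_{\fI\in\mathcal D^{m,\ell}(c)} \Res_{\hbar=0}\int_{\cM^{\C^*}(\fI)}
    \frac{\widetilde\Phi(\cE)}{e(\fN(\cM^{\C^*}(\fI)))},
\end{equation*}
where the contributions of $\cM_\pm$ have no $\hbar^{-1}$-term because their normal bundles $L_\mp^*\otimes L_\pm$ are honest line bundles of weight $\pm1$ (\thmref{thm:normalbundle}(1)), so $1/e(\fN(\cM_\pm))$ is $\mp\hbar^{-1} + O(\hbar^{-2})$ times a class of cohomological degree that, after pushforward, contributes to $\hbar^{0}$ only the genuine integrals $\int_{\bM^m(c)}\Phi$, $\int_{\tM^{m,\ell}(c)}\widetilde\Phi$ (this bookkeeping is where one follows \cite[\S7.2]{Moc} verbatim).

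Next I would rewrite each exceptional term. Using the diagram \eqref{eq:exceptfixed}, the \'etale finite morphisms $F,G$ of degree $1/pD$ let me pull back the integral over $\cM^{\C^*}(\fI)$ to an integral over $\mathcal S(\fI)$ and then push down to $\tM^{m,\min(\Is)-1}(c_\flat)\times\tM^{m,+}(c_\sharp)$; the two degree-$1/pD$ factors cancel. By \thmref{thm:master}(3) the universal sheaf restricts as $F^*\cE\cong G^*\cE_\flat\oplus G^*\cE_\sharp\otimes L_{\mathcal S}$ with $L_{\mathcal S}^{\otimes pD} = G^*(\cL(\cE_\flat)^*)$ and $\C^*$ acting with weight $0$ on the $\flat$-factor and weight $1/pD$ on the $\sharp$-factor, so after dividing weights by $pD$ (the passage to the cover $\C^*_s$) the $\sharp$-part gets the weight-$1$ twist $I_{1}$, equivalently $C_m\boxtimes\Qcal\otimes I_{-1}$ once I use \propref{prop:m,+} to identify $\tM^{m,+}(c_\sharp)$ over the Grassmann-bundle base; this accounts for the argument $\cEf\oplus(C_m\boxtimes\Qcal\otimes I_{-1})$ of $\widetilde\Phi$. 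The normal bundle $\fN(\cM^{\C^*}(\fI))$ is given by \thmref{thm:normalbundle}(2) as $N_0 + \fN(\cEf,\cEs)\otimes I_{1/pD} + \fN(\cEs,\cEf)\otimes I_{-1/pD}$; the flag-bundle part $N_0$ together with the relative tangent bundle exact sequences \eqref{eq:reltan1}, \eqref{eq:reltan2} and $\cEs = C_m\otimes(\text{something})$ combine with the normalizing factors $\frac{(v_1(c_\sharp)-1)!\,v_1(c_\flat)!}{v_1(c)!}$ to produce precisely the Euler-class expression $e\big((V_1(C_m)\otimes\Qcal/\shfO)^*\big)$ in the numerator of \eqref{eq:Phi'}, while the $\fN(\cEf,C_m)$ and $\fN(C_m,\cEf)$ terms give the denominators, after using $\cEs\otimes(\cF^1)^* = (\id\times h)^*(C_m\boxtimes\Qcal)$ to trade $\cEs$ for $C_m\boxtimes\Qcal$. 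Finally I integrate over the Grassmann-bundle fibre $\Gr(m+1,p)$, which is the definition of $\Phi'(\cEf)$, leaving an integral over $\tM^{m,\min(\Is)-1}(c_\flat)$.

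The main obstacle I expect is the careful tracking of the $\C^*$-weights and of the combinatorial normalization constants through the several nested flag/Grassmann bundles: one must show that the contribution of $N_0$ and $\Trel'$ (\eqref{eq:reltan1}--\eqref{eq:reltan2}) to $e(\fN(\cM^{\C^*}(\fI)))^{-1}$, after multiplication by the factor $\frac1{v_1(c)!}e(\Trel)$ hidden inside $\widetilde\Phi$, reorganizes exactly into $\frac{(v_1(c_\sharp)-1)!\,v_1(c_\flat)!}{v_1(c)!}$ times the fibre integral $\int_{\Gr(m+1,p)}$ with numerator $e((V_1(C_m)\otimes\Qcal/\shfO)^*)$ — this is the one genuinely nontrivial bookkeeping step, and it is precisely \cite[\S7.2]{Moc} adapted to our quiver setting, so the argument there applies once the dictionary of \secref{sec:quiver} and \secref{sec:mater spaces} is in place. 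Everything else — smoothness and properness of $\cM(c)$, validity of localization for Deligne--Mumford stacks (\cite{Vistoli}), and the $\hbar$-degree vanishing giving the residue identity — is either already established in the excerpt or standard.
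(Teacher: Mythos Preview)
Your overall strategy matches the paper's: apply $\C^*$-localization on $\cM(c)$, use that the pushforward to $M_0(p_*(c))$ is polynomial in $\hbar$ so the $\hbar^{-1}$-term of the fixed-point sum vanishes, then rewrite the exceptional contributions via \eqref{eq:exceptfixed} and \propref{prop:m,+}. The paper packages the residue extraction by multiplying with $c_1(\mathcal T(1))$ and evaluating at $\hbar=0$, but this is equivalent to your direct $\Res_{\hbar=0}$. There is, however, one genuine gap and several smaller inaccuracies.

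The gap: when you pass from $F^*\cE\cong G^*\cEf\oplus G^*\cEs\otimes L_{\mathcal S}$ and $\cEs\otimes(\cF^1)^*=(\id\times h)^*(C_m\boxtimes\cQ)$ to ``$C_m\boxtimes\cQ\otimes I_{-1}$'', you have silently dropped the fractional twists $\cL(\cEf)^{-1/pD}$ (from $L_{\mathcal S}$) and $\det\cQ^{-1/p}$ (from $\cF^1$). The paper removes these by a separate argument: all nontrivial $\C^*$-weights enter as $\hbar/pD$ shifted by a fixed class $\omega$ built from $c_1(\cL(\cEf))$ and $c_1(\det\cQ)$, so the integrand has the form $\sum_jA_j(\hbar-\omega)^j$; one checks directly that $\Res_{\hbar=0}(\hbar-\omega)^j=\delta_{j,-1}$, hence the residue is unchanged upon setting $\omega=0$. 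Only after this does the substitution $\hbar\mapsto\hbar_s=-\hbar/pD$ (with $\Res_{\hbar=0}=-pD\,\Res_{\hbar_s=0}$) convert $I_{\pm1/pD}$ into $I_{\mp1}$ and absorb the leftover $pD$; your ``dividing weights by $pD$'' conflates these two distinct steps and loses the sign. Smaller points: you wrote ``$\hbar^0$-coefficient\dots must vanish'' where $\hbar^{-1}$ is meant, and the pushforward is polynomial but not constant in $\hbar$; the leading terms of $1/e(\fN(\cM_\pm))$ are $\pm\hbar^{-1}$, not $\mp$; and $e(N_0)$ does not feed into the numerator $e((V_1(C_m)\otimes\cQ/\shfO)^*)$ --- rather it \emph{cancels} against the $e(N_0)$ factor in $F^*e(\Trel)$ via \eqref{eq:reltan1}, while that numerator comes from $e(\Hom(\Vcal_1^\sharp/\Fcal_\sharp^{\min(\Is)},\Fcal_\sharp^{\min(\Is)}))$ in \eqref{eq:reltan2}.
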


The proof will be given in the next subsection.

\subsection{Fixed point formula on the enhanced master space}
\label{subsec:fixedptformula}

Let $\iota_\pm$, $\iota_{\fI}$ be the inclusions of
$\cM^\pm(c)$, $\cM^{\C^*}(\fI)$ into $\cM(c)$. Let
$\iota_\pm^*$, $\iota_{\fI}^*$ be the pullback homomorphisms,
which are defined as $\cM(c)$ is smooth. They will be omitted
from formulas eventually.
Let $e(\fN(\cM_+(c)))$, $e(\fN(\cM^{\C^*}(\fI)))$ be the
equivariant Euler class of the normal bundles $\fN(\cM_+(c))$,
$\fN(\cM^{\C^*}(\fI))$.
The localization theorem in the equivariant cohomology groups says the
following diagram is commutative:
\begin{equation*}
% \xymatrix@C=-3pc{
%   H^{\hT\times\C^*}(\cM(c))\otimes_{\C[s]}\C[s,s^{-1}]
%    \ar[rd]_(.4){\int_{\cM(c)}} \ar[rr]_{d} && \ar[ld]^{
%      \int_{\cM^+(c)}+\int_{\cM^-(c)}
%      \\
%      + \sum_{\fI} \int_{\cM^{\C^*}(\fI)}}
%    H^{\hT\times\C^*}(\cM(c)^{\C^*})\otimes_{\C[s]}\C[s,s^{-1}]
%    \\
% & H^{\hT}(M_0(p_*(c)))\otimes\C[s,s^{-1}] & \\
% }
  \begin{CD}
    \varprojlim_n H_*^{\C^*}(\cM(c)\times_{\hT} E_n)
    \otimes_{\C[\hbar]}\C[\hbar^{-1},\hbar]
    @>\cong>>
    \varprojlim_n H_*(\cM(c)^{\C^*}\times_{\hT}E_n)[\hbar^{-1},\hbar]
\\
    @V\int_{\cM(c)}VV 
    @VV\int_{\cM_+(c)}+\int_{\cM_-(c)}
    + \sum_{\fI} \int_{\cM^{\C^*}(\fI)}V
\\
    \varprojlim_n H_*(M_0(p_*(c)\times_{\hT}E_n))[\hbar^{-1},\hbar]
    @=
    \varprojlim_n H_*(M_0(p_*(c)\times_{\hT}E_n))[\hbar^{-1},\hbar],
  \end{CD}
\end{equation*}
where the upper horizontal arrow is given by
\begin{equation*}
   \frac{\iota_+^*}{e(\fN(\cM_+(c)))}
   + \frac{\iota_-^*}{e(\fN(\cM_-(c)))}
   + \sum_{\fI} \frac{\iota_{\fI}^*}
   {e(\fN(\cM^{\C^*}(\fI)))},
\end{equation*}
and $E_n\to E_n/\hT$ is a finite dimensional approximation of $E\hT\to
B\hT$ as above.

\begin{NB}
Let $\Phi(\cE)$ be a multiplicative cohomology class of
slant products of the universal sheaf $\cE$, for example, as in
\eqref{eq:Phi}.
Let us also denote $\Phi(\cE)$ a similar class
on $\tM^{m,\ell}(c)$, and we define 
\( 
   \widetilde\Phi(\cE) \defeq \frac1{v_1(c)!} \Phi\cup e(\Trel)
\)
as in \subsecref{subsec:1st_formula}.
We have similar classes $\Phi(\cE)$, $\widetilde\Phi(\cE)$
on the enhanced master space $\cM(c)$, denoted by the same symbols.
\begin{NB2}
where the subscript $t$ is
put to emphasize their $\C^*$-dependence.
\end{NB2}%
\end{NB}

Let $\mathcal T(1)$ be the trivial line bundle with the $\C^*$-action
of weight $1$. 
\begin{NB}
  I must understand where $\mathcal T(1)$ is defined. Mochizuki
  defined it over an Artin stack, and then pullback. A pullback of a
  trivial bundle is a trivial bundle. Do we need to distinguish where
  it from ?

  I asked Mochizuki this question. He says he was worried about the
  group may not act trivially, and want to ensure that the bundle is
  `really' trivial. I must digest what he means......
\end{NB}
We have
\begin{equation*}
  \int_{\cM(c)}
    \widetilde\Phi(\cE) c_1(\mathcal T(1))
  = \sum_{a=\pm} \int_{\cM_a(c)}
        \frac{\widetilde\Phi(\cE) c_1(\mathcal T(1))}{e(\fN(\cM_a))}
    + \sum_{\fI\in\mathcal D^{m,\ell}(c)}
    \int_{\cM^{\C^*}(\fI)}
      \frac{\widetilde\Phi(\cE) c_1(\mathcal T(1))}
      {e(\fN(\cM^{\C^*}(\fI)))}
\end{equation*}
holds in $\varprojlim
H_*(M_0(p_*(c)\times_{\hT}E_n))[\hbar^{-1},\hbar]$. Since $\mathcal
T(1)$ is a trivial line bundle if we forget the $\C^*$-action, the
left hand side is restricted to $0$ at $\hbar = 0$:
\(
  \left.
  \int_{\cM(c)}
    \widetilde\Phi c_1(\mathcal T(1))
    \right|_{\hbar=0} = 0.
\)
On the other hand, $\left.c_1(\mathcal T(1))\right|_{\cM_a(c)} =
\hbar$ and $\left.c_1(\mathcal T(1))\right|_{\cM^{\C^*}(\fI)}
= \hbar$.
Moreover we have
\[
  \frac1{e(\fN(\cM_a))}
  = a (\hbar - \omega)^{-1} 
  = \frac{a}\hbar \sum_{i=0}^\infty \left(\frac\omega{\hbar}\right)^i
\]
for $a=\pm$, where $\omega = c_1(L_+^*\otimes L_-)$. Combining with
\thmref{thm:master}(2) we get
\begin{equation}\label{eq:fixedptformula}
  \int_{\tM^{m,\ell}(c)} \widetilde\Phi(\cE)
   - \int_{\bM^m(c)} \Phi(\cE) 
   = - \sum_{\fI\in\mathcal D^{m,\ell}(c)}
   \Res_{\hbar=0} \int_{\cM^{\C^*}(\fI)}
      \frac{\widetilde\Phi(\cE)}{e(\fN(\cM^{\C^*}(\fI)))}.
\end{equation}
\begin{NB}
  For a later purpose, we change the sign.
\end{NB}

We now use the diagram \eqref{eq:exceptfixed} to rewrite the integral
in the right hand side of \eqref{eq:fixedptformula}:
\begin{equation*}
  \int_{\cM^{\C^*}(\fI)}
      \frac{\widetilde\Phi(\cE)}{e(\fN(\cM^{\C^*}(\fI)))}
   = {pD} \int_{\mathcal S(\fI)}
      \frac{F^* \widetilde\Phi(\cE)}
      {F^*e(\fN(\cM^{\C^*}(\fI)))}.
\end{equation*}
From \thmref{thm:master}(3) we have
\begin{equation*}
  F^*(\Phi(\cE)) = \Phi(G^*(\cEf)\oplus
  G^*(\cEs)\otimes L_{\mathcal S}\otimes I_{1/pD}).
\end{equation*}
Since $L_{\mathcal S}^{\otimes pD} = G^*(\cL(\cE_\flat)^*)$, we have
\begin{equation*}
  c_1(L_{\mathcal S}) = - \frac1{pD} G^* c_1(\cL(\cE_\flat)).
\end{equation*}
Since $L_{\mathcal S}$ appears as $c_1(L_{\mathcal S})$ in
$\Phi(G^*(\cEs)\otimes L_{\mathcal S}\otimes I_{1/pD})$, we can
formally write
\begin{equation*}
  \Phi(G^*(\cEf)\oplus
  G^*(\cEs)\otimes L_{\mathcal S}\otimes I_{1/pD})
%  \Phi(G^*(\cEs)\otimes L_{\mathcal S}\otimes I_{1/pD})
  = G^* \Phi(\cEf\oplus
  \cEs\otimes \cL(\cE_\flat)^{-1/pD}\otimes I_{1/pD}),
\end{equation*}
meaning that we replace $c_1(\cL(\cE_\flat)^{-1/pD})$ by
$-c_1(\cL(\cE_\flat))/pD$.

Similarly from \thmref{thm:normalbundle} and \thmref{thm:master}(3) we have
\begin{equation*}
  \begin{split}
   & F^* e(\fN(\cM^{\C^*}(\fI)))
\\
  =\; & 
  F^*\left(e(\fN({}^\cM\cEf,{}^\cM\cEs)\otimes I_{1/pD})
  e(\fN({}^\cM\cEs,{}^\cM\cEf)\otimes I_{-1/pD})
  e(N_0)\right)
\\
  =\; & 
  G^*\left(
  e(\fN(\cEf,\cEs)\otimes \cL(\cE_\flat)^{-1/pD} \otimes I_{1/pD})
  e(\fN(\cEs,\cEf)\otimes \cL(\cE_\flat)^{1/pD}\otimes I_{-1/pD})
  e(N_0)
  \right).
  \end{split}
\end{equation*}

From (\ref{eq:reltan1},\ref{eq:reltan2}) we have
\begin{equation*}
  \begin{split}
  & F^*(e(\Trel)) = G^*\left(e(\Trel^\flat) e(\Trel^\sharp) e(N_0)\right)
\\
  =\;& G^*\left( e(\Trel^\flat) e(\Trel') e\left(
  \Hom(\Vcal_1^\sharp/\Fcal_\sharp^{\min(\Is)},\Fcal_\sharp^{\min(\Is)})
  \right) e(N_0)\right).
  \end{split}
\end{equation*}
Therefore we get
\begin{equation*}
  \begin{split}
    & {pD} \int_{\mathcal S(\fI)}
      \frac{F^* \widetilde\Phi}
      {F^*e(\fN(\cM^{\C^*}(\fI)))}
\\
    =\; &
    \frac1{v_1(c)!}
    \begin{aligned}[t]
    \int_{\tM^{m,\min(\Is)-1}(c_\flat)\times \tM^{m,+}(c_\sharp)}
    & \frac{\Phi(\cE^\flat \oplus  
      \cEs\otimes \cL(\cE_\flat)^{-1/pD}\otimes I_{1/pD})}
    {
      e(\fN(\cEf,\cEs)\otimes \cL(\cE_\flat)^{-1/pD} \otimes I_{1/pD})
      }
\\
    & \quad \times \frac{
      e(\Trel^\flat) e(\Trel') e\left(
        \Hom(\Vcal_1^\sharp/\Fcal_\sharp^{\min(\Is)},\Fcal_\sharp^{\min(\Is)})
        \right)
    }{
      e(\fN(\cEs,\cEf)\otimes \cL(\cE_\flat)^{1/pD}\otimes I_{-1/pD})
    }.
%     & \int_{\tM^{m,\min(\Is)-1}(c_\flat)\times \tM^{m,+}(c_\sharp)}
% \\
%     & \qquad \frac{\Phi(\cE^\flat) 
%       \Phi(\cEs\otimes \cL(\cE_\flat)^{-1/pD}\otimes
%       I_{1/pD})
%       e(\Trel^\flat) e(\Trel') e\left(
%         \Hom(\Vcal_1^\sharp/\Fcal_\sharp^{\min(\Is)},\Fcal_\sharp^{\min(\Is)})
%         \right)
%     }{
%       e(\fN(\cEf,\cEs)\otimes \cL(\cE_\flat)^{-1/pD} \otimes I_{1/pD})
%       e(\fN(\cEs,\cEf)\otimes \cL(\cE_\flat)^{1/pD}\otimes I_{-1/pD})
%     }.
    \end{aligned}
  \end{split}
\end{equation*}
We use the claim that $\tM^{m,+}(c_\sharp)$ is a flag bundle over
$(\det \Qcal^{\otimes D})^\times/\C^*$ (\propref{prop:m,+}(1)) to rewrite
this further as
\begin{equation*}
  \frac{(v_1(c_\sharp) - 1)!}{v_1(c)!}
  \begin{aligned}[t]
    \int_{\tM^{m,\min(\Is)-1}(c_\flat)\times 
      (\det \Qcal^{\otimes D})^\times/\C^*
    }
    & \frac{\Phi(\cE^\flat\oplus 
      \cEs\otimes \cL(\cE_\flat)^{-1/pD}\otimes
      I_{1/pD})\, e(\Trel^\flat) }
    {
      e(\fN(\cEf,\cEs)\otimes \cL(\cE_\flat)^{-1/pD} \otimes I_{1/pD})
      }
\\
    & \quad\times
    \frac{
      e\left(
        \Hom(\Vcal_1^\sharp/\Fcal_\sharp^{\min(\Is)},\Fcal_\sharp^{\min(\Is)})
        \right)
    }{
      e(\fN(\cEs,\cEf)\otimes \cL(\cE_\flat)^{1/pD}\otimes I_{-1/pD})
    }.
%     & \int_{\tM^{m,\min(\Is)-1}(c_\flat)\times 
%       (\det \Qcal^{\otimes D})^\times/\C^*
%     }
% \\
%     & \qquad
%     \frac{\Phi(\cE^\flat) e(\Trel^\flat) 
%       \Phi(\cEs\otimes \cL(\cE_\flat)^{-1/pD}\otimes
%       I_{1/pD})
%       e\left(
%         \Hom(\Vcal_1^\sharp/\Fcal_\sharp^{\min(\Is)},\Fcal_\sharp^{\min(\Is)})
%         \right)
%     }{
%       e(\fN(\cEf,\cEs)\otimes \cL(\cE_\flat)^{-1/pD} \otimes I_{1/pD})
%       e(\fN(\cEs,\cEf)\otimes \cL(\cE_\flat)^{1/pD}\otimes I_{-1/pD})
%     }.
  \end{aligned}
\end{equation*}
We set $\widetilde\Phi(\bullet) \defeq \frac1{v_1(c_\flat)!}\Phi(\bullet)
e(\Trel^\flat)$, use \propref{prop:m,+}(2) to replace $(\det
\Qcal^{\otimes D})^\times/\C^*$ by $\Gr(m+1,p)$, and then plug into
\eqref{eq:fixedptformula} to get
\begin{multline*}
%\begin{equation*}
    \int_{\tM^{m,\ell}(c)} \widetilde\Phi(\cE)
  - \int_{\bM^m(c)} \Phi(\cE) 
%\\
   = \sum_{\fI\in\mathcal D^{m,\ell}(c)}
  \int_{\tM^{m,\min(\Is)-1}(c_\flat)}     
  \Res_{\hbar=0} \widetilde \Phi(\cEf
  \oplus 
      \cEs\otimes \cL(\cE_\flat)^{-1/pD}\otimes
      I_{1/pD}) \Phi'(\cEf),
\end{multline*}
%\end{equation*}
where
\begin{equation*}%\label{eq:Phi'}
  \begin{split}
  & \Phi'(\cEf) = 
    - \frac{(v_1(c_\sharp) - 1)! v_1(c_\flat)!}{v_1(c)!\, pD}
     \int_{\Gr(m+1,p)} \heartsuit,
\\
  &
  \heartsuit =
  \begin{aligned}[t]
    & \frac{1
%       \Phi(C_m\boxtimes\Qcal
%       \otimes \det\cQ^{-1/p}
%       \otimes \cL(\cE_\flat)^{-1/pD}\otimes I_{1/pD})
      }
    {
      e(\fN(\cEf,C_m)\otimes \Qcal
      \otimes \det\cQ^{-1/p}
      \otimes \cL(\cE_\flat)^{-1/pD} \otimes I_{1/pD})
      }
\\
   & \quad
   \times
   \frac{
     e\left(
        (V_1(C_m)\otimes\Qcal/
        \shfO)^*
        \right)
     }
     {
      e(\fN(C_m,\cEf)\otimes \Qcal^*
      \otimes \det\cQ^{1/p}
      \otimes \cL(\cE_\flat)^{1/pD}\otimes I_{-1/pD})
       }
   .
  \end{aligned}
\end{split}
%   \begin{aligned}[t]
%     - \frac{(v_1(c_\sharp) - 1)! v_1(c_\flat)!}{v_1(c)!\, pD}
%      \Res_{\hbar=0} \int_{\Gr(m+1,p)}
%      &
%      \frac{      \Phi(C_m\boxtimes\Qcal
%       \otimes \cL(\cE_\flat)^{-1/pD}\otimes
%       I_{1/pD})}
%     {
%             e(\fN(\cEf,C_m)\otimes \Qcal
%       \otimes \cL(\cE_\flat)^{-1/pD} \otimes I_{1/pD})
%       }
% \\
%    & \times
%     \frac{
%       e\left(
%         \Hom(V_1(C_m)\otimes\Qcal/
%         \det\Qcal^{-1/p},\det\Qcal^{-1/p})
%         \right)
%     }{
%       e(\fN(C_m,\cEf)\otimes \Qcal^*
%       \otimes \cL(\cE_\flat)^{1/pD}\otimes I_{-1/pD})
%     }.
%   \end{aligned}
\end{equation*}
Here 
\begin{itemize}
\item $\int_{\Gr(m+1,p)}$ means the pushforward with respect to the
  projection $\tM^{m,\min(\Is)-1}(c_\flat)\times \Gr(m+1,p)\to
  \tM^{m,\min(\Is)-1}(c_\flat)$.
\item $\det\Qcal^{-1/p}$ is understood as before: we replace
$c_1(\det\Qcal^{-1/p})$ by $-c_1(\det\Qcal)/p$.
\end{itemize}

\begin{NB}
\begin{equation*}
  \frac{(v_1(c_\sharp) - 1)! v_1(c_\flat)!}{v_1(c)!} 
  = \left(v_1(c_\sharp)\times {v_1(c) \choose v_1(c_\flat)}\right)^{-1}.
\end{equation*}
\end{NB}

Let us slightly simplify the formula. First note that $I_{\pm 1/pD}$
appears with $\det\cQ^{\mp 1/p}\otimes \cL(\cE_\flat)^{\mp 1/pD}$. Let
$\omega = - (c_1(\cL(\cE_\flat)) + c_1(\cQ))/pD$. Thus $\Phi'(\cEf)$
is written as
\begin{equation*}
  \sum_{j=-\infty}^\infty A_j (\hbar - \omega)^j.
\end{equation*}
By a direct calculation we have
\begin{equation*}
   \Res_{\hbar=0} (\hbar - \omega)^j =
   \begin{cases}
     1 & \text{if $j=-1$},
     \\
     0 & \text{otherwise}.
   \end{cases}
\end{equation*}
\begin{NB}
  When $j \ge 0$, $(\hbar - \omega)^j$ is a polynomial in
  $\hbar$. Therefore the assertion is clear. Assume $j < 0$. Then
  \begin{equation*}
     (\hbar - \omega)^j = \hbar^j \left(1 - \frac{\omega}{\hbar}\right)^j
       = \hbar^j\left( 1 - j\frac{\omega}{\hbar} + \cdots\right).
  \end{equation*}
  Then the coefficient of $\hbar^{-1}$ is $1$ if $j=-1$, and $0$ otherwise.

  More conceptually, this may be explained as follows. We should consider
  $\Res_{\hbar=0} f(\hbar) = \Res_{1/\hbar = 0} \hbar^2 f(\hbar)$. Then
  $\omega$ is `far away' from $1/\hbar=0$, and $-\omega$ does not affect.
\end{NB}%
Therefore we have
\begin{equation*}
   \Res_{\hbar=0} \sum_{j=-\infty}^\infty A_j (\hbar - \omega)^j
   =    \Res_{\hbar=0} \sum_{j=-\infty}^\infty A_j \hbar^j.
\end{equation*}
This means that we can erase 
$\det\cQ^{\mp 1/p}\otimes \cL(\cE_\flat)^{\mp 1/pD}$
from
$\Phi'(\cEf)$.
(The last simplification appeared in \cite[Proof of Theorem~7.2.4]{Moc})

Next note that nontrivial contributions of the $\C^*$-action appear as
$I_{\pm 1/pD}$.
If we take the covering $\C^*_s\to\C^*; s\mapsto t=s^{-pD}$, $I_{\pm
  1/pD}$ is of weight $\mp 1$ as a $\C^*_s$-module by our
convention. Since that we have a natural isomorphism
$H^*_{\C^*_s}(\pt) = \C[\hbar_s] \cong H^*_{\C^*}(\pt) = \C[\hbar]$ by
$\hbar_s = -\hbar/pD$, we can replace $\hbar$ by $\hbar_s$ noticing
\( 
  \Res_{\hbar=0} f(\hbar) =
  - pD\times \Res_{\hbar_s=0} f(\hbar_s pD).
\)
We use this replacement and then replace back $\hbar_s$ by $\hbar$
again. Therefore we get the formula \eqref{eq:Phi'}.
We have completed the proof of \thmref{thm:1st}.

\subsection{Proof of \thmref{thm:m=0}}\label{subsec:2nd}

The right hand side of the formula in \thmref{thm:1st} can be
expressed by an integral over $\bM^m(c_\flat)$ by using the formula
again.  We continue this procedure recursively, we will get a
wall-crossing formula comparing $\int_{\bM^m(c)}$ and
$\int_{\bM^{m+1}(c)}$. We then get \thmref{thm:m=0}.
The proof has combinatorial nature and is the same as one in
\cite[\S7.6]{Moc}. We reproduce it here for reader's convenience.

In fact, it is enough to consider the case $m=0$, as a general case
follows from $m=0$ since we have an isomorphism $\bM^m(c)
\xrightarrow{\cong} \bM^0(c e^{-m[C]}); (E,\Phi)\mapsto
(E(-mC),\Phi)$.
Then the formula in \thmref{thm:1st} is simplified as we can 
assume $p=1$ as $\Gr(m+1,p) = \Gr(1,p)$ is empty otherwise.
\thmref{thm:m=0} is obtained in this way, but we give a proof for
general $m$.

For $j\in \Z_{>0}$ let
\begin{equation*}
  S_j^m(c)
  \defeq \left\{ 
    \vec{c} = (c_\flat,p_1,\dots,p_j) \in H^*(\bp)\times \Z_{>0}^j
    \left|\,
  (c_\flat,[\linf]) = 0, \ c_\flat + \sum_{i=1}^j p_i e_m = c
  \right.\right\}.
\end{equation*}
We denote the universal family for $\bM^{m}(c_\flat)$ by $\cEf$ as above.
Let $S^m(c) = \bigsqcup_{j=1}^\infty S_j^m(c)$.

For $\vec{p} = (p_1,\dots,p_j)\in \Z_{>0}^j$ we consider the product
of Grassmannian varieties $\prod_{i=1}^j \Gr(m+1,p_i)$. Let
$\Qcal^{(i)}$ be the universal quotient of the $i^{\mathrm{th}}$
factor. We consider the $j$-dimensional torus $(\C^*)^j$ acting
trivially on $\prod_{i=1}^j \Gr(m+1,p_i)$. We denote the
$1$-dimensional weight $n$ representation of the $i^{\mathrm{th}}$
factor by $e^{n \hbar_i}$. (Denoted by $I_n$ previously.) The
equivariant cohomology $H_{(\C^*)^j}^*(\pt)$ of the point is
identified with $\C[\hbar_1,\dots,\hbar_j]$.
% We define $\mathfrak R(\hbar_1,\dots,\hbar_j)$ recursively by
% $\mathfrak R(\hbar_1,\dots,\hbar_j) = \mathfrak
% R(\hbar_2,\dots,\hbar_j)[[\hbar_1^{-1}, \hbar]$.

\begin{Theorem}\label{thm:2nd}
\begin{equation*}
  \int_{\bM^{m+1}(c)} \Phi(\cE) - \int_{\bM^{m}(c)} \Phi(\cE)
  = 
  \sum_{\vec{c}\in S^m(c)}
   \int_{\bM^{m}(c_\flat)} \Res_{\hbar_{j}=0} \cdots \Res_{\hbar_{1}=0} 
   \Phi(\cEf\oplus 
      \bigoplus_{i=1}^j C_m\boxtimes \Qcal^{(i)}\otimes e^{-\hbar_i})\cup 
   \Psi^{\vec{p}}(\cEf),
\end{equation*}
where $\Psi^{\vec{p}}(\bullet)$ is another cohomology class given by
\begin{gather}\label{eq:Psi}
  \Psi^{\vec{p}}(\bullet) \defeq
  \frac1{(m+1)^j} 
   \prod_{i=1}^j \frac1{\sum_{1\le k\le i} p_k}
%     \begin{NB}
%      = W(\vec{p})
%     \end{NB}%
          \int_{%\displaystyle
        \prod_{i=1}^j \Gr(m+1,p_i)} \heartsuit,
\\
  \heartsuit =
  \begin{aligned}[t]
   & 
   \prod_{i=1}^j
      \frac{
%       \Phi(C_m\boxtimes\Qcal^{(i)}
%       \otimes e^{-\hbar_i})
%       \, 
      e\left(
        (V_1(C_m)\otimes\Qcal^{(i)}/\shfO)^*
        \right)
    }{
      e(\fN(\bullet,C_m)\otimes \Qcal^{(i)}
      \otimes e^{-\hbar_i})
      \, e(\fN(C_m,\bullet)\otimes \Qcal^{(i)*}
      \otimes e^{\hbar_i})
    }
\\
   & \qquad \times
   \prod_{1\le i_1\neq i_2 \le j} 
   e(\Qcal^{(i_1)}\otimes \Qcal^{(i_2)*} \otimes e^{-\hbar_{i_1}+\hbar_{i_2}}).
  \end{aligned}
  \notag
\end{gather}
\textup(Note that $\Psi^{\vec{p}}$ depends on $\vec{p} =
(p_1,\dots,p_j)$, but not on $c_\flat$.\textup)
\end{Theorem}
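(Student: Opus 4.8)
The plan is to derive \thmref{thm:2nd} from \thmref{thm:1st} by iterating the latter along the chain of moduli stacks $\tM^{m,\ell}(c)$ for $\ell = 0, 1, \dots, N$, exactly in the spirit of \cite[\S7.6]{Moc}. First I would reduce to the case $m = 0$. As noted, the isomorphism $\bM^m(c) \xrightarrow{\cong} \bM^0(c e^{-m[C]})$, $(E,\Phi)\mapsto (E(-mC),\Phi)$, together with the corresponding identifications of the intermediate stacks $\tM^{m,\ell}$ and the sheaves $C_m = \shfO_C(-m-1)$ (which becomes $C_0$ after the twist), transports every assertion to the $m=0$ situation. In that case $\Gr(m+1,p) = \Gr(1,p) = \proj^{p-1}$ only when $p=1$ is allowed for the nonempty contributions appearing through $\widehat N$-type arguments; but for the general iteration I would keep $\Gr(m+1,p_i)$ to maintain a uniform statement, and the combinatorics below is what matters.

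The core is a telescoping sum. Recall from \subsecref{subsec:withflags} that $\tM^{m,0}(c)$ is the full flag bundle $\Flag(\Vcal_1,\underline{N})$ over $\bM^m(c)$, and $\tM^{m,N}(c)$ is the same over $\bM^{m+1}(c)$, so that $\int_{\tM^{m,0}(c)}\widetilde\Phi(\cE) = \int_{\bM^m(c)}\Phi(\cE)$ and likewise $\int_{\tM^{m,N}(c)}\widetilde\Phi(\cE) = \int_{\bM^{m+1}(c)}\Phi(\cE)$, using that $\widetilde\Phi(\cE) = \frac{1}{v_1(c)!}\Phi(\cE)\cup e(\Trel)$ integrates the relative Euler class over each flag fibre to $v_1(c)!$. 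Then I would write
\begin{equation*}
  \int_{\bM^{m+1}(c)}\Phi(\cE) - \int_{\bM^m(c)}\Phi(\cE)
  = \sum_{\ell=1}^{N}\left(\int_{\tM^{m,\ell}(c)}\widetilde\Phi(\cE)
     - \int_{\tM^{m,\ell-1}(c)}\widetilde\Phi(\cE)\right),
\end{equation*}
but this is not quite the telescoping I want: \thmref{thm:1st} compares $\tM^{m,\ell}(c)$ with $\bM^m(c)$ directly, not with $\tM^{m,\ell-1}(c)$. So instead I apply \thmref{thm:1st} to each $\tM^{m,\ell}(c)$ and recurse: the right-hand side of \thmref{thm:1st} is a sum over decomposition types $\fI\in\mathcal D^{m,\ell}(c)$ of integrals over $\tM^{m,\min(\Is)-1}(c_\flat)$ with $\min(\Is)-1 < \ell$, so each such term is handled by the same formula with a strictly smaller value of the flag parameter. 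Unwinding the recursion, an integral over $\tM^{m,\ell}(c)$ gets expressed as a sum over chains $\fI_1, \fI_2, \dots, \fI_j$ of decomposition types with strictly decreasing $\min$-parameters; each chain records a successive splitting off of copies $C_m^{\oplus p_1}, \dots, C_m^{\oplus p_j}$, terminating at $\bM^m(c_\flat)$ with $c_\flat + \sum_i p_i e_m = c$. Taking $\ell = N$, the surviving terms are precisely indexed by $\vec c = (c_\flat, p_1,\dots,p_j)\in S^m_j(c)$, and the auxiliary classes $\Phi'$ from \eqref{eq:Phi'} multiply together, producing the product over $i$ in $\heartsuit$. The cross terms $e(\Qcal^{(i_1)}\otimes\Qcal^{(i_2)*}\otimes e^{-\hbar_{i_1}+\hbar_{i_2}})$ arise from the $\C^*$-fixed-point normal-bundle contributions $\fN({}^\cM\cEs, {}^\cM\cEs)$-type pieces between the different $C_m$-summands split off at different stages (these are the $N_0$ and the mutual $\Ext$ contributions among the $\Qcal^{(i)}$ factors), and the prefactor $\frac{1}{(m+1)^j}\prod_i (\sum_{k\le i}p_k)^{-1}$ collects the degree-$1/pD$ normalizations $\frac{(v_1(c_\sharp)-1)!\,v_1(c_\flat)!}{v_1(c)!}$ telescoped along the chain.

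The main obstacle, and the place requiring genuine care rather than bookkeeping, is the combinatorial identification of the iterated sum: showing that summing \thmref{thm:1st} recursively over all chains of decomposition types reorganizes exactly into a sum over $S^m(c)$ with symmetric weights, and in particular that the ordering ambiguity (a given unordered collection of split-off $C_m^{\oplus p_i}$'s can be produced by many chains) is absorbed by the residue operations $\Res_{\hbar_j=0}\cdots\Res_{\hbar_1=0}$ and the symmetrizing cross-term product, so that $\Psi^{\vec p}$ depends only on $\vec p$ and not on the order or on $c_\flat$. I would verify this by an induction on $j$ (equivalently on $|\Is|$ summed along the chain): the inductive step matches the outermost residue $\Res_{\hbar_1=0}$ and the factor $e((V_1(C_m)\otimes\Qcal^{(1)}/\shfO)^*)/\big(e(\fN(\bullet,C_m)\otimes\Qcal^{(1)}\otimes e^{-\hbar_1})e(\fN(C_m,\bullet)\otimes\Qcal^{(1)*}\otimes e^{\hbar_1})\big)$ against a single application of \thmref{thm:1st}, with the remaining $j-1$ residues supplied by the inductive hypothesis applied to $\tM^{m,\min(\Is_1)-1}(c_\flat')$. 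The replacement $\hbar \leadsto \hbar_s \leadsto \hbar$ and the erasure of the $\det\cQ^{\mp1/p}\otimes\cL(\cE_\flat)^{\mp1/pD}$ twists, already carried out in \subsecref{subsec:fixedptformula}, apply verbatim at each stage because $\Res$ is insensitive to shifts of $\hbar$ by classes pulled back from the base; this is what lets the recursion close with the clean variables $\hbar_1,\dots,\hbar_j$ appearing in the statement. Once the induction is set up, the remaining computations — expanding $\fN$ via the $\Ext$-complexes of \subsecref{subsec:Ext} and checking the rank/prefactor accounting — are routine.
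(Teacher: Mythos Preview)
Your overall architecture is correct and matches the paper's proof: iterate \thmref{thm:1st} along a chain of decomposition types, organise the resulting sum by the sets $\Dec^{(j)}(c)$ of chains $(\If^{(j)},\Is^{(j)},\dots,\Is^{(1)})$ with $\min(\Is^{(1)})>\cdots>\min(\Is^{(j)})$, and then pass to $S_j^m(c)$ via the forgetful map $\rho_j\colon\Dec^{(j)}(c)\to S_j^m(c)$. The cross terms $e(\Qcal^{(i_1)}\otimes\Qcal^{(i_2)*}\otimes e^{-\hbar_{i_1}+\hbar_{i_2}})$ indeed arise in the inductive step, coming from $\fN(C_m,C_m)=-\C$ when one substitutes $\cEf^{(j)}=\cEf^{(j+1)}\oplus C_m\boxtimes\Qcal^{(j+1)}\otimes e^{-\hbar_{j+1}}$ into $\Psi^{\fI^{(j)}}$.

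There is, however, a genuine gap in your handling of the ``ordering ambiguity''. You write that this ambiguity is \emph{absorbed by the residue operations and the symmetrizing cross-term product}. That is not what happens. After the recursion, the integrand $\Psi^{\fI^{(j)}}$ already depends only on $\vec p=\rho_j(\fI^{(j)})$ by inspection (no residue magic is needed for this); what remains is the \emph{multiplicity} $\#\rho_j^{-1}(\vec c)$. Your claim that the telescoped factorials $\frac{(v_1(c_\sharp)-1)!\,v_1(c_\flat)!}{v_1(c)!}$ directly produce the prefactor $\frac{1}{(m+1)^j}\prod_i(\sum_{k\le i}p_k)^{-1}$ is false: telescoping gives $\frac{v_1(c_\flat)!\prod_i(p_i(m+1)-1)!}{v_1(c)!}$, which equals the desired prefactor only after multiplication by $\#\rho_j^{-1}(\vec c)$. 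The paper (following \cite[7.6.7]{Moc}) proves this by an explicit count: choose $\If^{(j)}$ in $\binom{N}{N_0}$ ways, then $\Is^{(j)}$ (whose minimum is forced) in $\binom{N-N_0-1}{N_j-1}$ ways, etc., yielding
\[
\#\rho_j^{-1}(\vec c)=\frac{N!}{N_0!\prod_i(N_i-1)!}\prod_{i=1}^j\frac{1}{\sum_{k\le i}N_k},\qquad N_i=p_i(m+1).
\]
This counting lemma is the actual crux; neither residues nor symmetry of the cross terms plays any role in it. Once you insert this step, your argument is complete and coincides with the paper's.
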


Let us prepare notation before starting the proof.

\begin{NB}
We define $\mathfrak R(\hbar_1,\dots,\hbar_j)$ recursively by
$\mathfrak R(\hbar_1,\dots,\hbar_j) = \mathfrak
R(\hbar_2,\dots,\hbar_j)[[\hbar_1^{-1}, \hbar_1]$.
\end{NB}
We defined a cohomology class $\Psi^{\vec{p}}(\cEf)$ by
the formula \eqref{eq:Psi}, and it is an element in
$H^*_{\hT}(\bp\times M \times \prod_{i=1}^j
\Gr(m+1,p^{(i)}))[\hbar_1^\pm,\dots,\hbar_j^\pm]$ for some moduli stack $M$
with the universal family $\cE$.
% Let $I$ be a finite subset of $\Z_{>0}$ such that $|I| = v_1(c)$. We
% have the total ordering on $I$ induced from $\Z_{>0}$. We set
% \begin{equation*}
%   \Dec(c,I) \defeq
% \left\{ \fI = (\If,\Is)% \subset \underline{N}^2
%   \left|\,
%   \begin{aligned}[c]
%   & I = \If\sqcup \Is,\ \If,\Is\neq \emptyset,
% \\
%   & |\Is| = p(m+1) \text{ for $p\in \Z_{>0}$},
%   \end{aligned}
%   \right.\right\}.
% \end{equation*}
% For any $\fI\in\Dec(c,I)$ put
% \begin{equation*}
%   \mathfrak k(\fI) \defeq 
%   \begin{cases}
%   0 & \text{if $\min(I) = \min(\Is)$},
% \\
%   \max\{ i\in \If \mid i < \min(\Is)\} & \text{otherwise}
%   \end{cases}
% \end{equation*}
% \begin{NB}
%   If $I=\underline{N}$, we have
%   $\mathfrak k(\fI) = \min(\Is) - 1$.
% \end{NB}%
% For $\ell\in I$ set
% \begin{equation*}
%    S(I,\ell) \defeq \{ \fI\in \Dec(c,I) \mid
%    \min(\Is) \le \ell\}.
% \end{equation*}

Let
\(
  \Dec^{(j)}(c)
\)
be the set of pairs $\fI^{(j)} = (\If^{(j)},\vec{\Is}^{(j)})$
as follows:
\begin{itemize}
\item $\vec{\Is}^{(j)}$ is a tuple
  $(\Is^{(j)},\Is^{(j-1)},\dots,\Is^{(1)})$ of subsets of
  $\underline{v_1(c)}$ such that $|\Is^{(i)}| = p_{i}(m+1)$ for some
  $p_i\in\Z_{>0}$ ($1\le i\le j$).
\item $\min(\Is^{(1)}) > \min(\Is^{(2)}) > \cdots > \min(\Is^{(j)})$.
\item $\If^{(j)}$ is also a subset of $\underline{v_1(c)}$ and we have
  $\underline{v_1(c)} = \If^{(j)}\sqcup\bigsqcup_{i=1}^j \Is^{(i)}$.
\end{itemize}

For $\mathfrak I^{(j)}\in \Dec^{(j)}(c)$ set
\begin{equation*}
   \mathfrak k(\fI^{(j)}) \defeq
  \max\{ i\in \If^{(j)} \mid i < \min(\Is^{(j)})\},
\end{equation*}
where we understand this to be $0$ if there exists no $i\in \If^{(j)}$
with $i < \min(\Is^{(j)})$. We also put
\begin{equation*}
   c_\sharp^{(i)}\defeq p_{i} e_m \ (1\le i \le j), \quad
   c_\flat^{(j)} \defeq c - \sum_{i=1}^j p_i e_m.
\end{equation*}

We have a map $\pi_j\colon \Dec^{(j+1)}(c)\to \Dec^{(j)}(c);
(\If^{(j+1)},\vec{\Is}^{(j+1)})\mapsto (\If^{(j)},\vec{\Is}^{(j)})$
given by
\begin{equation*}
   \If^{(j)} \defeq \If^{(j+1)}\sqcup \Is^{(j+1)},\quad
   \vec{\Is}^{(j)}\defeq (\Is^{(j)},\dots,\Is^{(1)}).
\end{equation*}

Let
\begin{equation*}
   \tM(\fI^{(j)}) \defeq \tM^{m,\ell}
   (c_\flat^{(j)}),
\qquad
   \bM(\fI^{(j)}) \defeq \bM^m(c_\flat^{(j)}),
\end{equation*}
where in the first equality we take the unique order preserving
bijection $\If^{(j)}\cong \underline{v_1(c_\flat^{(j)})} =
\underline{\# \If^{(j)}}$ and take
$\ell\in\underline{v_1(c_\flat^{(j)})}$ corresponding to $\mathfrak
k(\fI^{(j)})$.

For the universal family $\cEf^{(j)}$ for $\tM(\fI^{(j)})$ or
$\bM(\fI^{(j)})$ let
\begin{equation*}
  \Psi^{\fI^{(j)}}(\cEf^{(j)}) \defeq
   \Psi^{\vec{p}}(\cEf^{(j)})
    \frac{v_1(c_\flat^{(j)})! \prod_{i=1}^j (v_1(c_\sharp^{(i)}) - 1)!}
    {v_1(c)!}
    (m+1)^j
    \prod_{i=1}^j 
    \sum_{k=1}^i p_k,
\end{equation*}
where $\vec{p}= (p_1,\dots,p_j)$.

\begin{Lemma}[\protect{\cite[7.6.5]{Moc}}]
For each $j$, we have the formula
\begin{multline}
  \int_{\bM^{m+1}(c)} \Phi(\cE) - \int_{\bM^{m}(c)} \Phi(\cE)
\\
  =
  \sum_{\substack{1\le i < j \\ \fI^{(i)} \in \Dec^{(i)}(c)}}
  \int_{\bM(\fI^{(i)})} \Res_{\hbar_{i}=0} \cdots \Res_{\hbar_{1}=0}
  \Phi(\cEf^{(i)}\oplus \bigoplus_{k=1}^i
   C_m\boxtimes  \Qcal^{(k)}\otimes e^{-\hbar_k}
  ) 
  \Psi^{\fI^{(i)}}(\cEf^{(i)})
\\
  + \sum_{\fI^{(j)}\in \Dec^{(j)}(c)}
  \int_{\tM(\fI^{(j)})}   \Res_{\hbar_{j}=0} \cdots \Res_{\hbar_{1}=0}
  \widetilde\Phi(\cEf^{(j)}\oplus \bigoplus_{k=1}^j
   C_m\boxtimes  \Qcal^{(k)}\otimes e^{-\hbar_k}
  ) 
  \Psi^{\fI^{(j)}}(\cEf^{(j)}).
\end{multline}
%\begin{equation*}
%   \begin{split}
%   & \int_{\bM^{m+1}(c)} \Phi(\cE) - \int_{\bM^{m}(c)} \Phi(\cE) =
% \\
%   = \; & 
%   \sum_{\substack{1\le i < j \\ \fI^{(i)} \in \Dec^{(i)}(c)}}
%   \int_{\bM(\fI^{(i)})} \Phi(\cEf^{(i)}) 
%   \Res_{\hbar_{i}=0} \cdots \Res_{\hbar_{1}=0}
%   \Psi^{\fI^{(i)}}(\cEf^{(i)})
%   + \sum_{\fI^{(j)}\in \Dec^{(j)}(c)}
%   \int_{\tM(\fI^{(j)})} \widetilde\Phi(\cEf^{(j)}) 
%   \Res_{\hbar_{j}=0} \cdots \Res_{\hbar_{1}=0}
%   \Psi^{\fI^{(j)}}(\cEf^{(j)}).
%   \end{split}
%\end{equation*}
\end{Lemma}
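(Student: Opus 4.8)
The proof proceeds by induction on $j$, with \thmref{thm:1st} providing both the base case and the inductive step. For the base case $j=1$, I would apply \thmref{thm:1st} directly with $\ell = N = v_1(c)$, so that $\tM^{m,N}(c) = \Flag(\Vcal_1,\underline{N})$ over $\bM^{m+1}(c)$ and $\int_{\tM^{m,N}(c)}\widetilde\Phi(\cE) = \int_{\bM^{m+1}(c)}\Phi(\cE)$ by the normalization $\widetilde\Phi = \frac1{v_1(c)!}\Phi\cup e(\Trel)$. The decomposition types $\fI = (\If,\Is)\in\mathcal D^{m,N}(c)$ contributing to the right-hand side are indexed by their value $\min(\Is)$, and grouping them according to $p = |\Is|/(m+1)$ and $\mathfrak k(\fI) = \max\{i\in\If \mid i<\min(\Is)\}$ gives, after summing over all choices of $\Is$ with a fixed $\min(\Is)$ and fixed $\mathfrak k$, exactly the combinatorial factor $\binom{v_1(c)}{v_1(c_\flat)}$-type weight appearing in $\Psi^{\fI^{(1)}}$. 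This identifies the $j=1$ case of the Lemma: the first sum (over $1\le i<1$) is empty, and the second sum over $\Dec^{(1)}(c)$ reproduces the single application of \thmref{thm:1st}.

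For the inductive step, assuming the formula holds for $j$, I would take each term $\int_{\tM(\fI^{(j)})}\widetilde\Phi(\cEf^{(j)}\oplus\cdots)\Psi^{\fI^{(j)}}$ in the last sum — which is an integral over $\tM^{m,\mathfrak k(\fI^{(j)})}(c_\flat^{(j)})$ — and apply \thmref{thm:1st} once more to it, with $\ell$ corresponding to $\mathfrak k(\fI^{(j)})$. This replaces each such term by $\int_{\bM^m(c_\flat^{(j)})}\Phi(\cdots)\Psi^{\cdots}$ (contributing to the first sum with $i=j$) plus a sum over new decomposition types $\fI^{(j+1)}\in\pi_j^{-1}(\fI^{(j)})$, with a new residue variable $\hbar_{j+1}$, a new Grassmann factor $\Qcal^{(j+1)}$, and the extra factor $\prod_{1\le i_1\neq i_2\le j+1}e(\Qcal^{(i_1)}\otimes\Qcal^{(i_2)*}\otimes e^{-\hbar_{i_1}+\hbar_{i_2}})$ from the cross terms in $\fN(\cEf\oplus\bigoplus C_m\boxtimes\Qcal^{(k)}\otimes e^{-\hbar_k},\,C_m)$ and its dual. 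The bookkeeping here is that $\fN(A\oplus B, C) = \fN(A,C) + \fN(B,C)$ splits the Euler classes multiplicatively, and the condition $\min(\Is^{(j+1)}) < \min(\Is^{(j)})$ — equivalently $\min(\Is^{(j+1)}) \le \mathfrak k(\fI^{(j)})$ — is precisely what $\mathcal D^{m,\mathfrak k}$ enforces inside the new application of \thmref{thm:1st}.

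The main obstacle is the combinatorial matching of the prefactors: one must check that the product of binomial/factorial weights $\frac{v_1(c_\flat^{(j)})!\prod(v_1(c_\sharp^{(i)})-1)!}{v_1(c)!}$, the factors $(m+1)^j$ and $\prod_{i=1}^j\sum_{k=1}^i p_k$, and the contributions $\frac1{(m+1)}\cdot\frac1{\sum_{1\le k\le i}p_k}$ emitted by each successive application of \thmref{thm:1st} (where the $i$-th step works over a moduli space with $v_1 = v_1(c_\flat^{(i-1)})$ and produces a Grassmann $\Gr(m+1,p_i)$) telescope correctly into the stated $\Psi^{\vec p}$ and $\Psi^{\fI^{(j)}}$. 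One also has to verify that the iterated residues $\Res_{\hbar_j=0}\cdots\Res_{\hbar_1=0}$ are taken in a consistent order and that the $e(\Trel)$ factors reorganize at each stage via the exact sequences \eqref{eq:reltan1} and \eqref{eq:reltan2} — the flag bundle over $\tM^{m,+}(c_\sharp^{(i)})$ splitting off the $\proj(\Vcal_1^\sharp)$ direction is what converts $\widetilde\Phi$ on the intermediate stack back into the plain $\Phi$ on $\bM^m(c_\flat)$ plus the Grassmann integral defining $\Psi$. Finally, to pass from the Lemma to \thmref{thm:m=0} itself, I would take $j\to\infty$: the dimension estimate \eqref{eq:dim_est} forces $S_j^m(c) = \emptyset$ for $j$ large (each $p_i\ge 1$ drops the dimension), so the second sum eventually vanishes and the first sum stabilizes; then rewriting $\Dec(c)$-indexed sums as sums over $S^m(c)$ and comparing $\Psi^{\fI^{(j)}}$ with $\Psi^j$ of \thmref{thm:m=0} (noting the reduction to $p=1$, $\Gr(m+1,1)=\proj^m$, when $m=0$ and the general-$m$ case follows by the twist $(E,\Phi)\mapsto(E(-mC),\Phi)$) completes the argument.
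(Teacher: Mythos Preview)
Your approach is correct and essentially identical to the paper's: induction on $j$, with the base case $j=1$ being \thmref{thm:1st} applied at $\ell=v_1(c)$, and the inductive step obtained by applying \thmref{thm:1st} once more to each $\tM(\fI^{(j)})$, where the cross terms in $\fN(\cEf^{(j+1)}\oplus\bigoplus C_m\boxtimes\Qcal^{(k)}\otimes e^{-\hbar_k},\,C_m)$ together with $\fN(C_m,C_m)=-\C\id$ produce exactly the extra Euler factors $\prod e(\Qcal^{(i_1)}\otimes\Qcal^{(i_2)*}\otimes e^{-\hbar_{i_1}+\hbar_{i_2}})$ so that $\Phi'=\Psi^{\fI^{(j+1)}}$. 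One small correction: in the base case there is no ``summing over choices of $\Is$'' --- the sets $\mathcal D^{m,N}(c)$ and $\Dec^{(1)}(c)$ are naturally bijective and the identities match term by term; the combinatorial resummation you describe only enters later, when the Lemma is combined with \lemref{lem:zeta,l}-type counting via the map $\rho_j$ to pass to $S^m(c)$.
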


\begin{proof}
We prove the assertion by an induction on $j$.
If $j=1$, this is nothing but \thmref{thm:1st} applied to $\ell = v_1(c)$.

Suppose that the formula is true for $j$. We apply \thmref{thm:1st} to get
\begin{multline*}
  \int_{\tM(\fI^{(j)})}
  \Res_{\hbar_{j}=0} \cdots \Res_{\hbar_{1}=0}
  \widetilde\Phi(\cEf^{(j)}\oplus \bigoplus_{k=1}^j
   C_m\boxtimes  \Qcal^{(k)}\otimes e^{-\hbar_k}) 
  \Psi^{\fI^{(j)}}(\cEf^{(j)})
\\
  -
  \int_{\bM(\fI^{(j)})} \Res_{\hbar_{j}=0} \cdots \Res_{\hbar_{1}=0}
  \Phi(\cEf^{(j)}\oplus \bigoplus_{k=1}^j
   C_m\boxtimes  \Qcal^{(k)}\otimes e^{-\hbar_k})
  \Psi^{\fI^{(j)}}(\cEf^{(j)})
\\
  = \sum_{\substack{\fI^{(j+1)}\in\Dec^{(j+1)}(c) \\ 
      \pi_j(\fI^{(j+1)}) = \fI^{(j)}}}
    \int_{\tM(\fI^{(j+1)})} 
        \Res_{\hbar_{j+1}=0} \cdots \Res_{\hbar_{1}=0}\left[
          \widetilde\Phi(\cEf^{(j+1)}\oplus \bigoplus_{k=1}^{j+1}
          C_m\boxtimes  \Qcal^{(k)}\otimes e^{-\hbar_k}
          )
%      \Psi^{\fI^{(j)}}(\cEf^{(j+1)})
      \Phi'(\cEf^{(j+1)})\right],
\end{multline*}
where
\begin{equation*}
  \begin{split}
  & \Phi'(\cEf^{(j+1)}) =
    \frac{v_1(c_\flat^{(j+1)})! (v_1(c_\sharp^{(j+1)}) - 1)!}{v_1(c_\flat^{(j)})!}
%    \Res_{\hbar_{j+1}=0} 
    \int_{\Gr(m+1,p_{j+1})} \heartsuit
\\
%   \begin{aligned}[t]
%      & \Phi(C_m\boxtimes\Qcal^{(j+1)}
%       \otimes e^{-\hbar_{j+1}})
%        \Psi^{\fI^{(j)}}(C_m\boxtimes\Qcal^{(j+1)}\otimes e^{-\hbar_{j+1}})
% \\
%     & \quad\cup
%     \frac{
%       e\left(
%         (V_1(C_m)\otimes\Qcal^{(j+1)}/
%         \shfO)^*
%         \right)
%     }{
%       e(\fN(\cEf^{(j+1)},C_m)\otimes \Qcal^{(j+1)}
%       \otimes e^{-\hbar_{j+1}})
%       e(\fN(C_m,\cEf^{(j+1)})\otimes \Qcal^{(j+1)*}
%       \otimes e^{\hbar_{j+1}})
%     }.
%   \end{aligned}
  & \heartsuit =
    \frac{
%      \Phi(C_m\boxtimes\Qcal^{(j+1)}
%      \otimes e^{-\hbar_{j+1}})
       \Psi^{\fI^{(j)}}(
       \cEf^{(j+1)}\oplus
       C_m\boxtimes\Qcal^{(j+1)}\otimes e^{-\hbar_{j+1}})
      e\left(
        (V_1(C_m)\otimes\Qcal^{(j+1)}/
        \shfO)^*
        \right)
    }{
      e(\fN(\cEf^{(j+1)},C_m)\otimes \Qcal^{(j+1)}
      \otimes e^{-\hbar_{j+1}})
      e(\fN(C_m,\cEf^{(j+1)})\otimes \Qcal^{(j+1)*}
      \otimes e^{\hbar_{j+1}})
    }.
  \end{split}
%   \begin{aligned}[t]
%     &
%     \frac{(v_1(c_\sharp) - 1)! v_1(c_\flat)!}{v_1(c)!}
% \\
%     & %\quad 
%       \times \Res_{\hbar=0} \int_{\Gr(m+1,p)} \frac{
%       \Phi(C_m\boxtimes\Qcal
%       \otimes I_{-1})
%       e\left(
%         \Hom(V_1(C_m)\otimes\Qcal/
%         \det\Qcal^{-1/p},\det\Qcal^{-1/p})
%         \right)
%     }{
%       e(\fN(\cEf,C_m)\otimes \Qcal
%       \otimes I_{-1})
%       e(\fN(C_m,\cEf)\otimes \Qcal^*
%       \otimes I_{1})
%     }.
%   \end{aligned}
\end{equation*}
We have 
\(
   \Phi'(\cEf^{(j+1)})
   = \Psi^{\fI^{(j+1)}}(\cEf^{(j+1)})
\)
thanks to the multiplicative property of the Euler class and
\(
   \fN(C_m,C_m) = - \Hom(C_m,C_m) = -\C \id_{C_m}.
\)
Hence the formula holds for $j+1$.
\end{proof}

If $j$ is sufficiently large, $\Dec^{(j)}(c) = \emptyset$. Hence we have
\begin{multline*}
%\begin{equation*}
    \int_{\bM^{m+1}(c)} \Phi(\cE) - \int_{\bM^{m}(c)} \Phi(\cE)
\\
    = \sum_{j=1}^\infty \sum_{\fI^{(j)} \in \Dec^{(j)}(c)}
  \int_{\bM(\fI^{(j)})}   \Res_{\hbar_{j}=0} \cdots \Res_{\hbar_{1}=0}
  \Phi(\cEf^{(j)}\oplus \bigoplus_{i=1}^{j}
          C_m\boxtimes  \Qcal^{(i)}\otimes e^{-\hbar_i}) 
  \Psi^{\fI^{(j)}}(\cEf^{(j)}).
\end{multline*}
%\end{equation*}
We have a map $\rho_j\colon\Dec^{(j)}(c)\to S_j(c)$ given by
\begin{equation*}
   \rho_j(\fI^{(j)}) = 
   (c_\flat,p_1,\dots,p_j) = 
   (c_\flat^{(j)},\frac{|\Is^{(1)}|}{m+1},\dots,\frac{|\Is^{(j)}|}{m+1}).
\end{equation*}
Therefore the right hand side is equal to
%\begin{equation*}
\begin{multline*}
  \sum_{j=1}^\infty \sum_{\vec{c}\in S_j(c)}
  \# \rho_j^{-1}(\vec{c})
    \frac{v_1(c_\flat)! \prod_{i=1}^j (p_i(m+1) - 1)!}
    {v_1(c)!}
    (m+1)^j
    \prod_{i=1}^j 
    \sum_{k=1}^i p_k
\\
  \times
  \int_{\bM^m(c_\flat)}   \Res_{\hbar_{j}=0} \cdots \Res_{\hbar_{1}=0}
  \Phi(\cEf\oplus \bigoplus_{i=1}^{j}
          C_m\boxtimes  \Qcal^{(i)}\otimes e^{-\hbar_i}) 
  \Psi^{\vec{p}}(\cEf).
%\end{equation*}
\end{multline*}
Thus \thmref{thm:2nd} follows from the following lemma.

\begin{Lemma}[\protect{\cite[7.6.7]{Moc}}]
  \begin{equation*}
      \# \rho_j^{-1}(\vec{c})
    \frac{v_1(c_\flat)! \prod_{i=1}^j (p_i(m+1) - 1)!}
    {v_1(c)!}
    =  
    \frac1{(m+1)^j} 
   \prod_{i=1}^j \frac1{\sum_{1\le k\le i} p_k}
    \begin{NB}
      = W(\vec{p})
    \end{NB}%
   .
  \end{equation*}
\end{Lemma}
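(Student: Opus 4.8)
The identity is a purely combinatorial counting statement, so the plan is to compute both sides of
\[
   \# \rho_j^{-1}(\vec{c})
    \frac{v_1(c_\flat)! \prod_{i=1}^j (p_i(m+1) - 1)!}
    {v_1(c)!}
    =
    \frac1{(m+1)^j}
   \prod_{i=1}^j \frac1{\sum_{1\le k\le i} p_k}
\]
directly in terms of binomial coefficients and cancel. First I would record the basic numerical data: if $\vec{c} = (c_\flat,p_1,\dots,p_j)\in S_j^m(c)$, then by the dimension formulas for $V_1$ recalled in \secref{sec:1st} we have $v_1(c) = v_1(c_\flat) + (m+1)\sum_{i=1}^j p_i$, since $C_m = \shfO_C(-m-1)$ contributes $\dim V_1 = m+1$. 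Write $P_i \defeq \sum_{k=1}^i p_k$ and $n_i \defeq (m+1)p_i = |\Is^{(i)}|$; thus $|\If^{(j)}| = v_1(c_\flat)$ and $\underline{v_1(c)}$ is partitioned into $\If^{(j)}$ together with the $\Is^{(i)}$.

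The heart of the argument is to count $\rho_j^{-1}(\vec{c})$, i.e.\ the number of ways to choose the ordered tuple $(\Is^{(1)},\dots,\Is^{(j)})$ of disjoint subsets of $\underline{v_1(c)}$ with $|\Is^{(i)}| = n_i$, together with $\If^{(j)}$ (which is then forced to be the complement), subject only to the chain of inequalities $\min(\Is^{(1)}) > \min(\Is^{(2)}) > \dots > \min(\Is^{(j)})$. The plan is to build such a configuration by deciding, in order $i = j, j-1, \dots, 1$, which elements go into $\Is^{(i)}$: the key observation is that the constraint $\min(\Is^{(i)}) < \min(\Is^{(i-1)})$ is equivalent to requiring that the smallest of all elements not yet assigned to $\Is^{(j)},\dots,\Is^{(i)}$ be placed into $\Is^{(i)}$ when $i \ge 2$ stays below the previous minimum — more cleanly, one shows the number of ordered partitions of a set of size $N = v_1(c_\flat) + \sum n_i$ into blocks of sizes $n_j, n_{j-1},\dots,n_1$ and a leftover block of size $v_1(c_\flat)$, with the stated descending-minima condition, equals
\[
   \frac{v_1(c)!}{v_1(c_\flat)!\,\prod_{i=1}^j n_i!}
   \cdot \prod_{i=1}^j \frac{n_i}{(m+1)\,P_i}.
\]
This is the calculation carried out in \cite[7.6.7]{Moc}; the cleanest route is induction on $j$, peeling off $\Is^{(j)}$ first. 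Conditioning on $\Is^{(j)}$, the requirement $\min(\Is^{(j)})$ be the global minimum is \emph{not} imposed, but rather $\min(\Is^{(j)}) < \min(\Is^{(j-1)})$; arranging the bookkeeping so that one factor $\tfrac{n_i}{(m+1)P_i}$ is produced at each peeling step is the delicate point. Concretely, among the $P_j(m+1) = \sum_{k\le j} n_k$ elements that will eventually lie in $\bigcup_{i\le j}\Is^{(i)}$, the condition forces that the overall minimum of this union lies in $\Is^{(j)}$; this happens for a fraction $n_j/(P_j(m+1))$ of the $\binom{v_1(c)}{n_j}$ choices, and then one recurses on the remaining $v_1(c) - n_j$ elements with $(p_1,\dots,p_{j-1})$.

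Once the count of $\rho_j^{-1}(\vec{c})$ is in hand, substituting $n_i = (m+1)p_i$ gives $\prod_i n_i = (m+1)^j \prod_i p_i$ and $\prod_i n_i!/\prod_i\bigl((m+1)p_i - 1\bigr)! = \prod_i n_i = (m+1)^j\prod_i p_i$, while $\prod_i n_i/\bigl((m+1)P_i\bigr) = \prod_i p_i/P_i$; plugging into the displayed expression for $\#\rho_j^{-1}(\vec c)$ and multiplying by $v_1(c_\flat)!\prod_i(p_i(m+1)-1)!/v_1(c)!$ collapses everything to $\prod_{i=1}^j \tfrac1{(m+1)P_i} = \tfrac1{(m+1)^j}\prod_i \tfrac1{P_i}$, which is exactly the right-hand side. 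I expect the main obstacle to be the combinatorial count of $\rho_j^{-1}(\vec c)$ with the descending-minima condition — getting the inductive step to emit precisely the factor $n_i/\bigl((m+1)P_i\bigr)$ and no stray constants requires care — whereas the final algebraic simplification is routine.
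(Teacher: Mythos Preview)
Your approach is correct and arrives at the same formula, but the paper's argument is organized differently in a way that dissolves exactly the ``delicate point'' you flag. Rather than peeling off $\Is^{(j)}$ first and invoking a symmetry/fraction argument, the paper peels off $\If^{(j)}$ first: there are $\binom{N}{N_0}$ choices for $\If^{(j)}\subset\underline{N}$ (with $N=v_1(c)$, $N_0=v_1(c_\flat)$). Once $\If^{(j)}$ is fixed, the descending-minimum condition forces $\min(\Is^{(j)})$ to be the smallest element of $\underline{N}\setminus \If^{(j)}$, so only the remaining $N_j-1$ elements of $\Is^{(j)}$ are free, giving $\binom{N-N_0-1}{N_j-1}$ choices; then $\min(\Is^{(j-1)})$ is likewise forced, giving $\binom{N-N_0-N_j-1}{N_{j-1}-1}$ choices; and so on. This yields directly
\[
\#\rho_j^{-1}(\vec c)=\binom{N}{N_0}\prod_{i=1}^j\binom{\sum_{k\le i}n_k-1}{n_i-1}
=\frac{N!}{N_0!\prod_i(n_i-1)!}\prod_{i=1}^j\frac{1}{\sum_{k\le i}n_k},
\]
which is algebraically equivalent to your displayed formula for $\#\rho_j^{-1}(\vec c)$, and the final simplification is then the same as yours.

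So both routes work. Your symmetry argument (the minimum of $\bigcup_{k\le i}\Is^{(k)}$ lands in $\Is^{(i)}$ with probability $n_i/\sum_{k\le i}n_k$, and these events are independent across $i$) is perfectly valid, but your phrasing ``a fraction $n_j/(P_j(m+1))$ of the $\binom{v_1(c)}{n_j}$ choices'' is imprecise --- the fraction applies to full ordered partitions, not to choices of $\Is^{(j)}$ alone. The paper's ordering sidesteps this entirely: by fixing $\If^{(j)}$ first, each minimum becomes deterministic rather than probabilistic, and no independence argument is needed.
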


\begin{proof}
The set $\rho_j^{-1}(\vec{c})$ is
\begin{equation*}
    \left\{
      (\If^{(j)},\Is^{(j)},\dots,\Is^{(1)}) \left|\,
        \begin{aligned}[c]
          & \underline{v_1(c)} = \If^{(j)}\sqcup \bigsqcup_{i=1}^j \Is^{(i)},
%\\
%          & 
          |\If^{(j)}| = v_1(c_\flat), |\Is^{(i)}| = p_i(m+1),
\\
          & \min(\Is^{(1)}) > \min(\Is^{(2)}) > \dots > \min(\Is^{(j)})
        \end{aligned}
      \right.\right\}.
\end{equation*}
Put $N \defeq v_1(c)$, $N_0 \defeq v_1(c_\flat)$, $N_i \defeq
p_i(m+1)$ ($1\le i\le j$). 

We first choose $\If^{(j)}\subset\underline{N}$. We have ${N \choose
  N_0}$ possibilities. Next we choose $\Is^{(j)}\subset
\underline{N}\setminus \If^{(j)}$. From the second condition, we must
have $\min(\Is^{(j)}) = \min(\underline{N}\setminus \If^{(j)})$. Let
$x$ be this number. Then the remaining choice is
\(
  \Is^{(j)}\setminus \{x\} \subset
  (\underline{N}\setminus \If^{(j)})
  \setminus\{x\}.
\)
We have $N-N_0 -1 \choose N_j-1$ possibilities. Next we choose
$\Is^{(j-1)}\subset\underline{N}\setminus (\If^{(j)}\cup\Is^{(j)})$. We have
$N-N_0-N_j-1\choose N_{j-1}-1$ possibilities. We continue until we choose
$\Is^{(1)}$. Therefore we have
\begin{equation*}
  \begin{split}
   \# \rho_j^{-1}(\vec{c})
   &= 
   {N \choose N_0}\prod_{i=1}^j 
   {N - N_0 - \sum_{k>i} N_k - 1\choose N_i - 1}
\\
   &=
   \frac{N!}{N_0! \prod_{i=1}^j (N_i-1)!}
   \times \prod_{i=1}^j \frac1{\sum_{1\le k\le i} N_k}.
  \end{split}
\end{equation*}
Moreover
\makeatletter
\tagsleft@false
\makeatother
\begin{gather*}
  \prod_{i=1}^j \frac1{\sum_{1\le k\le i} N_k}
  = \prod_{i=1}^j \frac1{\sum_{1\le k\le i} p_k(m+1)}
  \tag*{\qedhere}.
\end{gather*}
\end{proof}
\makeatletter
\tagsleft@true
\makeatother

\begin{NB}
  First consider \thmref{thm:1st} with $m=0$.  We have $c_\sharp =
  e_0$ is the only possibility. Therefore
\begin{equation*}
  \mathcal D^{0,\ell}(c) = \{
  \mathfrak I = (\underline{N}\setminus \{ i\}, \{i\})
  \mid i\le \ell\} \cong \underline{\ell}.
\end{equation*}

\begin{Theorem}
\begin{equation*}
   \int_{\tM^{0,\ell}(c)} \widetilde\Phi(\cE) 
   - \int_{\bM^0(c)} \Phi(\cE)
  = 
  \sum_{i=1}^{\ell}
   \int_{\tM^{m,i-1}(c-e_0)} 
   \widetilde\Phi(\cEf)\cup
   \Res_{\hbar=0} \Phi'(\cEf),
\end{equation*}
where
\begin{equation*}%\label{eq:Phi'}
  \begin{split}
  & \Phi'(\cEf) =
    \frac1{v_1(c)}
    \frac{
      \Phi(C_0\boxtimes I_{-1})
    }{
      e(\fN(\cEf,C_0)\otimes I_{-1})
      \, e(\fN(C_0,\cEf)\otimes I_{1})
    }.
  \end{split}
\end{equation*}
\end{Theorem}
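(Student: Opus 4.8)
The stated formula is the case $m=0$ of \thmref{thm:1st}, so the plan is to quote that theorem and perform the elementary simplifications special to $m=0$. First I would note that for $m=0$ the auxiliary Grassmannian $\Gr(m+1,p)=\Gr(1,p)$ occurring in the definition of $\Phi'(\cEf)$ is empty unless $p=1$; hence in the sum over $\fI=(\If,\Is)\in\mathcal D^{0,\ell}(c)$ only the terms with $|\Is|=p(m+1)=1$ contribute, i.e.\ those with $\Is=\{i\}$, $\If=\underline{N}\setminus\{i\}$ for an index $i=\min(\Is)\le\ell$. This identifies $\mathcal D^{0,\ell}(c)$ with $\underline{\ell}$ via $i\mapsto(\underline{N}\setminus\{i\},\{i\})$, so that $\min(\Is)-1=i-1$, $c_\sharp=1\cdot e_0=e_0$, $c_\flat=c-e_0$, and the right-hand side of \thmref{thm:1st} becomes $\sum_{i=1}^{\ell}\int_{\tM^{0,i-1}(c-e_0)}\Res_{\hbar=0}\bigl[\widetilde\Phi(\cEf\oplus(C_0\boxtimes\Qcal\otimes I_{-1}))\,\Phi'(\cEf)\bigr]$.

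Next I would simplify $\Phi'(\cEf)$. Over $\Gr(1,1)=\pt$ the universal quotient $\Qcal$ is the trivial line bundle $\shfO$, and the universal homomorphism $\shfO\to V_1(C_0)\otimes\Qcal$ is a map of line bundles which is injective, hence an isomorphism; therefore $V_1(C_0)\otimes\Qcal/\shfO=0$ and $e((V_1(C_0)\otimes\Qcal/\shfO)^*)=1$. From the quiver description $C_0=\shfO_C(-1)$ corresponds to $V_0=0$, $V_1=\C$, so $v_1(e_0)=1$; thus $(v_1(c_\sharp)-1)!=0!=1$, $v_1(c_\flat)=v_1(c)-1$, and $\frac{(v_1(c_\sharp)-1)!\,v_1(c_\flat)!}{v_1(c)!}=\frac1{v_1(c)}$. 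Substituting $\Qcal=\shfO$ into the two remaining Euler factors, $\Phi'(\cEf)$ collapses to $\frac1{v_1(c)}\bigl(e(\fN(\cEf,C_0)\otimes I_{-1})\,e(\fN(C_0,\cEf)\otimes I_1)\bigr)^{-1}$, and likewise $\widetilde\Phi(\cEf\oplus(C_0\boxtimes\Qcal\otimes I_{-1}))=\widetilde\Phi(\cEf\oplus(C_0\boxtimes I_{-1}))$.

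Finally I would transfer the $C_0$-contribution out of $\widetilde\Phi$ and into $\Phi'$. In \thmref{thm:1st} the symbol $\widetilde\Phi$ on the right is $\widetilde\Phi(\bullet)=\frac1{v_1(c_\flat)!}\Phi(\bullet)\cup e(\Trel^\flat)$, with $\Trel^\flat$ the relative tangent bundle of the flag bundle over $\bM^0(c_\flat)$; since $\Phi$ is multiplicative, $\Phi(A\oplus B)=\Phi(A)\Phi(B)$, this gives $\widetilde\Phi(\cEf\oplus(C_0\boxtimes I_{-1}))=\widetilde\Phi(\cEf)\,\Phi(C_0\boxtimes I_{-1})$, and since $\widetilde\Phi(\cEf)$ carries no $\hbar$-dependence it may be pulled out of $\Res_{\hbar=0}$. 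Absorbing $\Phi(C_0\boxtimes I_{-1})$ into $\Phi'$ produces exactly the asserted identity, with $\Phi'(\cEf)=\frac1{v_1(c)}\,\frac{\Phi(C_0\boxtimes I_{-1})}{e(\fN(\cEf,C_0)\otimes I_{-1})\,e(\fN(C_0,\cEf)\otimes I_1)}$. There is no genuine obstacle beyond \thmref{thm:1st} itself; the only point demanding care is the bookkeeping of the normalization constants $1/v_1!$ and the precise reading of $\widetilde\Phi$ on a direct sum, which is why I would write out the definition of $\widetilde\Phi$ on the $\flat$-factor explicitly before invoking multiplicativity.
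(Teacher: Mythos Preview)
Your proposal is correct and follows exactly the route the paper takes: the statement is nothing but the $m=0$ specialization of \thmref{thm:1st}, and the paper's own derivation consists precisely of noting that for $m=0$ one has $c_\sharp=e_0$ as the only possibility (since $\Gr(1,p)=\emptyset$ for $p>1$) and hence $\mathcal D^{0,\ell}(c)\cong\underline{\ell}$, after which the formula follows by direct simplification. You have carried out those simplifications more explicitly than the paper does, including the bookkeeping with $\widetilde\Phi$ on the $\flat$-factor and the use of multiplicativity of $\Phi$ to split off $\Phi(C_0\boxtimes I_{-1})$; this last step is indeed needed for the statement as written, and the paper implicitly relies on it here as well.
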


\begin{Theorem}
\begin{equation*}
  \int_{\bM^{1}(c)} \Phi(\cE) - \int_{\bM^{0}(c)} \Phi(\cE)
  = 
  \sum_{j=1}^\infty
   \int_{\bM^{0}(c - je_0)} \Phi(\cEf)\cup 
   \Res_{\hbar_{j}=0} \cdots \Res_{\hbar_{1}=0}
   \Psi^{j}(\cEf),
\end{equation*}
where
\begin{gather*}%\label{eq:Psi}
  \Psi^{j}(\bullet) \defeq
    \frac1{j!}
    \left[
      \prod_{i=1}^j
      \frac{
      \Phi(C_0\boxtimes e^{-\hbar_i})
    }{
      e(\fN(\bullet,C_0)
      \otimes e^{-\hbar_i})
      \, e(\fN(C_0,\bullet)
      \otimes e^{\hbar_i})
    }
    \times \prod_{1\le i_1\neq i_2 \le j} 
   ({-\hbar_{i_1}+\hbar_{i_2}})
   \right].
\end{gather*}
\end{Theorem}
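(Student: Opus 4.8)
The identity to be proved is the $m=0$ specialization of the weak wall-crossing formula \thmref{thm:m=0} (equivalently of \thmref{thm:2nd}), so the plan is to run Mochizuki's enhanced-master-space argument in the particularly transparent $m=0$ situation. First I would interpolate between $\bM^0(c)$ and $\bM^1(c)$ by the moduli schemes $\tM^{0,\ell}(c)$ of $(0,\ell)$-stable pairs $((E,\Phi),F^\bullet)$, where $F^\bullet$ is a full flag in $V_1(E)$ and $\ell$ runs from $0$ to $N:=v_1(c)=\dim V_1(E)$; by \propref{prop:m,l-moduli} and the remark after it, $\tM^{0,0}(c)$ and $\tM^{0,N}(c)$ are the full flag bundles of $\Vcal_1$ over $\bM^0(c)$ and $\bM^1(c)$. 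Setting $\widetilde\Phi(\cE):=\frac{1}{N!}\Phi(\cE)\cup e(\Trel)$ with $\Trel$ the relative tangent bundle of the flag bundle, the normalization is chosen so that $\int_{\tM^{0,0}(c)}\widetilde\Phi(\cE)=\int_{\bM^0(c)}\Phi(\cE)$ and $\int_{\tM^{0,N}(c)}\widetilde\Phi(\cE)=\int_{\bM^1(c)}\Phi(\cE)$, since the Euler number of a full flag variety of $V_1$ is $N!$.

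Next, for each $\ell$ I would form Thaddeus' enhanced master space $\cM^{0,\ell}(c)$ of \eqref{eq:masterspace}: by \thmref{thm:master} it is a smooth Deligne--Mumford stack carrying a $\C^*$-action whose fixed components are $\cM_-\cong\tM^{0,0}(c)$, $\cM_+\cong\tM^{0,\ell}(c)$, together with exceptional components $\cM^{\C^*}(\fI)$ indexed by decomposition types $\fI$ that split off a summand $C_0^{\oplus p}$ from the sheaf and a sub-index-set of the flag. Applying the Atiyah--Bott--Lefschetz localization formula on $\cM^{0,\ell}(c)$ to $\widetilde\Phi(\cE)\,c_1(\mathcal{T}(1))$ --- which restricts to $0$ at $\hbar=0$ because $\mathcal{T}(1)$ is topologically trivial --- and feeding in the normal-bundle computation \thmref{thm:normalbundle} produces the one-step formula \thmref{thm:1st}. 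Here the $m=0$ hypothesis simplifies everything: the Grassmannian $\Gr(m+1,p)=\Gr(1,p)$ of \propref{prop:m,+} is empty unless $p=1$, so only $p=1$ survives, $\tM^{0,+}(e_0)$ is a point modulo $\C^*$, the universal quotient $\cQ$ is trivial of rank one, and \thmref{thm:1st} becomes
\[
  \int_{\tM^{0,\ell}(c)}\widetilde\Phi(\cE)-\int_{\bM^0(c)}\Phi(\cE)
  =\sum_{i=1}^{\ell}\int_{\tM^{0,i-1}(c-e_0)}\widetilde\Phi(\cEf)\cup\Res_{\hbar=0}\Phi'(\cEf),
\]
with $\Phi'(\cEf)=\frac{1}{v_1(c)}\frac{\Phi(C_0\boxtimes e^{-\hbar})}{e(\fN(\cEf,C_0)\otimes e^{-\hbar})\,e(\fN(C_0,\cEf)\otimes e^{\hbar})}$.

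Then I would iterate: starting from $\ell=N$ and applying the one-step formula repeatedly, each application on a term $\int_{\tM^{0,i-1}(c')}$ peels off one further $C_0$-summand, replaces $c'$ by $c'-e_0$, and introduces a new variable $\hbar$ together with a new residue. Organizing the resulting terms by the decomposition types $\fI^{(j)}$ --- which record which flag indices are split off, and in which order --- the recursion terminates after finitely many steps at integrals over $\bM^0(c-je_0)$. Collecting the contributions lying over a fixed $\vec c$ in $S_j(c)$ and applying Mochizuki's combinatorial counting identity (\cite[7.6.7]{Moc}, reproduced above) converts $\#\rho_j^{-1}(\vec c)$ and the accumulated factorials into the coefficient $\frac{1}{j!}$, while the products of Euler classes $e(e^{-\hbar_{i_1}+\hbar_{i_2}})=-\hbar_{i_1}+\hbar_{i_2}$ over ordered pairs $i_1\neq i_2$ assemble into the Vandermonde factor of $\Psi^j$. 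This yields exactly the asserted formula.

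The main obstacle is the $\C^*$-localization underlying \thmref{thm:1st}; steps (1) and the recursive bookkeeping are essentially formal once \thmref{thm:master} and \thmref{thm:1st} are in hand. One must (a) show $\cM^{0,\ell}(c)$ is Deligne--Mumford with exactly the stated fixed-point decomposition, which rests on the stabilizer analysis \lemref{lem:stabilizer}; (b) compute the equivariant normal bundles, including the orientation line bundle $L_{\mathcal S}$ with $L_{\mathcal S}^{\otimes pD}=G^*(\cL(\cEf)^*)$ arising from \propref{prop:m,+}; and (c) handle the subtlety that $\C^*$ does not act trivially on $\cM^{\C^*}(\fI)$ --- only the finite cover $\C^*_s\to\C^*$, $s\mapsto s^{pD}=t$, does --- which forces the ``rational'' weights $\pm1/pD$ and the rescaling $\Res_{\hbar=0}f(\hbar)=-pD\,\Res_{\hbar_s=0}f(pD\,\hbar_s)$, together with the residue identity $\Res_{\hbar=0}(\hbar-\omega)^j=\Res_{\hbar=0}\hbar^j$ that lets one erase the $\det\cQ$- and $\cL$-twists. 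In the $m=0$ case (b) is trivial since $\Gr(1,1)$ is a point, but (c) and the precise matching with Mochizuki's tree of decomposition types remain the delicate points.
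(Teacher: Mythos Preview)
Your proposal is correct and follows essentially the same route as the paper. The paper likewise reduces to the $m=0$ case (noting $\Gr(1,p)=\emptyset$ unless $p=1$), derives the one-step formula \thmref{thm:1st} via $\C^*$-localization on the enhanced master space, iterates it through the tree of decomposition types $\fI^{(j)}$, and then collapses the combinatorics with Mochizuki's counting lemma; the only cosmetic difference is that the paper chooses to write out the recursion for general $m$ (\thmref{thm:2nd}) before specializing, whereas you work directly at $m=0$ throughout.
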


Since we have an isomorphism $\bM^m(c) \xrightarrow{\cong} \bM^0(c
e^{-m[C]}); (E,\Phi)\mapsto (E(-mC),\Phi)$, we get \thmref{thm:m=0}.
\end{NB}

\begin{NB}
\subsection{Some formulas}

For a cohomology class $\varphi\in H_{\C^*\times\hT}^*(X\times\cM)$ we have
\begin{equation*}
  \begin{split}
  & \varphi \ch(C_m)/[\bC] 
\\
  = \;
  & 
  \ve_1 \ve_2 \frac{e^{m\ve_1}(\frac{1 - e^{\ve_1}}{\ve_1})
     - e^{m\ve_2}(\frac{1 - e^{\ve_2}}{\ve_2})}{\ve_1 - \ve_2}
   \varphi/[\bC]
   - \frac{e^{m\ve_1}(1 - e^{\ve_1}) - e^{m\ve_2}(1 - e^{\ve_2})}
  {\ve_1-\ve_2}\varphi/[C],
\\
  & \varphi \ch(C_m)/[C] 
\\
  = \;
  &
  \ve_1 \ve_2 \frac{e^{m\ve_1}(1-e^{\ve_1}) - e^{m\ve_2}(1-e^{\ve_2})}
  {\ve_1 - \ve_2}
  \varphi/[\bC]
  -
  \frac{\ve_1 e^{m\ve_1}(1-e^{\ve_1}) - \ve_2 e^{m\ve_2}(1-e^{\ve_2})}
  {\ve_1 - \ve_2}
  \varphi/[C].
  \end{split}
\end{equation*}

\begin{proof}
First note that
\begin{equation*}
  \begin{split}
    & \varphi \psi /[\bC] 
    = \varphi_{p_1} \psi_{p_1} \frac1{\ve_1(\ve_2 - \ve_1)}
    + \varphi_{p_2} \psi_{p_2} \frac1{(\ve_1-\ve_2)\ve_2}
    = \frac{(A + \ve_1 B) \varphi_{p_1}}{\ve_1(\ve_2-\ve_1)}
    + \frac{(A + \ve_2 B) \varphi_{p_2}}{(\ve_1-\ve_2)\ve_2}
\\
    =\; & A \varphi/[\bC] + B\varphi/[C]
  \end{split}
\end{equation*}
for
\begin{equation*}
   A \defeq \frac{ \ve_1 \psi_{p_2} - \ve_2 \psi_{p_1}}{\ve_1 - \ve_2},
   \quad
   B \defeq \frac{ \psi_{p_1} - \psi_{p_2}}{\ve_1 - \ve_2}.
\end{equation*}

We have
\(
   c_1(\shfO(C))|_{p_1} = \ve_1,
\)
\(
   c_1(\shfO(C))|_{p_2} = \ve_2.
\)
From the exact sequence
\(
   0 \to \shfO(-C) \to \shfO \to \shfO_C\to 0, 
\)
we get
\begin{gather*}
  \left.\ch(C_m)\right|_{p_a} = e^{(m+1)\ve_a} - e^{m\ve_a}.
\end{gather*}
Therefore we have the first assertion.

Similarly
\begin{equation*}
  \begin{split}
    & \varphi \psi /[C] 
    = \varphi_{p_1} \psi_{p_1} \frac1{\ve_2 - \ve_1}
    + \varphi_{p_2} \psi_{p_2} \frac1{\ve_1-\ve_2}
    = \frac{(A + \ve_1 B) \varphi_{p_1}}{\ve_1(\ve_2-\ve_1)}
    + \frac{(A + \ve_2 B) \varphi_{p_2}}{(\ve_1-\ve_2)\ve_2}
\\
    =\; & A \varphi/[\bC] + B\varphi/[C]
  \end{split}
\end{equation*}
for
\begin{equation*}
   A \defeq  \ve_1 \ve_2 \frac{\psi_{p_2} - \psi_{p_1}}{\ve_1 - \ve_2}
   \quad
   B \defeq \frac{ \ve_1\psi_{p_1} - \ve_2\psi_{p_2}}{\ve_1 - \ve_2}.
\end{equation*}
\end{proof}
Remark that $\ve_1\ve_2 \bullet/[\bC] = \bullet/ p^*[0]$, where $0$ is
the origin of $\C^2$.
\begin{NB2}
We have $[0]=\ve_1\ve_2\in H^4_{\hT}(\C^2)$. More precisely, 
the pullback of $\ve_1\ve_2\in H^4_{\hT}(\pt)$ by $\C^2\to \pt$.
Therefore $p^*[0] = \ve_1\ve_2\in H^4_{\hT}(\bC)$
\[
  \varphi/p^*[0] = \left(\frac{\ve_1\ve_2 \varphi_{p_1}}{\ve_1(\ve_2 - \ve_1)}
    + \frac{\ve_1\ve_2 \varphi_{p_2}}{(\ve_1-\ve_2)\ve_2}\right)
  = \ve_1\ve_2 \varphi/[\bC].
\]
\end{NB2}
Setting $\varphi = 1$, we get
\begin{equation*}
  \begin{split}
    \ch(C_m)/[C] &= 
  \frac{e^{m\ve_1}(1-e^{\ve_1}) - e^{m\ve_2}(1-e^{\ve_2})}
  {\ve_1 - \ve_2},
\\
    \ch(C_m)/[\bC] &= 
  \frac{e^{m\ve_1}(\frac{1 - e^{\ve_1}}{\ve_1})
     - e^{m\ve_2}(\frac{1 - e^{\ve_2}}{\ve_2})}{\ve_1 - \ve_2}.
  \end{split}
\end{equation*}
Therefore
\begin{equation*}
  \begin{split}
   &\Phi(C_m\boxtimes e^{-\hbar_i})
     = 
     \exp\left[
       \sum_{a=1}^\infty \left\{
        t_a \ch_{a+1}(C_m\boxtimes e^{-\hbar_i})/[C]
        + \tau_a \ch_{a+1}(C_m\boxtimes e^{-\hbar_i})/[\bC]
      \right\}\right]
\\
   = \; & 
   \exp\left[
       \sum_{a=1}^\infty \left\{
        t_a \left[
          \frac{e^{m\ve_1}(1-e^{\ve_1}) - e^{m\ve_2}(1-e^{\ve_2})}
          {\ve_1 - \ve_2}
            e^{-\hbar_i}
        \right]_{a}
        +
        \tau_a \left[
            \frac{e^{m\ve_1}(\frac{1 - e^{\ve_1}}{\ve_1})
              - e^{m\ve_2}(\frac{1 - e^{\ve_2}}{\ve_2})}{\ve_1 - \ve_2}
            e^{-\hbar_i}
        \right]_{a}
      \right\}\right],
  \end{split}
\end{equation*}
where $[\bullet]_a$ means degree $a$ part of $\bullet$. 
When $\ve_1$, $\ve_2\to 0$, this converges to
\begin{equation*}
  \left.\Phi(C_m\boxtimes e^{-\hbar_i})\right|_{\ve_1,\ve_2 = 0}
  \begin{NB2}
  = \exp\left[
    \sum_{a=1}^\infty \left\{
      - \left(t_a + (m + \frac12)\tau_a\right)
      \left[ e^{-\hbar_i} \right]_a
      \right\}
    \right]
  \end{NB2}
  = \exp\left[
    \sum_{a=1}^\infty \left\{
      (-1)^{a-1} \left(t_a + (m + \frac12)\tau_a\right)
      \frac{\hbar_i^a}{a!}
      \right\}
    \right]
\end{equation*}

We have
\begin{equation*}
  \begin{split}
  & \left(\varphi \ch(C_m)\td{\bp}\right)/[\bp]
\\
  =\; & - \ve_1\ve_2 \frac{e^{\ve_1+\ve_2}(e^{m\ve_1} - e^{m\ve_2})}
   {e^{\ve_1} - e^{\ve_2}} \varphi/[\bC]
   + \frac{e^{\ve_1+\ve_2}(\ve_1 e^{m\ve_1} - \ve_2 e^{m\ve_2})}
    {e^{\ve_1} - e^{\ve_2}}\varphi/[C].
  \end{split}
\end{equation*}

\begin{proof}
  There is no contribution from fixed points in $\linf$ since
  $\ch(C_m)$ vanishes outside $C$. Thus we use the above argument.

  Since
\begin{gather*}
  \td{\bp}|_{p_1} = \frac{\ve_1}{1 - e^{-\ve_1}}
  \frac{{\ve_2-\ve_1}}{1 - e^{-\ve_2+\ve_1}}
  = \frac{\ve_1(\ve_2-\ve_1)}{(1 - e^{-\ve_1})(1 - e^{-\ve_2+\ve_1})},
\\
  \td{\bp}|_{p_2} = \frac{\ve_2}{1 - e^{-\ve_2}}
  \frac{{\ve_1-\ve_2}}{1 - e^{-\ve_1+\ve_2}}
  = \frac{(\ve_1-\ve_2)\ve_2}{(1 - e^{-\ve_2})(1 - e^{-\ve_1+\ve_2})},
\end{gather*}
we have
\begin{equation*}
  \begin{split}
  & \left(\ch(C_m)\td{\bp}\right)_{p_1}
  = \frac{\ve_1(\ve_2-\ve_1) e^{m\ve_1}(e^{\ve_1} - 1)}
  {(1 - e^{-\ve_1})(1 - e^{-\ve_2+\ve_1})}
  = \frac{e^{(m+1)\ve_1}\ve_1(\ve_2-\ve_1)}{1 - e^{-\ve_2+\ve_1}},
\\
  & \left(\ch(C_m)\td{\bp}\right)_{p_2}
  = \frac{e^{(m+1)\ve_2}(\ve_1-\ve_2)\ve_2}{1 - e^{-\ve_1+\ve_2}}.
  \end{split}
\end{equation*}
Therefore
\begin{equation*}
  \begin{split}
   A &= \frac1{\ve_1 - \ve_2}
   \left(
     \frac{\ve_1\ve_2(\ve_1-\ve_2)e^{(m+1)\ve_2}}{1 - e^{-\ve_1+\ve_2}}
     - \frac{\ve_1\ve_2(\ve_2-\ve_1)e^{(m+1)\ve_1}}{1 - e^{-\ve_2+\ve_1}}
   \right)
   = - \frac{\ve_1\ve_2 e^{\ve_1+\ve_2}(e^{m\ve_1} - e^{m\ve_2})}
   {e^{\ve_1} - e^{\ve_2}},
\\
   B &= \frac1{\ve_1 - \ve_2}
   \left(
     \frac{\ve_1(\ve_2-\ve_1)e^{(m+1)\ve_1}}{1 - e^{-\ve_2+\ve_1}}
     - \frac{(\ve_1-\ve_2)\ve_2 e^{(m+1)\ve_2}}{1 - e^{-\ve_1+\ve_2}}
   \right)
   = \frac{e^{\ve_1+\ve_2}(\ve_1 e^{m\ve_1} - \ve_2 e^{m\ve_2})}{e^{\ve_1} - e^{\ve_2}}
  \end{split}
\end{equation*}
\end{proof}

Setting $\varphi = 1$ and replacing $m$ by $m+1$, we get
\begin{equation*}
   \ch(H^1(C_m(C))) = 
   \ve_1\ve_2 e^{\ve_1+\ve_2} \frac{e^{(m+1)\ve_1} - e^{(m+1)\ve_2}}
   {e^{\ve_1} - e^{\ve_2}}.
\end{equation*}
The Euler class $e(\Hom(H^1(C_m(C))\otimes \cQ/[\xi],[\xi]))$ can be
expressed in terms of Chern classes of $\cQ$ from this formula in
principle.

We also have
\begin{equation*}
  \begin{split}
  & \ch(\fN(\cEf,C_m))
  = -\ch(\cEf)^\vee\ch(C_m)\td{\bp}/[\bp] 
\\
  =\; & \ve_1\ve_2 e^{\ve_1+\ve_2} \frac{e^{m\ve_1} - e^{m\ve_2}}
   {e^{\ve_1} - e^{\ve_2}} \ch(\cEf)^\vee/[\bC]
   - e^{\ve_1+\ve_2} \frac{\ve_1 e^{m\ve_1} - \ve_2 e^{m\ve_2}}
    {e^{\ve_1} - e^{\ve_2}}\ch(\cEf)^\vee/[C],
  \end{split}
\end{equation*}
where $\vee$ is the involution defined by $1$ on $H^{2(\mathrm{even})}$
and $-1$ on $H^{2(\mathrm{odd})}$.
Similarly we have
\begin{equation*}
  \begin{split}
  & \ch(\fN(C_m,\cEf))
  = -\ch(\cEf)\ch(C_m)^\vee\td{\bp}/[\bp] 
\\
  =\; & - \ve_1\ve_2 e^{\ve_1+\ve_2} \frac{e^{(1-m)\ve_1} - e^{(1-m)\ve_2}}
   {e^{\ve_1} - e^{\ve_2}} \ch(\cEf)/[\bC]
   + e^{\ve_1+\ve_2} \frac{\ve_1 e^{(1-m)\ve_1} - \ve_2 e^{(1-m)\ve_2}}
    {e^{\ve_1} - e^{\ve_2}}\ch(\cEf)/[C].
  \end{split}
\end{equation*}
\begin{NB2}
  we have
\begin{equation*}
  \begin{split}
  & \left(\ch(C_m)^\vee \td{\bp}\right)_{p_1}
  = - \frac{e^{-m\ve_1}\ve_1(\ve_2-\ve_1)}{1 - e^{-\ve_2+\ve_1}}
  = - \left(\ch(C_{1-m}) \td{\bp}\right)_{p_1},
\\
  & \left(\ch(C_m)^\vee \td{\bp}\right)_{p_2}
  = - \frac{e^{-m\ve_2}(\ve_1-\ve_2)\ve_2}{1 - e^{-\ve_1+\ve_2}}
  = - \left(\ch(C_{1-m})\td{\bp}\right)_{p_2}.
  \end{split}
\end{equation*}
\end{NB2}%

We have
\begin{equation*}
  \begin{split}
   \left.\ch(\fN(\cEf,C_m))\right|_{\ve_1=\ve_2=0}
  &= m \ch(\cEf)^\vee/p^*[0] - \ch(\cEf)^\vee/[C],
\\
  \left.\ch(\fN(C_m,\cEf))\right|_{\ve_1=\ve_2=0}
  &= (m-1) \ch(\cEf)/p^*[0] + \ch(\cEf)/[C].
  \end{split}
\end{equation*}
Therefore
\begin{equation*}
  \begin{split}
   & \left. e(\fN(\cEf,C_m)\otimes e^{-\hbar_i})\right|_{\ve_1=\ve_2=0}
   = \left[
    \left(m \ch(\cEf)^\vee/p^*[0] - \ch(\cEf)^\vee/[C]\right) e^{-\hbar_i}
   \right]_{mr(c_\flat) + (c_1(c_\flat),[C])},
\\   
  & \left. e(\fN(C_m,\cEf)\otimes e^{\hbar_i})\right|_{\ve_1=\ve_2=0}
  = \left[
  \left((m-1) \ch(\cEf)/p^*[0] + \ch(\cEf)/[C]\right) e^{\hbar_i}
  \right]_{(m+1)r(c_\flat) + (c_1(c_\flat),[C])}.
  \end{split}
\end{equation*}

\subsection{Tensor product}

We have the following identity of symmetric polynomials (\cite[\S4,
Example~5]{Macdonald})
\begin{equation*}
   \prod_{i=1}^m \prod_{j=1}^n (1 + x_i + y_j)
   = \sum_{\mu\subset \lambda \subset (n^m)}
   d_{\lambda,\mu} s_\mu(x) s_{\tilde\lambda'}(y),
\end{equation*}
where $s_\bullet(\bullet)$ is a Schur polynomial,
\begin{equation*}
  d_{\lambda,\mu} = \det
  \begin{pmatrix}
    \lambda_i + n - i \choose \mu_j + n - j
  \end{pmatrix}_{1\le i,j\le n}
\end{equation*}
and
\(
   \tilde\lambda' = (m - \lambda_n',\dots,m-\lambda_1').
\)
This gives us the formula for the Chern classes of tensor products:
\begin{equation*}
  c(E\otimes F)
  = \sum_{\mu\subset \lambda \subset (n^m)}
   d_{\lambda,\mu} s_\mu(E) s_{\tilde\lambda'}(F),
\end{equation*}
where $s_\mu(E)$ means that we write $s_\mu(x)$ as a polynomial in
elementary symmetric functions $e_i(x)$ and replace it by $c_i(E)$,
and similarly for $s_{\tilde\lambda'}(F)$.

\subsection{Effects of $\otimes\shfO(C)$}

We have
\begin{equation*}
  \begin{split}
   & \Phi(\cE(C))
   = 
     \exp\left[
      \sum_{a=1}^\infty \left\{
        t_a \ch_{a+1}(\cE(C))/[C] + \tau_a \ch_{a+1}(\cE(C))/[\bC]
      \right\}
    \right]
\\
  =\; &
     \exp\Biggl[
      \sum_{a=1}^\infty \Biggl\{
        \sum_{b=0}^{a+1} 
        \begin{aligned}[t]
        & \left(
        - t_a
        \left[
          \ve_1\ve_2 \frac{e^{\ve_1} - e^{\ve_2}}{\ve_1 - \ve_2} 
        \right]_{a+1-b}
        +
        \tau_a
        \left[
          \frac{\ve_1 e^{\ve_2} - \ve_2 e^{\ve_1}}{\ve_1 - \ve_2} 
        \right]_{a+1-b}
        \right)
          \ch_{b}(\cE)/[\bC]
\\
        & \quad + 
        \left(
          t_a
          \left[
            \frac{\ve_1 e^{\ve_1} - \ve_2 e^{\ve_2}}{\ve_1 - \ve_2} 
          \right]_{a+1-b}
          + \tau_a
        \left[
          \frac{e^{\ve_1} - e^{\ve_2}}{\ve_1 - \ve_2} 
        \right]_{a+1-b}
        \right)
          \ch_{b}(\cE)/[C]
      \Biggr\}
    \Biggr]
        \end{aligned}
  \end{split}
\end{equation*}
as
\begin{equation*}
  \begin{split}
   & \varphi \ch(\shfO(C))/[\bC]
   = \frac{\ve_1 e^{\ve_2} - \ve_2 e^{\ve_1}}{\ve_1 - \ve_2} 
   \varphi/[\bC]
   + \frac{e^{\ve_1} - e^{\ve_2}}{\ve_1 - \ve_2} 
   \varphi/[C],
\\
   & \varphi \ch(\shfO(C))/[C]
   = - \ve_1\ve_2 \frac{e^{\ve_1} - e^{\ve_2}}{\ve_1 - \ve_2} 
   \varphi/[\bC]
   + \frac{\ve_1 e^{\ve_1} - \ve_2 e^{\ve_2}}{\ve_1 - \ve_2} 
   \varphi/[C].
  \end{split}
\end{equation*}

Since we formally need $\ch_0(\cE)$, $\ch_1(\cE)$, we include
$t_{-1}$, $t_0$, $\tau_{-1}$, $\tau_0$, and replace
$\sum_{a=1}^\infty$ to $\sum_{a=-1}^\infty$.
Then
\begin{equation*}
  \begin{split}
  & \Phi(\cE(C))
\\
  =\; &
  \exp\Biggl[
      \sum_{b=-1}^\infty \Biggl\{
        \sum_{a=b}^{\infty} 
        \begin{aligned}[t]
        & \left(
        - t_a
        \left[
          \ve_1\ve_2 \frac{e^{\ve_1} - e^{\ve_2}}{\ve_1 - \ve_2} 
        \right]_{a-b}
        +
        \tau_a
        \left[
          \frac{\ve_1 e^{\ve_2} - \ve_2 e^{\ve_1}}{\ve_1 - \ve_2} 
        \right]_{a-b}
        \right)
          \ch_{b+1}(\cE)/[\bC]
\\
        & \quad + 
        \left(
          t_a
          \left[
            \frac{\ve_1 e^{\ve_1} - \ve_2 e^{\ve_2}}{\ve_1 - \ve_2} 
          \right]_{a-b}
          + \tau_a
        \left[
          \frac{e^{\ve_1} - e^{\ve_2}}{\ve_1 - \ve_2} 
        \right]_{a-b}
        \right)
          \ch_{b+1}(\cE)/[C]
      \Biggr\}
    \Biggr]
        \end{aligned}
\\
  =\; &
  \Phi(\cE) \exp\Biggl[ 
   \sum_{a=-1}^\infty \tau_a \ch_{a+1}(\cE)/[C] 
  \Biggr]
\\
  & \quad \times
  \exp\Biggl[
      \sum_{b=-1}^\infty \Biggl\{
        \sum_{a=b+1}^{\infty} 
        \begin{aligned}[t]
        & \left(
        - t_a
        \left[
          \ve_1\ve_2 \frac{e^{\ve_1} - e^{\ve_2}}{\ve_1 - \ve_2} 
        \right]_{a-b}
        +
        \tau_a
        \left[
          \ve_1 \ve_2
          \frac{\frac{1-e^{\ve_1}}{\ve_1} - \frac{1 - e^{\ve_2}}{\ve_2}}
          {\ve_1 - \ve_2} 
        \right]_{a-b}
        \right)
          \ch_{b+1}(\cE)/[\bC]
\\
        & \quad + 
        \left(
          t_a
          \left[
            \frac{\ve_1 e^{\ve_1} - \ve_2 e^{\ve_2}}{\ve_1 - \ve_2} 
          \right]_{a-b}
          + \tau_a
        \left[
          \frac{e^{\ve_1} - e^{\ve_2}}{\ve_1 - \ve_2} 
        \right]_{a-b}
        \right)
          \ch_{b+1}(\cE)/[C]
      \Biggr\}
    \Biggr]
        \end{aligned}
  \end{split}
\end{equation*}

\underline{Matters}:

From 
\(
   0 \to \shfO \to \shfO(C) \to \shfO_C(-1) \to 0,
\)
we have
\begin{equation*}
   0  \to H^0(\cE\otimes^L \shfO_C(-1))
   \to H^1(\cE(-\linf)) \to H^1(\cE(C-\linf)) \to 
   H^1(\cE\otimes^L \shfO_C(-1)) \to 0.
\end{equation*}
We have
\(
  H^1(\cE\otimes^L \shfO_C(-1)) = 0.
\)
\begin{NB2}
  This is because $d\colon V_0\to V_1$ is surjective. Is there a
  direct proof ?
\end{NB2}%
Thus we have
\begin{equation*}
  e(H^1(\cE-\linf)) = e(H^1(\cE(C-\linf))) e(H^0(\cE\otimes^L \shfO_C(-1))).
\end{equation*}
We have
\begin{equation*}
  \left(\ch(\cE)\ch(C_0)\td{\bp}\right)/[\bp] 
  = \frac{e^{\ve_1+\ve_2}(\ve_1 - \ve_2)}
    {e^{\ve_1} - e^{\ve_2}}\ch(\cE)/[C].
\end{equation*}
\end{NB}

%\bibliographystyle{myamsplain}
%\bibliography{nakajima,mybib}

\end{document}